\newtheorem{Th}{Theorem}
\newtheorem{Prop}[Th]{Proposition}
\newtheorem{Lm}[Th]{Lemma}
\newtheorem{Co}[Th]{Corollary}
\theoremstyle{definition}
\newtheorem{Def}[Th]{Definition}
\newtheorem{Rem}{Remark}
\author{ {\bf Artem Dudko} and {\bf  Nikolay I. Nessonov} \\
                    %Intsitute of Mathematics Polish Academy of Sciences
                    IMPAN, Warsaw, Poland, and \\ B. Verkin ILTPE NAS, Kharkiv, Ukraine }
\date{}
\begin{document}

\maketitle

\begin{abstract}
Let  $\mathbb{N}$ be a set of the natural numbers.
 Symmetric inverse semigroup $R_\infty$ is the semigroup of all infinite 0-1 matrices $\left[ g_{ij}\right]_{i,\j\in \mathbb{N}}$ with  at most one 1 in each  row and each column such that $g_{ii}=1$ on the complement of a finite set. The binary operation in  $R_\infty$ is the ordinary matrix multiplication.  It is clear that infinite symmetric group $\mathfrak{S}_\infty$ is a subgroup of  $R_\infty$. The map $\star:\left[ g_{ij}\right]\mapsto\left[ g_{ji}\right]$ is an involution on $R_\infty$. We call a function $f$ on  $R_\infty$ positive definite if for all $r_1, r_2, \ldots, r_n\in R_\infty$  the matrix $\left[ f\left( r_ir_j^\star\right)\right]$ is Hermitian and positive semi-definite. A function $f$ said to be  indecomposable if the corresponding $\ast$-representation $\pi_f$ is  a factor-representation. A class of the $\mathfrak{S}_\infty$-invariant functions is defined by the  condition  $f(rs)=f(sr)$ for all $r\in R_\infty$ and $s\in\mathfrak{S}_\infty$. In this paper we classify all  semifinite  factor-representations of $R_\infty$ that correspond to the $\mathfrak{S}_\infty$-invariant positive definite functions.
\end{abstract}

%Acknowledgement. The authors were partially supported by the “Long-term program of support of the Ukrainian research teams at the Polish Academy of Sciences carried out in collaboration with the U.S. National Academy of Sciences with the financial support of external partners”

%\newpage
\tableofcontents

\section{Introduction}
In the field of group representations, one of the most important questions is a description of "elementary" representations of the group, constructing "natural" representations, and decomposing them into elementary ones. Usually, for infinite-dimensional groups the role of elementary representations is played by either factor representations or irreducible representations, satisfying some natural restrictions.

One of the most interesting classification results in this area is a description of finite type factor representations of the infinite symmetric group $\mathfrak{S}_\infty$ ({\rm i.e.} the group of finitary permutation of the set $\mathbb N$ of positive integers). It is known that finite type representations of a group are in a correspondence with characters on the group (positive-definite functions constant on conjugacy classes). In 1964, Thoma \cite{Thoma} using analytic tools showed that indecomposable characters (extreme points in the set of characters) on $\mathfrak{S}_\infty$ are parameterized by two sequences $\alpha_1\geqslant\alpha_2\geqslant\ldots\geqslant 0$, $\beta_1\geqslant\beta_2\geqslant\ldots\geqslant 0$ with $\sum\alpha_i+\sum\beta_j\leqslant 1$. Later, many remarkable interpretations of these parameters and important relations of factor representations of $\mathfrak S_\infty$ with other areas were found.

Notably, Vershik and Kerov $\cite{VK1}$ developed asymptotic character theory for infinite-dimensional groups using ergodic approach and applied it to study characters of $\mathfrak S_\infty$ as limits of characters of finite symmetric groups $\mathfrak S_n$. In particular, they showed that parameters $\{\alpha_i\}$ and $\{\beta_j\}$ are limits of normalized rows and columns of Young diagrams associated to irreducible representations of $\mathfrak{S}_n$.

Later, Okounkov and Olshanski \cite{Ok2}, \cite{Olsh_tamesym} developed a so-called semigroup approach to study representations of $\mathfrak S_\infty$ and related groups. It was discovered that parameters $\{\alpha_i\}$ and $\{\beta_j\}$ are spectral values of weak limits of operators of factor-representations associated to transpositions (see eg. \cite{Ok2}). Namely, given a finite type representation $\pi$ of $\mathfrak{S}_\infty$ for every $k\in\mathbb N$ there exists weak limit $\mathcal O_k=\lim\limits_{l\to\infty}\pi((k,l))$, where $(k,l)\in\mathfrak{S}_\infty$ is the transposition of $k$ and $l$. If $\pi$ is a factor representation, then the spectrum of $\mathcal O_k$ is equal to $\{\alpha_i\}\cup\{\beta_j\}$ with, possibly, zero added.

In addition, many remarkable relations were discovered between characters and representations of $S(\infty)$ and various branches of math, including Combinatorics, Ergodic Theory, Probability Theory, Random Matrices, and Operator Algebras (see, for instance, \cite{BorodinOkounkovOlschanskii00}, \cite{Ok00}, \cite{V12}).

One of the most natural semigroup extensions of $\mathfrak{S}_\infty$ is infinite symmetric inverse semigroup $R_\infty$. It can be viewed as the group of infinite matrices $\{g_{ij}\}_{i,j\in\mathbb N}$ with $0,1$ entries such that $g_{ij}\neq \delta_{ij}$ only for finitely many pairs $i,j\in\mathbb N$. It appears naturally in the representation theory of $\mathfrak{S}_\infty$ as follows. Notice that operators $\mathcal O_k$ defined above need not to be invertible, are pairwise commuting and are naturally permuted by the action of $\mathfrak S_\infty$. Fix an $\mathfrak S_\infty$-equivariant collection $E_k$ of spectral projections of $\mathcal O_k$. Let $\epsilon_k\in R_\infty$ be the matrix obtained from the identity matrix by replacing the entry at $i$th row and $i$th column with zero. Then the assignment $\pi(\epsilon_k)=E_k$ uniquely extends the representation $\pi$ to a representation of $R_\infty$.

The description of indecomposable characters on $R_\infty$ was given by Vershik and Nikitin in \cite{VN} and, independently, by the second author of the present paper \cite{N_INV}. It turned out that any indecomposable character on $R_\infty$ is determined by the Thoma parameters of its restriction to $\mathfrak S_\infty$ and an additional parameter $\rho$ equal to either zero or one of the Thoma parameters $\alpha_i$.

In this paper, we consider a natural generatlization of a notion of a character on $R_\infty$. Namely, we study normalized positive-definite functions $f$ on $\mathfrak S_\infty$ which are preserved under the adjoint action only by elements of $\mathfrak S_\infty$: $f(sgs^{-1})=f(g)$ for all $g\in R_\infty,s\in\mathfrak S_\infty$. Such functions lead to a larger class of factor representations of $R_\infty$ compared to previously studied. A principle difference from the case of characters on $R_\infty$ is that for the GNS representation $\pi_f$ associated to a function $f$ as above the Okounkov operator $\mathcal O_k=\lim\limits_{l\to\infty}\pi((k,l))$ (weak operator limit), $k\in\mathbb N$, does not need to commute with the operator $\pi_f(\epsilon_k)$.

Our main result is a complete description of indecomposable positive-definite $\mathfrak S_\infty$-central functions $f$ producing semi-finite factor-representations. In particular, these include examples of type ${\rm I}_\infty$ and ${\rm II}_\infty$ representations of $R_\infty$. In the latter case obtained representations are parameterized by Thoma parameters $\{\alpha_i\},\{\beta_i\}$, the choice  of one element from the collection $\{\alpha_i\}$ (appearing also in ${\rm II}_1$ case), and an additional new parameter $t\in (0,1)$.

We notice that closely related to $R_\infty$ are the semidirect products $\mathfrak{S}_\infty\ltimes \Gamma^\infty$, where $\Gamma$ is  arbitrary group. Using a technique developed by A. Okounkov (\cite{Ok1}, \cite{Ok2}) and G. Olshansky \cite{O2} for studying representations of $\mathfrak{S}_\infty$ , the authors of the present paper obtained a series of classification results for $\mathfrak{S}_\infty\ltimes \Gamma^\infty$ (\cite{Dudko_Nes_2009}, \cite{Dudko_Nes_2008_Sb}, \cite{{Dudko_Nes}}). In \cite{Dudko_Nes_2008_Sb}, \cite{{Dudko_Nes}} they considered ${\rm II}_1$ factor representations. In \cite{Dudko_Nes_2009} they obtained new examples of type  ${\rm II}_\infty$ and type ${\rm III}$ factor representations of $S(\infty)\ltimes\Gamma^\infty$. In the present paper, we use the techniques from above mentioned papers as well as new ideas and methods.

The paper is organized as follows. In Section \ref{section:preliminaries}, we introduce the main concepts appearing in the paper and give necessary preliminaries on inverse semigroups, factor-representations, and positive-definite functions. In Section \ref{section:main}, we describe in detail the main results of the paper. Section \ref{section:admissible} is dedicated to an important class of $\mathfrak {S}_\infty $-admissible representation of $R_\infty$. In Section \ref{section:properties} we study properties of indecomposable $\mathfrak S_\infty$ central positive-definite functions on $R_\infty$ and obtain their description. In Section \ref{section:realizations}, we construct examples of associated factor representations and complete the proof of the main result. For the readers convenience, in Section \ref{section:notations}, we provide a list of main notations used in the paper.

\subsection{Acknowledgements.}  The authors acknowledge the funding by the “Long-term program of support of the Ukrainian research teams at the Polish Academy of Sciences carried out in collaboration with the U.S. National Academy of Sciences with the financial support of external partners”.

%%%%%%%%%%%%%%%%%%%%%%%%%%%%%%%%%%%%%%%%%%%%%%%%%%%%%%%%%%%%%%%
\section{Preliminaries}\label{section:preliminaries}
In this section we introduce in details the notions used in the present paper and formulate some auxiliary statements.
\subsection{Key notions and their properties}
The symmetric inverse semigroup on the $n$-element set $X_n = \{1, . . . , n\}$ ($X_\infty$ is the set of all
positive integers), which we denote by $R_n$ in what follows, consists of all partial bijections on $X_n$,
i.e. bijections between subsets of $X_n$. Following \cite{Munn}, we denote the domain and the range of a
bijection $r\in R_n$ by $\mathcal{D}(r)$ and $\mathcal{I}(r)$, respectively, so that $r(\mathcal{D}(r)) = \mathcal{I}(r)$. In the case $n =\infty$, we
assume that the complement of the set $\{x \in X_\infty : rx = x\}$ is finite for all $r \in R_\infty$.  Note that there exists an involution on $R_n$, which is denoted by $\star$. Namely, the domain of each partial bijection $r^\star$ is $\mathcal{I}(r)$
and if $r(d) = i\in \mathcal{I}(r)$, then $r^\star(i) = d \in \mathcal{D}(r)$. It is convenient to represent the elements of the semigroup $R_n$ in the form
of $n \times n$ matrices with entries $0$ and $1$: $r=\left[ r_{lk} \right]_{l,k=1}^n$, where  $r_{lk}=\left\{\begin{array}{rl}
 1,&\text{ if }r(k)=l\\
0,&\text{ otherwise. } \end{array}\right.$
In particular, if $ k\notin \mathcal{D}(r)$, then $r_{lk} = 0$ for all $l \in X_n$.
Multiplication in the semigroup corresponds to
usual matrix multiplication, and involution corresponds to matrix transposition. In what follows, it
is convenient to identify $R_n$ with this realization. The semigroup $R_n$ contains the symmetric group
$\mathfrak{S}_n$, which consists of all bijections of $X_n$ for $n <\infty$. A bijection $s: \mathbb{N}\to \mathbb{N}$ is called
{\it finite} if the set $\left\{ i\in\mathbb{N}\big|\,s(i) \neq i\right\}$
is finite. In the case $n =\infty$, the subgroup $\mathfrak{S}_\infty$ consists
of all finite bijections of $X_\infty$. In particular, $s = [s_{lk}] \in R_n$ belongs to $\mathfrak{S}_n$ if and only if the matrix
$\left[s_{lk} \right]$ is invertible. The semigroup $R_n$ contains the Abelian semigroup of identity partial bijections,
which is denoted by ${\rm Diag}_n$ and defined by the conditions
 \begin{eqnarray*}
 r\in {\rm Diag}_n\Leftrightarrow \mathcal{D}(r)=\mathcal{I}(r) \text{  end } r(x)=x \text{ for all } x\in \mathcal{D}(r).
 \end{eqnarray*}
 The bijection with empty domain corresponds to the zero matrix and is the zero of the semigroup.
In particular, the semigroup $R_\infty$ has no zero. We denote the identity
element of the semigroup $R_\infty$ by $e$. Set $R_{n\infty}=\left\{r\in R_\infty:r(x)=x \text{ for all } x=1,2,\ldots,n \right\}$.\label{R_n_infty} In particular, this means that
$\{1,2,\ldots,n\}\subset \mathcal{D}(r)$ for all $r\in R_{n\infty}$. Take subset $\mathbb{A}\subset X_n$ and denote by $\epsilon_\mathbb{A}$ \label{epsilon_mathbb_A} element of ${\rm Diag}_n$ such that $\mathcal{D}(\epsilon_\mathbb{A})=X_n\setminus\mathbb{A}$.

 Let $\mathcal{B}(\mathcal{H})$ be the algebra of all bounded operators on a Hilbert space $\mathcal{H}$ and $M$ be $W^*$-subalgebra in $\mathcal{B}(\mathcal{H})$. Denote by $M_*$ the Banach space of all weakly continuous linear functionals on $M$. From now on, $M_*^+$ stands  for the cone of all positive functionals from $M_*$.

  Let $\mathcal{S}$ be the subset in $\mathcal{B}(\mathcal{H})$. Set $\mathcal{S}^\prime=\left\{A\in\mathcal{B}(\mathcal{H}): A\,m\,=\,m\,A\;\text{ for all }\right.$ $\left.m\in \mathcal{S} \right\}$ and $\mathcal{S}^{\prime\prime}=\left( \mathcal{S}^\prime \right)^\prime$.

  We denote the identity
element of the semigroup $R_\infty$ by $e$ and the identity operator in $\mathcal{B}(\mathcal{H})$ by $I$. A $*$-representation
of the semigroup $R_\infty$ is a homomorphism $R_\infty\stackrel{\pi}{\mapsto} \mathcal{B}(\mathcal{H})$
 such that $\pi(r^{\star})=\left( \pi(r) \right)^*$,  where $\left( \pi(r) \right)^*$  is
the operator conjugate to $ \pi(r) $, and $ \pi(e)  = I$. Therefore, for $s\in \mathfrak{S}_\infty$, the operator $\pi(s)$ is unitary,
and for $d\in {\rm Diag}_\infty$, the operator $\pi(d)$ is a self-adjoint projection.

Let us define the semigroup algebra $\mathbb{C}[R_\infty]$ as the set  of all linear combinations of finitely many elements of $R_\infty$ with coefficients in $\mathbb{C}$, hence of all elements of the form
$\sum\limits_{r\in R_\infty} a(r)r$, 	 where $a(r)\in\mathbb{C}$ and $\#\left\{r:a(r)\neq0 \right\}<\infty$. Involution $\star:\left[ g_{ij}\right]\mapsto\left[ g_{ji}\right]$
on $R_\infty$ define a structure of $*$-algebra on  $\mathbb{C}[R_\infty]$ by $\left(\sum\limits_{r\in R_\infty} a(r)r\right)^\star=\sum\limits_{r\in R_\infty} a(r)r^\star$.

Recall that a complex valued function $f$ on $R_\infty$ is called positive definite if matrix $\left[f\left(  r_j^\star r_i \right) \right]$ is positive semidefinite for any finite collection $\left\{r_i \right\}\subset R_\infty$.  We will identify the function $f$ with the functional on $\mathbb{C}[R_\infty]$, which defined     as follows: $\sum\limits_{r\in R_\infty} a(r)r\mapsto \sum\limits_{r\in R_\infty} a(r)f(r)$. It is clear that
  \begin{eqnarray*}
  \sum\limits_{q,r\in R_\infty} \overline{a(q)}\,a(r)f(q^\star \,r)\geq 0.
  \end{eqnarray*}
  We call $f$ a state, if $f(e)=1$. If $H$ is a subset in $R_\infty$, then $H$-central or $H$-invariant state $f$ is defined by the condition
\begin{eqnarray*}
f(hr)=f(rh)\;\text{ for all }\;h\in H \;\text{ and for all}\;r\in R_\infty.
\end{eqnarray*}
Note that $R_\infty$-central states on $R_\infty$ are usual characters \cite{VN}, \cite{N_INV}.
\subsection{On Gelfand-Najmark-Segal representations of $R_\infty$}\label{Subsection: GNS}
Let $\left( \pi_f,\mathcal{H}_f,\xi_f \right)$ be  the GNS-representation of the semigroup $R_\infty$ (i. e., the representation obtained by applying the Gelfand-Naimark-Sigal construction); it acts on a Hilbert space $\mathcal{H}_f$ with cyclic vector $\xi_f$ such that $f(r)=\left(\pi_f(r)\xi_f,\xi_f\right)$ for all $r\in R_\infty$.

\begin{Def} We say that $f$ is a factor-state, or indecomposable, if the corresponding GNS-representation $\pi_f$ is a factor-representation. Denote by $\mathfrak{F}_\mathfrak{S}$ the set of all $\mathfrak{S}_\infty$-central indecomposable states on $R_\infty$\label{F_S}.
\end{Def}
\noindent We notice that a description of a different class of semifinite functions on $R_\infty$ can be obtained from the results of Nikitin-Safonkin \cite{NikSaf}. The principal difference of $\mathfrak{F}_{\mathfrak{S}}$ from the class of functions appearing in \cite{NikSaf} is that their semifinite functions take  finite nonzero values on projectors from semigroup algebras of finite semigroups, while for $\mathfrak{F}_{\mathfrak{S}}$ the corresponding values are either zero or infinity.

Recall that representations $\pi_1$ and $\pi_2$ of the semigroup $R_\infty$ are said to be quasi-equivalent if there exists an isomorphism $\theta$: $\pi_1(R_\infty)^{\prime\prime}\mapsto \pi_2(R_\infty)^{\prime\prime}$ such that $\theta\left( \pi_1(r)\right)=\pi_2(r)$ for all $r\in R_\infty$. The following assertion is valid.
\begin{Prop}\label{quasi_unique}
Suppose that GNS-representations $\left( \pi_\varphi,\mathcal{H}_\varphi,\xi_\varphi \right)$ and  $\left( \pi_\psi,\mathcal{H}_\psi,\xi_\psi \right)$ are quasi-equivalent. If  $\varphi,\psi\in\mathfrak{F}_\mathfrak{S}$ then $\varphi=\psi$.
\end{Prop}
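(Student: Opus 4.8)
The plan is to transport $\psi$ through the quasi-equivalence onto the von Neumann algebra of $\pi_\varphi$ and then exploit $\mathfrak{S}_\infty$-centrality to force the two states to agree on the factor $M_\varphi:=\pi_\varphi(R_\infty)''$. First I would record that each state extends to a normal state on its factor: writing $\hat\varphi:=(\,\cdot\,\xi_\varphi,\xi_\varphi)$ on $M_\varphi$ and $\hat\psi:=(\,\cdot\,\xi_\psi,\xi_\psi)$ on $M_\psi:=\pi_\psi(R_\infty)''$, one has $\hat\varphi(\pi_\varphi(r))=\varphi(r)$ and $\hat\psi(\pi_\psi(r))=\psi(r)$. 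Both $\hat\varphi,\hat\psi$ are normal and $\mathfrak{S}_\infty$-central, meaning $\hat\varphi(\pi_\varphi(s)x)=\hat\varphi(x\pi_\varphi(s))$ for $s\in\mathfrak{S}_\infty$, $x\in M_\varphi$; this follows from the defining relation $f(sr)=f(rs)$ by weak density and normality. Let $\theta:M_\varphi\to M_\psi$ be the isomorphism from quasi-equivalence, with $\theta(\pi_\varphi(r))=\pi_\psi(r)$, and set $\tilde\psi:=\hat\psi\circ\theta$. Since $\theta$ is a normal $*$-isomorphism intertwining the generators, $\tilde\psi$ is a normal $\mathfrak{S}_\infty$-central state on $M_\varphi$ with $\tilde\psi(\pi_\varphi(r))=\psi(r)$. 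Thus the proposition reduces to the single statement $\hat\varphi=\tilde\psi$ on $M_\varphi$.

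Next I would prove that the factor $M_\varphi$ carries at most one normal $\mathfrak{S}_\infty$-central state. Since $\mathfrak{S}_\infty=\bigcup_n\mathfrak{S}_n$ is locally finite, hence amenable, I can average the inner action $\alpha_s:=\mathrm{Ad}\,\pi_\varphi(s)$: a Følner (invariant-mean) procedure yields, as a weak$^*$ cluster point of the averages $\tfrac{1}{|F_k|}\sum_{s\in F_k}\alpha_s(x)$, a unital completely positive projection $E$ of $M_\varphi$ onto the fixed-point algebra $M_\varphi^{\mathfrak{S}_\infty}=M_\varphi\cap\pi_\varphi(\mathfrak{S}_\infty)'$. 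Any normal $\mathfrak{S}_\infty$-central, equivalently $\alpha$-invariant, state $\omega$ then satisfies $\omega=\omega\circ E$, because $\omega$ is weak$^*$-continuous and $\omega(\alpha_s(x))=\omega(x)$ for every $s$. Hence every normal central state on $M_\varphi$ is determined by its restriction to $M_\varphi^{\mathfrak{S}_\infty}$, and it suffices to show this fixed-point algebra is trivial.

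The crux is therefore ergodicity of the adjoint action, namely $M_\varphi^{\mathfrak{S}_\infty}=\mathbb{C}I$; granting this, $M_\varphi^{\mathfrak{S}_\infty}$ has a unique state, so $\hat\varphi=\tilde\psi$ and $\varphi(r)=\hat\varphi(\pi_\varphi(r))=\tilde\psi(\pi_\varphi(r))=\psi(r)$ for all $r$, which proves the claim. I expect the triviality of the fixed-point algebra to be the main obstacle. The mechanism I would use is a zero--one law for the factor representation: the equivariance $\pi_\varphi(s)\pi_\varphi(\epsilon_{\{k\}})\pi_\varphi(s)^*=\pi_\varphi(\epsilon_{\{s(k)\}})$ shows that elements of $M_\varphi^{\mathfrak{S}_\infty}$ arise from exchangeable/symmetric combinations of the commuting projections $\pi_\varphi(\epsilon_{\{k\}})$ together with the permutations, and for a factor representation such exchangeable elements are asymptotically central and collapse to scalars. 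Concretely, I would approximate any $x\in M_\varphi^{\mathfrak{S}_\infty}$ by elements supported on $\{1,\dots,n\}$, move their support to infinity by permutations that fix $x$, and invoke the factor (cluster) property to conclude $x=\hat\varphi(x)I$, so that $x\in\mathcal{Z}(M_\varphi)=\mathbb{C}I$. Equivalently, the same ergodic content can be phrased through disjointness: distinct indecomposable $\mathfrak{S}_\infty$-central states are mutually disjoint, since their positions are recorded in the centers of their GNS algebras and disjoint representations are never quasi-equivalent; this is exactly what the central decomposition of $\tfrac12(\varphi+\psi)$ expresses.
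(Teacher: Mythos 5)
Your reduction and transport step are fine: extending $\varphi,\psi$ to normal $\mathfrak S_\infty$-central states on the factors and pulling $\hat\psi$ back through the normal isomorphism $\theta$ is legitimate, and it correctly reduces the proposition to comparing two normal $\mathfrak S_\infty$-central states on $M_\varphi$. The gap is in the crux you yourself flag: the claimed ergodicity $M_\varphi\cap\pi_\varphi(\mathfrak S_\infty)'=\mathbb C I$ is \emph{false} precisely for the representations this paper is about. By Proposition \ref{1 or 2}, when $\pi_\varphi(R_\infty)''$ is of type ${\rm I}_\infty$ or ${\rm II}_\infty$ there is an $\mathfrak S_\infty$-invariant projection $F_\varphi\in\pi_\varphi(R_\infty)''$ with $\varphi=\kappa\,{\rm Tr}(F_\varphi\,\cdot\,)$ and ${\rm Tr}(F_\varphi)<\infty$, so $F_\varphi\neq I$ is a nonscalar element of the fixed-point algebra; the projection $Q^{(\infty)}$ of \eqref{property_Q_infty} and the projection onto the spherical vectors are further examples. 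Consequently your intermediate target, ``the factor $M_\varphi$ carries at most one normal $\mathfrak S_\infty$-central state,'' is also false: if the commutant of $\pi_\varphi(\mathfrak S_\infty)$ in $M_\varphi$ contains two invariant finite projections (as it does in the tame/type ${\rm I}_\infty$ situation), the corresponding normalized functionals ${\rm Tr}(F_i\,\cdot\,)/{\rm Tr}(F_i)$ are distinct normal central states. What saves the proposition is the hypothesis $\varphi,\psi\in\mathfrak F_{\mathfrak S}$ — indecomposability, equivalently the multiplicativity of Proposition \ref{factor_condition} — which your main line of argument never invokes; mere centrality plus normality cannot suffice. The mechanics of your asymptotic-centrality sketch also break down where you need a separating vector: from $(xa-ax)\xi_\varphi=0$ you cannot conclude $x$ is central, since $\xi_\varphi$ is not separating when the support of $\omega_{\xi_\varphi}$ is a proper projection (the semifinite case). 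Finally, the closing ``disjointness'' remark is circular: quasi-equivalent representations are by definition not disjoint, and the central decomposition of $\tfrac12(\varphi+\psi)$ cannot separate quasi-equivalent summands, so it cannot be used to prove the very uniqueness at stake.

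The correct localization of your ``move the support to infinity'' idea is to apply it not to an arbitrary fixed point $x\in M_\varphi^{\mathfrak S_\infty}$ but to the conjugates of a single semigroup element — and that is exactly the paper's proof. For $r\in R_\infty$ pick $r_n\in\mathcal C(r)\cap R_{n\infty}$; then $w\text{-}\lim_n\pi_\varphi(r_n)=A(r)$ exists and lies in $\bigcap_n\pi_\varphi(R_n)'=\pi_\varphi(R_\infty)'$, hence $A(r)=c(r)I$ \emph{because $\pi_\varphi$ is a factor representation} (this is where indecomposability enters), while $\mathfrak S_\infty$-invariance gives $\varphi(r)=\varphi(r_n)\to c(r)$. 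Since $\theta$ is weakly continuous, $w\text{-}\lim\pi_\psi(r_n)=\theta(A(r))=c(r)I$, whence $\psi(r)=c(r)=\varphi(r)$. No statement about the full fixed-point algebra is needed, and none would be true.
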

\begin{proof}
Take an arbitrary $r\in R_\infty$. Set $\mathcal{C}(r)=\left\{srs^{-1} \right\}_{s\in\mathfrak{S}_\infty}$. Clearly,
\begin{eqnarray}\label{S_inv}
\varphi(r)=\varphi(r_n) \text{ and }
\psi(r)=\psi(r_n) \text{ for all } r_n\in R_{n\infty}\cap\mathcal{C}(r).
\end{eqnarray}
 Since   $\left( R_{n\infty}\cap\mathcal{C}(r) \right)\neq\emptyset$ for each $n$, there exists sequence $\left\{r_n \right\}_{n\in\mathbb{N}}$ such that
 $r_n\in R_{n\infty}\cap\mathcal{C}(r)$.

 Take any $g,h\in R_n\subset R_\infty$.
 If $n<N$ then  $r_N=s_{nN}\;r_n\;s_{nN}^{-1}$ for some $s_{nN}\in R_{n\infty}\cap\mathfrak{S}_\infty$. It follows from this that
 \begin{eqnarray*}
 \left( \pi_\varphi(r_{n_1})\pi_\varphi(g)\xi_\varphi,\pi_\varphi(h)\xi_\varphi \right)=\left( \pi_\varphi(r_{n_2})\pi_\varphi(g)\xi_\varphi,\pi_\varphi(h)\xi_\varphi \right)
\text{ for all } n_1,n_2>n.
 \end{eqnarray*}
Therefore, $\pi_\varphi(r_n)$ is weakly convergent. Let $$w-\lim\limits_{n\to\infty}\pi_\varphi(r_n)=A(r)\in\pi_\varphi(R_\infty)^{\prime\prime}$$
be the corresponding limit in the weak operator topology. Clearly, $$A(r)\in\bigcap\limits_{n=1}^\infty\pi_\varphi(R_n)^\prime=\pi_\varphi(R_\infty)^{\prime}.$$
Hence we obtain that $A(r)$ is a scalar operator; i.e. $A(r)=c(r)I$, where $c(r)\in \mathbb{C}$. Now, applying (\ref{S_inv}), we have
\begin{eqnarray}\label{state_on_scalar_operator}
\varphi(r)=\left(\pi_\varphi(r)\xi_\varphi,\xi_\varphi\right)=c(r).
\end{eqnarray}
Since isomorphism $ \pi_\varphi(R_\infty)^{\prime\prime}\ni\pi_\varphi(r)\stackrel{\theta}{\mapsto}\pi_\psi(r)
\in\pi_\psi(R_\infty)^{\prime\prime}$ is continuous in the weak operator topology, we claim that
\begin{eqnarray*}
w-\lim\limits_{n\to\infty}\pi_\psi\left( r_n \right)=w-\lim\limits_{n\to\infty}\theta(\pi_\varphi(r_n))  =\theta(A(r))=c(r)I.
\end{eqnarray*}
Hence, using the equality $\psi(r)\stackrel{(\ref{S_inv})}{=}\left(\pi_\psi(r_n)\xi_\psi,\xi_\psi\right)$, we conclude from (\ref{state_on_scalar_operator}) that $c(r)=\psi(r)=\varphi(r)$.
\end{proof}

 If $\left( \pi_\varphi,\mathcal{H}_\varphi,\xi_\varphi \right)$ is GNS-representation, corresponding to positive definite function $\varphi$ on $R_\infty$ then $\varphi(r) = \left(\pi_\varphi(r)\xi_\varphi,\xi_\varphi\right)$ for all $r\in R_\infty$.     Given vector $\eta$ in Hilbert space $\mathcal{H}_\varphi$, let $\omega_\eta$ be the vector functional on $\pi_\varphi (R_\infty)''$ defined by $\omega_\eta(A)=\left(A\eta,\eta\right)$, $A\in \pi_\varphi (R_\infty)''$. For abbreviation, we use the same letter $\varphi$ for $\omega_{\xi_{\varphi}}$.
\begin{Prop}\label{1 or 2}
Let $\varphi\in\mathfrak{F}_\mathfrak{S}$. Suppose that the factor $\pi_\varphi(R_\infty)''$ has type ${\rm I}_\infty$ or ${\rm II}_\infty$. If ${\rm Tr}$ is a normal trace on $\pi_\varphi(R_\infty)''$ then there exist a positive number $\kappa$ and an orthogonal projection $F_\varphi \in \pi_\varphi(R_\infty)''$ such that $\varphi(A)=\kappa {\rm Tr}(F_\varphi\,A)$ for all $A\in\pi_\varphi(R_\infty)''$.
\end{Prop}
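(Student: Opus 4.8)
\emph{Plan.} The idea is to realize $\varphi$ through a density operator with respect to ${\rm Tr}$ and then to rule out any nontrivial spectral decomposition of that density by appealing to the rigidity of Proposition \ref{quasi_unique}. First I would regard $\varphi$ as the normal vector state $\omega_{\xi_\varphi}$ on $M:=\pi_\varphi(R_\infty)''$. Since ${\rm Tr}$ is a faithful normal semifinite trace on the semifinite factor $M$, the noncommutative Radon--Nikodym theorem provides a positive self-adjoint operator $\rho$ affiliated with $M$, with ${\rm Tr}(\rho)=\varphi(I)=1$, such that $\varphi(A)={\rm Tr}(\rho A)$ for all $A\in M$. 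The assertion to be proved is then equivalent to the statement that $\rho$ is a scalar multiple of a projection, $\rho=\kappa F_\varphi$, since in that case ${\rm Tr}(\rho A)=\kappa\,{\rm Tr}(F_\varphi A)$ yields the claim.

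Next I would encode the $\mathfrak{S}_\infty$-centrality as an invariance of $\rho$. For $s\in\mathfrak{S}_\infty$ put $u_s=\pi_\varphi(s)$; the identity $\varphi(sr)=\varphi(rs)$ extends by normality to $\varphi(u_sA)=\varphi(Au_s)$ for all $A\in M$, whence $\varphi(u_sAu_s^*)=\varphi(A)$. Rewriting this through the trace and using ${\rm Tr}(\rho\,u_sAu_s^*)={\rm Tr}(u_s^*\rho\,u_sA)$ gives $u_s^*\rho\,u_s=\rho$ for every $s$. Consequently every spectral projection of $\rho$ commutes with $\pi_\varphi(\mathfrak{S}_\infty)$, so for any Borel set $\Delta$ the cutoff ${\rm Tr}(\chi_\Delta(\rho)\rho\,\cdot\,)$ is again an $\mathfrak{S}_\infty$-central positive functional on $R_\infty$.

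The heart of the argument is to exclude a genuinely multivalued $\rho$. Suppose $\rho$ is not a scalar multiple of a projection; then its spectrum is not contained in a set of the form $\{0,\kappa\}$, and one can choose a spectral projection $E=\chi_{[c,\infty)}(\rho)\in M$ with $t:={\rm Tr}(E\rho)\in(0,1)$ and $E\rho\neq t\rho$. Set $\psi:=t^{-1}{\rm Tr}(E\rho\,\cdot\,)$. By the previous paragraph $\psi$ is an $\mathfrak{S}_\infty$-central state; moreover $\psi$ is a normal state on the factor $M$, so its central support equals $I$ and its GNS representation is a factor representation, quasi-equivalent to $\pi_\varphi$. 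Hence $\psi\in\mathfrak{F}_\mathfrak{S}$ and $\pi_\psi$ is quasi-equivalent to $\pi_\varphi$, so Proposition \ref{quasi_unique} forces $\psi=\varphi$, i.e. $E\rho=t\rho$, contradicting the choice of $E$. Therefore $\rho=\kappa F_\varphi$ for some $\kappa>0$ and some orthogonal projection $F_\varphi\in M$, which is exactly the required form of $\varphi$.

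I expect the main obstacle to be the careful handling of the density $\rho$: since ${\rm Tr}$ is only semifinite and ${\rm Tr}(\rho)=1$, the operator $\rho$ is in general unbounded and merely affiliated with $M$, so the Radon--Nikodym step, the manipulation $u_s^*\rho\,u_s=\rho$, and the spectral cutoffs all have to be justified inside $L^1(M,{\rm Tr})$ together with the Borel functional calculus for affiliated operators. A second delicate point is the verification that each cutoff state $\psi$ is a bona fide element of $\mathfrak{F}_\mathfrak{S}$, that is, that its GNS representation of $R_\infty$ (and not merely of $M$) is factorial and quasi-equivalent to $\pi_\varphi$; once this is established, the rigidity furnished by Proposition \ref{quasi_unique} does all the remaining work.
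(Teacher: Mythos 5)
Your argument is correct, but it takes a genuinely different route from the paper's. The paper works in the standard form $L^2(M,{\rm Tr})$, $M=\pi_\varphi(R_\infty)''$: it approximates the positive implementing vector $\xi\in\overline{M}_+$ of $\varphi$ by a bounded $a\in M_+$, forms the averages $a_j=(\#\mathfrak{S}_j)^{-1}\sum_{s\in\mathfrak{S}_j}\pi_\varphi(s)\,a\,\pi_\varphi(s)^*$, extracts a weak limit $\tilde a$ and identifies it (via a polarization computation) with $Pa$, $P$ being the mean-ergodic projection, so that $\tilde a$ is a nonzero bounded positive $\mathfrak{S}_\infty$-invariant element of $M$; then Proposition \ref{quasi_unique} is applied \emph{twice}: once to $\psi={\rm Tr}(\tilde a\,\cdot)/{\rm Tr}(\tilde a)$ to get $\varphi=\psi$, and once to the pair ${\rm Tr}(b\,\cdot)$, ${\rm Tr}(b^2\,\cdot)$ with $b=\tilde a/{\rm Tr}(\tilde a)$ to force $b^2=\vartheta b$, i.e.\ that $b$ is a multiple of a projection. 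You instead obtain the density $\rho\in L^1(M,{\rm Tr})_+$ abstractly from the Radon--Nikodym theorem, derive $u_s\rho u_s^*=\rho$ directly from uniqueness of the derivative (no averaging at all), and apply the rigidity of Proposition \ref{quasi_unique} once, to the spectral cutoff $t^{-1}{\rm Tr}(E\rho\,\cdot)$, to force the spectrum of $\rho$ into $\{0,\kappa\}$. The two projectification steps are morally the same trick --- the paper perturbs the density by the weighting $b\mapsto b^2$, you by the indicator $\rho\mapsto E\rho$ --- but your version is shorter, bypasses the $L^2$-approximation and ergodic-averaging machinery, and has the incidental advantage that finiteness of the trace of the density is built in (${\rm Tr}(\rho)=\varphi(I)=1$), whereas the paper's normalization of $\tilde a$ tacitly presupposes ${\rm Tr}(\tilde a)<\infty$; the price is the noncommutative $L^1$-theory for a semifinite trace (unbounded affiliated $\rho$, traciality of ${\rm Tr}(\rho\,u_sAu_s^*)={\rm Tr}(u_s^*\rho u_s A)$), which you rightly flag. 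The two delicate points you name are indeed where the remaining details sit, and both are routine: since $\pi_\varphi(\mathbb{C}[R_\infty])$ is $\sigma$-weakly dense in $M$ and $\psi$ is normal, the GNS representation of $R_\infty$ attached to $\psi$ generates $\pi_\psi(M)\cong M$ (a factor), with $\pi_\psi$ itself furnishing the isomorphism required by the definition of quasi-equivalence; and the identity $\psi=\varphi$, a priori only on $R_\infty$, extends to $M$ by the same density-plus-normality argument, yielding $E\rho=t\rho$ in $L^1$ --- which is exactly the level of detail at which the paper itself asserts the corresponding quasi-equivalence for its $\psi$.
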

\begin{proof}
For the sake of convenience, we will denote $\pi_\varphi(R_\infty)''$ by $M$.
Without loss of generality we may assume that $\pi_\varphi$ acts in Hilbert space $\mathcal{H}_\varphi=L^2(M,{\rm Tr})$ by left multiplication. Let $M_+$ be the cone of the nonnegative operators in factor $M$, and let  $\overline{M}_+$ be a closure   of $$\mathfrak{J}=M_+\cap L^2(M,\rm Tr)$$ in $\mathcal{H}_\varphi$. Then there exists a unique $\xi\in \overline{M}_+$ such that $\varphi(A)=(A\xi,\xi)_{\mathcal{H}_\varphi}$ for all $A\in M$ and $\|\xi\|_{\mathcal{H}_\varphi}=1$ (see \cite{TAKES}, Chapter {\rm IX}, Theorem 1.2). Now we  take  $a\in M_+$ such that
\begin{eqnarray}\label{epsilon_estimate}
\|\xi-a\|_{\mathcal{H}_\varphi}<1/2.
\end{eqnarray}
Set $a_j=(\# \mathfrak{S}_j)^{-1}\,\sum\limits_{s\in\mathfrak{S}_j}\,\pi_\varphi(s)\,a\,\pi_\varphi(s)^*$. Since the ball \- $B_{\|a\|}=\left\{b\in M: \|b\|\leq\|a\| \right\}$ is weakly compact in $M$ and sequence $\left\{ a_j\right\}\subset B_{\|a\|}$, there exists $\lim\limits_{k\to\infty}a_{j_k}=\tilde a\in B_{\|a\|}$ in the week operator topology for some subsequence $j_k$ and
 \begin{eqnarray}\label{inv}
 \pi_\varphi(s)\,\tilde a=\tilde a \pi_\varphi(s) ~\text{ for all }~ s\in\mathfrak{S}_\infty.
  \end{eqnarray}
  On the other hand the mapping, acting by $m\ni M\stackrel{P_j}{\mapsto} (\# \mathfrak{S}_j)^{-1}\,\sum\limits_{s\in\mathfrak{S}_j}\,\pi_\varphi(s)\,m\,\pi_\varphi(s)^*$, extends to an orthogonal projection in $\mathcal{H}_\varphi$. Since, by definition, $P_{n+1}\leq P_n$, the sequence $\{P_n\}$ converges in the strong operator topology to some orthogonal projection $P$.
Let us prove that $Pa=\tilde a$.

Since $\lim\limits_{j\to\infty}\|a_j-Pa\|_{L^2(M,{\rm Tr})}=0$,  we have
\begin{eqnarray}\label{L^2 equality}
\lim\limits_{j\to\infty}\left(a_j,h^2\right)_{L^2(M,{\rm Tr})}=\lim\limits_{j\to\infty}{\rm Tr}(h^2a_j)=\left( Pa,h^2\right)_{L^2(M,{\rm Tr})} ~\text{for all }~ h\in M_+\cap\mathfrak{I}.
\end{eqnarray}
On the other hand
\begin{eqnarray*}
\left(\tilde a h,h\right)_{L^2(M,{\rm Tr})}={\rm Tr}\left( h^2\tilde a\right)=\lim\limits_{p\to\infty}\left(a_{j_p}h,h\right)=\lim\limits_{p\to\infty}{\rm Tr}\left( h^2 a_{j_p}\right)\stackrel{\eqref{L^2 equality}}{=}\left( Pa,h^2\right)_{L^2(M,{\rm Tr})}.
\end{eqnarray*}
Thus
\begin{eqnarray*}
{\rm Tr}\left( h^2\tilde a\right)=\left( Pa,h^2\right)_{L^2(M,{\rm Tr})}.
\end{eqnarray*}
Hence, replacing in turn $h^2$ by $(x+y)^*(x+y)$, $(x-y)^*(x-y)$, $(x+iy)^*(x+iy)$, $(x-iy)^*(x-iy)$, where $x,y\in\mathfrak{I}$, and using the {\it polarization} identity, we obtain
\begin{eqnarray*}
{\rm Tr}(x^*y\tilde a)=\left(y\tilde a,x \right)_{L^2(M,{\rm Tr})}=\left(Pa,y^*x \right)_{L^2(M,{\rm Tr})} ~\text{ for all }~ x,y\in\mathfrak{I}.
\end{eqnarray*}
Thus $\left(y\tilde a,x \right)_{L^2(M,{\rm Tr})}=\left(y(Pa),x \right)_{L^2(M,{\rm Tr})}$ for all $x,y\in\mathfrak{I}$. It follows that $y\tilde a=y(Pa)$ for all $y\in\mathfrak{I}$, which implies that $\tilde a=Pa$.

 Further, using similar calculations as above one can show that $P\xi=\xi$. We obtain from \eqref{epsilon_estimate} that
\begin{eqnarray}
\left\|\xi-\tilde a \right\|_{L^2(M,{\rm Tr})}<\frac{1}{2}\Rightarrow \left\|\tilde a \right\|_{L^2(M,{\rm Tr})}>\frac{1}{2}.
\end{eqnarray} In particular $\tilde a\neq 0$. Since $\tilde{a}$ belongs to the commutant of $\pi_\varphi(S_\infty)$, the state $\psi(A)=\tfrac{1}{{\rm Tr}(\tilde a)}{\rm Tr}(\tilde{a}A)$ on $M=\pi_\varphi(R_\infty)''$ is $\mathfrak S_\infty$-invariant. Thus, $\psi\in\mathfrak{F}_\mathfrak{S}$. Observe that by construction the GNS-representation associated to $\psi$ is quasiequivalent to $\pi_\varphi$. By Proposition \ref{quasi_unique}, $\varphi=\psi$.

It remains to show that $b=\tfrac{\tilde a}{{\rm Tr}(\tilde a)}$ is an orthogonal projection. Define the positive functionals $\omega_1$, $\omega_2$ on $M$ by
\begin{eqnarray*}
\omega_1(m)={\rm Tr}(b\,m), \omega_2(m)={\rm Tr}(b^2\,m), m\in M.
\end{eqnarray*}
It follows from \eqref{inv} that $\omega_1(m\,\pi_\varphi(s))=\omega_1(\pi_\varphi(s)\,m)$ and $\omega_2(m\,\pi_\varphi(s))$ $=\omega_2(\pi_\varphi(s)\,m)$ for all $m\in M$, $s\in\mathfrak{S}_\infty$. By Proposition \ref{quasi_unique}, there exists a positive number $\vartheta$ such that $b^2=\vartheta b$. Since, by definition, $\|b\|=\|b^2\|=1$, we obtain that $b=b^2$.
\end{proof}

\subsection{Case of the factor of type ${\rm I}_\infty$}\label{Subsection: I_infty}
Let us  consider the case when $M=\pi_\varphi(R_\infty)''$ is ${\rm I}_\infty$-factor.

We recall that a permutation $s\in\mathfrak{S}_\infty$ is even if it is a product of an even number of transpositions and $s$ is odd if it is a product of an odd number of transpositions. Define the function  ${\rm sign}:$  $\mathfrak{S}_\infty \rightarrow \{-1,1\}$
  by ${\rm sign}(s)=\left\{\begin{array}{rl}
 -1,&\text{ if } s ~\text{ is odd};\\
1,&\text{ if  } s ~\text{ is even}.\end{array}\right.$

Denote $\pi_\varphi(R_\infty)''$ by $M$ and suppose that $\pi_\varphi$ acts in Hilbert space $\mathcal{H}_\varphi=L^2\left(M, {\rm Tr} \right)$ by the operators of left multiplication. Since $M$ is ${\rm I}_\infty$-factor, by Propositions \ref{quasi_unique} and \ref{1 or 2}, we have $\pi_\varphi(s)F_\varphi= F_\varphi\pi_\varphi(s)$ for all $s\in\mathfrak{S}_\infty$. It follows from the equality $1=\varphi(I)=\kappa {\rm Tr}(F_\varphi)$
that $w^*$-algebra $F_\varphi M F_\varphi$ is ${\rm I}_n$-factor, where $n<\infty$.  Thus  $w^*$-algebra $F_\varphi \pi_\varphi(\mathfrak{S}_\infty)''F_\varphi\subset F_\varphi M F_\varphi$ is also finite dimensional. Therefore, there exists orthogonal projection $E_0\leq F_\varphi$ from the  $w^*$-algebra $F_\varphi \pi_\varphi(\mathfrak{S}_\infty)''F_\varphi$  such that
\begin{eqnarray*}
F_\varphi\pi_\varphi(s)=E_0+({\rm sign}\,s)(E_1), ~\text{ where }~ E_1=F_\varphi-E_0, ~\text{ for all }~ s\in\mathfrak{S}_\infty\subset R_\infty.
\end{eqnarray*}
Hence, using the fact that $F_\varphi\in M$, we obtain that $E_0, E_1\in M$.

Denote by $\left[\mathfrak{A} \right]$ the minimal closed subspace in $\mathcal{H}_\varphi$ containing subset $\mathfrak{A}\subset \mathcal{H}_\varphi$. Let $\widetilde{E}_0$ be the orthogonal projection onto subspace $\left[\pi_\varphi \left({\rm Diag}_\infty \right)\,E_0\,\mathcal{H}_\varphi\right]$, and let $\widetilde{E}_1$ be the orthogonal projection onto  $\left[\pi_\varphi \left({\rm Diag}_\infty \right)\,E_1\,\mathcal{H}_\varphi\right]$. Then
\begin{eqnarray*}
\pi_\varphi(r)\widetilde{E}_0\,\mathcal{H}_\varphi\subset\widetilde{E}_0\,\mathcal{H}_\varphi~\text{ and}~
\pi_\varphi(r)\widetilde{E}_1\,\mathcal{H}_\varphi\subset\widetilde{E}_1\,\mathcal{H}_\varphi ~\text{ for all}~ r\in R_\infty.
\end{eqnarray*}
It follows from this that $\widetilde{E}_0$ and $\widetilde{E}_1$ belong to the center of $M$. Since $\widetilde{E}_0$ and $\widetilde{E}_1$ are mutually orthogonal and $M$ is factor, we obtain that one the projections  $\widetilde{E}_0$, $\widetilde{E}_1$ is  equal to zero.

$1)$ Assume first that $\widetilde E_1=0$. Then $F_\varphi=E_0$. By definitions of $E_0$  and $F_\varphi$ we have for every $s,t\in\mathfrak S_\infty$ and $r\in R_\infty$:
$$\varphi(srt)=\kappa {\rm Tr}(F_\varphi\pi_\varphi(srt))=\kappa {\rm Tr}(E_0\pi_\varphi(srt))=\kappa {\rm Tr}(E_0\pi_\varphi(r))=\varphi(r).$$

$2)$ Assume now that $\widetilde{E}_0=0$. Then $F_\varphi=E_1$. Therefore, $\widetilde{E}_1=I$. Let us prove that $\pi_\varphi\left( \epsilon_\mathbb{A}\right)E_1=0$ for all $\mathbb{A}\subset X_\infty$ such that $\#\mathbb{A}\geq 2$. Take the elements $p,q\in\mathbb{A}$ and denote by $(p\;\;q)$ the corresponding transposition. Then we have $\epsilon_\mathbb{A}\,(p\;\;q)=\epsilon_\mathbb{A}$. It follows from this that
\begin{eqnarray*}
\pi_\varphi(\epsilon_\mathbb{A})\,E_1=\pi_\varphi(\epsilon_\mathbb{A})\,\pi_\varphi((p\;\;q))\,E_1=-\pi_\varphi(\epsilon_\mathbb{A})\,E_1.
\end{eqnarray*}
Therefore,  $\pi_\varphi(\epsilon_\mathbb{A})\,E_1=0$. Hence we obtain that the subspaces $\left[\pi_{\epsilon_{\{j\}}}E_1\,\mathcal{H}_\varphi \right]$ are orthogonal for different $j$.  Since subspace $ \bigoplus\limits_{j=1}^\infty\left[\pi_{\epsilon_{\{j\}}}E_1\,\mathcal{H}_\varphi \right]$ is $\pi_\varphi(R_\infty)$-invariant and $E_1\in M$, only two cases are possible:
\begin{itemize}
  \item[\rm i)] $\bigoplus\limits_{j=1}^\infty\left[\pi_\varphi({\epsilon_{\{j\}}})E_1\,\mathcal{H}_\varphi \right]=\mathcal{H}_\varphi$;
  \item[\rm ii)] $ \bigoplus\limits_{j=1}^\infty\left[\pi_\varphi({\epsilon_{\{j\}}})\,E_1\,\mathcal{H}_\varphi \right]=0$.
\end{itemize}
Take any nonzero $\eta\in E_1\,\mathcal{H}_\varphi$. In the case {\rm i)} we have $\eta=\sum\limits_{j=1}^\infty\pi_\varphi({\epsilon_{\{j\}}})\eta$. Since $\left\|\pi_\varphi({\epsilon_{\{j\}}})\eta \right\|$ does not depend on $j$, we obtain that $\|\eta\|=\infty$. Thus $\eta=0$. Hence, $\pi_\varphi(\epsilon_{\{j\}})=0$ for all $j$.
\begin{Co}
If $\pi_\varphi$ is type ${\rm I}$ factor-representation of $R_\infty$ then only two cases are possible:
\begin{itemize}
  \item[\rm 1)] $\varphi$ is two-sided $\mathfrak{S}_\infty$-invariant; i. e. $\varphi(srt)=\varphi(r)$ for all $s,t\in\mathfrak{S}_\infty$, $r\in R_\infty$;
  \item[\rm 2)] if $r=s\,\epsilon_{\mathbb{A}}$ then $\varphi(r)=\left\{\begin{array}{rl}
 {\rm sign}\,s,&\text{ if } \mathbb{A}=\emptyset;\\
0,&\text{ if  } \mathbb{A}\neq\emptyset.\end{array}\right.$
\end{itemize}
\end{Co}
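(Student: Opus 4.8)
The plan is to obtain the two alternatives by feeding the two‑case structure established just above into the trace formula of Proposition~\ref{1 or 2}. Recall that in the present setting we have $\varphi(A)=\kappa\,{\rm Tr}(F_\varphi A)$ for a projection $F_\varphi\in M$ commuting with all $\pi_\varphi(s)$, $s\in\mathfrak S_\infty$; the decomposition $F_\varphi\,\pi_\varphi(s)=E_0+({\rm sign}\,s)\,E_1$ with $E_0,E_1\in M$, $E_0+E_1=F_\varphi$; and the fact that the central projections $\widetilde E_0,\widetilde E_1$ cannot both be nonzero, so that, $M$ being a factor, exactly one of them vanishes. I would show that these two possibilities produce alternatives 1) and 2) respectively.

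In the branch $\widetilde E_1=0$ one has $F_\varphi=E_0$, and $E_0$ is fixed on both sides by $\pi_\varphi(\mathfrak S_\infty)$, in the sense that $E_0\pi_\varphi(s)=\pi_\varphi(s)E_0=E_0$. First I would record that this already gives alternative~1): the tracial identity together with invariance of ${\rm Tr}$ under cyclic moves yields $\varphi(srt)=\kappa\,{\rm Tr}(E_0\pi_\varphi(srt))=\kappa\,{\rm Tr}(E_0\pi_\varphi(r))=\varphi(r)$ for all $s,t\in\mathfrak S_\infty$ and $r\in R_\infty$. No further work is needed in this branch.

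The content lies in the branch $\widetilde E_0=0$, where $F_\varphi=E_1$ and $F_\varphi\pi_\varphi(s)=({\rm sign}\,s)E_1$. Here I would first collect the vanishing statements already proved: $\pi_\varphi(\epsilon_{\mathbb A})E_1=0$ for every $\mathbb A$ with $\#\mathbb A\ge2$ (coming from $\epsilon_{\mathbb A}\,(p\;q)=\epsilon_{\mathbb A}$), and, after eliminating case i), $\pi_\varphi(\epsilon_{\{j\}})E_1=0$ for every $j$. Since $\widetilde E_1=I$ and now only the empty set contributes to $[\pi_\varphi({\rm Diag}_\infty)E_1\mathcal H_\varphi]$, this subspace equals $E_1\mathcal H_\varphi$, forcing $E_1=I$, hence $F_\varphi=I$, $\pi_\varphi(s)=({\rm sign}\,s)I$, and $\pi_\varphi(\epsilon_{\{j\}})=0$ for all $j$. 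To finish, I would evaluate $\varphi$ on an arbitrary $r\in R_\infty$ by writing $r=s\,\epsilon_{\mathbb A}$ with $\mathbb A=X_\infty\setminus\mathcal D(r)$ finite and $s\in\mathfrak S_\infty$ any finite permutation extending $r$: using $\varphi(r)=(\pi_\varphi(s)\pi_\varphi(\epsilon_{\mathbb A})\xi_\varphi,\xi_\varphi)$, the case $\mathbb A=\emptyset$ gives $r=s$ and $\varphi(r)=({\rm sign}\,s)\,\|\xi_\varphi\|^2={\rm sign}\,s$, whereas for $\mathbb A\neq\emptyset$, choosing $j\in\mathbb A$ and using $\epsilon_{\mathbb A}=\epsilon_{\{j\}}\epsilon_{\mathbb A}$ with $\pi_\varphi(\epsilon_{\{j\}})=0$ gives $\pi_\varphi(\epsilon_{\mathbb A})=0$ and hence $\varphi(r)=0$. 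This is precisely alternative~2); it is well defined since $\mathbb A$ depends only on $r$, the value $0$ for $\mathbb A\neq\emptyset$ is insensitive to the choice of $s$, and for $\mathbb A=\emptyset$ the extension $s=r$ is forced.

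The step I expect to be the real obstacle is the elimination of case i), i.e. showing that $\bigoplus_j[\pi_\varphi(\epsilon_{\{j\}})E_1\mathcal H_\varphi]$ cannot be all of $\mathcal H_\varphi$. The mechanism is that the one‑point idempotents $\epsilon_{\{j\}}$ are permuted transitively by the transpositions, which commute with $E_1=F_\varphi$, so that $\|\pi_\varphi(\epsilon_{\{j\}})\eta\|$ is independent of $j$ for a suitable $\eta\in E_1\mathcal H_\varphi$; a nonzero such $\eta$ would then satisfy $\eta=\sum_j\pi_\varphi(\epsilon_{\{j\}})\eta$ with infinitely many mutually orthogonal summands of equal nonzero norm, forcing $\|\eta\|=\infty$ and a contradiction. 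Making this $j$‑independence and the orthogonal expansion precise is the one genuinely analytic point; everything else in the corollary is bookkeeping around the factor and trace structure.
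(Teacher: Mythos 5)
Your proposal is correct and takes essentially the same route as the paper, which proves the corollary in the body of the subsection itself: the same dichotomy $\widetilde E_1=0$ versus $\widetilde E_0=0$, the same trace-cyclicity computation for alternative 1), and for alternative 2) the same chain $\pi_\varphi(\epsilon_{\mathbb A})E_1=0$ for $\#\mathbb{A}\geq 2$, elimination of case i) by the orthogonal equal-norm expansion $\eta=\sum_j\pi_\varphi(\epsilon_{\{j\}})\eta$, and the conclusion $E_1=I$, $\pi_\varphi(s)=({\rm sign}\,s)I$, $\pi_\varphi(\epsilon_{\{j\}})=0$. The only cosmetic addition needed is the remark that if the common norm $\left\|\pi_\varphi(\epsilon_{\{j\}})\eta\right\|$ is zero then $\eta=0$ directly, so the contradiction in case i) covers both subcases — a shorthand the paper itself also uses.
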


\section{The main results}\label{section:main}
The finite characters or $R_\infty$-central indecomposable states on $R_\infty$ were first classified by A. Vershik and P. Nikitin in \cite{VN}. Their approach  is based on the Vershik–Kerov ergodic method \cite{VK1}. Namely, they studied the sequences $\left\{\chi_n \right\}$ of the irreducible characters of the finite semigroups $\left\{R_n \right\}$, and  found a characterization of those sequences that weakly converge, as $n\to\infty$, to an indecomposable characters on $R_\infty$. The new proof of this result  suggested by N. Nessonov \cite{N_INV} was based on the multiplicativity property  of the indecomposable characters.
\subsection{An Analogue of Cycle Decomposition in $R_\infty$.}\label{subsec: cycle decomposition}
In $R_\infty$ there exists an analogue of the decomposition of elements of the group $\mathfrak{S}_\infty$ into products of disjoint cycles \cite{Munn} (see Theorem 1). It is constructed as follows.

For $r\in R_\infty$ and $a\in \mathcal{D}(r)\setminus\mathcal{I}(r)$, we set $$\mathfrak{m}(r,a)=\min\left\{ k\geq 1: r^k(a)\in  \mathcal{I}(r)\setminus\mathcal{D}(r)\right\}$$
 and $\mathfrak{O}_a=\left\{a,r(a),\ldots,r^{\mathfrak{m}(r,a)}(a)\right\}$. If $a,b\in  \mathcal{D}(r)\setminus\mathcal{I}(r)$ and $a\neq b$, then $\mathfrak{O}_a\cap\mathfrak{O}_b=\emptyset$. Consider the partial bijection $\;^a\!q$ defined by
  \begin{eqnarray*}
\,^a\!q(x) =
\left\{\begin{array}{rl}
 r(x),&\text{ if } x\in \mathfrak{O}_a\setminus r^{\mathfrak{m}(r,a)}(a)\\
x,&\text{ if  } x\in X_\infty\setminus  \mathfrak{O}_a.\end{array}\right.
 \end{eqnarray*}
Clearly, $\mathcal{D}\left( \,^a\!q \right)=X_\infty\setminus r^{\mathfrak{m}(r,a)}(a)$ and $\mathcal{I}\left( \,^a\!q \right)=X_\infty\setminus a$. If $\,^a\!c=\left( a\rightarrow r(a)\rightarrow\cdots\rightarrow r^{\mathfrak{m}(r,a)}(a)\rightarrow a\right)$ is a usual cycle and $\epsilon_{\{j\}}$ is an element of ${\rm Diag}_\infty$ with domain $X_\infty\setminus \{j\}$, then
\begin{eqnarray}\label{cycle_diag}
 \,^a\!q =\,^a\!c\;\epsilon_{\{r^{\mathfrak{m}(r,a)}(a)\}}.
\end{eqnarray}
If $a,b\in\mathcal{D}(r)\setminus\mathcal{I}(r)$, then the cycles $\,^a\!c$ and $\,^b\!c$ according to (\ref{cycle_diag}) are disjoint and have length at least $2$. In this case, we say that quasi-cycles $\,^a\!q$ and $\,^b\!q$ are disjoint as well.

 In what follows, we refer to partial bijections of the form $\,^a\!q $ as nontrivial {\it quasi-cycles}. It is convenient to regard the elements of ${\rm Diag}_\infty$, whose domains are obtained by removing one point from $X_\infty$, as (trivial) quasi-cycles as well.

 We define the support of an element $r\in R_\infty$ as
\begin{eqnarray}\label{supp(r)}
{\rm supp}\,r=\left\{x\in \mathcal{D}(r):r(x)\neq x\right\}\cup\left( X_\infty\setminus \mathcal{D}(r) \right).
\end{eqnarray}

Thus, we arrive at the following important assertion of \cite{Munn}.
\begin{Th}\label{decomposition_into_product_cycles}
Each $r\in R_\infty$ has a unique decomposition into a product of pairwise disjoint  quasi-cycles and usual  cycles of the form
\begin{eqnarray}\label{decomposition_of_r}
r=\left[ \prod\limits_{a\in\mathcal{D}(r)\setminus\mathcal{I}(r)} \,^a\!q \right]\left[ \prod\limits_{i}c_i^{(r)} \right] d^{(r)},
\end{eqnarray}
where $\;c_i^{(r)}$ are the usual nontrivial cycles and $d^{(r)}$ is the product of trivial quasi-cycles; i.e. $d^{(r)}=\prod\limits_{a\in X_\infty\setminus(\mathcal{D}(r)\cup \mathcal{I}(r))}\;\epsilon_{\{a\}}\in {\rm Diag}_\infty$.
\end{Th}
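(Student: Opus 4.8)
The plan is to realize the decomposition \eqref{decomposition_of_r} as the encoding of the functional graph of the partial bijection $r$: view $r$ as a directed graph on $X_\infty$ with an edge $x\to r(x)$ whenever $x\in\mathcal{D}(r)$. Because $r$ is a partial \emph{bijection}, every vertex has out-degree at most $1$ and in-degree at most $1$, so each connected component is either a finite directed path, a directed cycle, or an isolated vertex. These three types correspond exactly to the nontrivial quasi-cycles $\,^a\!q$, the usual cycles $c_i^{(r)}$, and the trivial quasi-cycles $\epsilon_{\{a\}}$ in \eqref{decomposition_of_r}. The whole argument splits into (i) verifying that the components really do have only these three shapes and that they partition $X_\infty$, and (ii) checking that the associated product reconstructs $r$ and is forced.

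First I would establish finiteness and the absence of infinite or self-returning paths. Fix a source $a\in\mathcal{D}(r)\setminus\mathcal{I}(r)$ and consider the forward iterates $a,r(a),r^2(a),\ldots$. Injectivity of $r$ together with $a\notin\mathcal{I}(r)$ rules out any repetition: an equality $r^i(a)=r^j(a)$ with $i<j$ would propagate backwards to $a=r^{j-i}(a)\in\mathcal{I}(r)$, a contradiction. Since $r$ is the identity off a finite set, the orbit is confined to a finite set, so the iteration cannot continue indefinitely inside $\mathcal{D}(r)$; hence there is a least $\mathfrak{m}(r,a)$ with $r^{\mathfrak{m}(r,a)}(a)\in\mathcal{I}(r)\setminus\mathcal{D}(r)$, i.e. the path ends at a sink. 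The same argument run backwards shows every sink $b\in\mathcal{I}(r)\setminus\mathcal{D}(r)$ is the terminal point of the path issuing from a unique source, and the disjointness $\mathfrak{O}_a\cap\mathfrak{O}_b=\emptyset$ for $a\neq b$ (already noted in the text) comes from the same backward-propagation trick.

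Next I would show $X_\infty=P\sqcup Q\sqcup I$, where $P=\bigcup_a\mathfrak{O}_a$, the set $I=X_\infty\setminus(\mathcal{D}(r)\cup\mathcal{I}(r))$ of isolated points, and $Q=(\mathcal{D}(r)\cap\mathcal{I}(r))\setminus P$. The key point is that $r$ maps $Q$ bijectively onto itself: if $x\in Q$ had $r(x)$ a sink, then $x$ would lie on the path terminating at that sink, contradicting $x\notin P$; so $r(x)\in\mathcal{D}(r)\cap\mathcal{I}(r)$, and $r(x)\notin P$ since otherwise $x\in P$. Thus $r|_Q$ is a bijection of a finite set, whose ordinary cycle decomposition (discarding fixed points, which contribute the identity) yields the $c_i^{(r)}$. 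With the partition in hand, each factor $\,^a\!q$, $c_i^{(r)}$, $\epsilon_{\{a\}}$ acts as $r$ on its own block and as the identity elsewhere; since the blocks are pairwise disjoint the factors commute, and their product agrees with $r$ both on domains and on removed points, giving \eqref{decomposition_of_r}, while \eqref{cycle_diag} rewrites the quasi-cycle factors in the stated form $\,^a\!c\,\epsilon_{\{\cdot\}}$.

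Finally, for uniqueness I would argue that any decomposition into pairwise disjoint quasi-cycles and usual cycles is forced by the intrinsic orbit structure of $r$. The support of each factor is a single connected piece (one path or one loop) and must be a union of components of the functional graph; since the factors are pairwise disjoint and jointly reconstruct $r$, each factor's support is exactly one component, and the factor is then determined by $r$ restricted to that component. Concretely, the sources $\mathcal{D}(r)\setminus\mathcal{I}(r)$, the sinks $\mathcal{I}(r)\setminus\mathcal{D}(r)$, and the isolated points $X_\infty\setminus(\mathcal{D}(r)\cup\mathcal{I}(r))$ are read off from $r$ and pin down the quasi-cycles, the cycles, and the trivial factors respectively. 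I expect the main obstacle to be the bookkeeping in step (i)--(ii), namely showing cleanly that the three component types exhaust $X_\infty$ and that $r|_Q$ is genuinely a self-bijection, rather than any single deep idea; the dynamical no-return argument via injectivity is what makes everything go through.
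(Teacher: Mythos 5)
Your proof is correct, and it follows essentially the same route as the paper: the paper itself gives no proof beyond constructing the orbits $\mathfrak{O}_a$, the quasi-cycles $\,^a\!q$ via the relation \eqref{cycle_diag}, and their disjointness, deferring the full statement to \cite{Munn}, and your functional-graph argument is precisely a careful completion of that construction. The no-return argument via injectivity, the partition into sources-to-sinks paths, cycles of $r|_Q$, and isolated points, and the support-based uniqueness step are all sound and fill in exactly what the paper leaves to the reference.
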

\begin{Rem}
By (\ref{decomposition_of_r}), we have
$${\rm supp}\,r=\left(\bigcup\limits_{a\in\mathcal{D}(r)\setminus\mathcal{I}(r)}\mathfrak{O_a}\right)\cup\left( \bigcup\limits_i{\rm supp}\,c_i^{(r)} \right)\cup\left(X_\infty\setminus(\mathcal{D}(r)\cup \mathcal{I}(r))  \right).$$
\end{Rem}
For each $r\in R_\infty$,  we define the unordered partitions $\,^q\!\lambda(r)=\left(\,^q\!\lambda_1(r)\geq\,^q\!\lambda_2(r)\geq\ldots\geq\,^q\!\lambda_j(r)  \right)$,
$\,^c\!\lambda(r)=\left(\,^c\!\lambda_1(r)\geq\,^c\!\lambda_2(r)\geq\ldots\geq\,^c\!\lambda_k(r)  \right)$, where $j=\#\left( \mathcal{D}(r)\setminus\mathcal{I}(r) \right)$, $k$ is the number of nontrivial usual cycles in decomposition (\ref{decomposition_of_r}), and nonnegative integer $m(r)$ as follows
\begin{eqnarray*}
^q\!\lambda_i(r)=\#\mathfrak{F}_{a_i},\;a_i\in\mathcal{D}(r)\setminus\mathcal{I}(r), \\
\,^c\!\lambda_i(r)=\#\left( {\rm supp}\,c_i^{(r)} \right),\;\;m(r)=\#\left(X_\infty\setminus(\mathcal{D}(r)\cup \mathcal{I}(r))\right).
\end{eqnarray*}
Under the above assumptions,  $\,^q\!\lambda_i(r)\geq2$ and $^c\!\lambda_i(r)\geq2$ for all $i$.
\begin{Prop}\label{mult7}
For given $r_1, r_2\in R_\infty$, suppose that $\left(\,^q\!\lambda(r_1),\,^c\!\lambda(r_1),m(r_1)\right)= \left(\,^q\!\lambda(r_2),\,^c\!\lambda(r_2),m(r_2)\right)$. Then there exists $s\in\mathfrak{S}_\infty$ such that $r_1=sr_2s^{-1}$.
\end{Prop}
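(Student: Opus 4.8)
The plan is to show that the triple $\left(\,^q\!\lambda(r),\,^c\!\lambda(r),m(r)\right)$ is a \emph{complete} invariant of the $\mathfrak{S}_\infty$-conjugacy class of $r$, by constructing the conjugating permutation directly from the canonical decomposition provided by Theorem \ref{decomposition_into_product_cycles}. Concretely, I would build a bijection between the (finite) supports of $r_1$ and $r_2$ that matches their combinatorial pieces, and then extend it to a finitary permutation by letting it act as the identity away from the supports.

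First I would record the elementary observation that conjugation by $s\in\mathfrak{S}_\infty$ only relabels points: $(sr_2s^{-1})(x)=s\bigl(r_2(s^{-1}x)\bigr)$, so $\mathcal{D}(sr_2s^{-1})=s(\mathcal{D}(r_2))$, $\mathcal{I}(sr_2s^{-1})=s(\mathcal{I}(r_2))$, and $s$ carries each quasi-cycle chain $a_0\to\cdots\to a_k$ of $r_2$ to the chain $s(a_0)\to\cdots\to s(a_k)$, and likewise for honest cycles and for the holes $X_\infty\setminus(\mathcal{D}\cup\mathcal{I})$. Hence it suffices to produce an $s$ that matches the pieces. Using $\,^q\!\lambda(r_1)=\,^q\!\lambda(r_2)$ I fix a length-preserving pairing of the quasi-cycles of $r_1$ with those of $r_2$; from $\,^c\!\lambda(r_1)=\,^c\!\lambda(r_2)$ I fix a length-preserving pairing of the honest cycles; and from $m(r_1)=m(r_2)$ I fix an arbitrary bijection between the two hole-sets.

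Next I define $s_0$ on ${\rm supp}\,r_2$, piece by piece. For a paired quasi-cycle with chains $b_0\to\cdots\to b_k$ for $r_2$ (where $b_0\in\mathcal{D}(r_2)\setminus\mathcal{I}(r_2)$, $r_2(b_j)=b_{j+1}$, and $b_k\in\mathcal{I}(r_2)\setminus\mathcal{D}(r_2)$) and $a_0\to\cdots\to a_k$ for $r_1$, I set $s_0(b_j)=a_j$. For a paired honest cycle I choose base points and again set $s_0(b_j)=a_j$ along the cyclic order, and on the holes I take the chosen bijection. Since the pieces are pairwise disjoint and exhaust each support, and since $\#{\rm supp}\,r_i=\sum_j{}^q\!\lambda_j(r_i)+\sum_j{}^c\!\lambda_j(r_i)+m(r_i)$ agree, $s_0\colon{\rm supp}\,r_2\to{\rm supp}\,r_1$ is a bijection of finite sets of equal cardinality. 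I extend it to $s\in\mathfrak{S}_\infty$ by setting $s$ to be the identity on $X_\infty\setminus({\rm supp}\,r_1\cup{\rm supp}\,r_2)$ and any bijection from ${\rm supp}\,r_1\setminus{\rm supp}\,r_2$ onto the equal-sized set ${\rm supp}\,r_2\setminus{\rm supp}\,r_1$; this $s$ is finitary, as it fixes everything outside the finite set ${\rm supp}\,r_1\cup{\rm supp}\,r_2$.

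Finally I would verify $sr_2s^{-1}=r_1$ pointwise. On a paired quasi-cycle, $s^{-1}(a_j)=b_j$ yields $(sr_2s^{-1})(a_j)=s(r_2(b_j))=s(b_{j+1})=a_{j+1}=r_1(a_j)$ for $j<k$, while $a_k\notin\mathcal{D}(sr_2s^{-1})$ matches $a_k\notin\mathcal{D}(r_1)$; the honest cycles and holes are treated identically. For $x\in X_\infty\setminus{\rm supp}\,r_1$ the point $s^{-1}(x)$ lies in $X_\infty\setminus{\rm supp}\,r_2$ (since $s$ maps supports to supports), so both $sr_2s^{-1}$ and $r_1$ fix $x$. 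The one point demanding genuine care is the asymmetry of a quasi-cycle chain, whose head has no preimage and whose tail has no image: the conjugation must respect this directed structure, so a chain may not be reversed. This is the only real obstacle, and it is resolved by matching the \emph{directed} chains, which a length-preserving pairing does automatically.
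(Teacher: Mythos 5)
Your proposal is correct: the paper itself declares this proposition straightforward and omits the argument, and your piece-by-piece construction --- pairing directed quasi-cycle chains head-to-head, cycles of equal length, and holes, then extending the resulting bijection of the finite supports to a finitary permutation --- is exactly the intended routine verification, including the one genuine subtlety (the directed, non-reversible structure of a quasi-cycle chain), which you handle properly. Nothing is missing.
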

\begin{proof}[{\it The proof} {\rm is straightforward}]\end{proof}

The next statement is a specification for the case of the semigroup algebra $\mathbb{C}\left[R_\infty\right]$ of the general asymptotic factorization property of factor states on AF-algebras, which was  noticed by  Powers in \cite{Powers}. For the characters of the infinite-dimensional unitary group, a similar factorization was obtained by Voiculescu in \cite{Voic}. Powers' idea was applied  to prove the multiplicativity of spherical functions on $GL(\infty,\mathbb{C})$ in \cite{NN_GL}.  To prove  the following Proposition we can now proceed analogously to the proof of Proposition 7 \cite{Dudko_Nes}.
\begin{Prop}\label{mult}
Let $f$ belong to $\mathfrak{F}_S$. Take any $r_1, r_2$ such that $({\rm supp}\,r_1)\cap({\rm supp}\,r_2)=\emptyset$. Then $f(r_1\,r_2)=f(r_1)\,f(r_2)$.
\end{Prop}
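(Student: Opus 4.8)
The plan is to realize the Powers-type asymptotic factorization by conjugating $r_2$ so that its support runs off to infinity while $r_1$ is left fixed, thereby reducing everything to the scalar-limit mechanism of Proposition \ref{quasi_unique}. First I note that $\mathfrak{S}_\infty$-centrality is the same as conjugation invariance: taking $h=s\in\mathfrak{S}_\infty$ and $r=x\,s^{-1}$ in $f(hr)=f(rh)$ gives $f(sxs^{-1})=f(x)$ for every $x\in R_\infty$. Since ${\rm supp}\,r_1$ and ${\rm supp}\,r_2$ are disjoint finite sets, for each $n$ I choose $s_n\in\mathfrak{S}_\infty$ with ${\rm supp}\,s_n\cap{\rm supp}\,r_1=\emptyset$ and $s_n({\rm supp}\,r_2)\subset\{n+1,n+2,\dots\}$. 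Disjointness of supports makes $s_n$ and $r_1$ commute, so $s_nr_1s_n^{-1}=r_1$; writing $r_2^{(n)}=s_nr_2s_n^{-1}$ we get $s_n(r_1r_2)s_n^{-1}=r_1\,r_2^{(n)}$, whence conjugation invariance yields
\[
f(r_2^{(n)})=f(r_2)\quad\text{and}\quad f(r_1r_2)=f(r_1\,r_2^{(n)})\qquad(n\ge1),
\]
while ${\rm supp}\,r_2^{(n)}$ is eventually disjoint from $\{1,\dots,m\}$ for each fixed $m$.

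The key step is to prove that $\pi_f(r_2^{(n)})$ converges, in the weak operator topology, to the scalar $f(r_2)I$. Following the pattern of Proposition \ref{quasi_unique}, fix $g,h\in R_m$ and take $n$ large enough that ${\rm supp}\,r_2^{(n)}$ avoids $\{1,\dots,m\}$; then $r_2^{(n)}$ has support disjoint from those of $g$ and $h$, and
\[
\bigl(\pi_f(r_2^{(n)})\pi_f(g)\xi_f,\,\pi_f(h)\xi_f\bigr)=f\bigl(h^\star g\,r_2^{(n)}\bigr).
\]
For two such indices $n_1,n_2$ the elements $r_2^{(n_1)},r_2^{(n_2)}$ have equal type invariants, so by Proposition \ref{mult7} they are conjugate by a permutation supported away from $\{1,\dots,m\}$; this permutation fixes $h^\star g\in R_m$ pointwise, and conjugation invariance makes the right-hand side independent of $n$. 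Since the vectors $\pi_f(g)\xi_f$ ($g\in R_\infty$) span a dense subspace by cyclicity of $\xi_f$ and $\|\pi_f(r_2^{(n)})\|\le1$, the weak limit $B=w\text{-}\lim_n\pi_f(r_2^{(n)})$ exists.

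Because $r_2^{(n)}$ commutes with all of $R_m$ once its support avoids $\{1,\dots,m\}$, the limit satisfies $B\in\bigcap_m\pi_f(R_m)'=\pi_f(R_\infty)'$, while $B\in M:=\pi_f(R_\infty)''$ as a weak limit of operators in that algebra; factoriality of $M$ (indecomposability of $f$) forces $B=cI$ with $c\in\mathbb{C}$. Pairing against $\xi_f$ gives $c=\lim_n f(r_2^{(n)})=f(r_2)$, and using $f(r_1^\star)=\overline{f(r_1)}$ I conclude
\[
f(r_1r_2)=\lim_n f(r_1\,r_2^{(n)})=\lim_n\bigl(\pi_f(r_2^{(n)})\xi_f,\,\pi_f(r_1^\star)\xi_f\bigr)=f(r_2)\,\overline{f(r_1^\star)}=f(r_1)\,f(r_2).
\]

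The step I expect to be most delicate is the identification of $B$ with a scalar: this is precisely where indecomposability enters, through $M\cap M'=\mathbb{C}I$, combined with the eventual disjointness of supports that places $B$ in the commutant. The auxiliary facts—existence of the $s_n$, the relation $s_nr_1s_n^{-1}=r_1$, and the stabilization of the matrix coefficients via Proposition \ref{mult7}—are routine once one works with finite, pairwise disjoint supports.
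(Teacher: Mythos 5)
Your proof is correct and takes essentially the same approach as the paper: the paper itself defers to the Powers-type asymptotic factorization (citing Proposition 7 of \cite{Dudko_Nes}), and the instances of this technique it does write out --- Proposition \ref{quasi_unique} and the special case proved inside Proposition \ref{Prop_spherical} (a) --- follow exactly your scheme of conjugating one factor off to infinity, identifying the weak limit with a scalar via factoriality of $\pi_f(R_\infty)''$, and pairing against $\xi_f$. The only cosmetic point is that Proposition \ref{mult7} as stated does not give the support control on the conjugating permutation, but that refinement is the same routine fact the paper uses without proof (e.g.\ the choice of $s_{nN}\in\mathfrak{S}_{n\infty}$ in Proposition \ref{quasi_unique} and the unnamed lemma preceding Proposition \ref{infty_scalar}).
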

\subsection{Factor-representation of $R_\infty$ of the finite type.}
For the completeness, we recall here the description of the indecomposable characters on $R_\infty$ \cite{N_INV}.
Let $f$ be $R_\infty$-central factor-state (indecomposable character) on $R_\infty$.
By Proposition \ref{mult}, the restriction $f$ to the subgroup $\mathfrak{S}_\infty\subset R_\infty$ is Thoma character $\chi_{\alpha\beta}$ \cite{Thoma}, where $\alpha=\left\{\alpha_1\geq\alpha_2\geq\ldots>0 \right\}$ and $\beta=\left\{\beta_1\geq\beta_2\geq\ldots>0\right\}$ are the corresponding Thoma parameters. The following statement has been proved in \cite{N_INV}.
\begin{Th}[\cite{N_INV}]\label{finite_factor_repr}
Let $c=(1\;2\;\ldots\;n)$ be an usual cycle, and let $q=c\,\epsilon_{\{a\}}$, where $a\in \{1,2,\ldots,n\}$, be a quasi-cycle. Then there exists $\alpha_i$ such that  $f(q)=\rho^n$, where $\rho\in\{0,\alpha_i\}$, and $f(c)=\chi_{\alpha\beta}(c)=\sum\limits_i\alpha_i^n+(-1)^{n-1}\sum\limits_i\beta_i^k$.
\end{Th}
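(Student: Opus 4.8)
\emph{Overview.} The statement has two essentially independent parts: the value of $f$ on a genuine $n$-cycle $c$, and the value on the $n$-quasi-cycle $q=c\,\epsilon_{\{a\}}$. The first is a direct consequence of Thoma's formula; the second is the substantive point and I would prove it by an inductive reduction of the length, using the Okounkov operator together with Propositions \ref{mult7}, \ref{mult} and \ref{quasi_unique}.

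\emph{Cycles.} As recalled just before the statement, Proposition \ref{mult} forces the restriction $f|_{\mathfrak S_\infty}$ to be multiplicative on disjointly supported permutations, hence an indecomposable character of $\mathfrak S_\infty$, i.e. a Thoma character $\chi_{\alpha\beta}$. Thoma's evaluation of such a character on an $n$-cycle \cite{Thoma} then gives at once $f(c)=\chi_{\alpha\beta}(c)=\sum_i\alpha_i^n+(-1)^{n-1}\sum_i\beta_i^n$, so no further work is needed for this part.

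\emph{Quasi-cycles.} Since $f$ is $R_\infty$-central, it extends to the normalized trace on $M=\pi_f(R_\infty)''$; I keep the letter $f$ for this trace, put $E_1=\pi_f(\epsilon_{\{1\}})$ and $\rho=f(\epsilon_{\{1\}})=\|E_1\xi_f\|^2\ge 0$. By Proposition \ref{mult7} the value $f(q)$ depends only on the length $n=\#\mathfrak{O}$, so I may work with the standard representative $q^{(n)}$ (the quasi-cycle $2\to3\to\cdots\to n\to1$ with a hole at $1$); then $q^{(1)}=\epsilon_{\{1\}}$ and $q^{(n-1)}=w\,\epsilon_{\{1\}}$ for the $(n-1)$-cycle $w=(1\,2\,\cdots\,n-1)$, whence $\pi_f(q^{(n-1)})=\pi_f(w)E_1$. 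The elementary identity $(N\,1)\,q^{(n-1)}=q^{(n)}$ for $N=n$, combined with Proposition \ref{mult7} (any $(N\,1)\,q^{(n-1)}$ with $N\ge n$ is a single $n$-quasi-cycle, hence $\mathfrak S_\infty$-conjugate to $q^{(n)}$), yields
\[ f(q^{(n)})=f\big(\pi_f((N\,1))\,\pi_f(q^{(n-1)})\big)\qquad(N\ge n). \]
Letting $N\to\infty$ and using that $\mathcal O_1=w\text{-}\lim_{l\to\infty}\pi_f((1\,l))$ exists (by the scalar-limit argument of Proposition \ref{quasi_unique}) while $m\mapsto f\big(m\,\pi_f(q^{(n-1)})\big)$ is a normal functional on $M$, I obtain
\[ f(q^{(n)})=f\big(\mathcal O_1\,\pi_f(q^{(n-1)})\big)=f\big(E_1\,\mathcal O_1\,\pi_f(w)\big), \]
the second equality by cyclicity of the trace.

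\emph{The key identity and conclusion.} Everything now hinges on $E_1\mathcal O_1=\rho E_1$. I would derive it from $\epsilon_{\{1\}}(1\,l)\epsilon_{\{1\}}=\epsilon_{\{1,l\}}=\epsilon_{\{1\}}\epsilon_{\{l\}}$: since $w\text{-}\lim_{l}\pi_f(\epsilon_{\{l\}})=\rho I$ (a scalar limit as in Proposition \ref{quasi_unique}, the supports $\{1\}$ and $\{l\}$ being disjoint so that Proposition \ref{mult} applies), one gets $E_1\mathcal O_1 E_1=\rho E_1$; together with the commutation $[\mathcal O_1,E_1]=0$ — which holds precisely in the character case (this is the property whose failure in general is emphasized in the Introduction) — this upgrades to $E_1\mathcal O_1=\rho E_1$. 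Substituting gives the recursion $f(q^{(n)})=\rho\,f(\pi_f(w)E_1)=\rho\,f(q^{(n-1)})$, and induction from $f(q^{(1)})=\rho$ yields $f(q^{(n)})=\rho^{\,n}$. Finally, $E_1\mathcal O_1=\rho E_1$ shows $\rho$ is an eigenvalue of $\mathcal O_1$ when $E_1\ne 0$ (and $\rho=0$ when $E_1=0$); the nonzero spectrum of $\mathcal O_1$ is $\{\alpha_i\}\cup\{-\beta_j\}$ (see the Introduction and \cite{Ok2}), and $\rho\ge 0$ excludes the values $-\beta_j$, so $\rho\in\{0\}\cup\{\alpha_i\}$. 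The main obstacle is exactly the identity $E_1\mathcal O_1=\rho E_1$: proving the commutation of the Okounkov operator with $\pi_f(\epsilon_{\{1\}})$ and evaluating the attendant weak limit is where the $R_\infty$-centrality hypothesis is indispensable, everything else being bookkeeping with Propositions \ref{mult7}, \ref{mult} and \ref{quasi_unique}.
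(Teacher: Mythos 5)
You should first note that the paper contains no proof of this statement: it is imported verbatim from \cite{N_INV}, with the remark that Nessonov's proof rests on the multiplicativity of indecomposable characters. So your proposal can only be judged on its own merits, and it has one genuine gap at exactly the point you yourself call ``the main obstacle''. The cycle part is fine, the reduction via Proposition \ref{mult7} and the extension identity $(N\,1)\,q^{(n-1)}=q^{(n)}$ is correct, and the \emph{compressed} identity $E_1\mathcal O_1E_1=\rho E_1$ is correctly derived from $\epsilon_{\{1\}}(1\,l)\epsilon_{\{1\}}=\epsilon_{\{1\}}\epsilon_{\{l\}}$ and the scalar limit (this is the paper's Lemma \ref{relations_projections} argument). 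But the upgrade to $E_1\mathcal O_1=\rho E_1$ via the commutation $[\mathcal O_1,E_1]=0$ is asserted, not proved: your only justification is the Introduction's remark that in the character case $\mathcal O_k$ commutes with $\pi_f(\epsilon_{\{k\}})$, and that remark is a \emph{summary of the known classification} \cite{VN}, \cite{N_INV} — i.e., of the very theorem you are proving. In the classified realizations ($\mathbf q\leq E(\alpha_i)$ or $\mathbf q\leq E(0)$) commutation indeed holds, but one only knows these are the sole possibilities \emph{after} the theorem; I see no way to establish $[\mathcal O_1,E_1]=0$ from the tools you cite without circularity, and without it your trace expression $f(q^{(n)})=\tau\bigl(E_1\mathcal O_1\pi_f(w)\bigr)$ cannot be reduced by the compressed identity alone, since $\pi_f(w)$ does not commute with $E_1$.

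The repair is available inside your own toolkit and avoids operator limits entirely at the recursion step: you have full $R_\infty$-centrality here (unlike in the rest of the paper), so $f(c\,\epsilon_{\{1\}})=f(\epsilon_{\{1\}}c\,\epsilon_{\{1\}})$, and the algebraic identity displayed in Section \ref{section:properties} of the paper,
\begin{equation*}
\epsilon_{\{1\}}\,(1\,2\,\cdots\,n)\,\epsilon_{\{1\}}=(n{-}1\;n)(n{-}2\;n)\cdots(2\;n)\,\epsilon_{\{n\}}\,\epsilon_{\{1\}}
=(2\,3\,\cdots\,n)\,\epsilon_{\{n\}}\cdot\epsilon_{\{1\}},
\end{equation*}
exhibits the sandwiched element as an $(n-1)$-quasi-cycle supported in $\{2,\dots,n\}$ times $\epsilon_{\{1\}}$, disjointly; Propositions \ref{mult} and \ref{mult7} then give $f(q^{(n)})=\rho\,f(q^{(n-1)})$ purely algebraically, hence $f(q^{(n)})=\rho^n$ — this is precisely the multiplicativity route of \cite{N_INV}. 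For the identification $\rho\in\{0\}\cup\{\alpha_i\}$ you likewise do not need the full operator identity: iterating the sandwich (or specializing \eqref{O_p_relation} and inserting the already-proved values $\rho^m$) yields the compressed moments $E_1\mathcal O_1^{\,m}E_1=\rho^{\,m}E_1$ for all $m\geq 0$, so when $\rho>0$ the spectral measure of $\mathcal O_1$ in the state $x\mapsto\tau(E_1xE_1)/\tau(E_1)$ has all moments $\rho^m$, is therefore $\delta_\rho$, and is supported in $\overline{\{\alpha_i\}\cup\{-\beta_j\}\cup\{0\}}$; positivity of $\rho$ then forces $\rho=\alpha_i$ for some $i$. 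With these two substitutions your skeleton becomes a complete and correct proof.
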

It follows from Theorem \ref{finite_factor_repr} and Proposition \ref{mult} that $f$ is defined by its values  on  quasi-cycles and cycles.

\subsection{The description of $\mathfrak{S}_\infty$-central indecomposable states on $R_\infty$, corresponding to a semifinite representations.}
Take any state $f\in\mathfrak{F}_\mathfrak{S}$. Applying  Theorem \ref{decomposition_into_product_cycles}
 and Proposition \ref{mult}, we obtain that $f$ is defined by its values on the quasi-cycles and usual cycles.
 The restriction $f$ to subgroup $\mathfrak{S}_\infty\in R_\infty$ is a character  $\chi$ on $\mathfrak{S}_\infty$. By Proposition \ref{mult},  $\chi$ is indecomposable character. Therefore, $\chi=\chi_{\alpha\beta}$, where $\chi_{\alpha\beta}$ is Thoma character, defined by the parameters  $\alpha=\left\{\alpha_1\geq\alpha_2\geq\ldots>0 \right\}$ and $\beta=\left\{\beta_1\geq\beta_2\geq\ldots>0\right\}$.

The main result of this paper is the following generalisation of Theorem \ref{finite_factor_repr}.
\begin{Th}\label{semifinite_repr}
Let $\left(\pi_f,\mathcal{H}_f,\xi_f\right)$ be GNS-representation, corresponding to $f\in\mathfrak{F}_\mathfrak{S}$. Suppose that  $\left(\pi_f,\mathcal{H}_f,\xi_f\right)$ is  a semifinite representation; i. e. factor $ \pi_f(R_\infty)^{\prime\prime}$ has type ${\rm II}$ or  ${\rm I}$.
Let $c=(1\;2\;\ldots\;n)$ be an usual cycle, and let $q=c\,\epsilon_{\{a\}}$, where $a\in \{1,2,\ldots,n\}$, be a quasi-cycle. Then $f(c)=\chi_{\alpha\beta}(c)$  and there exist a positive parameter Thoma $\alpha_i$ and $t\in[0,1]$ such that $f(q)=t\alpha_i^n$.
\end{Th}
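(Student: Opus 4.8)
The plan is to treat the two assertions separately. The value on the genuine cycle is immediate: since $c\in\mathfrak{S}_\infty$ and the restriction $f|_{\mathfrak{S}_\infty}$ is the Thoma character $\chi_{\alpha\beta}$ (as recorded above via Proposition \ref{mult}), one has $f(c)=\chi_{\alpha\beta}(c)$. All the work is in the quasi-cycle value $f(q)$, where by Proposition \ref{mult7} I may take $a=n$, so $q=q_n:=c_n\,\epsilon_{\{n\}}$ with $c_m=(1\,2\,\ldots\,m)$. Write $M=\pi_f(R_\infty)''$, let ${\rm Tr}$ be the normal semifinite trace and let $F_f\in M$, $\kappa>0$ be the data of Proposition \ref{1 or 2}, so that $f(\cdot)=\kappa\,{\rm Tr}(F_f\,\cdot)$, and set $E_k=\pi_f(\epsilon_{\{k\}})$. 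As in the proof of Proposition \ref{quasi_unique}, factoriality together with $\mathfrak{S}_\infty$-centrality guarantees that the Okounkov operators $\mathcal{O}_k=w\text{-}\lim_{l\to\infty}\pi_f((k,l))$ exist; they are self-adjoint, mutually commuting, lie in $\pi_f(\mathfrak{S}_\infty)''$, and satisfy $\pi_f(s)\mathcal{O}_k\pi_f(s)^*=\mathcal{O}_{s(k)}$. On the finite factor $\pi_f(\mathfrak{S}_\infty)''$ the vector $\xi_f$ induces the trace $\chi_{\alpha\beta}$, and the moment computation ${\rm Tr}\bigl((k,l_1)\cdots(k,l_m)\bigr)=\chi_{\alpha\beta}((1\,2\,\ldots\,m{+}1))=\sum_i\alpha_i^{m+1}+(-1)^m\sum_j\beta_j^{m+1}$ identifies the spectral distribution of each $\mathcal{O}_k$ as $\sum_i\alpha_i\,\delta_{\alpha_i}+\sum_j\beta_j\,\delta_{-\beta_j}+\bigl(1-\sum_i\alpha_i-\sum_j\beta_j\bigr)\delta_0$; write $P^{(k)}_i,Q^{(k)}_j,P^{(k)}_0$ for the corresponding spectral projections.

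Next I would reduce $f(q_n)$ to a weak-limit matrix element. For $l>n$ one checks in $\mathfrak{S}_\infty$ that $(1\,2\,\ldots\,n\,l)=c_n\,(n\,l)$ and that $(1\,2\,\ldots\,n\,l)\,\epsilon_{\{n\}}$ is a quasi-cycle of length $n+1$, hence $\mathfrak{S}_\infty$-conjugate to $q_{n+1}$. Since $f$ is $\mathfrak{S}_\infty$-central and $\pi_f((n\,l))\to\mathcal{O}_n$ weakly, letting $l\to\infty$ gives
\[
f(q_{n+1})=\bigl(\pi_f(c_n)\,\mathcal{O}_n\,E_n\,\xi_f,\ \xi_f\bigr),\qquad c_0=e .
\]
The same elementary manipulation, applied to $\epsilon_{\{a\}}(k,l)\epsilon_{\{a\}}=\epsilon_{\{a,l\}}$ and to the fact that $E_l\to\gamma I$ weakly (the limit commutes with every $\pi_f(R_m)$, hence is central and scalar, with $\gamma=f(\epsilon_{\{1\}})$), records the auxiliary identity $E_k\mathcal{O}_kE_k=\gamma E_k$; this isolates the constant $\gamma$ that will become the numerator of $t$.

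The heart of the argument, and the step I expect to be the main obstacle, is to show that indecomposability forces the vector $E_n\xi_f$ to couple, through $\mathcal{O}_n$, to exactly one positive Thoma parameter. Concretely, the goal is a single index $i_0$ with $\alpha_{i_0}>0$ such that $\bigl(\pi_f(c_n)\mathcal{O}_nE_n\xi_f,\xi_f\bigr)=\alpha_{i_0}\bigl(\pi_f(c_n)E_n\xi_f,\xi_f\bigr)=\alpha_{i_0}f(q_n)$ for every $n$, i.e.\ the recursion $f(q_{n+1})=\alpha_{i_0}f(q_n)$. I would remove the column ($-\beta_j$) contributions by the sign argument of Section \ref{Subsection: I_infty}: from $\epsilon_{\mathbb{A}}(p\ q)=\epsilon_{\mathbb{A}}$ the idempotents annihilate the antisymmetric part, forcing $E_nQ_j^{(n)}\xi_f=0$. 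The null ($0$) contribution is removed by the normalizability/infinite-multiplicity argument of that same section (a nonzero component inside $\bigoplus_j[\pi_f(\epsilon_{\{j\}})\,\cdot\,]$ would have infinite norm), forcing $E_nP_0^{(n)}\xi_f=0$. Finally, the $\mathfrak{S}_\infty$-equivariant family $\{E_k\}$ meets the mutually orthogonal equivariant ``rows'' $\{P^{(k)}_i\}$, the projection onto the span of a given row being $\pi_f(R_\infty)$-invariant; factoriality then leaves only one admissible row, singling out $i_0$.

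Granting this, the recursion with base case $f(q_1)=f(\epsilon_{\{1\}})=\gamma$ yields $f(q_n)=\gamma\,\alpha_{i_0}^{\,n-1}=t\,\alpha_{i_0}^{\,n}$ with $t=\gamma/\alpha_{i_0}=f(\epsilon_{\{1\}})/\alpha_{i_0}$. Positivity of $f$ gives $t\geq 0$, and the confinement of $E_n\xi_f$ to the single row of trace-weight $\alpha_{i_0}$ bounds $\gamma=\kappa\,{\rm Tr}(F_fE_n)$ by $\alpha_{i_0}$, whence $t\leq 1$; thus $t\in[0,1]$, completing the proof. The decisive difficulty is precisely the single-row selection: unlike the character case treated in Theorem \ref{finite_factor_repr}, here $E_n$ need not commute with $\mathcal{O}_n$ (indeed $E_n\mathcal{O}_nE_n=\gamma E_n$ with $\gamma$ in general different from $\alpha_{i_0}$), so the selection cannot be read off a spectral projection and must instead be extracted from factoriality and the equivariance of the whole family $\{E_k\}$ --- which is exactly where the new parameter $t$ is born.
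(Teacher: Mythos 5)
Your reduction of the quasi-cycle values to the weak-limit matrix element $f(q_{n+1})=\left(\pi_f(c_n)\,\mathcal O_n\,\pi_f(\epsilon_{\{n\}})\,\xi_f,\xi_f\right)$ is correct and parallels the paper's computation leading to \eqref{O_p_relation}, and your auxiliary identity $\pi_f(\epsilon_{\{k\}})\mathcal O_k\pi_f(\epsilon_{\{k\}})=\gamma\,\pi_f(\epsilon_{\{k\}})$ with $\gamma=f(\epsilon_{\{1\}})$ is exactly the $m=2$ case of that formula. The genuine gap is the step you yourself flag as decisive: you extract the single-row selection from ``factoriality and the equivariance of the whole family $\{E_k\}$''. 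That mechanism cannot work, because those hypotheses hold for \emph{every} $f\in\mathfrak{F}_\mathfrak{S}$, while the conclusion $f(q)=t\alpha_i^n$ fails without semifiniteness: Section \ref{section:realizations} constructs indecomposable $\mathfrak S_\infty$-invariant states $\varphi_A$ in which the minimal projection $\mathbf q$ couples to several spectral values of $A$, so that $\varphi_A(c\,\epsilon_{\{a\}})={\rm Tr}(\mathbf q A^{n-1}|A|)$ is a genuine mixture $\sum_i c(\alpha_i)\alpha_i^n$ of several rows; there the corner $\mathbf f\mathfrak A\mathbf f$ is a nonabelian type ${\rm I}$ factor, the modular group \eqref{action_mod_aut_gr_on_generators} is nontrivial, and the factor is of type ${\rm III}$ --- consistent with the paper's closing theorem, which makes semifiniteness \emph{equivalent} to the one-row condition. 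Hence any correct proof of the selection step must use the type ${\rm I}/{\rm II}$ hypothesis essentially, and your sketch never does (the data $\kappa,F_f$ of Proposition \ref{1 or 2} are quoted but idle until the final bound). Your invariant-subspace argument also fails on its own terms: the projection onto $\left[\pi_f(R_\infty)E_k(\alpha_i)\pi_f(\epsilon_{\{k\}})H_f\right]$ lies in $\pi_f(R_\infty)'$, not in the center; in a factor representation every such nonzero projection has central support $I$, so factoriality eliminates nothing, and the mutual orthogonality of the subspaces generated by different rows is precisely what remains unproven. The paper's route is different: compress by the support projection $Q^{(\infty)}={\rm supp}\,\omega_f$ (Theorem \ref{supp_th}), renormalize $\widehat\pi(\epsilon_{\{k\}})=(1-c(0))^{-1}Q^{(\infty)}\pi_f(\epsilon_{\{k\}})Q^{(\infty)}$, verify Popova's defining relations so that $\widehat\pi$ is again a representation of $R_\infty$, use semifiniteness through Propositions \ref{1 or 2} and \ref{quasi_unique} to identify $F=Q^{(\infty)}$, conclude that $\widehat\pi(R_\infty)''$ is a \emph{finite} factor with trace $\omega_f$, and only then invoke the known classification (Theorem \ref{finite_factor_repr}); the parameter $t=1-c(0)$ is born in the renormalization, not in a row-selection argument.

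Two of your supporting mechanisms are also misattributed, and one claim is false in the very regime the theorem is about. The relation $\epsilon_{\mathbb A}\,(p\;q)=\epsilon_{\mathbb A}$ requires $p,q\in\mathbb A$, so the sign argument of Subsection \ref{Subsection: I_infty} is unavailable for the singleton $\mathbb A=\{n\}$; the paper instead obtains $E_k(\lambda)\,\pi_f(\epsilon_{\{k\}})=0$ for $\lambda<0$ algebraically, from \eqref{value_state-on_projection}, the positivity \eqref{E_k_epsilon_0} and the separating property of $\xi_f$ (Lemma \ref{separating}, Corollary \ref{zero_negative}), and likewise $\pi_f(\epsilon_{\{k\}})E_k(0)\xi_f=0$ is \eqref{E_k_epsilon_1}, not an infinite-multiplicity argument. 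More importantly, the order of factors matters: while $\pi_f(\epsilon_{\{k\}})E_k(0)\xi_f=0$, the reversed vector $E_k(0)\pi_f(\epsilon_{\{k\}})\xi_f$ has squared norm $c(0)\gamma$, which by \eqref{projection_relations_zero} is \emph{nonzero} exactly when $t\in(0,1)$. So your closing assertion that $E_n\xi_f$ is ``confined to the single row'' is false whenever $t<1$: $\pi_f(\epsilon_{\{n\}})\xi_f$ always retains a component in ${\rm Ker}\,\mathcal O_n$, and consequently both the recursion (one must still prove that the cross-term $\left(\pi_f(c_n)E_n(0)\pi_f(\epsilon_{\{n\}})\xi_f,\xi_f\right)$ vanishes, since granting one row gives only $f(q_{n+1})=\alpha_{i_0}f(q_n)-\alpha_{i_0}\left(\pi_f(c_n)E_n(0)\pi_f(\epsilon_{\{n\}})\xi_f,\xi_f\right)$) and the bound $t\le 1$ require arguments your proposal does not supply.
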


\begin{Th}
Let $(t,\alpha,\beta,i)$ be the quadruple, corresponding to $f$ according to Theorem \ref{semifinite_repr}. Then, under the conditions of Theorem \ref{semifinite_repr}, we have the following:
\begin{itemize}
  \item[$({\bf i})$] if $\alpha_1=1$ and $t\in(0,1)$, then $\left(\pi_f,\mathcal{H}_f,\xi_f\right)$ is the representation of the type ${\rm I}_\infty$ and $\pi_f(s)\xi_f=\xi_f$ for all $s\in\mathfrak{S}_\infty$;
  \item[$({\bf ii})$] if $\alpha=\emptyset$, then $\pi_f(\epsilon_{\{a\}})=0$ for all $a\in X_\infty$; i. e. $\pi_f\left( R_\infty \right)^{\prime\prime}$ is a ${\rm II}_1$-factor or $\mathbb{C}$;
  \item[$({\bf iii})$] if $\alpha_i\in (0,1)$ and $t\in(0,1)$, then factor $\pi_f\left( R_\infty \right)^{\prime\prime}$ has a type ${\rm II}_\infty$;
  \item[$({\bf iv})$] if $\alpha_i\in (0,1)$ and $t\in\{0,1\}$, then $\pi_f\left( R_\infty \right)^{\prime\prime}$ is ${\rm II}_1$-factor.
\end{itemize}
\end{Th}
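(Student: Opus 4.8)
The plan is to treat the four regimes separately, using throughout that the type of $M:=\pi_f(R_\infty)''$ is an invariant of the quasi-equivalence class of $\pi_f$, which by Proposition \ref{quasi_unique} is pinned down by the quadruple $(t,\alpha,\beta,i)$. I would combine intrinsic arguments, valid directly from the values of $f$ on quasi-cycles, with the identification of $\pi_f$ (in the two genuinely new regimes) with the explicit representations built in Section \ref{section:realizations}. Write $N:=\pi_f(\mathfrak{S}_\infty)''$, and recall that $f|_{\mathfrak{S}_\infty}=\chi_{\alpha\beta}$, so $N$ is the factor of the Thoma character: a $\mathrm{II}_1$ factor when $\chi_{\alpha\beta}$ is diffuse, and $\mathbb{C}$ only for the one-dimensional (trivial or sign) characters.

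The cleanest step, covering $({\bf ii})$ and half of $({\bf iv})$, is a vanishing lemma: if $f(\epsilon_{\{a\}})=0$ (as happens when $\alpha=\emptyset$, and when $t=0$), then $\pi_f(\epsilon_{\{a\}})=0$ for every $a$. Indeed $\|\pi_f(\epsilon_{\{a\}})\xi_f\|^2=f(\epsilon_{\{a\}})=0$, and by Proposition \ref{mult} $f(\epsilon_S)=\prod_{a\in S}f(\epsilon_{\{a\}})=0$ for every finite nonempty $S$, so $\pi_f(\epsilon_S)\xi_f=0$. Since every $r\in R_\infty$ with $\mathcal{D}(r)\neq X_\infty$ factors as $r=r\,\epsilon_{X_\infty\setminus\mathcal{D}(r)}$, we get $\pi_f(r)\xi_f=\pi_f(r)\pi_f(\epsilon_{X_\infty\setminus\mathcal{D}(r)})\xi_f=0$; hence $\mathcal{H}_f=\overline{\pi_f(\mathfrak{S}_\infty)\xi_f}$. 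Finally the relation $\epsilon_{\{a\}}s=s\,\epsilon_{\{s^{-1}(a)\}}$ gives $\pi_f(\epsilon_{\{a\}})\pi_f(s)\xi_f=0$ for all $s\in\mathfrak{S}_\infty$, and cyclicity forces $\pi_f(\epsilon_{\{a\}})=0$. Thus $M=N$, which is $\mathrm{II}_1$ (or $\mathbb{C}$), establishing $({\bf ii})$ and the $t=0$ part of $({\bf iv})$.

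The remaining part of $({\bf iv})$, namely $t=1$ with $\alpha_i\in(0,1)$, I would reduce to the finite-type classification: the values $f(q)=\alpha_i^{\,n}$ on quasi-cycles coincide with those of the indecomposable character of Theorem \ref{finite_factor_repr} with $\rho=\alpha_i$, and by Proposition \ref{mult} these values together with $f|_{\mathfrak{S}_\infty}=\chi_{\alpha\beta}$ determine $f$; hence $f$ is $R_\infty$-central, $\pi_f$ is of finite type, and $\alpha_i\in(0,1)$ (diffuse $N$) forces $M$ to be $\mathrm{II}_1$. For $({\bf i})$ I first note that $\alpha_1=1$ makes $\chi_{\alpha\beta}\equiv 1$, whence $\|\pi_f(s)\xi_f-\xi_f\|^2=2-2f(s)=0$, giving the asserted fixed vector $\pi_f(s)\xi_f=\xi_f$; multiplicativity then shows that $f$ restricted to $\mathrm{Diag}_\infty$ is the Bernoulli product, $f(\epsilon_S)=t^{|S|}$, and I would identify $\pi_f$ with the explicit $t$-Bernoulli/Fock model of Section \ref{section:realizations}, whose bicommutant is all of $\mathcal{B}(\mathcal{H})$, i.e. type $\mathrm{I}_\infty$ for $t\in(0,1)$. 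For $({\bf iii})$, $N$ is a genuine $\mathrm{II}_1$ factor and the parameter $t\in(0,1)$ supplies an additional infinite-multiplicity degree of freedom; identifying $\pi_f$ with the corresponding model of Section \ref{section:realizations} yields type $\mathrm{II}_\infty$.

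I expect the main obstacle to be the two finer determinations in $({\bf i})$ and $({\bf iii})$: proving that $M$ is type $\mathrm{I}_\infty$ versus $\mathrm{II}_\infty$, and in particular that the normal trace on $M$ is infinite (equivalently that $I$ is a properly infinite projection) exactly in the open regime $t\in(0,1)$, as opposed to the finite regimes $t\in\{0,1\}$. Unlike the character case, here the Okounkov limits $\mathcal{O}_k=\lim_{l\to\infty}\pi_f((k,l))$ (weak operator limit) need not commute with $\pi_f(\epsilon_{\{k\}})$, as flagged in the Introduction, so the trace cannot simply be read off from the spectral picture of the $\mathcal{O}_k$; one must construct the semifinite trace on the concrete realization and estimate $\mathrm{Tr}(\pi_f(\epsilon_{\{1,\dots,n\}}))$ directly. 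The separation of $({\bf i})$ from $({\bf iii})$ then comes down to whether the diffuse $\mathrm{II}_1$ factor $N$ persists as a tensor component of $M$ in the model, so that $M\cong N\otimes\mathcal{B}(\mathcal{H})$ is $\mathrm{II}_\infty$ when $\alpha_i\in(0,1)$, as opposed to the trivial-character case $\alpha_1=1$ where $N$ collapses and $M$ is purely type $\mathrm{I}$.
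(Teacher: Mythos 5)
Your treatment of $({\bf ii})$, of both halves of $({\bf iv})$, and of the fixed-vector claim in $({\bf i})$ is correct, and in fact more elementary than what the paper does: your vanishing lemma ($f(\epsilon_{\{a\}})=0$ forces $\pi_f(\epsilon_{\{a\}})=0$ via $r=r\,\epsilon_{X_\infty\setminus\mathcal D(r)}$ and cyclicity) and your identification of the $t=1$ state with the $R_\infty$-central character of Theorem \ref{finite_factor_repr} with $\rho=\alpha_i$ (hence traciality, hence $\mathrm{II}_1$) are complete arguments, whereas the paper reaches these conclusions through the machinery of Section \ref{section:properties} (there $t=1-c(0)$, and $t\in\{0,1\}$ is exactly the degenerate case $\mathfrak P_k=I$, $Q^{(\infty)}=I$, so that $\omega_f$ is a faithful trace). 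For $({\bf i})$, matching $f$ with the spherical function of $\pi_{u,w}$ with $|(u,w)|^2=t$ and invoking irreducibility (Proposition \ref{prop_non_zero_lambda}) is legitimate and close to the paper's route via admissibility and Theorem \ref{main_admissible}.

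The genuine gap is $({\bf iii})$, which is the heart of the theorem (the new regime producing $\mathrm{II}_\infty$ factors), and you acknowledge rather than close it. Two things are missing. First, you never establish that a model with the prescribed values $f(q)=t\alpha_i^n$, $\alpha_i\in(0,1)$, $t\in(0,1)$, has an infinite semifinite trace: your proposed mechanism, that the Thoma factor $N=\pi_f(\mathfrak S_\infty)''$ ``persists as a tensor component'' so that $M\cong N\otimes\mathcal B(\mathcal H)$, is unsubstantiated and sits in tension with the very phenomenon you flag, namely that $\mathcal O_k$ and $\pi_f(\epsilon_{\{k\}})$ do not commute; in the paper's analysis the finite corner of $M$ is not $N$ but $Q^{(\infty)}M\,Q^{(\infty)}=\widehat\pi(R_\infty)''$, the $\mathrm{II}_1$ factor of the $R_\infty$-character with parameters $(\alpha,\beta,\rho=\alpha_i)$, cut out by the support projection $Q^{(\infty)}=\prod_k\mathfrak P_k$ of $\omega_f$ (Theorem \ref{supp_th}), and infiniteness comes from $Q^{(\infty)}\neq I$ when $c(0)=1-t\in(0,1)$. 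Second, even granting semifiniteness (which here is an assumption of Theorem \ref{semifinite_repr}), you must still exclude the finite and type $\mathrm{I}$ alternatives; this can be done inside the paper's framework without any trace estimate: if $M$ were finite, composing its trace with $\pi_f$ would give a state in $\mathfrak F_{\mathfrak S}$ quasi-equivalent to $f$, so $f$ itself would be tracial by Proposition \ref{quasi_unique}, forcing $t\in\{0,1\}$ by Theorem \ref{finite_factor_repr}; and if $M$ were type $\mathrm{I}$, the Corollary of Subsection \ref{Subsection: I_infty} would force either $\chi_{\alpha\beta}\equiv 1$ (i.e. $\alpha_1=1$) or the sign-degenerate case, both excluded by $\alpha_i\in(0,1)$. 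Your plan to ``construct the semifinite trace on the concrete realization and estimate $\mathrm{Tr}(\pi_f(\epsilon_{\{1,\dots,n\}}))$'' corresponds to the paper's Section \ref{section:realizations} (the state $\varphi_A$, the bicyclicity criterion of Theorem \ref{condition_of_bicyclic}, and the modular-group computation), but as written it remains a program, not a proof.
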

 \section{{$\mathfrak{S}_\infty$-admissible representations of $R_\infty$.}}\label{section:admissible}
 Let $\mathbb{N}$ be a set of the natural numbers. For simplicity of notation we will write  $\mathbb{N}$ instead $X_\infty$.
For the reader's convenience, let us recall some notations introduced earlier. A bijection $s: \mathbb{N}\to \mathbb{N}$ is called finite if the set $\left\{ i\in\mathbb{N}\big|\,s(i) \neq i\right\}$
is finite. Then subgroup $\mathfrak{S}_\infty\subset R_\infty$ is the set
 of all finite bijections $\mathbb{N}\to\mathbb{N}$. We write
$\mathfrak{S}_n=\left\{s\in\mathfrak{S}_\infty|\; s(i)=i\;
 \text{ for all }\;
i>n\right\}$.

 Let $\delta_{ij}$ be the Kronecker  delta. We identify $s\in \mathfrak{S}_\infty$ with a matrix $\left[ g_{ij}\right]_{i,j\in\mathbb{N}}$,  where  $ g_{ij}=\delta_{i\,s(j)}$.  Denote by $\mathbf{1}_\mathbb{A}$ the indicator function of a set  $\mathbb{A}\subset\mathbb{N}$.   Let  $\epsilon_\mathbb{A}=\left[ \left( 1-\mathbf{1}_\mathbb{A}(i)\right)\delta_{ij}\right]$
 and $\mathfrak{S}_\mathbb{A}=\left\{ s\in\mathfrak{S}_\infty:s(j)=j \text{ for all } j\in\mathbb{N}\setminus\mathbb{A}\right\}$. It is clear that $r\cdot\epsilon_\mathbb{A}=\epsilon_\mathbb{A}\cdot r=\epsilon_\mathbb{A}$ for all $r\in\mathfrak{S}_\mathbb{A}$.  Each element $r\in R_\infty$ is defined by $s_r \in \mathfrak{S}_\infty$ and a finite subset $\mathbb{A}_r\subset\mathbb{N}$: $r=s_r\cdot\epsilon_{\mathbb{A}_r}$.
 In the matrix realisation  $R_n=\left\{ r=[r_{ij}]\in R_\infty:r_{ij}=\delta_{ij}\text{ for all } i >n\right\}$ and $R_{n\infty}$ \ $=\left\{ r=[r_{ij}]\in R_\infty:r_{ij}=\delta_{ij}\text{ for all } i \leq n\right\}$. It is clear that $\mathfrak{S}_n\subset R_n$. Let $e$ be the unit element of $R_\infty$. For a semigroup $S$, denote by $\mathbb{C}[S]$ the associated semigroup algebra. The reader can construct a simple proof of next statement.
  \begin{Lm}\label{Gelfand_pair}
 Let $\mathfrak{p}_n=\sum\limits_{s\in\mathfrak{S}_n}s$. Then algebra $\mathfrak{p}_n\mathbb{C}[R_n]\mathfrak{p}_n$ is abelian.
 \end{Lm}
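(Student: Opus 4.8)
The plan is to use Gelfand's trick: exhibit an involutive anti-automorphism of $\mathbb{C}[R_n]$ that fixes every element of the subalgebra $A:=\mathfrak{p}_n\mathbb{C}[R_n]\mathfrak{p}_n$ pointwise, from which commutativity follows formally. The natural candidate is the involution $\star$ (matrix transposition), which by construction is a $\mathbb{C}$-linear anti-automorphism of $\mathbb{C}[R_n]$, i.e. $(xy)^\star=y^\star x^\star$. First I would record that $\mathfrak{p}_n^\star=\mathfrak{p}_n$: indeed $s^\star=s^{-1}$ for $s\in\mathfrak{S}_n$, and $s\mapsto s^{-1}$ permutes $\mathfrak{S}_n$, so $\mathfrak{p}_n^\star=\sum_{s}s^{-1}=\mathfrak{p}_n$. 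Consequently $(\mathfrak{p}_n x\mathfrak{p}_n)^\star=\mathfrak{p}_n x^\star\mathfrak{p}_n$, so $\star$ maps $A$ into itself. One also checks $A$ is genuinely a subalgebra, since $\mathfrak{p}_n^2=(\#\mathfrak{S}_n)\,\mathfrak{p}_n$.

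Since $A$ is linearly spanned by the double-coset sums $\mathfrak{p}_n r\mathfrak{p}_n=\sum_{s,t\in\mathfrak{S}_n}srt$ with $r\in R_n$, it suffices to show each such sum is $\star$-invariant. The key observation is that $\mathfrak{p}_n r\mathfrak{p}_n$ depends only on the double coset $\mathfrak{S}_n r\mathfrak{S}_n$: if $r'=urv$ with $u,v\in\mathfrak{S}_n$, then reindexing $s\mapsto su$ and $t\mapsto vt$ yields $\mathfrak{p}_n r'\mathfrak{p}_n=\mathfrak{p}_n r\mathfrak{p}_n$. Thus I only need $r$ and $r^\star$ to lie in the same double coset. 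Here I would classify the double cosets $\mathfrak{S}_n\backslash R_n/\mathfrak{S}_n$ by the rank $\#\mathcal{D}(r)$ (equivalently by $\#\mathbb{A}_r$): composing with permutations on either side preserves rank, and conversely any two partial bijections of equal rank are related by $s(\cdot)t$. Since $\mathcal{D}(r^\star)=\mathcal{I}(r)$ has the same cardinality as $\mathcal{D}(r)$, the elements $r$ and $r^\star$ have equal rank, whence $\mathfrak{p}_n r^\star\mathfrak{p}_n=\mathfrak{p}_n r\mathfrak{p}_n$.

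Combining these facts gives $(\mathfrak{p}_n r\mathfrak{p}_n)^\star=\mathfrak{p}_n r^\star\mathfrak{p}_n=\mathfrak{p}_n r\mathfrak{p}_n$, so $\star$ fixes every spanning element and hence, by linearity, every element of $A$. Finally, for $x,y\in A$ (so that $xy\in A$ is also $\star$-fixed) one obtains $xy=(xy)^\star=y^\star x^\star=yx$, proving $A$ abelian.

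The only genuinely substantive point is the double-coset classification by rank; everything else is formal. I expect the mild care needed there---producing explicit $s,t\in\mathfrak{S}_n$ realizing $r'=srt$ for two partial bijections of equal rank---to be the main (though routine) step: I would first choose $t$ carrying $\mathcal{D}(r')$ onto $\mathcal{D}(r)$, then choose $s$ on $\mathcal{I}(r)$ so that $s\circ r\circ t|_{\mathcal{D}(r')}$ matches $r'$, and extend $s$ arbitrarily to a permutation, which is possible precisely because the equal ranks force the complements $\mathcal{I}(r)^c$ and $\mathcal{I}(r')^c$ to have equal cardinality.
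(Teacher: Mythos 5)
Your proof is correct. The paper offers no proof of this lemma (it is explicitly left to the reader), and your argument --- Gelfand's trick with the transpose involution $\star$, the observation that $\mathfrak{p}_n r\mathfrak{p}_n$ depends only on the double coset $\mathfrak{S}_n r\mathfrak{S}_n$, and the classification of these double cosets in $R_n$ by rank, which forces $\mathfrak{p}_n r^\star\mathfrak{p}_n=\mathfrak{p}_n r\mathfrak{p}_n$ --- is exactly the intended ``simple proof,'' i.e.\ the standard verification that $\mathfrak{S}_n$ plays the role of a Gelfand subgroup in $R_n$.
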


Denote by $\overline{\mathfrak{S}}_\infty$ the group of all bijections $\mathbb{N}\mapsto\mathbb{N}$. For $n\in\mathbb{N}$ define
\begin{eqnarray*}\label{def_of_S_n_infty}
\overline{\mathfrak{S}}_{n\infty}=\left\{ s\in\overline{\mathfrak{S}}_\infty\left( \mathfrak{S}_\infty\right):s(i)=i \text{ for all }i=1,2,\ldots,n\right\},\;\mathfrak{S}_{n\infty}=\overline{\mathfrak{S}}_{n\infty}\cap \mathfrak S_\infty.
\end{eqnarray*}
The  subgroups $\mathfrak{S}_{n\infty}$ form a fundamental system  of neighborhoods of the unit element of
 the {\it weak} topology on $\mathfrak{S}_\infty$.  The weak closure of $\mathfrak{S}_\infty$ coincides with $\overline{\mathfrak{S}}_\infty$.

 Recall the notion of a tame representation \cite{Olshsurvey}.
 \begin{Def}\label{Def_tame}
 Let unitary representation $\pi$ of the group  $\mathfrak{S}_\infty$ act in a Hilbert space $H_\pi$.
   $\pi$ is called {\it tame}, if it is continuous with respect to the {\it weak} topology on  $\overline{\mathfrak{S}}_\infty$ and the strong operator topology on $\mathcal{B}(H_\pi)$.
 \end{Def}
 Put $H_\pi(n)=\left\{ \eta\in H_\pi:\pi(s)\eta=\eta \text{ for all } s\in\mathfrak{S}_{n\infty}\right\}$. By definition, $H_\pi(1)\subset H_\pi(2) \subset\ldots$.\label{def_yP_n}
 The next statement provides a natural  intrinsic characterization of tame representations.
 \begin{Prop}[\cite{Olsh_tamesym}]\label{prop_tame_fix}
 A representation $\pi$ is tame if and only if the closure of $\bigcup\limits_{n=1}^\infty H_\pi(n)$ coincides
 with $H_\pi$.
 \end{Prop}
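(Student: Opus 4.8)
The plan is to prove the two implications separately, and in both the decisive link is that the subgroups $\mathfrak{S}_{n\infty}$ form a fundamental system of neighborhoods of the unit in the weak topology. Because $\pi$ is unitary, $\|\pi(s)\eta-\pi(t)\eta\|=\|\pi(t^{-1}s)\eta-\eta\|$, so weak$\to$SOT continuity everywhere reduces to continuity at the identity. Hence tameness of $\pi$ is equivalent to the following local condition: for every $\eta\in H_\pi$ and every $\varepsilon>0$ there is an $n$ with $\sup_{s\in\mathfrak{S}_{n\infty}}\|\pi(s)\eta-\eta\|\le\varepsilon$. I will phrase both directions against this reformulation, using the fundamental-system property to pass between an abstract weak neighborhood of $e$ and a concrete $\mathfrak{S}_{n\infty}$.

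For the easy direction, assume $\overline{\bigcup_n H_\pi(n)}=H_\pi$. Given $\eta$ and $\varepsilon$, I would pick $\eta'\in H_\pi(n)$ (for some $n$) with $\|\eta-\eta'\|<\varepsilon/2$. Every $s\in\mathfrak{S}_{n\infty}$ fixes $\eta'$ by the definition of $H_\pi(n)$, so a three-term triangle inequality together with $\|\pi(s)\|=1$ yields $\|\pi(s)\eta-\eta\|\le 2\|\eta-\eta'\|<\varepsilon$ for all $s\in\mathfrak{S}_{n\infty}$. This is exactly the local continuity condition, so $\pi$ is tame.

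For the converse, assume $\pi$ is tame. Fix $\eta$ and $\varepsilon$; by the local condition choose $n$ with $\|\pi(s)\eta-\eta\|\le\varepsilon$ for all $s\in\mathfrak{S}_{n\infty}$. Let $C$ be the closed convex hull in $H_\pi$ of the single orbit $\{\pi(s)\eta:s\in\mathfrak{S}_{n\infty}\}$. Since each orbit point lies in the closed ball of radius $\varepsilon$ about $\eta$, which is closed and convex, we get $C\subseteq \overline{B}(\eta,\varepsilon)$. Let $\zeta$ be the unique element of minimal norm in $C$, i.e. the nearest point of the closed convex set $C$ to the origin. Each $\pi(s)$ with $s\in\mathfrak{S}_{n\infty}$ permutes the orbit (because $s\,\mathfrak{S}_{n\infty}=\mathfrak{S}_{n\infty}$) and is a norm-preserving bijection of $C$ onto itself, so it carries the minimal-norm element to the minimal-norm element; by uniqueness $\pi(s)\zeta=\zeta$, that is $\zeta\in H_\pi(n)$. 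Moreover $\zeta\in C\subseteq\overline{B}(\eta,\varepsilon)$ gives $\|\zeta-\eta\|\le\varepsilon$. As $\eta,\varepsilon$ were arbitrary, $\bigcup_n H_\pi(n)$ is dense in $H_\pi$.

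The only delicate point is this converse: one must produce an \emph{honestly} $\mathfrak{S}_{n\infty}$-invariant vector near $\eta$, not merely a vector that $\mathfrak{S}_{n\infty}$ moves little. I prefer to avoid invoking an invariant mean on the (locally finite, hence amenable) group $\mathfrak{S}_{n\infty}$ and instead use the minimal-norm-element (circumcenter) trick, which needs only that $C$ is a closed convex set invariant under the isometries $\pi(s)$ and that nearest points in Hilbert space are unique. The points to verify carefully are that $C$ is genuinely $\mathfrak{S}_{n\infty}$-invariant — which holds because it is the closed convex hull of one orbit — and that the abstract weak neighborhood supplied by tameness may legitimately be replaced by some $\mathfrak{S}_{n\infty}$, which is exactly the fundamental-system property recorded before the statement.
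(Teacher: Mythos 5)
Your proof is correct. There is, however, no internal proof to compare it against: the paper states Proposition \ref{prop_tame_fix} with a citation to Olshanski \cite{Olsh_tamesym} and supplies no argument, so a proof had to be built from scratch, as you did. Your reduction of tameness to the local condition at the identity is legitimate, since the subgroups $\mathfrak{S}_{n\infty}$ form a fundamental system of neighborhoods of $e$ in the weak topology and each $\pi(t)$ is unitary; the forward implication is the routine $2\|\eta-\eta'\|$ estimate; and the converse is watertight: the closed convex hull $C$ of the orbit $\left\{\pi(s)\eta : s\in\mathfrak{S}_{n\infty}\right\}$ is carried onto itself by every $\pi(s)$ with $s\in\mathfrak{S}_{n\infty}$ (left translation permutes the orbit, and a surjective linear isometry preserves convex hulls and closures), it lies in $\overline{B}(\eta,\varepsilon)$, and its unique element of minimal norm is therefore fixed by all these isometries, hence lies in $H_\pi(n)$ within distance $\varepsilon$ of $\eta$. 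The route standard in the literature --- and the one this paper itself uses for cognate statements, namely the averaging projections $P_j$ in the proof of Proposition \ref{1 or 2} and the strong limit of $\pi(\mathfrak{p}_n)$ in Proposition \ref{Prop_spherical}(b) --- is slightly different: one averages over the finite subgroups $\mathfrak{S}_{\{n+1,\ldots,m\}}$, noting that $A_{n,m}=\left(\#\mathfrak{S}_{\{n+1,\ldots,m\}}\right)^{-1}\sum_{s}\pi(s)$ is the orthogonal projection onto the $\mathfrak{S}_{\{n+1,\ldots,m\}}$-fixed vectors, that these projections decrease to the projection $E(n)$ onto $H_\pi(n)$ as $m\to\infty$, and that tameness forces $\|A_{n,m}\eta-\eta\|\le\varepsilon$, whence $\|E(n)\eta-\eta\|\le\varepsilon$. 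The averaging argument buys a canonical approximant (the orthogonal projection $E(n)\eta$) and produces the projections $E(n)$ as strong operator limits, which is useful later in the paper; your circumcenter argument is equally elementary, avoids limits of operators entirely, and uses only uniqueness of the nearest point in a closed convex subset of a Hilbert space. Both are valid; yours is a genuinely self-contained alternative rather than a reproduction of the cited proof.
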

 \noindent In \cite{Olsh_tamesym}, Olshanski also introduced and studied so called \emph{admissible representations} of several pairs of groups related to $\mathfrak S_\infty$. Here we study analogous notion for the pair $(R_\infty,\mathfrak{S}_\infty)$.
 \begin{Def}\label{def_yP_n}
  A unitary representation $\pi$ of the semigroup $R_\infty$ is called  $\mathfrak{S}_\infty$-{\it admissible}, \label{admissible} if its restrictions to $\mathfrak{S}_\infty$ is tame.
 \end{Def}
\subsection{Representations of $R_\infty$, living in the tame representations of $\mathfrak{S}_\infty$.}
The classification of tame representations of the group $\mathfrak{S}_\infty$ was first obtained by Lieberman \cite{LIEB}. Later G.I. Olshanski gave a new proof of Lieberman's result using the semigroup method \cite{Olsh_tamesym}, \cite{Olshsurvey}.

Fix a tame representation $\pi$ of $\mathfrak{S}_\infty$. Denote by $(i\;n)$  the transposition exchanging   $i$ and $n$.   We begin with a simple property of tame representations.
\begin{Lm}\label{as_projection}
The limit of the sequence of operators $\left\{ \pi\left( (i\;n)\right)\right\}_{n\in\mathbb{N}}$ exists in the weak operator topology. If $P_i=\lim\limits_{n\to\infty}\pi\left( (i\;n)\right)$, then $P_i^2=P_i$.
\end{Lm}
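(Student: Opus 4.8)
The plan is to work entirely in the dense subspace $D=\bigcup_{n}H_\pi(n)$, whose closure is $H_\pi$ by Proposition \ref{prop_tame_fix}, and to show that the matrix coefficients of $\pi((i\;n))$ against vectors of $D$ stabilize as $n\to\infty$. First I would establish existence of the weak limit. Fix $\xi,\eta\in H_\pi(m)$ with $m\geq i$. For $n,n'>m$ the transposition $(n\;n')$ lies in $\mathfrak{S}_{m\infty}$, hence fixes both $\xi$ and $\eta$ and conjugates $(i\;n)$ to $(i\;n')$ (it fixes $i\leq m$ and swaps $n,n'$). Writing $\pi((i\;n'))=\pi((n\;n'))\pi((i\;n))\pi((n\;n'))$ and moving the self-adjoint unitary $\pi((n\;n'))$ onto $\xi$ and $\eta$, I get $(\pi((i\;n'))\xi,\eta)=(\pi((i\;n))\xi,\eta)$; thus the coefficient is \emph{eventually constant} in $n$. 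Since $\|\pi((i\;n))\|=1$ and $D$ is dense, a routine $\varepsilon/3$ argument promotes convergence on $D\times D$ to weak convergence of $\pi((i\;n))$ on all of $H_\pi$, defining $P_i$ with $\|P_i\|\leq 1$. Because each $\pi((i\;n))$ is a self-adjoint unitary, $P_i=P_i^*$.

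Next I would record two stability facts for $\xi\in H_\pi(m)$, $m\geq i$. (a) $P_i$ preserves $H_\pi(m)$: given $s\in\mathfrak{S}_{m\infty}$, for all large $n$ the point $n$ lies outside ${\rm supp}\,s$, so $s$ fixes both $i$ and $n$ and therefore commutes with $(i\;n)$; hence $\pi(s)\pi((i\;n))\xi=\pi((i\;n))\xi$, and passing to the weak limit (left multiplication by the fixed operator $\pi(s)$ is weakly continuous, and weak limits are unique) gives $\pi(s)P_i\xi=P_i\xi$, i.e. $P_i\xi\in H_\pi(m)$. (b) $P_N\xi=\xi$ whenever $N>m$: indeed for $n>N$ the transposition $(N\;n)$ lies in $\mathfrak{S}_{m\infty}$, so $\pi((N\;n))\xi=\xi$, and the weak limit of this constant sequence is $\xi$.

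With these in hand, since $P_i$ is self-adjoint and bounded it suffices to prove the quadratic-form identity $\|P_i\xi\|^2=(P_i\xi,\xi)$ for $\xi\in D$; equality of the (continuous) forms of $P_i^2$ and $P_i$ on the dense set $D$ then forces $P_i^2-P_i=0$. Fix $\xi\in H_\pi(m)$, $m\geq i$, and any $N>m$; by the eventually-constant property $(P_i\xi,\xi)=(\pi((i\;N))\xi,\xi)$. Put $\zeta=P_i\xi\in H_\pi(m)$ by (a). The same stabilization gives $\|P_i\xi\|^2=(P_i\xi,\zeta)=(\pi((i\;N))\xi,\zeta)=(\pi((i\;N))\xi,P_i\xi)$. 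Now I expand the remaining $P_i$ as a weak limit and use self-adjointness of $\pi((i\;n'))$ to get $(\pi((i\;N))\xi,P_i\xi)=\lim_{n'}(\pi((i\;n')(i\;N))\xi,\xi)$. The key algebraic identities are $(i\;n')(i\;N)=(i\;N\;n')$ and $(i\;N\;n')=(i\;N)(N\;n')$, which turn the coefficient into $(\pi((i\;N))\pi((N\;n'))\xi,\xi)$; by (b) applied to the transposition $(N\;n')\in\mathfrak{S}_{m\infty}$ this equals $(\pi((i\;N))\xi,\xi)$ for every large $n'$. Hence the limit is $(\pi((i\;N))\xi,\xi)=(P_i\xi,\xi)$, proving $\|P_i\xi\|^2=(P_i\xi,\xi)$ and therefore $P_i^2=P_i$.

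The main obstacle is the usual one: weak limits do not pass through products, so $P_i^2$ cannot be read off from $\pi((i\;n))^2=I$. The device that circumvents this is to keep one factor frozen as the genuine transposition $(i\;N)$ while letting only the second index $n'$ run, so that every product $\pi((i\;n'))\pi((i\;N))$ remains a \emph{single} group element $\pi((i\;N\;n'))$ whose coefficient I can evaluate; the fixed-vector facts (a) and (b) are exactly what render the frozen factor harmless and collapse the limit. A secondary point requiring care is the promotion of weak convergence from $D\times D$ to all of $H_\pi$, and the verification in (a) that $P_i$ genuinely maps $H_\pi(m)$ into itself as an identity of vectors (obtained from uniqueness of the weak limit), not merely against test vectors lying in $H_\pi(m)$.
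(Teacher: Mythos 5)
Your proof is correct, and it reaches the conclusion by a genuinely different technical route than the paper, even though the pivotal algebraic trick is the same. Both arguments hinge on rewriting a product of two transpositions sharing the index $i$ as a single group element times a ``far away'' transposition --- your identity $(i\;n')(i\;N)=(i\;N)(N\;n')$ is, up to relabeling, the paper's $(i\;k)(i\;n)=(i\;n)(k\;n)$ --- and on the device of freezing one factor while the other index runs. The difference lies in how tameness is fed in. The paper works directly from Definition \ref{Def_tame}: every vector is \emph{approximately} fixed in norm by transpositions with both indices large, so both the existence of the weak limit and the idempotency are run as $\varepsilon$-estimates on arbitrary vectors $\eta,\zeta\in H_\pi$, with no auxiliary structure needed. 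You instead pass through Proposition \ref{prop_tame_fix} and work on the dense core $D=\bigcup_n H_\pi(n)$ of $\mathfrak{S}_{n\infty}$-fixed vectors, where all matrix coefficients stabilize \emph{exactly}; this costs you three extra pieces of bookkeeping that the paper avoids --- the density-plus-uniform-boundedness promotion of weak convergence from $D$ to $H_\pi$, the invariance fact that $P_i$ maps $H_\pi(m)$ into itself (correctly derived from weak continuity of left multiplication by $\pi(s)$ and uniqueness of weak limits), and the self-adjointness/quadratic-form endgame reducing $P_i^2=P_i$ to $\|P_i\xi\|^2=(P_i\xi,\xi)$ on $D$ --- but it buys you exact computations in place of $\varepsilon/3$ management. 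Your route is also well adapted to this paper specifically: the stabilization-on-$H_\pi(n)$ pattern you set up is precisely the mechanism the authors use later (e.g., in Lemmas \ref{setting-up} and \ref{R_n_infty_scalar}), so nothing in your argument is foreign to the framework, and all the individual steps (the conjugation $(i\;n')=(n\;n')(i\;n)(n\;n')$ for eventual constancy, the cycle identities, and the vanishing of the quadratic form of the bounded self-adjoint operator $P_i^2-P_i$ on a dense subspace) check out.
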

\begin{proof}
Fix any positive number $\epsilon$. Using definition \ref{Def_tame}, for the vectors $\zeta, \eta$ $\in H_\pi$, we find
natural $N$ such that
\begin{eqnarray*}
\left\| \pi\left((k\;n)\right)\zeta-\zeta \right\|<\epsilon \text{ and }
\left\| \pi\left((k\;n)\right)\eta-\eta \right\| <\epsilon \text{ for all } k,n>N.
\end{eqnarray*}
It follows that for all $k,n>N$
\begin{eqnarray*}
\left|\left(\pi\left( (i\;n)\right)\eta,\zeta \right)- \left(\pi\left( (i\;k)\right)\eta,\zeta \right)  \right|<\epsilon (\|\eta\|+\|\zeta\|).
\end{eqnarray*}
Therefore, the sequence $\left\{ \left( \pi((i\;n))\eta,\zeta \right)\right\}_{n\in\mathbb{N}}$ is fundamental.  This proves the first statement of our lemma.

Now we claim that $P_i^2=P_i$. Indeed, for any number $\epsilon>0$ and the vectors $\zeta, \eta$ $\in H_\pi$ there exists $N$, which satisfies  the conditions
\begin{eqnarray*}
&\left\| \pi((k\;n))\eta-\eta \right\|<\epsilon ~\text{ and }~ \left\| \pi((k\;n))\zeta-\zeta \right\|<\epsilon \text{ for all } k,n>N;\\
&\left| \left(P_i^2\eta,\zeta \right) -\left(\pi\left( (i\;k)(i\;n)\right)\eta,\zeta \right)\right| <\epsilon  \text{ for fixed } k>N \text{ and for all } n > N;\\
&\left| \left( P_i\eta,\zeta \right)-\left((i\;n)\eta,\zeta \right)\right|<\epsilon \text{ for all } n>N.
\end{eqnarray*}
Hence, applying the simple estimates and using the relation $(i\;k)(i\;n)$$=(i\;n)(k\;n)$, we obtain
$\left|\left(P_i^2\eta,\zeta \right)-\left(P_i\eta,\zeta \right)  \right|<\epsilon (\| \zeta\|+1)$.
\end{proof}
\begin{Lm}\label{as_relations}
Let $ P_i $ be the projection that was defined in lemma \ref{as_projection}. Then
\begin{itemize}
  \item [{\rm i)}] $\pi(s)P_i\pi\left( s^{-1}\right)=P_{s(i)}$ and $ P_iP_k=P_kP_i$ for all $i,k\in\mathbb{N}$, $s\in\mathfrak{S}_\infty$;
  \item [{\rm ii)}] if $s\in\mathfrak{S}_\mathbb{A}$, then $\pi(s)\,\prod\limits_{i\in\mathbb{A}}P_i=\prod\limits_{i\in\mathbb{A}}P_i$.
\end{itemize}
\end{Lm}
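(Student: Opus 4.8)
Let $P_i=\lim_{n\to\infty}\pi((i\;n))$ (weak operator limit) be the projections from Lemma~\ref{as_projection}. We must show:
(i) $\pi(s)P_i\pi(s^{-1})=P_{s(i)}$ and $P_iP_k=P_kP_i$ for all $i,k$ and $s\in\mathfrak{S}_\infty$;
(ii) if $s\in\mathfrak{S}_\mathbb{A}$ then $\pi(s)\prod_{i\in\mathbb{A}}P_i=\prod_{i\in\mathbb{A}}P_i$.

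Let me think about how to prove each part.
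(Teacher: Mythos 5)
Your proposal contains no proof: after restating the statement it ends with ``Let me think about how to prove each part,'' so there is nothing to check for correctness. Both parts genuinely require an argument, and neither is a formal triviality. For {\rm i)}, the equivariance $\pi(s)P_i\pi(s^{-1})=P_{s(i)}$ does follow almost immediately from the definition, since $s\,(i\;n)\,s^{-1}=(s(i)\;s(n))=(s(i)\;n)$ for all $n$ outside the (finite) support of $s$, so conjugating the approximating sequence and passing to the weak limit gives the claim; but commutativity $P_iP_k=P_kP_i$ needs a separate limit argument (e.g.\ approximate $P_iP_k$ by $\pi((i\;n))\pi((k\;m))$ with $n,m$ large and distinct, note these transpositions are disjoint and hence commute, and control the error using tameness), and weak limits do not commute with operator products in general, so this step cannot be waved through.

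For {\rm ii)}, which is the substantive part, the paper reduces to the generating case $\mathbb{A}=\{k,l\}$, $s=(k\;l)$, and then exploits the braid-type identity $(k\;l)(k\;n)(l\;m)=(k\;n)(l\;m)(n\;m)$: approximating $P_kP_l$ by $\pi((k\;n))\pi((l\;m))$ with $m,n$ large, the identity shows $\pi((k\;l))\pi((k\;n))\pi((l\;m))$ differs from $\pi((k\;n))\pi((l\;m))$ only by the factor $\pi((n\;m))$, and tameness of $\pi$ (Definition \ref{Def_tame}) gives $\left\|\pi((n\;m))\eta-\eta\right\|<\epsilon$ for $m,n$ large, so the discrepancy vanishes in the limit. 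Without some identity of this kind pushing the action of $s$ off to infinity, there is no reason for $\pi(s)$ to fix the range of $\prod_{i\in\mathbb{A}}P_i$; indeed the statement is false for non-tame representations, so tameness must enter the argument explicitly. As it stands, your submission identifies the correct statement but supplies neither of these two ingredients.
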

\begin{proof}
The relations from {\rm i)} are the direct consequence of the definition $P_i$.

To prove statement {\rm ii)} it suffices to consider the case $\mathbb{A}=\{k,l\}$.

Fix $\eta,\zeta\in H_\pi$.
For any positive $\epsilon$ find $N\in\mathbb{N}$ such that
\begin{eqnarray*}
&\left|\left( \pi((k\;l))P_kP_l\eta,\zeta\right)-\left(\pi((k\;l))\pi((k\;n))\pi((l\;m))\eta,\zeta \right)\right|<\epsilon \text{ and }\\
&\left|\left( P_kP_l\eta,\zeta\right)-\left(\pi((k\;n))\pi((l\;m))\eta,\zeta \right)\right|<\epsilon \text{ for fixed } m>N \text{ and all } n>N;\\
&\left\| \pi((m\;n)) \eta-\eta\right\|<\epsilon \text{ for all } m,n>N.
\end{eqnarray*}
Combining these inequalities with the relations $(k\;l)(k\;n)(l\;m)$ $=(k\;n)(l\;m)(n\;m)$, we get
$\left| \left( \pi((k\;l))P_kP_l\eta,\zeta\right)-\left( P_kP_l\eta,\zeta\right) \right|<\epsilon (\| \zeta\|+2)$.
\end{proof}

Associated with the tame representation $\pi$ of $\mathfrak{S}_\infty$ is the map $\widehat{\pi}$ that is defined by the formula
\begin{eqnarray*}
\widehat{\pi}(s)=\pi(s),\; s\in\mathfrak{S}_\infty;\\
\widehat{\pi}\left( \epsilon_\mathbb{A} \right)=\prod\limits_{i\in\mathbb{A}}P_i, \text{ where } \mathbb{A} \subset\mathbb{N} \text{ and } \#\mathbb{A}<\infty.
\end{eqnarray*}
The following assertion follows from lemmas \ref{as_projection} and \ref{as_relations}.
\begin{Prop}
The map $\widehat{\pi}$ extends by multiplicativity to the representation of the semigroup $R_\infty$.
\end{Prop}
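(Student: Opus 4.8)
The plan is to first make the definition of $\widehat{\pi}$ unambiguous and then reduce multiplicativity to two elementary identities in $R_\infty$ that have exact operator counterparts coming from Lemmas~\ref{as_projection} and~\ref{as_relations}. Recall that every $r\in R_\infty$ has the normal form $r=s_r\,\epsilon_{\mathbb{A}_r}$ with $s_r\in\mathfrak{S}_\infty$ and $\mathbb{A}_r=\mathbb{N}\setminus\mathcal{D}(r)$ finite, so that $\widehat{\pi}(r):=\pi(s_r)\prod_{i\in\mathbb{A}_r}P_i$. The set $\mathbb{A}_r$ is determined by $r$, but $s_r$ is determined only up to right multiplication by an element of $\mathfrak{S}_{\mathbb{A}_r}$; hence the first---and, in my view, the only genuinely delicate---step is to check that $\widehat{\pi}(r)$ does not depend on the choice of $s_r$. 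If $s\,\epsilon_\mathbb{A}=s'\,\epsilon_\mathbb{A}$ then $s^{-1}s'$ fixes $\mathbb{N}\setminus\mathbb{A}$ pointwise, i.e. $s^{-1}s'\in\mathfrak{S}_\mathbb{A}$, and Lemma~\ref{as_relations}~ii) gives $\pi(s^{-1}s')\prod_{i\in\mathbb{A}}P_i=\prod_{i\in\mathbb{A}}P_i$; multiplying by $\pi(s)$ on the left yields $\pi(s')\prod_{i\in\mathbb{A}}P_i=\pi(s)\prod_{i\in\mathbb{A}}P_i$, so $\widehat{\pi}$ is well defined.

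Next I would record the two semigroup identities. First, for $s\in\mathfrak{S}_\infty$ one has $\epsilon_\mathbb{A}\,s=s\,\epsilon_{s^{-1}(\mathbb{A})}$ (both sides are the partial identity with domain $\mathbb{N}\setminus s^{-1}(\mathbb{A})$ composed with $s$), and second $\epsilon_\mathbb{A}\,\epsilon_\mathbb{B}=\epsilon_{\mathbb{A}\cup\mathbb{B}}$. Applying these to $r_1=s_1\epsilon_{\mathbb{A}_1}$ and $r_2=s_2\epsilon_{\mathbb{A}_2}$ gives the normal form of the product,
\begin{eqnarray*}
r_1r_2=s_1\epsilon_{\mathbb{A}_1}s_2\epsilon_{\mathbb{A}_2}=s_1s_2\,\epsilon_{s_2^{-1}(\mathbb{A}_1)}\epsilon_{\mathbb{A}_2}=(s_1s_2)\,\epsilon_{s_2^{-1}(\mathbb{A}_1)\cup\mathbb{A}_2}.
\end{eqnarray*}
These identities have exact operator analogues: Lemma~\ref{as_relations}~i) rewritten as $\bigl(\prod_{i\in\mathbb{A}}P_i\bigr)\pi(s)=\pi(s)\prod_{i\in\mathbb{A}}P_{s^{-1}(i)}=\pi(s)\prod_{k\in s^{-1}(\mathbb{A})}P_k$ is the operator form of the first identity, while idempotence and mutual commutativity of the $P_i$ (Lemmas~\ref{as_projection} and~\ref{as_relations}) give $\prod_{k\in\mathbb{B}}P_k\prod_{j\in\mathbb{C}}P_j=\prod_{i\in\mathbb{B}\cup\mathbb{C}}P_i$, the operator form of the second.

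Then the verification is a direct computation matching the two normal forms step for step:
\begin{eqnarray*}
\widehat{\pi}(r_1)\widehat{\pi}(r_2)=\pi(s_1)\Bigl(\prod_{i\in\mathbb{A}_1}P_i\Bigr)\pi(s_2)\Bigl(\prod_{j\in\mathbb{A}_2}P_j\Bigr)=\pi(s_1s_2)\prod_{i\in s_2^{-1}(\mathbb{A}_1)\cup\mathbb{A}_2}P_i=\widehat{\pi}(r_1r_2),
\end{eqnarray*}
which is exactly $\widehat{\pi}$ applied to the computed normal form of $r_1r_2$. Finally, to see that $\widehat{\pi}$ is a $*$-representation I would check $\widehat{\pi}(r^\star)=\widehat{\pi}(r)^*$ on the normal form: since $\epsilon_\mathbb{A}$ is a self-adjoint idempotent and $s^\star=s^{-1}$, one has $r^\star=\epsilon_\mathbb{A}s^{-1}=s^{-1}\epsilon_{s(\mathbb{A})}$, and the same relations from Lemma~\ref{as_relations}~i) together with self-adjointness of the $P_i$ turn $\widehat{\pi}(r)^*=\bigl(\prod_{i\in\mathbb{A}}P_i\bigr)\pi(s)^*$ into $\pi(s^{-1})\prod_{j\in s(\mathbb{A})}P_j=\widehat{\pi}(r^\star)$. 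The heart of the argument is thus the well-definedness step, where Lemma~\ref{as_relations}~ii) is indispensable; everything else is bookkeeping with the commutation and idempotence relations already established.
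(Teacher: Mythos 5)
Your proposal is correct and is essentially the paper's argument: the paper simply asserts that the Proposition follows from Lemmas \ref{as_projection} and \ref{as_relations}, and your write-up is precisely the detailed verification of that claim (well-definedness of $\widehat{\pi}$ on the normal form $r=s_r\epsilon_{\mathbb{A}_r}$ via Lemma \ref{as_relations}~ii), and multiplicativity plus the $\star$-property via the commutation, idempotence and covariance relations of the $P_i$). No gaps; the computation $\widehat{\pi}(r_1)\widehat{\pi}(r_2)=\pi(s_1s_2)\prod_{i\in s_2^{-1}(\mathbb{A}_1)\cup\mathbb{A}_2}P_i=\widehat{\pi}(r_1r_2)$ matches exactly what the cited lemmas provide.
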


\subsection{$\mathfrak{S}_\infty$-spherical representations of $R_\infty$.}\label{sph_repr}
Let $\pi$ be a factor-representation of the semigroup $R_\infty$ in a Hilbert space $H_\pi$. Suppose  that there exists a cyclic vector $\xi$ for $\pi$, which satisfies the condition
\begin{eqnarray}\label{fixed}
\pi(s)\xi =\xi  \text{ for all } s\in \mathfrak{S}_\infty.
\end{eqnarray}
Such representations are called a $\mathfrak{S}_\infty$-{\it spherical}.  Observe that for a spherical representation $\pi$ for any $n\in\mathbb N$ and $r\in R_n\subset R_\infty$ the vector $\pi(r)\xi$ is fixed under the action of $\mathfrak S_{n\infty}$. It follows that the restriction of $\pi$ onto $\mathfrak S_\infty$ is tame, and thus $\pi$ is $\mathfrak S_\infty$-admissible.

Recall that for $r\in R_\infty$ the set ${\rm supp}\,r$ was defined in \eqref{supp(r)}
\begin{Rem}\label{support_of_el_semigroup}
Given an element $r=[r_{mn}]\in R_\infty$, the set ${\rm supp}\,r$ coincides with the complement of the set $\left\{n\in \mathbb{N}:r_{nn}=1\right\}$.
\end{Rem}
 \noindent By definition of $R_\infty$, ${\rm supp}\,r$ is a finite set. If ${\rm supp}\,r_1\cap{\rm supp}\,r_2=\emptyset$, then $r_1$ and $r_2$ will be called independent.

 Let $c=\left( n_1\;n_2,\;\cdots\;n_k\right)$ be a cycle from $\mathfrak{S}_\infty$. If $a\in{\rm supp}\,c$, then the quasi-cycle $q=c\cdot \epsilon_{a}$ (see Subsection \ref{subsec: cycle decomposition}) we call a $k$-{\it quasicycle}. Notice, that $\epsilon_{\{k\}}$ is a $1$-quasicycle. Two quasicycles $q_1$ and $q_2$ are called {\it independent}, if $({\rm supp}\,q_1)\cap({\rm supp}\,q_2)=\emptyset$.    By Theorem \ref{decomposition_into_product_cycles}, each $r\in R_\infty$ can be decomposed  into  the product of the quasicycles\label{quasicycle}:
\begin{eqnarray}\label{decomposition_into_product}
r=q_1\cdot q_2\cdots q_l, \text{ where } {\rm supp}\, q_i\,\cap \,{\rm supp}\, q_j=\emptyset \text{ for all } i\neq j.
\end{eqnarray}
\begin{Prop}\label{Prop_spherical}
Let $H_\pi^{\rm sph}=\left\{ \eta\in H_\pi: \pi(s)\eta=\eta\text{ for all } s\in\mathfrak{S}_\infty\right\}$. Then the following holds:
\begin{itemize}
  \item [{\rm (a)}] if ${\rm supp}\,q\cap{\rm supp}\,r=\emptyset$, then $\left( \pi(q)\pi(r)\xi,\xi\right)=(\pi(q)\xi,\xi)(\pi(r)\xi,\xi)$;

  \item [{\rm (b)}] orthogonal projection $E$ onto $H_\pi^{\rm sph}$ belongs to $\pi(\mathfrak{S}_\infty)''\subset \pi(R_\infty)''$ and $E\,\pi(R_\infty)\,E\subset \mathbb{C}E$;
  \item[{\rm (c)}] factor representation $\pi$ is irreducible   iff ${\rm dim}\,H_\pi^{\rm sph}=1$.
\end{itemize}
\end{Prop}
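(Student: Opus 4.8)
The three parts grow in depth, and the plan is to handle them in order, recycling two ingredients already in hand: the disjoint-support multiplicativity of Proposition~\ref{mult}, and the ``conjugate-to-infinity'' weak-limit argument from the proof of Proposition~\ref{quasi_unique}. For part (a), normalize $\|\xi\|=1$ and consider the state $\omega_\xi(r)=(\pi(r)\xi,\xi)$ on $R_\infty$. Since $\pi(s)\xi=\xi$ for every $s\in\mathfrak S_\infty$, one checks immediately that $\omega_\xi(sr)=\omega_\xi(rs)=\omega_\xi(r)$, so $\omega_\xi$ is $\mathfrak S_\infty$-central; and as $\xi$ is cyclic, $\pi$ is the GNS representation of $\omega_\xi$, hence a factor representation, so $\omega_\xi$ is indecomposable. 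Thus $\omega_\xi\in\mathfrak F_{\mathfrak S}$, and (a) is exactly Proposition~\ref{mult} applied to the independent elements $q$ and $r$.

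For the first assertion of part (b), I would use the averaging device already employed in Proposition~\ref{1 or 2}. Set $P_n=(\#\mathfrak S_n)^{-1}\sum_{s\in\mathfrak S_n}\pi(s)$. Each $P_n$ is a finite linear combination of the $\pi(s)$, hence lies in $\pi(\mathfrak S_\infty)''$, and it is the orthogonal projection onto the $\mathfrak S_n$-fixed vectors. Because $\mathfrak S_n\subset\mathfrak S_{n+1}$ the projections $P_n$ decrease, so they converge in the strong operator topology to the projection onto $\bigcap_n\{v:\pi(s)v=v,\ s\in\mathfrak S_n\}=H_\pi^{\rm sph}$. Therefore $E=\lim_n P_n$ lies in the von Neumann algebra $\pi(\mathfrak S_\infty)''\subset\pi(R_\infty)''$, as claimed.

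The crux is the second assertion of (b). Fix $r\in R_\infty$ and $\eta,\zeta\in H_\pi^{\rm sph}$. Sphericity of $\eta,\zeta$ gives $(\pi(r)\eta,\zeta)=(\pi(s^{-1}rs)\eta,\zeta)$ for all $s\in\mathfrak S_\infty$, so this number is conjugation-invariant. Since ${\rm supp}\,r$ is finite, I can pick $s_n$ with $r_n:=s_nrs_n^{-1}\in R_{n\infty}$, whence $(\pi(r_n)\eta,\zeta)=(\pi(r)\eta,\zeta)$ for every $n$. Now I run the argument of Proposition~\ref{quasi_unique}: on the dense set of vectors $\pi(g)\xi$, $g\in R_m$, the inner products $(\pi(r_n)\pi(g)\xi,\pi(h)\xi)$ stabilize once $n>m$, so the contractions $\pi(r_n)$ converge in the weak operator topology; the limit lies in $\bigcap_m\pi(R_m)'=\pi(R_\infty)'$ and, being a weak limit of elements of the algebra, in $\pi(R_\infty)''$, hence equals a scalar $c(r)I$ by the factor property, with $c(r)=\lim_n\omega_\xi(r_n)=\omega_\xi(r)$. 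Passing to the limit in $(\pi(r_n)\eta,\zeta)=(\pi(r)\eta,\zeta)$ yields $(\pi(r)\eta,\zeta)=\omega_\xi(r)(\eta,\zeta)$, i.e. $E\pi(r)E=\omega_\xi(r)E$; weak density of $\pi(R_\infty)$ in $\pi(R_\infty)''$ then gives $E\pi(R_\infty)E\subset\mathbb C E$. The main obstacle is precisely this weak convergence to a scalar, on which everything rests; it is secured by reusing Proposition~\ref{quasi_unique}.

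Part (c) is then soft. Write $M=\pi(R_\infty)''$; by (b) we have $E\neq 0$ (as $\xi\in H_\pi^{\rm sph}$) and $EME=\mathbb C E$. If $\pi$ is irreducible then $M=\mathcal B(H_\pi)$, so $EME=\mathcal B(EH_\pi)=\mathbb C E$ forces $\dim H_\pi^{\rm sph}=\dim EH_\pi=1$. Conversely, if $\dim H_\pi^{\rm sph}=1$ then $E$ is the rank-one projection onto $\mathbb C\xi$. For any $T\in\pi(R_\infty)'$ we have $TE=ET$ since $E\in M$, so $T$ preserves $EH_\pi=\mathbb C\xi$ and $T\xi=\lambda\xi$ for some $\lambda$; then $Tm\xi=mT\xi=\lambda m\xi$ for all $m\in M$, and cyclicity of $\xi$ gives $T=\lambda I$. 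Hence $\pi(R_\infty)'=\mathbb C I$ and $\pi$ is irreducible, completing the equivalence.
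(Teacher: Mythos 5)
Your proof is correct, and while parts (a) and (c) and the first claim of (b) essentially match the paper, your argument for the core claim $E\,\pi(R_\infty)\,E\subset\mathbb{C}E$ takes a genuinely different route. The paper gets it from Lemma \ref{Gelfand_pair}: since $\mathfrak{p}_n\mathbb{C}[R_n]\mathfrak{p}_n$ is abelian, the corner $E\pi(R_\infty)''E$ is an abelian von Neumann algebra, and because a corner of a factor is again a factor, it must equal $\mathbb{C}E$. You bypass the Gelfand-pair lemma entirely and instead conjugate $r$ off to infinity: using sphericity of $\eta,\zeta\in H_\pi^{\rm sph}$ to freeze $(\pi(r_n)\eta,\zeta)=(\pi(r)\eta,\zeta)$, establishing WOT convergence of $\pi(r_n)$ on the dense set $\pi(R_m)\xi$ (your implicit use of conjugating elements $s_{nN}\in\mathfrak{S}_{n\infty}$ between $r_{n_1}$ and $r_{n_2}$ is exactly the device of Proposition \ref{quasi_unique} and is legitimate, since two $\mathfrak S_\infty$-conjugate elements of $R_{n\infty}$ are already conjugate inside $\mathfrak{S}_{n\infty}$), and invoking factoriality to identify the limit as $\omega_\xi(r)I$. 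Notably, this is the same asymptotic-conjugation technique the paper itself uses in its ``for the reader's convenience'' proof of part (a), so your proof stays within the paper's toolkit while deploying it where the paper switches to Hecke-algebra commutativity. What each approach buys: the paper's Gelfand-pair route is shorter and structural (abelianness of the corner is of independent interest), whereas yours is self-contained — no Lemma \ref{Gelfand_pair} needed — and yields the sharper explicit identity $E\pi(r)E=\left(\pi(r)\xi,\xi\right)E$, identifying the scalar directly rather than a posteriori. Your expanded version of (c) (forward direction via $E\mathcal{B}(H_\pi)E\cong\mathcal{B}(EH_\pi)$, converse via $TE=ET$ plus cyclicity of $\xi$) fills in the paper's one-line deduction from minimality of $E$ and is correct.
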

\begin{proof}
Part ${\rm (a)}$ follows from Proposition \ref{mult}, since the function $f(r)=(\pi(r)\xi,\xi)$ belongs to $\mathfrak{F}_S$ (and even satisfies a stronger condition of two-sided $\mathfrak S_\infty$-invariance). For the readers convenience, below we present a proof for this particular case.

 Since ${\rm supp}\,q\cap{\rm supp}\,r=\emptyset$, there
 exist  elements $\left\{ s_i\right\}_{i\in\mathbb{N}}\subset\mathfrak{S}_\infty$ such that
\begin{eqnarray}\label{estimate_supp}
\min\left\{k\in{\rm supp}\,s_i q s_i^{-1}\right\}\,>i, \;\; s_i r s_i^{-1} =r.
\end{eqnarray}
Since $\xi$ is cyclic vector, (\ref{estimate_supp}) shows that $\lim\limits_{i\to\infty}\pi\left( s_i q s_i^{-1} \right)$ exists in the weak operator topology. Put $A=\lim\limits_{i\to\infty}\pi\left( s_i q s_i^{-1} \right)$. We conclude from (\ref{estimate_supp}) that
\begin{eqnarray*}
A\pi(t)=\pi(t)A \;\text{ for all } t\in R_\infty.
\end{eqnarray*}
Since $\pi$ is a factor-representation, then $A=c(q){\rm I}_{H_\pi}$, where $c(q)=\left( \pi(q)\xi,\xi \right)$, and ${\rm I}_{H_\pi}$ is the identity operator in $H_\pi$.

 Using  (\ref{estimate_supp}) again, we obtain
\begin{eqnarray*}
\left( \pi(q)\pi(r)\xi,\xi\right)\stackrel{(\ref{fixed})}=\left(\pi\left( s_i\right)\pi(q)\pi(r)\pi\left(s_i^{-1}\right)\xi,\xi\right)=\left( \pi\left(s_i q s_i^{-1}\right)\pi(r)\xi,\xi\right).
\end{eqnarray*}
Letting $i\to \infty$, we see that
\begin{eqnarray}
\left( \pi(q)\pi(r)\xi,\xi\right)=\left(A \pi(r)\xi,\xi\right)=(\pi(q)\xi,\xi)(\pi(r)\xi,\xi).
\end{eqnarray}
To prove {\rm (b)}, we consider element $\mathfrak{p}_n=\frac{1}{n!}\sum\limits_{s\in\mathfrak{S}_n}s$ from semigroup algebra $\mathbf{C}[R_n]\subset\mathbf{C}[R_\infty]$.  By definition, $\pi\left( \mathfrak{p}_n\right)\geq\pi\left( \mathfrak{p}_{n+1}\right)$. Therefore, there exists $\lim\limits_{n\to\infty}\pi\left( \mathfrak{p}_n\right)= E$ in the strong operator topology.  It follows immediately that $E$ is a projection from $\pi\left(\mathfrak{S}_\infty\right)''\subset\pi\left( R_\infty\right)^{\prime\prime}$. By Lemma  \ref{Gelfand_pair}, algebra $E\pi\left( R_\infty\right)^{\prime\prime}E$ is abelian. Since $\pi$ is a factor representation, we obtain that  $E\pi\left( R_\infty\right)^{\prime\prime}E$ is a factor. Therefore, $E\pi\left( R_\infty\right)^{\prime\prime}E=\mathbb{C}E$, which finishes the proof of {\rm (b)}.

From part {\rm (b)} we deduce that $E$ is a minimal orthogonal projection in $\pi\left( R_\infty\right)^{\prime\prime}$. Taking into account that $H_\pi^{\rm sph}=E H_\pi$, we obtain {\rm (c)}.
\end{proof}
To classify the admissible representations in
Subsection \ref{par_classification} we will use the next statement.
\begin{Prop}\label{Prop_minimal}
Let $\pi$ be a factor-representation  of $R_\infty$ with the cyclic vector $\xi$ such that $\left( \pi\left( srt\right)\xi ,\xi \right)=\left( \pi\left( r\right)\xi ,\xi \right)$ for all $s,t\in\mathfrak{S}_\infty$ and $r\in R_\infty$. Denote by $\mathfrak{A}_k$ the von Neumann algebra generated by $\pi\left( \epsilon _{\{k\}}\right)$ and $P_k$ (see lemmas \ref{as_projection} and \ref{as_relations}). Then the following hold:
\begin{itemize}
  \item [{\rm a)}] $\xi$ is a separating vector for $P_k\mathfrak{A}_kP_k$;
  \item [{\rm b)}] $P_k$ is a minimal projection in $\mathfrak{A}_k$.
\end{itemize}
\end{Prop}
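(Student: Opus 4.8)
The plan is to deduce both assertions from the single operator identity
\[
P_k\,\pi(\epsilon_{\{k\}})\,P_k \;=\; c\,P_k,\qquad c:=\bigl(\pi(\epsilon_{\{k\}})\xi,\xi\bigr),
\]
after which everything is routine. First I would record that the invariance hypothesis $\left(\pi(srt)\xi,\xi\right)=\left(\pi(r)\xi,\xi\right)$, taken with $r=t=e$, gives $\left(\pi(s)\xi,\xi\right)=\|\xi\|^2$; since $\pi(s)$ is unitary, equality in the Cauchy--Schwarz inequality forces $\pi(s)\xi=\xi$ for all $s\in\mathfrak S_\infty$, and in particular $P_k\xi=\lim_{n}\pi((k\;n))\xi=\xi$ (Lemma \ref{as_projection}). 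Granting the displayed identity, the standard description of the compression of the von Neumann algebra generated by two projections yields $P_k\mathfrak A_kP_k=\{P_k\pi(\epsilon_{\{k\}})P_k\}''=\mathbb C P_k$, which is exactly statement {\rm b)} that $P_k$ is minimal in $\mathfrak A_k$. Statement {\rm a)} is then immediate: a nonzero element of $P_k\mathfrak A_kP_k=\mathbb C P_k$ has the form $\lambda P_k$ with $\lambda\neq0$, and $\lambda P_k\xi=\lambda\xi\neq0$, so $\xi$ is separating.

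To prove the identity I would isolate two facts. The first is that $w\text{-}\lim_{n\to\infty}\pi(\epsilon_{\{n\}})=cI$. Indeed each $\pi(\epsilon_{\{n\}})$ lies in the weakly closed algebra $\pi(R_\infty)''$, and for a fixed $r\in R_\infty$ one has $\epsilon_{\{n\}}\,r=r\,\epsilon_{\{n\}}$ as soon as $n\notin{\rm supp}\,r$ (disjoint supports commute, since ${\rm supp}\,\epsilon_{\{n\}}=\{n\}$); hence any weak limit point of $\{\pi(\epsilon_{\{n\}})\}$ commutes with every $\pi(r)$ and therefore lies in $\pi(R_\infty)''\cap\pi(R_\infty)'$, i.e.\ is a scalar because $\pi$ is a factor-representation. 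Its value is pinned down by $\left(\pi(\epsilon_{\{n\}})\xi,\xi\right)=\left(\pi(\epsilon_{\{k\}})\xi,\xi\right)=c$ (all the $\epsilon_{\{n\}}$ are $\mathfrak S_\infty$-conjugate and the functional is conjugation-invariant), so every limit point equals $cI$ and the weak limit exists and equals $cI$. The second fact is that $\pi((k\;n))\zeta\to\zeta$ \emph{strongly} for every $\zeta\in P_kH_\pi$: since $\pi((k\;n))$ is unitary and $\left(\pi((k\;n))\zeta,\zeta\right)\to(P_k\zeta,\zeta)=\|\zeta\|^2$, we get $\|\pi((k\;n))\zeta-\zeta\|^2=2\|\zeta\|^2-2\,{\rm Re}\left(\pi((k\;n))\zeta,\zeta\right)\to0$.

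With these in hand the identity is a short computation on $P_kH_\pi$. For $\zeta,\eta\in P_kH_\pi$,
\[
\bigl(P_k\pi(\epsilon_{\{k\}})P_k\,\zeta,\eta\bigr)=\bigl(\pi(\epsilon_{\{k\}})\zeta,\eta\bigr)=\lim_{n\to\infty}\bigl(\pi(\epsilon_{\{k\}})\pi((k\;n))\zeta,\pi((k\;n))\eta\bigr),
\]
the last step by strong convergence and boundedness of $\pi(\epsilon_{\{k\}})$. Using that $\pi((k\;n))$ is self-adjoint and that $\pi((k\;n))\pi(\epsilon_{\{k\}})\pi((k\;n))=\pi\bigl((k\;n)\epsilon_{\{k\}}(k\;n)\bigr)=\pi(\epsilon_{\{n\}})$, the right-hand side equals $\lim_{n}\bigl(\pi(\epsilon_{\{n\}})\zeta,\eta\bigr)=c(\zeta,\eta)$ by the first fact. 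Since $P_k\pi(\epsilon_{\{k\}})P_k$ already annihilates $(P_kH_\pi)^\perp$ and maps into $P_kH_\pi$, this proves $P_k\pi(\epsilon_{\{k\}})P_k=cP_k$.

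I expect the main obstacle to be precisely the handling of the limits. The naive attempt writes both copies of $P_k$ in $P_k\pi(\epsilon_{\{k\}})P_k$ as weak limits of transpositions; letting the two limiting indices coincide collapses the expression to $\pi(\epsilon_{\{n\}})$ (losing all information), while letting them run independently reproduces $P_k\pi(\epsilon_{\{k\}})P_k$ itself, a genuine iterated weak limit that cannot be interchanged. The device that breaks this circularity is to test only against vectors of $P_kH_\pi$, where the transpositions converge \emph{strongly}; strong convergence in both slots is exactly what permits the single weak limit $w\text{-}\lim\pi(\epsilon_{\{n\}})=cI$ to be pulled through the product. The only other point needing care is the factoriality argument for that weak limit, namely checking that each limit point lies simultaneously in $\pi(R_\infty)''$ and in $\pi(R_\infty)'$ and hence is scalar.
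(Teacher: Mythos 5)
Your proof is correct, but it takes a genuinely different route from the paper's. The paper proves a) first and directly: given $a\in P_k\mathfrak{A}_kP_k$ with $a\xi=0$, it conjugates $a$ by a transposition $(k\,N)$ with $N$ outside ${\rm supp}\,r_1\cup{\rm supp}\,r_2$, so that $\pi((k\,N))\,a\,\pi((k\,N))\in\mathfrak{A}_N$ commutes with $\pi(r_1)$ and $\pi(r_2)$, and then the $\mathfrak{S}_\infty$-invariance of $\xi$ gives $(a\pi(r_1)\xi,\pi(r_2)\xi)=0$; it then deduces b) from a): for $a\in\mathfrak{A}_k$ the vector $P_ka\xi$ is shown to be $\mathfrak{S}_\infty$-fixed, so Proposition \ref{Prop_spherical}(b) (whose proof rests on the Gelfand-pair Lemma \ref{Gelfand_pair}) yields $P_kaP_k\xi=EP_kaE\xi=\zeta\xi$, and the separating property a) converts this vector equation into the operator equation $P_kaP_k=\zeta P_k$. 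You run the logic in the opposite direction: you establish the operator identity $P_k\pi(\epsilon_{\{k\}})P_k=cP_k$ outright --- factoriality gives $w\text{-}\lim_n\pi(\epsilon_{\{n\}})=cI$, unitarity upgrades the weak convergence $\pi((k\,n))\to P_k$ to strong convergence on $P_kH_\pi$ (a device the paper never needs), and the relation $(k\,n)\,\epsilon_{\{k\}}\,(k\,n)=\epsilon_{\{n\}}$ closes the computation --- then b) follows from the standard fact that the compression by $P_k$ of the von Neumann algebra generated by the two projections $P_k$ and $\pi(\epsilon_{\{k\}})$ is the von Neumann algebra generated by $P_k\pi(\epsilon_{\{k\}})P_k$ on $P_kH_\pi$, and a) becomes a one-line consequence of b) together with $P_k\xi=\xi\neq0$. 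Your route is more self-contained: it bypasses the spherical-function machinery (Proposition \ref{Prop_spherical} and Lemma \ref{Gelfand_pair}) altogether, reverses the logical dependence (b) implies a), rather than a) implies b)), and identifies the explicit constant $c=(\pi(\epsilon_{\{k\}})\xi,\xi)$, producing a mirror image of Lemma \ref{relations_projections} and of formula \eqref{value_state-on_projection} obtained later in the paper. What the paper's ordering buys is a proof of the separating property that does not depend on the structure of $\mathfrak{A}_k$ (the same conjugation trick reappears in Lemma \ref{separating}) and a proof of b) that recycles machinery already in place for admissible representations. Two small points you should make explicit when writing this up: the normalization $\|\xi\|=1$ (otherwise the weak limit of $\pi(\epsilon_{\{n\}})$ is $(c/\|\xi\|^2)I$), and the fact that $P_k\mathfrak{A}_kP_k$ is weakly closed (being the compression of a von Neumann algebra by one of its projections), which your appeal to two-projection compression theory implicitly uses; both are standard.
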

\begin{proof}
To prove {\rm a)} we fix $a\in P_k\mathfrak{A}_kP_k$ such that $a\xi=0$. It suffices to show that
\begin{eqnarray}
\left( a\pi(r_1) \xi, \pi(r_2)\xi\right)=0 \text{ for all } r_1, r_2\in R_\infty.
\end{eqnarray}
For $N$ from the complement of the set ${\rm supp}\,r_1\cup\,{\rm supp}\,r_2$, using lemma \ref{as_projection},  we have
\begin{eqnarray*}
&\left( a\pi(r_1) \xi, \pi(r_2)\xi\right)=\lim\limits_{M\to\infty}\left( a\pi\left( (k\,M)\right)\pi(r_1) \xi, \pi(r_2)\xi\right)=\left( a\pi\left( (k\,N)\right)\pi(r_1) \xi, \pi(r_2)\xi\right)\\
&~ \text{and}~
\pi\left( (k\,N)\right)a\pi\left( (k\,N)\right)\pi(r_i)= \pi(r_i)\pi\left( (k\,N)\right)a\pi\left( (k\,N)\right), ~\text{ where }~ i=1,2.
\end{eqnarray*}
Hence
\begin{eqnarray*}
\left( a\,\pi(r_1)\xi, \pi(r_2)\xi\right)=\left( a\,\pi((k\,\,N))\,\pi(r_1)\xi, \pi(r_2)\xi\right)\\
=\left(\pi((k\,\,N))\, a\,\pi((k\,\,N))\,\pi(r_1)\xi,\pi((k\,\,N))\, \pi(r_2)\xi\right)\\
=\left(\pi(r_1)\,\pi((k\,\,N))\, a\,\pi((k\,\,N))\xi,\pi((k\,\,N))\, \pi(r_2)\xi\right)\\
\left(\pi(r_1)\,\pi((k\,\,N))\, a\xi,\pi((k\,\,N))\, \pi(r_2)\xi\right)=0.
\end{eqnarray*}
 {\rm a)} is proved.

The property  {\rm b)} follows from   {\rm a)}  and Lemmas  \ref{as_projection}, \ref{as_relations}. Indeed, if $a\in\mathfrak{A}_k$ then
\begin{eqnarray*}
\left(\pi\left((k\;l) \right) P_ka\xi,\pi(r)\xi\right)\stackrel{\text{Lemma \ref{as_projection}}}{=}
\lim\limits_{n\to\infty}\left(\pi\left(\left(k\;l \right) \right)\pi((k\;n))a\xi,\pi(r)\xi \right)\\=
\lim\limits_{n\to\infty}\left(\pi\left(\left(k\;n \right) \right)\pi((l\;n))a\xi,\pi(r)\xi \right)\stackrel{\text{Lemma \ref{as_relations}}}{=}\lim\limits_{n\to\infty}\left(\pi\left(\left(k\;n \right) \right)a\xi,\pi(r)\xi \right)\\=\left(P_k a\xi,\pi(r)\xi \right)\;\text{ for all }\;r\in R_\infty.
\end{eqnarray*}
Thus $\pi\left((k\;l) \right) P_ka\xi=P_k a\xi$ for all $l$. Together with Lemma \ref{as_relations} the latter implies that $\pi\left(s \right) P_ka\xi=P_ka\xi$ for all $s\in\mathfrak{S}_\infty$. Hence, using Proposition \ref{Prop_spherical}, we obtain that $P_kaP_k\xi=P_ka\xi=EP_kaE\xi =\zeta E \xi=\zeta\xi$, where $\zeta\in\mathbb{C}$. Since $\xi$ is separable for $P_k\mathfrak A_k P_k$ by $a)$, it follows that $P_kaP_k=\zeta P_k$, which finishes the proof.
\end{proof}
%Using similar arguments, one can prove the next statement (see also Proposition 7 in \cite{Dudko_Nes_2009}).
%\begin{Prop}\label{multiplicativsty_prop_21}
%Let $f$ be a positive definite function on the semigroup  $R_\infty$. Suppose that $f$ satisfies the next conditions:
%\begin{itemize}
%  \item [{\rm a)}] $f\left( sr\right)=f(rs)$ for all $s\in\mathfrak{S}_\infty$ and $r\in R_\infty$;
%  \item [{\rm b)}]  $f \left(r_1\,r_2 \right)=f\left( r_1\right)\,f\left( r_2\right)$ for all $r_1,r_2\in R_\infty$ such that ${\rm supp}\,r_1$ $\cap\,{\rm supp}\,r_2=\emptyset$.
%\end{itemize}
%Then the GNS-representation, corresponding to $f$, is a factor-representation.
%\end{Prop}
\subsection{Examples of $\mathfrak{S}_\infty$-admissible representations.}
\noindent Let us now construct examples of irreducible $\mathfrak{S}_\infty$-admissible representations of the semigroup $R_\infty$.

As a simple example, given a tame representation $\pi$ of $\mathfrak{S}_\infty$, we define a representation $\widetilde{\pi}$ of $R_\infty$ as follows
\begin{eqnarray*}
\widetilde{\pi}(r)=\left\{
\begin{array}{rl}
\pi(s), &\text{ if } r=s\in\mathfrak{S}_\infty\\
0, &\text{ if } r=\epsilon_{\{k\}}.
\end{array}\right.
\end{eqnarray*} It is clear from definitions that $\widetilde{\pi}$ is admissible and it is irreducible whenever $\pi$ is irreducible.

To construct more complicated examples, consider the vector space $V=\mathbb{C}^m$ with the natural inner product.  Fix the unit vector $u\in V$  For any natural $n$ define the isometrical embedding $V^{\otimes n}\stackrel{\mathfrak{i}_{n\,n+1}}{\mapsto}V^{\otimes (n+1)}$ by $\mathfrak{i}_{n\,n+1}(v)=v\otimes u$. Now we identify $V^{\otimes n}$ with $\mathfrak{i}_{n\,n+1}\left( V^{\otimes n}\right)$, and denote by $V^{\otimes\infty}$ the closure of $\cup_{n=1}^\infty V^{\otimes n}$ with respect to the topology, which defined by the standard inner product on each $V^{\otimes n}$. Fix the unit vector $w\in V$ and denote by $p_w$ the orthogonal projection onto the line $\mathbb{C}w$.

Let the operators $\mathfrak{R}\left( \epsilon_{\{n\}} \right)$ and $\mathfrak{R}\left( s\right)$ $\left( n\in\mathbb{N}, s\in\mathfrak{S}_\infty\right)$ act in $V^{\otimes\infty}$ as follows
\begin{eqnarray}\label{representation_mathfrak_R}
\begin{split}
&\mathfrak{R}\left( \epsilon_{\{n\}} \right)\left( v_1\otimes v_2\otimes\cdots\right)=w_1\otimes w_2\otimes\cdots, \text{ where } w_l=\left\{
\begin{array}{rl}
v_l, &\text{ if } l\neq n\\
p_w v_n, &\text{ if } l=n
\end{array}
\right.;\\
&\mathfrak{R}(s)\left( v_1\otimes v_2\otimes\cdots\right)=v_{s^{-1}(1)}\otimes v_{s^{-1}(2)}\otimes\cdots.
\end{split}
\end{eqnarray}
\begin{Prop}
The assignment $\mathfrak S_\infty\cup\{\epsilon_{\{n\}}:n\in\mathbb N\}\ni r\mapsto \mathfrak{R}(r)$ extends by multiplicativity to a $\mathfrak{S}_\infty$-admissible representation of $R_\infty$.
\end{Prop}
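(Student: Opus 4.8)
The plan is to show that, under $\mathfrak{R}$, the generators $\mathfrak{S}_\infty\cup\{\epsilon_{\{n\}}\}$ satisfy exactly the relations that govern the multiplication in $R_\infty$, so that $\mathfrak{R}$ extends to a $*$-representation on $V^{\otimes\infty}$, and then to deduce admissibility from the fixed-vector characterization of tameness in Proposition \ref{prop_tame_fix}.

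First I would settle well-definedness and boundedness. The formulas in \eqref{representation_mathfrak_R} are prescribed on the simple tensors $v_1\otimes\cdots\otimes v_N\otimes u\otimes u\otimes\cdots$ spanning the dense subspace $\bigcup_N V^{\otimes N}$. Each $\mathfrak{R}(s)$ merely permutes tensor slots, hence preserves inner products of simple tensors and is unitary; each $\mathfrak{R}(\epsilon_{\{n\}})$ applies the orthogonal projection $p_w$ onto $\mathbb C w$ in the $n$-th slot, hence is a self-adjoint idempotent of norm one. Since a finite permutation rearranges only finitely many slots and $\epsilon_{\{n\}}$ touches a single slot, both operators send finite vectors to finite vectors compatibly with the embeddings $\mathfrak{i}_{N,N+1}$, and therefore extend by continuity to bounded operators on $V^{\otimes\infty}$.

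Next I would verify the defining relations. Because $p_w$ is a projection, $\mathfrak{R}(\epsilon_{\{n\}})^2=\mathfrak{R}(\epsilon_{\{n\}})=\mathfrak{R}(\epsilon_{\{n\}})^*$; because distinct slots are independent, $\mathfrak{R}(\epsilon_{\{n\}})\mathfrak{R}(\epsilon_{\{m\}})=\mathfrak{R}(\epsilon_{\{m\}})\mathfrak{R}(\epsilon_{\{n\}})$ and more generally $\mathfrak{R}(\epsilon_{\mathbb{B}})\mathfrak{R}(\epsilon_{\mathbb{C}})=\mathfrak{R}(\epsilon_{\mathbb{B}\cup\mathbb{C}})$; and relabelling of slots by a permutation gives $\mathfrak{R}(s)\mathfrak{R}(\epsilon_{\{n\}})\mathfrak{R}(s)^{-1}=\mathfrak{R}(\epsilon_{\{s(n)\}})$. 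Together with the fact that $s\mapsto\mathfrak{R}(s)$ is a unitary representation of $\mathfrak{S}_\infty$, these are precisely the relations governing $R_\infty$. Using the unique normal form $r=s_r\epsilon_{\mathbb{A}_r}$, I set $\mathfrak{R}(r)=\mathfrak{R}(s_r)\prod_{k\in\mathbb{A}_r}\mathfrak{R}(\epsilon_{\{k\}})$ and check multiplicativity from $r_1r_2=(s_{r_1}s_{r_2})\,\epsilon_{s_{r_2}^{-1}(\mathbb{A}_{r_1})\cup\mathbb{A}_{r_2}}$, commuting $\mathfrak{R}(\epsilon_{\mathbb{A}_{r_1}})$ past $\mathfrak{R}(s_{r_2})$ via the conjugation relation and merging idempotents; the identity $\mathfrak{R}(r^\star)=\mathfrak{R}(r)^*$ follows the same way from $r^\star=s_r^{-1}\epsilon_{s_r(\mathbb{A}_r)}$, with $\mathfrak{R}(e)=I$ automatic.

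Finally, for admissibility I would invoke Definition \ref{admissible} together with Proposition \ref{prop_tame_fix}. For $s\in\mathfrak{S}_{n\infty}$ and any embedded vector $\eta=v_1\otimes\cdots\otimes v_n\otimes u\otimes u\otimes\cdots$, the permutation $s$ only rearranges tail slots all equal to $u$, so $\mathfrak{R}(s)\eta=\eta$; hence $V^{\otimes n}\subset H_{\mathfrak{R}}(n)$. Since $\bigcup_n V^{\otimes n}$ is dense in $V^{\otimes\infty}$, the closure of $\bigcup_n H_{\mathfrak{R}}(n)$ is all of $V^{\otimes\infty}$, and Proposition \ref{prop_tame_fix} yields that $\mathfrak{R}|_{\mathfrak{S}_\infty}$ is tame, i.e. $\mathfrak{R}$ is $\mathfrak{S}_\infty$-admissible. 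The only step carrying genuine content is the compatibility of the generating relations with the normal form, namely confirming that the concrete operators reproduce the semigroup product $r_1r_2$ through $s_r\epsilon_{\mathbb{A}_r}$ and not merely on generators; everything else is routine bookkeeping on finite tensors.
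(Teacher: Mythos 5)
Your admissibility argument is fine, and your overall strategy (verify relations on generators, then extend via the normal form $r=s_r\epsilon_{\mathbb{A}_r}$) is the right one — the paper leaves this proof to the reader — but as written there is a genuine gap at exactly the step you dismiss as settled. First, the normal form is \emph{not} unique: $\mathbb{A}_r=\mathbb{N}\setminus\mathcal{D}(r)$ is determined by $r$, but $s_r$ is determined only up to right multiplication by elements of $\mathfrak{S}_{\mathbb{A}_r}$ (e.g. $\epsilon_{\{1,2\}}=(1\;2)\,\epsilon_{\{1,2\}}$), so your assignment $\mathfrak{R}(r)=\mathfrak{R}(s_r)\prod_{k\in\mathbb{A}_r}\mathfrak{R}(\epsilon_{\{k\}})$ is not obviously well defined. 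Second, the relations you list — idempotency, commutation and merging of the $\mathfrak{R}(\epsilon_{\{n\}})$, and $\mathfrak{S}_\infty$-equivariance — are \emph{not} ``precisely the relations governing $R_\infty$'': the presentation of $R_\infty$ (Popova \cite{Popova}, recalled in the paper in the proof of Theorem \ref{semifinite_repr}) contains the additional absorption relation $\epsilon_{\{1\}}s_1\epsilon_{\{1\}}s_1=s_1\epsilon_{\{1\}}s_1\epsilon_{\{1\}}=\epsilon_{\{1\}}s_1\epsilon_{\{1\}}$, equivalently $\epsilon_{\mathbb{A}}\,t=\epsilon_{\mathbb{A}}$ for $t\in\mathfrak{S}_{\mathbb{A}}$, and this is not a consequence of the relations you checked. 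This is not pedantry: if in \eqref{representation_mathfrak_R} you replace $p_w$ by an orthogonal projection of rank $\geq 2$, every relation on your list still holds, yet the assignment does not extend to $R_\infty$, since then $\mathfrak{R}(\epsilon_{\{1\}})\mathfrak{R}(\epsilon_{\{2\}})\mathfrak{R}((1\;2))$ sends $v_1\otimes v_2\otimes u\otimes\cdots$ to $pv_2\otimes pv_1\otimes u\otimes\cdots\neq pv_1\otimes pv_2\otimes u\otimes\cdots$ although $\epsilon_{\{1,2\}}(1\;2)=\epsilon_{\{1,2\}}$ in $R_\infty$. So your ``routine bookkeeping'' conceals the only place where the rank-one nature of $p_w$ is actually used.

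The fix is short: verify that $\mathfrak{R}(t)\prod_{k\in\mathbb{A}}\mathfrak{R}(\epsilon_{\{k\}})=\prod_{k\in\mathbb{A}}\mathfrak{R}(\epsilon_{\{k\}})$ for all $t\in\mathfrak{S}_{\mathbb{A}}$, which holds here because every vector in the range of $\prod_{k\in\mathbb{A}}p_w^{(k)}$ has each slot indexed by $\mathbb{A}$ lying in the line $\mathbb{C}w$, hence is invariant under permutations of those slots (this is the exact analogue of part {\rm ii)} of Lemma \ref{as_relations} in the abstract setting). With this relation in hand, your assignment is independent of the choice of $s_r$, your product computation $r_1r_2=(s_{r_1}s_{r_2})\,\epsilon_{s_{r_2}^{-1}(\mathbb{A}_{r_1})\cup\mathbb{A}_{r_2}}$ establishes multiplicativity, the involution check goes through as you wrote it, and your tameness argument via Proposition \ref{prop_tame_fix} (embedded $V^{\otimes n}$ lies in $H_{\mathfrak{R}}(n)$, and $\bigcup_n V^{\otimes n}$ is dense) correctly yields $\mathfrak{S}_\infty$-admissibility.
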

%\subsubsection{The $\mathfrak{S}_\infty$-spherical components of $\mathfrak{R}$.}\label{sph_repr_components}
\noindent Let $\xi=u\otimes u\otimes u\otimes\cdots$.
Denote by $\left[ \mathfrak{R}\left( R_\infty\right)\xi\right]$ the closure of the span of  set $\mathfrak{R}\left( R_\infty\right)\xi\subset V^{\otimes\infty}$. The following statement is straightforward.
\begin{Prop}\label{prop_non_zero_lambda}
Let $\pi_{u,w}$ be the restriction of representation $\mathfrak{R}$  to the subspace $\left[ \mathfrak{R}\left( R_\infty\right)\xi\right]$. Then  $\pi_{u,w}$ is irreducible. The representations $\pi_{u_1,w_1}$ and $\pi_{u_2,w_2}$ are unitary equivalent if and only if $\left| \left( u_1,w_1\right) \right|=\left| \left( u_2,w_2\right) \right|$.
\end{Prop}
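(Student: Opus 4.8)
The plan is to compute everything in the concrete model in terms of $\xi=u\otimes u\otimes\cdots$ and the scalar $\lambda=(u,w)$ (note $|\lambda|\le 1$). For a finite set $\mathbb A\subset\mathbb N$ let $e_{\mathbb A}$ denote the stabilized tensor carrying $w$ in the positions of $\mathbb A$ and $u$ in all other positions, so that $e_\emptyset=\xi$. From \eqref{representation_mathfrak_R} one reads off $\mathfrak R(s)e_{\mathbb A}=e_{s(\mathbb A)}$ and $\mathfrak R(\epsilon_{\{b\}})e_{\mathbb A}=e_{\mathbb A}$ for $b\in\mathbb A$, while $\mathfrak R(\epsilon_{\{b\}})e_{\mathbb A}=\lambda\,e_{\mathbb A\cup\{b\}}$ for $b\notin\mathbb A$. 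Writing any $r\in R_\infty$ as $r=s_r\epsilon_{\mathbb A_r}$ with $\mathbb A_r=\mathbb N\setminus\mathcal D(r)$ gives $\mathfrak R(r)\xi=\lambda^{|\mathbb A_r|}e_{s_r(\mathbb A_r)}$, whence $K:=\bigl[\mathfrak R(R_\infty)\xi\bigr]=\overline{\operatorname{span}}\{e_{\mathbb A}\}$ and the spherical function is $\varphi_{u,w}(r)=(\mathfrak R(r)\xi,\xi)=\lambda^{|\mathbb A_r|}\overline\lambda^{\,|\mathbb A_r|}=|\lambda|^{2|\mathbb A_r|}$. In particular $\varphi_{u,w}$ depends only on $|\lambda|=|(u,w)|$ and takes the value $|\lambda|^2$ at $r=\epsilon_{\{1\}}$. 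If $\lambda=0$ then $K=\mathbb C\xi$ and the representation is the trivial one, so we may assume $\lambda\ne0$.

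Next I would prove $\dim H^{\rm sph}_{\pi_{u,w}}=1$. Let $E=\lim_n\mathfrak R(\mathfrak p_n)$ (strong operator topology) be the orthogonal projection onto $H^{\rm sph}_{\pi_{u,w}}$, as in Proposition \ref{Prop_spherical}(b). For $|\mathbb A|=k$ and $n\ge\max\mathbb A$ one has $\mathfrak R(\mathfrak p_n)e_{\mathbb A}=\binom{n}{k}^{-1}\sum_{\mathbb B\subset\{1,\dots,n\},\,|\mathbb B|=k}e_{\mathbb B}$. Using the Gram relations $(e_{\mathbb B},e_{\mathbb B'})=|\lambda|^{2(k-|\mathbb B\cap\mathbb B'|)}$ for $|\mathbb B|=|\mathbb B'|=k$, together with the binomial count of pairs with a prescribed intersection, a direct asymptotic estimate gives $\|\mathfrak R(\mathfrak p_n)e_{\mathbb A}\|^2\to|\lambda|^{2k}$, while $(\mathfrak R(\mathfrak p_n)e_{\mathbb A},\xi)=\overline\lambda^{\,k}$ for every such $n$. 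Passing to the norm-limit $E e_{\mathbb A}$ we obtain $\|E e_{\mathbb A}\|=|\lambda|^k=|(Ee_{\mathbb A},\xi)|$, so the equality case of Cauchy--Schwarz forces $E e_{\mathbb A}=\overline\lambda^{\,k}\xi$. Since $\{e_{\mathbb A}\}$ is total in $K$ and $E$ is bounded, $E K\subset\mathbb C\xi$, i.e. $H^{\rm sph}_{\pi_{u,w}}=\mathbb C\xi$.

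Irreducibility is then immediate: if $T\in\pi_{u,w}(R_\infty)'$, then $T\xi$ is $\mathfrak S_\infty$-fixed, hence $T\xi=c\xi$ for some $c\in\mathbb C$, and because $\xi$ is cyclic and $T\pi_{u,w}(r)\xi=\pi_{u,w}(r)T\xi=c\,\pi_{u,w}(r)\xi$, we get $T=cI$. For the equivalence criterion, if $|(u_1,w_1)|=|(u_2,w_2)|$ then $\varphi_{u_1,w_1}=\varphi_{u_2,w_2}$ by the formula above, and since each $\pi_{u_i,w_i}$ is the GNS representation of $\varphi_{u_i,w_i}$ with cyclic vector $\xi_i$, uniqueness of the GNS construction provides a unitary equivalence. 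Conversely, an intertwining unitary $U$ maps $H^{\rm sph}$ to $H^{\rm sph}$; as both are one-dimensional, $U\xi_1=c\xi_2$ with $|c|=1$, whence $\varphi_{u_1,w_1}=\varphi_{u_2,w_2}$, and evaluation at $\epsilon_{\{1\}}$ yields $|(u_1,w_1)|=|(u_2,w_2)|$.

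The only nonroutine point is the limit computation above: one must verify that after normalization by $\binom{n}{k}^{-2}$ only the pairs $(\mathbb B,\mathbb B')$ with $\mathbb B\cap\mathbb B'=\emptyset$ contribute to $\lim_n\|\mathfrak R(\mathfrak p_n)e_{\mathbb A}\|^2$, every intersecting pair being suppressed by a factor $O(n^{-1})$. Granting this, the rest is bookkeeping with the explicit action \eqref{representation_mathfrak_R}.
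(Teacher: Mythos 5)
The paper gives no proof of this proposition (it is labelled straightforward), and your argument is correct and is precisely the verification the surrounding setup suggests: the explicit action \eqref{representation_mathfrak_R} on the vectors $e_{\mathbb A}$, the averaging projections $\mathfrak p_n$ from Proposition \ref{Prop_spherical}{\rm (b)}, one-dimensionality of the spherical subspace, and GNS uniqueness. Your key limit checks out: the pairs $(\mathbb B,\mathbb B')$ with $\#(\mathbb B\cap\mathbb B')=j$ number $\binom{n}{k}\binom{k}{j}\binom{n-k}{k-j}$, so after the $\binom{n}{k}^{-2}$ normalization the $j$-th contribution is $O(n^{-j})$, giving $\|\mathfrak R(\mathfrak p_n)e_{\mathbb A}\|^2\to|\lambda|^{2k}$ and validating the Cauchy--Schwarz equality step $Ee_{\mathbb A}=\overline\lambda^{\,k}\xi$; the degenerate case $\lambda=0$ is also correctly split off. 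One remark: your (correct) evaluation $\varphi_{u,w}(\epsilon_{\{1\}})=(p_w u,u)=|(u,w)|^2$ shows that the normalization in the paper's subsequent approximation proposition should read $\lim_{n\to\infty} l_n/n=|(u,w)|^2$ rather than $|(u,w)|$, so your computation is consistent with, and slightly corrects, the adjacent text.
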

\subsubsection{Approximation of $\pi_{u,w}$ by irreducible representations of the finite semigroups $R_n$.} Consider the case $m=2$. In the space $V=\mathbb{C}^2$ fix basis $\left\{w,w^\perp \right\}$, where $w^\perp$ is unit vector orthogonal to $w$. Using proposition \ref{prop_non_zero_lambda}, we suppose that
\begin{eqnarray}
u=\kappa w+\sqrt{1-\kappa^2}\cdot w^\perp, \text{ where } \kappa\geq 0.
\end{eqnarray}
Denote by $\pi_{u,w}^{(n)}$ the restriction of $\pi_{u,w}$ on the semigroup $R_n$ $(n<\infty)$ in the subspace $\left[ \pi_{u,w}\left( R_n \right)\xi \right]$.

Now we will find the decomposition of $\pi_{u,w}^{(n)}$ into irreducible components.

For any subset $\mathbb{A}\subset\left\{ 1,2,\ldots, n\right\}$ we will use the symbol $\mathbf{e}_{n\mathbb{A}}$ to denote the vector $v_1\otimes v_2\otimes\cdots\otimes v_n\otimes u\otimes u\otimes\cdots\in\left[ \pi_{u,w}\left( R_n \right)\xi\right]$, where $v_j=\left\{\begin{array}{rl}
w, &\text{ if } j\in\mathbb{A}\\
w^\perp, &\text{ if } j\notin\mathbb{A}
\end{array}\right.$.
Denote by $\pi_{u,w}^{(n,\mathbb{A})} $ the restriction of $\pi_{u,w}^{(n)}$ to the subspace $\left[\pi_{u,w}^{(n)} (R_n) \mathbf{e}_{n\mathbb{A}}\right]$. Since the equality $\# \mathbb{A}=\#\mathbb{B}$, where $\mathbb{B}\subset\left\{ 1,2,\ldots, n\right\}$, implies that $\left[\pi_{u,w}^{(n)} (R_n) \mathbf{e}_{n\mathbb{A}}\right]=\left[\pi_{u,w}^{(n)} (R_n) \mathbf{e}_{n\mathbb{B}}\right]$, we will use the notation
$\pi_{u,w}^{(n,l)} $, where $l=\#\mathbb{A}$, instead $\pi_{u,w}^{(n,\mathbb{A})}$.
It is easily seen that next statement holds.
\begin{Prop}\label{finite_irr}
 $\pi_{u,w}^{(n,l)} $  is an irreducible representation of the finite semigroup $R_n$.
\end{Prop}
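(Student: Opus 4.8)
The plan is to show that the image $\pi_{u,w}^{(n,l)}(\mathbb{C}[R_n])$ is the \emph{full} matrix algebra on the representation space, which immediately forces irreducibility. First I would pin down that space together with the action on the distinguished basis. Writing each $r\in R_n$ as $r=s_r\,\epsilon_{\mathbb{A}_r}$ with $s_r\in\mathfrak{S}_n$ and $\mathbb{A}_r\subset\{1,\dots,n\}$, and using $p_w w=w$, $p_w w^\perp=0$ together with the defining formulas \eqref{representation_mathfrak_R}, one gets the clean rule
\begin{eqnarray*}
\mathfrak{R}(r)\,\mathbf{e}_{n\mathbb{B}}=\begin{cases}\mathbf{e}_{n\,s_r(\mathbb{B})},&\text{if }\mathbb{A}_r\subseteq\mathbb{B},\\ 0,&\text{otherwise,}\end{cases}
\end{eqnarray*}
for every $\mathbb{B}\subset\{1,\dots,n\}$. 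Since the $\mathbf{e}_{n\mathbb{B}}$ with $\#\mathbb{B}=\#\mathbb{A}=l$ form an orthonormal system (their first $n$ tensor legs carry the orthonormal vectors $w$ or $w^\perp$) and are permuted transitively by $\mathfrak{R}(\mathfrak{S}_n)$, the cyclic subspace $\left[\pi_{u,w}^{(n)}(R_n)\mathbf{e}_{n\mathbb{A}}\right]$ is exactly $\mathrm{span}\{\mathbf{e}_{n\mathbb{B}}:\#\mathbb{B}=l\}$, of dimension $\binom{n}{l}$.

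The \emph{crucial} step is the observation that the partial identities separate this basis. For a fixed $l$-subset $\mathbb{B}_0$, the element $\epsilon_{\mathbb{B}_0}\in R_n$ acts, by the displayed rule, as the projection onto those $\mathbf{e}_{n\mathbb{B}}$ with $\mathbb{B}_0\subseteq\mathbb{B}$; but among $l$-subsets the only such $\mathbb{B}$ is $\mathbb{B}_0$ itself. Hence $\mathfrak{R}(\epsilon_{\mathbb{B}_0})$ restricts to the rank-one orthogonal projection onto $\mathbb{C}\,\mathbf{e}_{n\mathbb{B}_0}$. This is the heart of the argument, and the point I expect to carry all the weight: it is precisely the deletion operators $\epsilon_{\mathbb{B}_0}$, absent in the group $\mathfrak{S}_n$ whose permutation module on $l$-subsets is highly reducible, that collapse a whole $R_n$-orbit down to a single line.

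It then remains to upgrade these rank-one projections to all matrix units. Given two $l$-subsets $\mathbb{B}_0,\mathbb{B}_1$, I would choose $s\in\mathfrak{S}_n$ with $s(\mathbb{B}_0)=\mathbb{B}_1$; the element $s\,\epsilon_{\mathbb{B}_0}\in R_n$ then maps under $\mathfrak{R}$, by multiplicativity, to the matrix unit sending $\mathbf{e}_{n\mathbb{B}_0}$ to $\mathbf{e}_{n\mathbb{B}_1}$ and annihilating the remaining basis vectors. Consequently $\pi_{u,w}^{(n,l)}(\mathbb{C}[R_n])$ contains every matrix unit and therefore equals $\mathrm{End}\!\big(\mathrm{span}\{\mathbf{e}_{n\mathbb{B}}:\#\mathbb{B}=l\}\big)$, so the representation is irreducible. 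Beyond the second step I anticipate no obstacle: the action rule and the orthonormality are immediate from the definitions, and the whole argument is uniform in the parameter $\kappa$, since $p_w$ acts the same way on $w$ and $w^\perp$ irrespective of $\kappa$.
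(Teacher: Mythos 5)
Your proof is correct: the action rule $\mathfrak{R}(s_r\epsilon_{\mathbb{A}_r})\mathbf{e}_{n\mathbb{B}}=\mathbf{e}_{n\,s_r(\mathbb{B})}$ for $\mathbb{A}_r\subseteq\mathbb{B}$ (and $0$ otherwise) holds, each $\mathfrak{R}(\epsilon_{\mathbb{B}_0})$ is indeed the rank-one projection onto $\mathbb{C}\,\mathbf{e}_{n\mathbb{B}_0}$ within the $l$-subset span, and combining with the transitive $\mathfrak{S}_n$-action yields all matrix units, so the image is the full matrix algebra $M_{\binom{n}{l}}(\mathbb{C})$ and irreducibility follows; the independence from $\kappa$ is also right, since elements of $R_n$ do not touch the tail $u\otimes u\otimes\cdots$. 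The paper offers no proof (the proposition is stated as ``easily seen''), and your matrix-unit argument is exactly the verification the authors evidently had in mind.
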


Set $\mathbf{e}_{nl}^{sph}= {{n}\choose{l}}^{-1/2}\sum\limits_{\mathbb{A}:\#\mathbb{A}=l}\mathbf{e}_{n\mathbb{A}}$. By definition, $\left\|\mathbf{e}_{nl}^{sph} \right\|=1$ and
\begin{eqnarray*}
\pi_{u,w}(s)\mathbf{e}_{nl}^{sph}=\mathbf{e}_{nl}^{sph} \text{ for all } s\in\mathfrak{S}_n.
\end{eqnarray*}
Hence, using lemma \ref{Gelfand_pair} and proposition \ref{finite_irr}, we have
\begin{eqnarray*}
\left\{\eta\in \left[\pi_{u,w}^{(n)} (R_n) \mathbf{e}_{n\mathbb{A}}\right]:\pi_{u,w}(s)\eta=\eta ~\text{for all }~ s\in\mathfrak{S}_n\right\}=\left\{c\cdot \mathbf{e}_{nl}^{sph} \right\}_{c\in\mathbb{C}},
\end{eqnarray*}
where $\# A=l$.
An easy computation shows that
\begin{eqnarray*}
\left(\pi_{u,w}^{(n,l)}\left(\epsilon_\mathbb{B} \right)\mathbf{e}_{nl}^{sph}, \mathbf{e}_{nl}^{sph} \right)=\left\{\begin{array}{rl}
\frac{l(l-1)\cdots(l-\#\mathbb{B}+1)}{n(n-1)\cdots(n-\#\mathbb{B}+1)}, &\text{ if } \#\mathbb{B}\leq l\\
0, &\text{ if } \#\mathbb{B}> l.
\end{array}\right.
\end{eqnarray*}
Hence we obtain the next result.
\begin{Prop}
Fix any sequence $l_n$ such that $\lim\limits_{n\to\infty}l_n/n=\kappa=|(u,w)|$.
Then the equality $\left(\pi_{u,w}(r)\xi,\xi \right)=\lim\limits_{n\to\infty}\left(\pi_{u,w}^{(n,l_n)}\left(r \right)\mathbf{e}_{nl_n}^{sph}, \mathbf{e}_{nl_n}^{sph} \right)$ holds for each $r\in R_\infty$.
\end{Prop}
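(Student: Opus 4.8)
The plan is to prove that the states $f_n(\cdot)=\bigl(\pi_{u,w}^{(n,l_n)}(\cdot)\,\mathbf{e}_{nl_n}^{sph},\mathbf{e}_{nl_n}^{sph}\bigr)$ converge pointwise on $\mathbb{C}[R_\infty]$ to $f(\cdot)=(\pi_{u,w}(\cdot)\xi,\xi)$. First I would record the structure of both sides. Because $\mathbf{e}_{nl_n}^{sph}$ is $\mathfrak S_n$-invariant, each $f_n$ is an $\mathfrak S_n$-central state on $\mathbb{C}[R_n]$; and $f\in\mathfrak F_{\mathfrak S}$ is $\mathfrak S_\infty$-central and, by Proposition \ref{mult}, multiplicative on elements with disjoint supports, while by Theorem \ref{decomposition_into_product_cycles} every $r\in R_\infty$ is a product of pairwise disjoint cycles and quasi-cycles. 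Consequently it suffices to establish: (a) $\lim_n f_n(r)$ exists for every $r$; (b) the resulting functional $f_\infty$ is $\mathfrak S_\infty$-central and multiplicative over disjoint supports; and (c) $f_\infty$ and $f$ take the same value on every cycle and every quasi-cycle $q=c\,\epsilon_{\{a\}}$. Indeed, (b) together with centrality reduces the value of each functional on an arbitrary $r$ to the product of its values on the disjoint cycles and quasi-cycles occurring in $r$, so (c) then forces $f_\infty=f$.

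For (a) and the centrality in (b): fix $r$ with ${\rm supp}\,r\subset\{1,\dots,N\}$. For $n>N$, $\mathfrak S_n$-centrality lets me replace $r$ by any $\mathfrak S_n$-conjugate, so $f_n(r)$ depends only on the cycle/quasi-cycle type of $r$, and equals the combinatorial matrix element computed for $\epsilon_{\mathbb B}$ in the preceding Proposition together with its analogue for genuine quasi-cycles. Each such quantity is a ratio of falling factorials in $l_n$ and $n$, which converges as $n\to\infty$ under $l_n/n\to\kappa=|(u,w)|$. Since $|f_n(r)|\le 1$ and the limits exist, $f_\infty$ is a well-defined functional with $f_\infty(e)=1$, and it is positive definite as a pointwise limit of positive-definite functions, hence a state; $\mathfrak S_\infty$-centrality passes to the limit because any conjugation by a fixed $s\in\mathfrak S_\infty$ lies in $\mathfrak S_n$ for $n$ large.

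The multiplicativity in (b) is where I expect the main difficulty. Given $r_1,r_2$ with disjoint supports inside $\{1,\dots,N\}$, I would use $\mathfrak S_n$-centrality to conjugate $r_2$ by some $s_n\in\mathfrak S_n$ that moves its support far beyond that of $r_1$, leaving $f_n$ unchanged; then $r_1$ and $s_nr_2s_n^{-1}$ act on disjoint blocks of tensor slots. Since $\mathbf{e}_{nl_n}^{sph}$ is exchangeable (invariant under all of $\mathfrak S_n$), disjoint finite blocks become asymptotically independent as $n\to\infty$ with $l_n/n\to\kappa$; at the level of the explicit formula, the falling-factorial ratio for $r_1\,s_nr_2s_n^{-1}$ splits into the product of those for $r_1$ and $r_2$ up to an error that vanishes. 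This is precisely the Powers-type asymptotic factorization cited before Proposition \ref{mult}, realized here inside the tensor model, and it yields $f_\infty(r_1r_2)=f_\infty(r_1)f_\infty(r_2)$.

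For (c): on a cycle $c\in\mathfrak S_n$ the identity $\pi_{u,w}(c)\mathbf{e}_{nl_n}^{sph}=\mathbf{e}_{nl_n}^{sph}$ gives $f_n(c)=1$, while $f(c)=\chi_{\alpha\beta}(c)=1$ since $\pi_{u,w}(s)\xi=\xi$ forces $f|_{\mathfrak S_\infty}$ to be the trivial character (so $\alpha_1=1$ and all remaining Thoma parameters vanish); the two agree. On a quasi-cycle $q=c\,\epsilon_{\{a\}}$ I would repeat the combinatorial computation behind the $\epsilon_{\mathbb B}$ formula: the factor $\epsilon_{\{a\}}$ forces $w$ into the affected slot while $c$ permutes the slots, so summing over the subsets $\mathbb A$ again produces a ratio of falling factorials whose limit under $l_n/n\to\kappa$ coincides with $f(q)=(\pi_{u,w}(q)\xi,\xi)$, computed directly in the tensor model. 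The delicate point is the bookkeeping of which tensor slots carry $w$ and which carry $w^\perp$ and how the cycle redistributes them, but this is a finite computation. Combining (a)--(c) with the reduction of the first paragraph gives $f_\infty=f$, as required.
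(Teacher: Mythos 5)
Your route --- extract a limit state, establish Powers-type asymptotic factorization, and match values on cycles and quasi-cycles --- is structurally plausible, but it misses the feature that makes the paper's argument immediate, and it leaves its own decisive steps unproven. Both states are not merely $\mathfrak{S}$-central but \emph{two-sided} invariant: $\pi_{u,w}(s)\xi=\xi$ for all $s\in\mathfrak{S}_\infty$ (since $\xi=u\otimes u\otimes\cdots$) and $\pi_{u,w}^{(n,l_n)}(s)\,\mathbf{e}_{nl_n}^{sph}=\mathbf{e}_{nl_n}^{sph}$ for all $s\in\mathfrak{S}_n$. Since every $r\in R_n$ is of the form $r=s_r\,\epsilon_{\mathbb{A}_r}$ with $s_r\in\mathfrak{S}_n$ for $n$ large, two-sided invariance gives at once $f_n(r)=f_n(\epsilon_{\mathbb{A}_r})$ and $f(r)=f(\epsilon_{\mathbb{A}_r})$, so the displayed falling-factorial formula for $\epsilon_{\mathbb{B}}$ already determines both sides on all of $R_\infty$, and the proposition follows by passing to the limit in that single formula --- this is the paper's entire (implicit) proof. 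Consequently your step (b), the asymptotic-independence argument, replaces something that costs nothing; and as written it is only asserted (``up to an error that vanishes''), with no estimate connecting $f_n$ at a general disjoint product to the falling-factorial values --- without bi-invariance you have no handle on $f_n$ at elements that are not $\mathfrak{S}_n$-conjugates of some $\epsilon_{\mathbb{B}}$.

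The genuinely fatal gap is in your step (c). You defer the quasi-cycle matching to ``a finite computation,'' but that computation \emph{is} the content of the proposition, and it is not bookkeeping. On the infinite side, for any quasi-cycle $q=c\,\epsilon_{\{a\}}$ the one projected slot contributes $(p_w u,u)=|(u,w)|^2$, so $\left(\pi_{u,w}(q)\xi,\xi\right)=|(u,w)|^{2}$ independently of the length of $c$; on the finite side, two-sided invariance gives $f_n(q)=f_n(\epsilon_{\{a\}})=l_n/n$. So the identity your deferred computation must decide is $\lim_n l_n/n=|(u,w)|^{2}$ per occupied slot --- precisely the question of how the parameter $\kappa$ is normalized (note $u=\kappa w+\sqrt{1-\kappa^2}\,w^\perp$ gives $(p_wu,u)=\kappa^2$, against $\lim l_n/n=\kappa$ in the hypothesis), and it is the only step of the whole argument that can fail. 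A proof that waves at it has not proven the statement. Your items (a) (existence of the limits, positivity and centrality of $f_\infty$) and the value $1$ on cycles for both states are fine, but they are the easy part.
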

\subsubsection{ $\mathfrak{S}_\infty$-admissible irreducible components of the representation $\mathfrak{R}$% (see \eqref{representation_mathfrak_R})
.}
First, we will consider the case $|(u,w)|=\alpha >0$. Fix a partition $\lambda $ of $k\leq m-2$.
 Let $\left\{ e_1,e_2,\ldots, e_m\right\}$ be the  orthonormal basis for $V$, and let $W_k$ be the subspace in $V$ generated by $\left\{ e_1,e_2,\ldots, e_k\right\}$. Denote by $W_k^\perp$ the orthogonal complement  of $W_k$. Suppose that the unit vectors $u$ and $w$ lie in  $W_k^\perp$. Let $\xi_k=e_1\otimes e_2\otimes\cdots\otimes e_k\otimes u\otimes u \otimes\cdots$. Now we notice that  the restriction of $\mathfrak{R}$ to the group $\mathfrak{S}_k$ in $\left[\mathfrak{R}\left( \mathfrak{S}_k\right)\xi_k\right]$ is unitary equivalent to the regular representation of $\mathfrak{S}_k$. Therefore, there exists an $\mathfrak{R}\left( \mathfrak{S}_k\right)$-invariant subspace $H_\lambda\subset \left[\mathfrak{R}\left( \mathfrak{S}_k\right)\xi_k\right]$ such that the restriction of $\mathfrak{R}$ to $\mathfrak{S}_k$ in $H_\lambda$ is the irreducible representation corresponding to $\lambda$. Denote by $\pi_{u, w}^{\alpha ,\lambda}$ the restriction of $\mathfrak{R}$ to the subspace $\left[ \mathfrak{R}\left( R_\infty\right)H_\lambda\right]$.
 \begin{Prop}\label{prop_zero_lambda}
 The representation $\pi_{u,w}^{\alpha ,\lambda}$ is irreducible.
 \end{Prop}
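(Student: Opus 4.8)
The plan is to show that the commutant $\pi(R_\infty)'$ of $\pi:=\pi^{\alpha,\lambda}_{u,w}$ on $\mathcal H:=\left[\mathfrak R\left(R_\infty\right)H_\lambda\right]$ reduces to scalars. First I would identify the asymptotic projections of Lemma \ref{as_projection}. A direct computation on elementary tensors shows that $P_n=\lim_{m\to\infty}\mathfrak R\left((n\;m)\right)$ is the orthogonal projection $p_u^{(n)}$ onto $u$ in the $n$-th tensor factor, exactly as $\mathfrak R\left(\epsilon_{\{n\}}\right)=p_w^{(n)}$ is the projection onto $w$. Since both lie in $\pi(R_\infty)''$ and, in the generic case $\alpha=\left|(u,w)\right|\in(0,1)$, the vectors $u,w$ span a two-dimensional subspace $U\subset W_k^\perp$, the algebra $\mathfrak A_n$ of Proposition \ref{Prop_minimal} generated by $p_u^{(n)}$ and $p_w^{(n)}$ contains the full matrix algebra $\mathcal B(U)$ acting in the $n$-th factor (and scalars on $U^\perp$). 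Thus $\pi(R_\infty)''$ contains all these site algebras $\mathcal B(U)^{(n)}$ together with $\pi\left(\mathfrak S_\infty\right)$. The degenerate case $\alpha=1$, where $p_u=p_w$, is treated separately; there the colored-site structure below together with $\mathfrak S_\infty$ still suffices.

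Next I would describe $\mathcal H$ explicitly through a conservation law. Because $w\perp W_k$, each $\mathfrak R\left(\epsilon_{\{n\}}\right)$ annihilates any factor lying in $W_k$, while permutations only relocate factors; consequently every nonzero vector in the orbit $\mathfrak R\left(R_\infty\right)\xi_k$ carries exactly $k$ ``colored'' factors from $W_k$. Hence $\mathcal H$ is spanned by configurations having $k$ colored sites — on which the colors $e_1,\dots,e_k$ are arranged with the $\lambda$-symmetry inherited from $H_\lambda$ — and all remaining factors lying in $U$. The set of colored sites and the associated $W_k$-data are therefore a conserved quantity on $\mathcal H$, invisible to the site algebras $\mathcal B(U)^{(n)}$, which vanish on $W_k$ and preserve the colored-versus-uncolored splitting.

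With this in hand, take $T\in\pi(R_\infty)'$. I would first use that $T$ commutes with every $\mathcal B(U)^{(n)}$ and with $\pi\left(\mathfrak S_\infty\right)$ to show that $T$ preserves the reference space $\left[\mathfrak R\left(\mathfrak S_k\right)\xi_k\right]\supset H_\lambda$ of vectors whose colored sites are exactly $\{1,\dots,k\}$ and whose remaining factors all equal $u$. On this space $T$ commutes with the $\mathfrak S_k$-action, which on $H_\lambda$ is the irreducible representation $\lambda$; Schur's lemma then forces $T|_{H_\lambda}$ to be a scalar. Finally, cyclicity of $H_\lambda$ propagates this scalar to all of $\mathcal H$: using the matrix units in $\bigvee_n\mathcal B(U)^{(n)}$ and the permutations one reconstructs every spanning configuration from $H_\lambda$, modulo the conserved color data, so $T$ is globally scalar. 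This is an infinite Schur--Weyl-type duality identifying $\pi(R_\infty)'$ with $\mathrm{End}_{\mathfrak S_k}(H_\lambda)=\mathbb C$.

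The main obstacle is the last step — proving that $T$ is rigidly determined by $T|_{H_\lambda}$. The difficulty is precisely that the colored sites are invisible to the site matrix algebras, so one cannot directly manipulate them; instead one must transport the $U$-data past the colored sites using permutations and then read off the scalarity of $T$ on the $U$-tensor tail via a weak-limit clustering argument of the type used in Proposition \ref{Prop_spherical}(a), ultimately leaning on the $k=0$ irreducibility of Proposition \ref{prop_non_zero_lambda}. Controlling the cross terms that arise when colored and $U$-factors are interchanged is the delicate point.
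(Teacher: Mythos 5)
The paper states Proposition \ref{prop_zero_lambda} without any proof, so there is no argument of the authors to compare yours against; I am judging your proposal on its own merits. Your skeleton is the right one and can be turned into a complete proof: the identification $P_n=\lim_m\mathfrak{R}\left((n\;m)\right)=p_u^{(n)}$ (projection onto $\mathbb{C}u$ in the $n$-th factor, as in Lemma \ref{as_projection}), the conservation of the $k$ colored sites, Schur's lemma on $H_\lambda$, and cyclicity of $H_\lambda$. In fact the site algebras $\mathcal{B}(U)^{(n)}$, and hence your separate treatment of $\alpha=1$, are not needed at all: only the projections $P_n$ enter.

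Two steps, however, are defective as written. First, the Schur step: you claim $T$ preserves the reference space $\left[\mathfrak{R}\left(\mathfrak{S}_k\right)\xi_k\right]\supset H_\lambda$ and then apply Schur's lemma to $T|_{H_\lambda}$. But $\left[\mathfrak{R}\left(\mathfrak{S}_k\right)\xi_k\right]$ carries the $k!$-dimensional regular representation of $\mathfrak{S}_k$; if its intersection with $\mathcal{H}$ were larger than $H_\lambda$, then $T$ preserving it and commuting with $\mathfrak{S}_k$ would not force scalarity (the commutant of the regular representation is not $\mathbb{C}$), and $T$ need not even map $H_\lambda$ into itself. What you actually need, and what is true, is the exact equality $E_k\mathcal{H}=H_\lambda$, where $E_k=\prod_{n>k}P_n\in\pi(R_\infty)''$ is the strong-limit projection onto vectors having $u$ in every factor beyond the $k$-th: one checks on generators that $E_k\,\mathfrak{R}(s\,\epsilon_{\mathbb{A}})\eta$, $\eta\in H_\lambda$, vanishes unless $s$ stabilizes $\{1,\dots,k\}$ and $\mathbb{A}\cap\{1,\dots,k\}=\emptyset$, in which case it equals $\alpha^{2\#\mathbb{A}}$ times $\mathfrak{R}(s|_{\{1,\dots,k\}})\eta\in H_\lambda$. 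This computation --- the point being that the $R_\infty$-action cannot alter the $\mathfrak{S}_k$-symmetry type of the color data --- is the real content of the proof, and your write-up only gestures at it ("the $\lambda$-symmetry inherited from $H_\lambda$"). Second, the propagation step, which you flag as the main obstacle, is in fact immediate: once $T\in\pi(R_\infty)'$ satisfies $T|_{H_\lambda}=c\,\mathrm{id}$, then for every $r\in R_\infty$ and $\eta\in H_\lambda$ one has $T\pi(r)\eta=\pi(r)T\eta=c\,\pi(r)\eta$, and since $\mathcal{H}=\left[\pi(R_\infty)H_\lambda\right]$ by the very definition of $\pi_{u,w}^{\alpha,\lambda}$, boundedness of $T$ gives $T=cI$. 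Your proposed substitute --- reconstructing configurations via matrix units, a weak-limit clustering argument as in Proposition \ref{Prop_spherical}, and an appeal to Proposition \ref{prop_non_zero_lambda} --- is unnecessary, and as described it is not a proof: there are no "cross terms" to control, because no such analysis is needed. In short: right strategy, but the one lemma that genuinely requires proof is left vague, while the step you call delicate is a one-line consequence of commutation and cyclicity.
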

 Now we suppose that $(u,w)=0$. Let $\xi _k= \underset{j}{\underbrace{ w\otimes\cdots\otimes w}}\otimes e_1\otimes e_2\otimes\cdots\otimes e_k\otimes u\otimes u \otimes\cdots$. Define $H_\lambda$ and $\pi_{u,w}^{0 ,\lambda}$ same as above.
 \begin{Prop}\label{Prop_zero}
 The representation $\pi_{u,w}^{0 ,\lambda}$ is irreducible and $\pi_{u,w}^{0 ,\lambda}(\epsilon_{\{k\}})=0$ for all $k$.
 \end{Prop}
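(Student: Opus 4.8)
The plan is to treat the two assertions in sequence, deriving irreducibility \emph{from} the vanishing of the projections rather than the other way around. First I would record the decomposition $r=s_r\,\epsilon_{\mathbb{A}_r}$ together with $\epsilon_{\mathbb{A}}=\prod_{a\in\mathbb{A}}\epsilon_{\{a\}}$, so that once $\pi_{u,w}^{0,\lambda}(\epsilon_{\{a\}})=0$ is known for every $a$, the operator $\pi_{u,w}^{0,\lambda}(r)$ vanishes for every $r\in R_\infty\setminus\mathfrak{S}_\infty$ and equals $\mathfrak{R}(s_r)$ otherwise. Consequently the cyclic module satisfies $\left[\mathfrak{R}(R_\infty)H_\lambda\right]=\left[\mathfrak{R}(\mathfrak{S}_\infty)H_\lambda\right]$ and $\pi_{u,w}^{0,\lambda}(R_\infty)''=\pi_{u,w}^{0,\lambda}(\mathfrak{S}_\infty)''$ (adjoining the zero operator changes neither the cyclic span nor the generated algebra). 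Thus irreducibility of $\pi_{u,w}^{0,\lambda}$ reduces to irreducibility of its restriction to $\mathfrak{S}_\infty$, and the whole proposition hinges on the vanishing of the projections, which I would establish first.

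For that crux I would produce a closed $\mathfrak{R}(R_\infty)$-invariant subspace on which every coordinate projection is zero. Let $\mathcal{K}\subset V^{\otimes\infty}$ be the closed span of the elementary tensors $v_1\otimes v_2\otimes\cdots$ all of whose factors lie in $w^\perp$ (with $v_i=u$ for all but finitely many $i$, which is consistent because $(u,w)=0$). The defining feature of the present construction is that every tensor factor of the generator $\xi_k$ is orthogonal to $w$ --- the tail factors because $(u,w)=0$, and the head factors by the placement of $u,w$ relative to $W_k$ --- so that $H_\lambda\subset\mathcal{K}$. Invariance is then immediate from \eqref{representation_mathfrak_R}: the operators $\mathfrak{R}(s)$ only permute tensor factors and hence preserve $\mathcal{K}$, while $\mathfrak{R}(\epsilon_{\mathbb{A}})$ replaces the factors indexed by $\mathbb{A}$ with their images under $p_w$, which are $0$ on $w^\perp$; thus $\mathfrak{R}(\epsilon_{\mathbb{A}})$ maps $\mathcal{K}$ into $\{0\}\subset\mathcal{K}$. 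In particular $\left[\mathfrak{R}(R_\infty)H_\lambda\right]\subseteq\mathcal{K}$, and since $p_w$ annihilates every factor of every tensor in $\mathcal{K}$, we obtain $\pi_{u,w}^{0,\lambda}(\epsilon_{\{k\}})=0$ for all $k$. The delicate point here is precisely that a projection can only ever destroy a factor, never create a surviving component along $w$; this is what makes the property ``all factors in $w^\perp$'' stable under the full semigroup action and not merely under $\mathfrak{S}_\infty$.

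It remains to prove that $\mathfrak{R}$ restricted to $\mathfrak{S}_\infty$ acts irreducibly on $\left[\mathfrak{R}(\mathfrak{S}_\infty)H_\lambda\right]$. Here I would argue that this is exactly the irreducible tame representation of $\mathfrak{S}_\infty$ attached to the Young diagram $\lambda$: by construction $H_\lambda$ is the $\lambda$-isotypic component of the regular representation of $\mathfrak{S}_k$ realized on $\left[\mathfrak{R}(\mathfrak{S}_k)\xi_k\right]$; the stabilizing vacuum $u^{\otimes\infty}$ makes each vector $\mathfrak{R}(r)\xi_k$ invariant under $\mathfrak{S}_{n\infty}$ for $n$ large, so that the representation is tame by Proposition \ref{prop_tame_fix}; and Lieberman's classification of tame representations \cite{LIEB} (see also \cite{Olsh_tamesym}) identifies the resulting representation as the irreducible one indexed by $\lambda$. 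Irreducibility of the restriction to $\mathfrak{S}_\infty$ then yields that $\pi_{u,w}^{0,\lambda}(\mathfrak{S}_\infty)''=\pi_{u,w}^{0,\lambda}(R_\infty)''$ is all of $\mathcal{B}\!\left(\left[\mathfrak{R}(R_\infty)H_\lambda\right]\right)$, which is the desired irreducibility of $\pi_{u,w}^{0,\lambda}$.

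The main obstacle I anticipate is this last identification: verifying that the cyclic $\mathfrak{S}_\infty$-module generated by the $\lambda$-component of the finite regular representation, together with the vacuum, is genuinely the irreducible tame representation labelled by $\lambda$ and carries no extra multiplicity. The vanishing of the projections, by contrast, is a clean invariant-subspace argument once the coordinate-orthogonality of $\xi_k$ to $w$ has been recorded.
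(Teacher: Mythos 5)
Your strategy --- derive irreducibility \emph{from} the vanishing of the idempotents, and get the vanishing from the invariant subspace $\mathcal K$ of tensors all of whose factors lie in $w^\perp$ --- is reasonable, and since the paper states this proposition without any proof there is nothing to compare against directly; but your argument rests on a misreading of the generator in the $(u,w)=0$ case. The paper defines $\xi_k=\underbrace{w\otimes\cdots\otimes w}_{j}\otimes e_1\otimes\cdots\otimes e_k\otimes u\otimes u\otimes\cdots$: the first $j$ tensor factors are equal to $w$ itself, not orthogonal to it. Since $p_w w=w$, formula \eqref{representation_mathfrak_R} shows that $\mathfrak R(\epsilon_{\{i\}})$ fixes every vector of $H_\lambda$ for $i\le j$; on the cyclic span it acts as multiplication by the indicator of ``$w$ occupies position $i$''. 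Consequently $H_\lambda\not\subset\mathcal K$ whenever $j\ge 1$, the containment carrying your entire proof fails, and no argument along these lines can give $\pi^{0,\lambda}_{u,w}(\epsilon_{\{k\}})=0$ for $k\le j$. Your proof is correct exactly in the degenerate case $j=0$. For $j\ge 1$ the nontrivial action of the idempotents, cf.\ \eqref{tame_symmetrscal}, is precisely the feature these models are built to realize in the proof of Theorem \ref{main_admissible}, and there irreducibility must be handled through the imprimitivity picture over $X=\mathfrak S_\infty/\left(\mathfrak S_j\cdot\mathfrak S_{j\infty}\right)$ --- permutations move the block of $w$'s, the idempotents act as diagonal multiplication operators --- rather than by reduction to a representation of $\mathfrak S_\infty$ alone, since the von Neumann algebra is then strictly larger than $\pi^{0,\lambda}_{u,w}(\mathfrak S_\infty)''$.

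Two smaller points in the part of your argument that does work (the case $j=0$). First, $H_\lambda$ is not ``the $\lambda$-isotypic component of the regular representation of $\mathfrak S_k$'' as you write, but a single irreducible $\lambda$-subspace of it; the distinction matters, because the full isotypic component would generate $\dim\lambda$ copies of the tame irreducible representation, and irreducibility would fail whenever $\dim\lambda>1$. The multiplicity worry you raise at the end is resolved by Schur's lemma: inside $\mathbb C[\mathfrak S_k]\cong\bigoplus_\mu \mu\otimes\mu^{\ast}$, every left-irreducible subspace of type $\lambda$ has the form $\lambda\otimes\mathbb C v$ for a single multiplicity vector $v$, so its $\mathfrak S_\infty$-cyclic span lies in the $\lambda$-block of the $\ell^2$-space of injections with one-dimensional multiplicity support, and the classification of tame representations \cite{LIEB}, \cite{Olsh_tamesym} then yields irreducibility. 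Second, your reduction $\pi^{0,\lambda}_{u,w}(R_\infty)''=\pi^{0,\lambda}_{u,w}(\mathfrak S_\infty)''$ via $r=s_r\,\epsilon_{\mathbb A_r}$ is correct once the vanishing of the idempotents is available --- but, again, it is available only when $j=0$, so the proposal does not prove the proposition for the family as the paper defines it.
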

 \subsection{The classification of $\mathfrak{S}_\infty$-admissible representations of $R_\infty$.}\label{par_classification}
  Let ${\rm id}_m$ be the unit representation of $R_\infty$ in $m$-dimensional space; i .e.  ${\rm id}_m(r)={\rm I}$ for all $r\in R_\infty$, where ${\rm I}$ is the identity operator.
 Here we prove the following statement.
 \begin{Th}\label{main_admissible}
 Let $\pi$ be an $\mathfrak{S}_\infty$-admissible factor-representation of the semigroup $R_\infty$ in the Hilbert space $H_\pi$. Then there exists a representation $\pi_{u,w}^{\alpha ,\lambda }$ (see propositions \ref{prop_non_zero_lambda}, \ref{prop_zero_lambda}) such that $\pi$ is unitary equivalent to $\pi_{u,w}^{\alpha ,\lambda }\otimes {\rm id}_m$.
 \end{Th}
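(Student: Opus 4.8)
The plan is to recover $\pi$ from two pieces of data: the $\mathfrak S_\infty$-isotype, encoded by a single Young diagram $\lambda$, and the relative position (``angle'') $\alpha\in[0,1]$ of the two commuting families of projections naturally attached to every site $k$, namely $P_k=\lim_{n}\pi((k\;n))$ from Lemma~\ref{as_projection} and $Q_k:=\pi(\epsilon_{\{k\}})$. Since $\pi$ is $\mathfrak S_\infty$-admissible, $\pi|_{\mathfrak S_\infty}$ is tame (Definition~\ref{def_yP_n}), and by the Lieberman--Olshanski classification \cite{LIEB},\cite{Olsh_tamesym} it is of type~${\rm I}$ and decomposes discretely into the irreducible tame representations indexed by Young diagrams. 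Throughout I would use the covariance and commutation relations of Lemma~\ref{as_relations}: $\pi(s)P_k\pi(s)^{-1}=P_{s(k)}$, $\pi(s)Q_k\pi(s)^{-1}=Q_{s(k)}$, the $P_k$ pairwise commute, and $\pi(s)\prod_{i\in\mathbb A}P_i=\prod_{i\in\mathbb A}P_i$ for $s\in\mathfrak S_{\mathbb A}$, together with the analogous identities for the $Q_i$.

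First I would isolate $\lambda$. Using the filtration $H_\pi(1)\subset H_\pi(2)\subset\cdots$ of $\mathfrak S_{n\infty}$-fixed vectors, whose union is dense by Proposition~\ref{prop_tame_fix}, I would work at a fixed level $n$. On $H_\pi(n)$ the relevant spherical algebra is $\mathfrak p_n\mathbb C[R_n]\mathfrak p_n$, which is \emph{abelian} by Lemma~\ref{Gelfand_pair}; this Gelfand-pair commutativity is what forces the $R_\infty$-data to behave multiplicatively and, together with factorality of $\pi(R_\infty)''$, pins down a \emph{single} diagram $\lambda$ as the common $\mathfrak S_n$-type of the leading vectors. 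This $\lambda$ is precisely the datum realized by the subspace $H_\lambda$ in the models of Propositions~\ref{prop_zero_lambda} and~\ref{Prop_zero}.

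Next I would determine $\alpha$ by a local analysis of $\mathfrak A_k=\{P_k,Q_k\}''$. The key claim is that $P_kQ_kP_k=\alpha^2P_k$ for a single scalar $\alpha\in[0,1]$ independent of $k$. To prove it I would pass to a site $k$ lying in the tail relative to some level $n$, where the matrix coefficients of $\pi(R_{n\infty})$ against the $\mathfrak S_{n\infty}$-fixed vectors are two-sided $\mathfrak S_{n\infty}$-invariant; the argument of Proposition~\ref{Prop_minimal}, applied to this tail, then shows that $P_k$ is a minimal projection of $\mathfrak A_k$, whence $P_k\mathfrak A_kP_k=\mathbb CP_k$ and in particular $P_kQ_kP_k$ is scalar. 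Covariance (Lemma~\ref{as_relations}) propagates the same value of $\alpha$ to every site. I would then identify $\alpha=|(u,w)|$ by matching it against the explicit finite-rank computation preceding Proposition~\ref{finite_irr} and the equivalence criterion of Proposition~\ref{prop_non_zero_lambda}, splitting into the case $\alpha>0$ (model $\pi_{u,w}^{\alpha,\lambda}$ of Proposition~\ref{prop_zero_lambda}) and the degenerate case $\alpha=0$, in which the relations force $Q_k=\pi(\epsilon_{\{k\}})=0$ and land us in Proposition~\ref{Prop_zero}.

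Finally I would construct the intertwiner. Sending the space of leading $\lambda$-vectors to $H_\lambda$ and extending by the $R_\infty$-action, one obtains a densely defined map whose matrix coefficients agree with those of $\mathfrak R$ on $[\mathfrak R(R_\infty)H_\lambda]$: this follows from the multiplicativity of spherical coefficients on independent supports (Proposition~\ref{Prop_spherical}(a)) together with the now-pinned invariants $(\lambda,\alpha)$. The map is isometric and extends to a unitary onto $[\mathfrak R(R_\infty)H_\lambda]\otimes\mathbb C^m$, where $m$ is the dimension of the space of leading $\lambda$-vectors, i.e. the multiplicity; since the model $\pi_{u,w}^{\alpha,\lambda}$ is irreducible (Propositions~\ref{prop_non_zero_lambda},~\ref{prop_zero_lambda}), the factor $\pi(R_\infty)''$ is of type~${\rm I}$ and the ampliation is precisely ${\rm id}_m$, yielding $\pi\cong\pi_{u,w}^{\alpha,\lambda}\otimes{\rm id}_m$. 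I expect the main obstacle to be the scalarness claim $P_kQ_kP_k=\alpha^2P_k$ with a uniform $\alpha$: because $P_k$ and $Q_k$ need not commute --- the genuinely new feature compared with the character theory of \cite{VN},\cite{N_INV} --- controlling the two-projection algebra $\mathfrak A_k$ inside a factor is delicate, and this is exactly where factorality and the minimality statement of Proposition~\ref{Prop_minimal} are indispensable; a secondary difficulty is excluding the occurrence of more than one Young diagram $\lambda$ in the tame decomposition of $\pi|_{\mathfrak S_\infty}$.
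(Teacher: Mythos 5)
The fatal gap is your treatment of the degenerate case $\alpha=0$. You claim that ``the relations force $Q_k=\pi(\epsilon_{\{k\}})=0$,'' but $\alpha=0$ only yields the orthogonality $\pi(\epsilon_{\{k\}})P_k\pi(\epsilon_{\{k\}})=0$, equivalently $w\text{-}\lim_{n}\pi(\epsilon_{\{n\}})=0$ (Lemma \ref{relations_projections}); it does not annihilate the individual projections $\pi(\epsilon_{\{k\}})$. Indeed, the models of Proposition \ref{Prop_zero} built from $\xi_k=w^{\otimes j}\otimes e_1\otimes\cdots\otimes e_k\otimes u\otimes u\otimes\cdots$ with $(u,w)=0$ and $j\geq 1$ are admissible factor-representations in which the finitely many leading projections $\pi(\epsilon_{\{i\}})$, $i\leq j$, act as the identity on a distinguished subspace, while only the tail projections vanish there. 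Precisely this possibility occupies the entire second half of the paper's proof: one forms $Q_l=\prod_{i>l}\left({\rm I}-\pi(\epsilon_{\{i\}})\right)$, takes the minimal $j$ with $Q_jH_\pi(n)\neq 0$, sets $F_j=Q_j\prod_{i>n}P_i$, proves $\pi(\epsilon_{\{i\}})\widetilde{F}_j=\widetilde{F}_j$ for $i\leq j$ and $=0$ for $i>j$ together with the imprimitivity relation \eqref{imprim}, namely $\pi(s)\widetilde{F}_j\pi(s^{-1})\widetilde{F}_j=0$ for $s\notin\mathfrak{S}_j\cdot\mathfrak{S}_{j\infty}$, and then realizes $\pi$ isometrically inside $l^2\left(X,\widetilde{F}_jH_\pi\right)$ with $X=\mathfrak{S}_\infty/(\mathfrak{S}_j\cdot\mathfrak{S}_{j\infty})$, concluding via the Lieberman--Olshanski classification of tame representations. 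None of this induced-representation analysis appears in your proposal, so the case $\alpha=0$ is simply not covered.

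In the case $\alpha>0$ your outline is broadly parallel to the paper's, but the two steps you defer as ``difficulties'' are the actual content, and your tools for them are misaligned. The scalar relation is obtained in the paper without Proposition \ref{Prop_minimal}: factoriality gives $w\text{-}\lim_n\pi(\epsilon_{\{n\}})=\alpha\,{\rm I}$, and the semigroup identity $\epsilon_{\{i\}}\,(i\;n)\,\epsilon_{\{i\}}=\epsilon_{\{i\}}\epsilon_{\{n\}}$ yields $\pi(\epsilon_{\{i\}})P_i\pi(\epsilon_{\{i\}})=\alpha\,\pi(\epsilon_{\{i\}})$ directly; note that this is the compression of $P_i$ by $\pi(\epsilon_{\{i\}})$ --- your reversed version $P_kQ_kP_k=\alpha^2P_k$ requires minimality of $P_k$, and Proposition \ref{Prop_minimal} presupposes a cyclic vector with two-sided $\mathfrak{S}_\infty$-invariant coefficients, which for an abstract admissible factor-representation is only produced \emph{after} the depth analysis. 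Likewise, uniqueness of $\lambda$ and type ${\rm I}$-ness do not follow from Lieberman's theorem applied to $\pi|_{\mathfrak{S}_\infty}$ (the type of the subalgebra $\pi(\mathfrak{S}_\infty)''$ says nothing a priori about $\pi(R_\infty)''$), and you cannot infer type ${\rm I}$ from irreducibility of the target model before the intertwiner exists --- that is circular. The paper instead takes $n=d(\pi)$ and proves, via Lemma \ref{setting-up} ($\widetilde{E}(n)\pi(s)\widetilde{E}(n)=0$ for $s\notin\mathfrak{S}_n\cdot\mathfrak{S}_{n\infty}$), Lemma \ref{R_n_infty_scalar}, and the vanishing $\pi(\epsilon_{\{k\}})\widetilde{E}(n)=0$ for $k\leq n$, that the corner $E(n)\,\pi(R_\infty)''\,E(n)$ coincides with $E(n)\,\pi(\mathfrak{S}_n)''\,E(n)$, hence is a finite-dimensional factor ${\rm I}_m$; this simultaneously forces $\pi(R_\infty)''$ to be of type ${\rm I}$, pins down a single $T_\lambda$, and supplies the vectors $\xi_k$ with $\left(\pi(s\,\epsilon_{\mathbb{A}})\xi_k,\xi_k\right)=\alpha^{\#\mathbb{A}}$ that your matrix-coefficient matching presupposes. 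You would need to import all of this machinery to make the $\alpha>0$ half rigorous, and to supply the missing imprimitivity argument for $\alpha=0$.
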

 As a preliminary step we prove three auxiliary lemmas.
 \begin{Lm}\label{relations_projections}
 Define the projections $P_i$ as in lemma \ref{as_projection}. There exists $\alpha \in[0,1]$ such that $\pi\left( \epsilon_{\{i\}}\right)P_i\pi\left( \epsilon_{\{i\}}\right)=\alpha \pi\left( \epsilon_{\{i\}}\right)$.
 \end{Lm}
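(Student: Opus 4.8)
The plan is to identify the positive contraction $Q_i:=\pi(\epsilon_{\{i\}})\,P_i\,\pi(\epsilon_{\{i\}})$ as a central element of the \emph{reduced} algebra $\pi(\epsilon_{\{i\}})\,M\,\pi(\epsilon_{\{i\}})$, where $M:=\pi(R_\infty)''$, and then invoke the factor property. Note first that both $\pi(\epsilon_{\{i\}})$ (a projection, since $\epsilon_{\{i\}}\in{\rm Diag}_\infty$) and $P_i$ (a weak limit of $\pi((i\;n))\in M$, by Lemma \ref{as_projection}) lie in $M$, hence so does $Q_i$; moreover $\pi(\epsilon_{\{i\}})Q_i=Q_i\pi(\epsilon_{\{i\}})=Q_i$ and $0\le Q_i=\big(\pi(\epsilon_{\{i\}})P_i\big)\big(\pi(\epsilon_{\{i\}})P_i\big)^*\le\pi(\epsilon_{\{i\}})$, so $Q_i$ sits in the corner $\pi(\epsilon_{\{i\}})M\pi(\epsilon_{\{i\}})$. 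Once $Q_i$ is shown central there, and since the reduction of the factor $M$ by a projection in it is again a factor, we will get $Q_i=\alpha_i\,\pi(\epsilon_{\{i\}})$ with $\alpha_i\in[0,1]$ forced by $0\le Q_i\le\pi(\epsilon_{\{i\}})$.

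First I would record the elementary semigroup identity $\epsilon_{\{i\}}\,(i\;n)\,\epsilon_{\{i\}}=\epsilon_{\{i,n\}}$, a one-line check of domains and values. Applying $\pi$ and using $\pi((i\;n))\to P_i$ weakly (Lemma \ref{as_projection}), together with weak continuity of left/right multiplication by the fixed operator $\pi(\epsilon_{\{i\}})$, this yields the reformulation
\[
Q_i=w\text{-}\lim\limits_{n\to\infty}\pi(\epsilon_{\{i,n\}}).
\]
This expression of $Q_i$ as an asymptotic average of ``distant'' diagonal idempotents is the engine of the argument.

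Next, for arbitrary $r\in R_\infty$ I would evaluate both products $Q_i\cdot\pi(\epsilon_{\{i\}})\pi(r)\pi(\epsilon_{\{i\}})$ and $\pi(\epsilon_{\{i\}})\pi(r)\pi(\epsilon_{\{i\}})\cdot Q_i$ through the above weak limit. Using that for $n$ larger than every point of ${\rm supp}\,r$ the idempotent $\epsilon_{\{n\}}$ commutes with both $r$ and $\epsilon_{\{i\}}$, each product is seen to equal $w\text{-}\lim\limits_{n\to\infty}\pi\big((\epsilon_{\{i\}}r\epsilon_{\{i\}})\,\epsilon_{\{n\}}\big)$; hence $Q_i$ commutes with every $\pi(\epsilon_{\{i\}})\pi(r)\pi(\epsilon_{\{i\}})$. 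Since $\mathrm{span}\{\pi(r)\}$ is weakly dense in $M$ and $x\mapsto\pi(\epsilon_{\{i\}})x\pi(\epsilon_{\{i\}})$ is weak--weak continuous, these operators are weakly dense in $\pi(\epsilon_{\{i\}})M\pi(\epsilon_{\{i\}})$, so $Q_i$ is central there and therefore a scalar multiple $\alpha_i\,\pi(\epsilon_{\{i\}})$ of the corner unit. Finally, to see that $\alpha_i$ does not depend on $i$, I would conjugate by $\pi(s)$ for $s$ with $s(i)=j$ and use $s\,\epsilon_{\{i\}}\,s^{-1}=\epsilon_{\{j\}}$ together with $\pi(s)P_i\pi(s^{-1})=P_j$ from Lemma \ref{as_relations}, which gives $\pi(s)Q_i\pi(s^{-1})=Q_j$, i.e. $\alpha_j=\alpha_i$; transitivity of $\mathfrak{S}_\infty$ on $\mathbb{N}$ then yields a single $\alpha\in[0,1]$.

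I expect the main obstacle to be the commutation step: one must keep careful track of supports (choosing $n$ beyond ${\rm supp}\,r$) and rigorously justify interchanging the weak operator limits with left and right multiplication by the fixed bounded operators $\pi(\epsilon_{\{i\}})\pi(r)\pi(\epsilon_{\{i\}})$. This bookkeeping is routine but is where an error is most likely; the only genuinely structural inputs are the standard fact that the reduction of a factor by one of its projections is again a factor, and the weak density of the corner generators.
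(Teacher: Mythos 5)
Your proof is correct, and it reaches the conclusion by a somewhat different application of factoriality than the paper. The paper's own argument rests on the same semigroup identity $\epsilon_{\{i\}}\,(i\;n)\,\epsilon_{\{i\}}=\epsilon_{\{i\}}\epsilon_{\{n\}}$, but it applies factoriality globally instead of in the corner: it first notes that $w\text{-}\lim_{n\to\infty}\pi(\epsilon_{\{n\}})$ exists and, since for fixed $r$ the element $\epsilon_{\{n\}}$ commutes with $r$ once $n\notin{\rm supp}\,r$, this limit lies in $\pi(R_\infty)'\cap\pi(R_\infty)''=\mathbb{C}I$; hence it equals $\alpha I$ with $\alpha\in[0,1]$, and multiplying the identity through by this scalar limit gives the lemma in one line. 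The trade-offs are real but minor. The paper's route requires the existence of $w\text{-}\lim_n\pi(\epsilon_{\{n\}})$ as a separate fact (it is justified by admissibility: the matrix entries $\left(\pi(\epsilon_{\{n\}})\eta,\zeta\right)$ are eventually constant for $\eta,\zeta$ in the dense union $\bigcup_m H_\pi(m)$), but in exchange the independence of $\alpha$ from $i$ is automatic, since $\alpha$ is defined with no reference to $i$. Your corner argument instead derives the needed limit $Q_i=w\text{-}\lim_n\pi(\epsilon_{\{i,n\}})$ directly from the already-proved Lemma \ref{as_projection}, at the cost of two standard operator-algebra inputs (weak density of the compressed generators in $\pi(\epsilon_{\{i\}})M\pi(\epsilon_{\{i\}})$, and that a corner of a factor is a factor) plus the extra conjugation step identifying the constants $\alpha_i$. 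One trivial point to flag in your write-up: if $\pi(\epsilon_{\{i\}})=0$, the corner degenerates and the factor argument does not literally apply, but then the statement holds vacuously with any $\alpha$, so this costs nothing.
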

 \begin{proof} First we notice that exists $\lim\limits_{n\to\infty}\pi(\epsilon_{\{n\}})$ in the weak operator topology. Since $\pi$ is factor-representation, $\lim\limits_{n\to\infty}\pi(\epsilon_{\{n\}})=\alpha {\rm I}$, where ${\rm I}$ is the identity operator in $H_\pi$ and $\alpha \in[0,1]$.
 Now our statement follows from the relation $\epsilon_{\{i\}} (i\,n)\epsilon_{\{i\}}=\epsilon_{\{i\}}\epsilon_{\{n\}}$.
 \end{proof}
 Recall that $H_\pi(n)=\left\{ \eta\in H_\pi:\pi(s)\eta=\eta \text{ for all } s\in\mathfrak{S}_{n\infty}\right\}$. \label{E_n} Denote by $E(n)$ the orthogonal projection onto $H_\pi(n)$.
  The next statement follows from Proposition \ref{prop_tame_fix} and Lemma \ref{as_relations}.
 \begin{Lm}\label{lemma_inv_product}
 $H_\pi(n)=\prod\limits_{i=n+1}^\infty P_i\,H_\pi$ for all $n\in\mathbb{N}$.
 \end{Lm}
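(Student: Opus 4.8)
The plan is to identify the right-hand side as a common fixed space and then establish the two inclusions. First I would make sense of the infinite product: by Lemma \ref{as_relations} the projections $P_i$ with $i>n$ are mutually commuting orthogonal projections, so the finite products $\prod_{i=n+1}^N P_i$ form a decreasing sequence of projections and converge in the strong operator topology to the orthogonal projection onto $\bigcap_{i=n+1}^\infty P_i H_\pi$. Thus $\prod_{i=n+1}^\infty P_i H_\pi=\{\eta\in H_\pi : P_i\eta=\eta \text{ for all } i>n\}$, and it suffices to show that this set equals $H_\pi(n)$.

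For the inclusion $\prod_{i=n+1}^\infty P_i H_\pi\subseteq H_\pi(n)$, I would take $\eta$ with $P_i\eta=\eta$ for every $i>n$ and an arbitrary $s\in\mathfrak{S}_{n\infty}$. Since $s$ is finitary and fixes $1,\dots,n$, it lies in $\mathfrak{S}_\mathbb{A}$ for some finite $\mathbb{A}\subset\{n+1,n+2,\dots\}$. By Lemma \ref{as_relations} ii) we have $\pi(s)\prod_{i\in\mathbb{A}}P_i=\prod_{i\in\mathbb{A}}P_i$, while $P_i\eta=\eta$ for each $i\in\mathbb{A}$ gives $\prod_{i\in\mathbb{A}}P_i\,\eta=\eta$. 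Combining the two, $\pi(s)\eta=\pi(s)\prod_{i\in\mathbb{A}}P_i\,\eta=\prod_{i\in\mathbb{A}}P_i\,\eta=\eta$, so $\eta\in H_\pi(n)$.

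For the reverse inclusion $H_\pi(n)\subseteq\prod_{i=n+1}^\infty P_i H_\pi$, I would take $\eta\in H_\pi(n)$ and fix $i>n$. For every $m>n$ with $m\neq i$ the transposition $(i\;m)$ belongs to $\mathfrak{S}_{n\infty}$, whence $\pi((i\;m))\eta=\eta$; letting $m\to\infty$ in the weak operator topology and using Lemma \ref{as_projection} yields $P_i\eta=\eta$. As $i>n$ was arbitrary, $\eta\in\bigcap_{i>n}P_iH_\pi$, as required. Here tameness of $\pi$, guaranteed by Proposition \ref{prop_tame_fix}, is what ensures, via Lemma \ref{as_projection}, that the weak limits $P_i$ exist in the first place.

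I do not anticipate a genuine obstacle: both inclusions are short and reduce directly to the two cited results. The only point needing a little care is the standard bookkeeping for the infinite product of commuting projections, namely that its range is exactly the common fixed space $\bigcap_{i>n}P_iH_\pi$; this is routine once commutativity from Lemma \ref{as_relations} i) is in hand.
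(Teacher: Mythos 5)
Your proof is correct and is essentially the paper's argument: the paper gives no written proof, only the remark that the lemma ``follows from Proposition \ref{prop_tame_fix} and Lemma \ref{as_relations}'', and your two inclusions --- Lemma \ref{as_relations}~ii) applied to $s\in\mathfrak{S}_\mathbb{A}$ with $\mathbb{A}\subset[n+1,\infty)$ finite for one direction, and the weak limits of Lemma \ref{as_projection} evaluated on transpositions $(i\;m)\in\mathfrak{S}_{n\infty}$ for the other, together with the standard fact that the decreasing commuting products $\prod_{i=n+1}^N P_i$ converge strongly to the projection onto $\bigcap_{i>n}P_iH_\pi$ --- are exactly the routine details behind that remark. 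One cosmetic correction: tameness is not ``guaranteed by Proposition \ref{prop_tame_fix}''; it is the standing hypothesis in this section (the restriction of an $\mathfrak{S}_\infty$-admissible representation to $\mathfrak{S}_\infty$ is tame by definition), and Proposition \ref{prop_tame_fix} merely characterizes it, so your argument in fact uses only Lemmas \ref{as_projection} and \ref{as_relations}.
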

\noindent Notice that for each $n$ the subspace $H_\pi(n)$ is $\pi(R_n)$-invariant. Using proposition \ref{prop_tame_fix}, define the depth $d(\pi)$\label{depth_of_pi} of $\pi$ as $\min\left\{ n:H_\pi(n)\neq 0\right\}\;\footnote{see page \pageref{def_yP_n}}$.
 Let $B\left( H_\pi\right)$ be the set of all bounded linear operators on $H_\pi$. Consider subset $\mathbb{S}\subset B\left( H_\pi\right)$, and put $\mathbb{S}^\prime=\left\{ A\in B\left( H_\pi\right):AS=SA \text{ for all } S\in\mathbb{S}\right\}$. Algebra $\mathbb{S}^\prime$ is called the commutant of $\mathbb{S}$. In the sequel,   $\mathbb{S}^{\prime\prime}$ stands for the commutant of the commutant $\mathbb{S}^\prime$ of $\mathbb{S}$, i. e. $\mathbb{S}^{\prime\prime}=\left(\mathbb{S}^\prime \right)^\prime$.
 \begin{Rem}\label{remark_cyclic}
 Denote by $\widetilde{H}_\pi(n)$ \label{widetilde_H_pi_n} the closure of  the span of $\pi\left( R_{n\infty}\right)H_\pi(n)$.
 By lemma \ref{lemma_inv_product}, $A^\prime H_\pi(n)\subset H_\pi(n)$ for all $A^\prime\in \pi\left( R_\infty\right)^\prime$. Therefore, $\pi\left( R_\infty\right)^\prime\widetilde{H}_\pi(n)\subset \widetilde{H}_\pi(n)$. Thus the orthogonal projection $\widetilde{E}(n)$ onto  $\widetilde{H}_\pi(n)$ lies in
$\left\{ \pi\left( R_\infty\right)^\prime\right\}^\prime=\pi\left( R_\infty\right)^{\prime\prime}$.
 \end{Rem}
 \begin{Lm}\label{setting-up}
 Let $s\in\mathfrak{S}_\infty$, and let $n=d(\pi)$. If $s\notin \mathfrak{S}_n\cdot\mathfrak{S}_{n\infty}$, then $\widetilde{E}(n)\pi(s)\widetilde{E}(n)=0$.
 \end{Lm}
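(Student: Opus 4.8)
The plan is to test the operator identity on a spanning family of $\widetilde H_\pi(n)$. Since $\widetilde H_\pi(n)$ is the closed span of the vectors $\pi(r)\eta$ with $r\in R_{n\infty}$, $\eta\in H_\pi(n)$, and $\pi(r_2)^*=\pi(r_2^\star)$, the assertion $\widetilde E(n)\pi(s)\widetilde E(n)=0$ is equivalent to
$$\left(\pi(g)\eta_1,\eta_2\right)=0,\qquad g:=r_2^\star s r_1,$$
for all $r_1,r_2\in R_{n\infty}$ and $\eta_1,\eta_2\in H_\pi(n)$. So I reduce to showing that these matrix coefficients of the single semigroup element $g$ vanish.

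First I would record the arithmetic of the depth. By Lemma \ref{lemma_inv_product}, $H_\pi(n-1)=\prod_{j\ge n}P_j\,H_\pi$, and this is $0$ because $n=d(\pi)$. For $\eta\in H_\pi(n)$ one has $P_j\eta=\eta$ for $j>n$, so $P_n\eta$ is fixed by every $P_j$ with $j\ge n$, i.e.\ $P_n\eta\in H_\pi(n-1)=0$; conjugating by $\sigma\in\mathfrak S_n$ with $\sigma(n)=a$ and using $\pi(\sigma)P_n\pi(\sigma)^{-1}=P_a$ (Lemma \ref{as_relations}) together with the invariance of $H_\pi(n)$ under $\mathfrak S_n$, this yields the key fact $P_a\eta=0$ for every $a\le n$ and every $\eta\in H_\pi(n)$.

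Since $s\notin\mathfrak S_n\cdot\mathfrak S_{n\infty}$, the set $\{1,\dots,n\}$ is not $s$-invariant, so there is $a\le n$ with $m:=s(a)>n$; fix such an $a$. As $r_1$ fixes $\{1,\dots,n\}$ pointwise, $g(a)=r_2^\star(m)$, and two cases arise. If $m\in\mathcal I(r_2)$, then $p:=g(a)>n$ and $a\in\mathcal D(g)$; here I would use the semigroup identity $g\,(a\,N)=(p\,N)\,g$, valid for every large $N>n$ fixed by $g$ (both sides send $a\mapsto N$, $N\mapsto p$, and agree elsewhere). Because $p,N>n$ the transposition $(p\,N)$ lies in $\mathfrak S_{n\infty}$ and fixes $\eta_2$, so $\left(\pi(g(a\,N))\eta_1,\eta_2\right)=\left(\pi(g)\eta_1,\pi((p\,N))\eta_2\right)=\left(\pi(g)\eta_1,\eta_2\right)$ is independent of $N$; on the other hand $\left(\pi(g(a\,N))\eta_1,\eta_2\right)=\left(\pi((a\,N))\eta_1,\pi(g^\star)\eta_2\right)\to\left(P_a\eta_1,\pi(g^\star)\eta_2\right)=0$ as $N\to\infty$, whence $\left(\pi(g)\eta_1,\eta_2\right)=0$. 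If instead $m\notin\mathcal I(r_2)$, then $a\notin\mathcal D(g)$, so $g\epsilon_{\{a\}}=g$ and $\pi(g)\eta_1=\pi(g)\pi(\epsilon_{\{a\}})\eta_1$; it then suffices to know $\pi(\epsilon_{\{a\}})\eta_1=0$.

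The heart of the matter — and the step I expect to be the main obstacle — is therefore the auxiliary fact that $\pi(\epsilon_{\{a\}})\eta=0$ for $a\le n$ and $\eta\in H_\pi(n)$. This is a genuinely global statement: it does \emph{not} follow from the local relation between the two projections $\pi(\epsilon_{\{a\}})$ and $P_a$ alone. My plan to prove it is to combine the depth fact with Lemma \ref{relations_projections}. Since $a\le n$, the element $\epsilon_{\{a\}}$ commutes with $\mathfrak S_{n\infty}$, so $\pi(\epsilon_{\{a\}})\eta\in H_\pi(n)$ and hence $P_a\big(\pi(\epsilon_{\{a\}})\eta\big)=0$ by the key fact. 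Writing $\alpha$ for the scalar of Lemma \ref{relations_projections}, with $\pi(\epsilon_{\{a\}})P_a\pi(\epsilon_{\{a\}})=\alpha\,\pi(\epsilon_{\{a\}})$, and using that $\pi(\epsilon_{\{a\}})$ is a self-adjoint projection,
$$\alpha\,\|\pi(\epsilon_{\{a\}})\eta\|^2=\left(\pi(\epsilon_{\{a\}})P_a\pi(\epsilon_{\{a\}})\eta,\eta\right)=\left(P_a\pi(\epsilon_{\{a\}})\eta,\ \pi(\epsilon_{\{a\}})\eta\right)=0,$$
so $\pi(\epsilon_{\{a\}})\eta=0$ whenever $\alpha>0$. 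The degenerate case $\alpha=0$ must be treated separately: there $\lim_{N}\pi(\epsilon_{\{N\}})=0$, and I would argue, using the factor property and the unitary equivalence of all the $\pi(\epsilon_{\{j\}})$ (via $\pi((i\,j))$), that $\pi(\epsilon_{\{j\}})=0$ for every $j$, so that only permutations act nontrivially on $\widetilde H_\pi(n)$, the range $\mathcal I(r_2)$ is cofull, and the blocked case never occurs. Assembling the two cases gives $\left(\pi(g)\eta_1,\eta_2\right)=0$ in all situations, and the Lemma follows.
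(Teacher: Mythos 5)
Your reduction to the coefficients $\left(\pi(g)\eta_1,\eta_2\right)$ with $g=r_2^\star s r_1$, your key fact that $P_a\eta=0$ for all $a\le n$ and $\eta\in H_\pi(n)$, and your Case 1 are all correct; Case 1 is, in streamlined form, exactly the paper's mechanism (conjugate by transpositions from $\mathfrak{S}_{n\infty}$, which fix $\eta_1,\eta_2$, then pass to the weak limit $P_a$ and invoke the depth). The genuine gap is Case 2. There you reduce everything to the auxiliary claim that $\pi(\epsilon_{\{a\}})\eta=0$ for $a\le n$, $\eta\in H_\pi(n)$; you prove this only when the scalar $\alpha$ of Lemma \ref{relations_projections} is strictly positive, and for $\alpha=0$ you propose to show that $\pi(\epsilon_{\{j\}})=0$ for every $j$. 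That statement is false, so this route cannot be completed. Consider $H_\pi=\ell^2(\mathbb{N})$ with orthonormal basis $\left\{e_k\right\}_{k\in\mathbb{N}}$, with $\pi(t)e_k=e_{t(k)}$ for $t\in\mathfrak{S}_\infty$ and $\pi(\epsilon_{\{k\}})$ the rank-one projection onto $\mathbb{C}e_k$; this is the subrepresentation of the paper's representation $\mathfrak{R}$ of \eqref{representation_mathfrak_R} (with $(u,w)=0$) generated by $w\otimes u\otimes u\otimes\cdots$. It is a $\star$-representation of $R_\infty$, irreducible (an operator commuting with every $\pi(\epsilon_{\{k\}})$ is diagonal, and commuting also with $\pi(\mathfrak{S}_\infty)$ it is scalar), hence a factor-representation, and it is $\mathfrak{S}_\infty$-admissible by Proposition \ref{prop_tame_fix} since $H_\pi(n)\supseteq{\rm span}\left\{e_1,\ldots,e_n\right\}$. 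Here $\alpha=0$, $d(\pi)=1$, $H_\pi(1)=\mathbb{C}e_1$, and yet $\pi(\epsilon_{\{1\}})e_1=e_1\neq0$ and no $\pi(\epsilon_{\{j\}})$ vanishes: both your auxiliary claim and your fallback fail, while Case 2 genuinely occurs in this representation (take $s=(1\;2)$, $r_1=e$, $r_2=\epsilon_{\{2\}}$, so $g=(1\;2)\,\epsilon_{\{1\}}$).

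The idea your proof is missing --- and the actual content of the paper's argument --- is that one never needs $\pi(\epsilon_{\{a\}})\eta=0$; what is needed is only $P_a\,\pi(\epsilon_{\{a\}})\eta=0$, and this already follows from your key fact, because $\epsilon_{\{a\}}$ with $a\le n$ commutes with $\mathfrak{S}_{n\infty}$ and therefore $\pi(\epsilon_{\{a\}})\eta\in H_\pi(n)$. To exploit this, the idempotent must be placed next to the crossing transposition \emph{before} the limit is taken. That is what the paper's decomposition \eqref{cycle_decomposition} accomplishes: using relations of the type $\epsilon_{\{a\}}\epsilon_{\{m\}}=\epsilon_{\{a\}}(a\;m)\epsilon_{\{a\}}$, the element is rewritten so that every idempotent appears as $\epsilon_{\{a_1^{(i)}\}}$ adjacent to the transpositions of the $i$-th cycle; then the same conjugate-by-$(m\;N)$-and-let-$N\to\infty$ step you run in Case 1 yields a coefficient of the form $\left(\cdots\xi,\;P_a\,\pi(\epsilon_{\{a\}})\eta\right)$, which vanishes by the key fact. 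In the counterexample above this is exactly what works: $P_1\pi(\epsilon_{\{1\}})e_1=P_1e_1=0$ even though $\pi(\epsilon_{\{1\}})e_1\neq0$. By contrast, any attempt, as in your Case 2, to first remove the idempotent from the vector (or to trade it for a weak limit of the $\pi(\epsilon_{\{N\}})$) inevitably produces the scalar $\alpha$ and collapses precisely when $\alpha=0$. So as written the proposal does not prove the Lemma; Case 2 must be redone with this rewriting step (or an equivalent), after which your key fact does finish the proof.
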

 \begin{proof}
 It suffices to  prove that
 \begin{eqnarray}\label{zero_equality}
 \left( \pi(s)\pi\left( \epsilon_\mathbb{A}\right)\xi,\eta \right)=0 \text{ for all  subsets } \mathbb{A}\subset \mathbb{N} \text{ and any } \xi, \eta\in H_\pi(n).
 \end{eqnarray}
 Let $s=c_1\cdot c_2\cdot \cdots \cdot c_k$ be the cycle decomposition of $s$. Here, the cycle $c_i=\left( a_1^{(i)}\;a_2^{(i)}\;\ldots a_{p_i}^{(i)}\;\right)$ is a permutation sending $a_j^{(i)}\;$
to $a_{j+1}^{(i)}\;$ for $1\leq j<p_i$ and  $a_{p_i}^{(i)}\;$ to $ a_1^{(i)}$. Define the maps $$f^{(i)}:\left\{ 1,2,\ldots,p_i\right\}\mapsto\left\{ e,\epsilon_{\left\{a_1^{(i)}\right\}} \right\}\subset R_\infty, i=1,2,\ldots,k,$$ where $e$ is the unit of $R_\infty$, as follows
 $$f^{(i)}(j)=\left\{
 \begin{array}{rl}
 \epsilon_{\left\{a_1^{(i)}\right\}},&\text{ if } a_j^{(i)}\in\mathbb{A}\\
 e,&\text{ if } a_j^{(i)}\notin\mathbb{A}.
 \end{array}\right.$$
  Using the relation $c_i=\left(a_1^{(i)}\;a_{p_i}^{(i)} \right)\left(a_1^{(i)}\; a_{p_i-1}^{(i)}\right)\cdots\left(a_1^{(i)}\; a_{2}^{(i)}\right)$, we obtain
 \begin{eqnarray}\label{cycle_decomposition}
 \begin{split}
& s\cdot\epsilon_\mathbb{A}= \left[f^{(1)}\left( p_1\right)\cdot\left(a_1^{(1)}\;a_{p_1}^{(1)} \right)\cdot f^{(1)}\left( p_1-1\right)\cdot\left(a_1^{(1)}\; a_{p_1-1}^{(1)}\right)\cdot f^{(1)}\left( p_1-2\right)\right.\\
&\left.\cdots f^{(1)}\left( 2\right)\cdot\left(a_1^{(1)}\; a_{2}^{(1)}\right)\cdot f^{(1)}\left( 1\right)\right]
 \cdots \left[f^{(k)}\left( p_k\right)\cdot\left(a_1^{(k)}\;a_{p_k}^{(k)} \right)\cdot f^{(k)}\left( p_k-1\right)\right.\\
 &\left.\cdot\left(a_1^{(k)}\; a_{p_k-1}^{(k)}\right)
 \cdots f^{(k)}\left( 2\right)\cdot\left(a_1^{(k)}\; a_{2}^{(k)}\right)f^{(k)} \left( 1\right)\right]
\cdot \epsilon _\mathbb{B},
 \end{split}
 \end{eqnarray}
 where $\mathbb{B}\subset \mathbb{N}\setminus\,{\rm supp}\,s$. Since $s\notin\mathfrak{S}_n\cdot\mathfrak{S}_{n\infty}$, we can suppose, without loss of generality, that
 $a_1^{(1)}\leq n$ and $a_{p_1}^{(1)}>n$. Using the relations $\pi(t)\xi=\xi$, $\pi(t)\eta=\eta$ $\left( t\in\mathfrak{S}_{n\infty}\right)$, we conclude  that
 \begin{eqnarray}
 \left( \pi\left(s\cdot\epsilon_\mathbb{A}\right)\xi,\eta \right) = \left( \pi\left(\left(a_{p_1}^{(1)}\;N\right) \cdot s\cdot\epsilon_\mathbb{A}\cdot\left(a_{p_1}^{(1)}\;N\right)\right)\xi,\eta \right)
 \end{eqnarray}
 for all $N>\max\left(\mathbb{B}\cup\,{\rm supp}\,s\right)$ (see (\ref{cycle_decomposition})).
 After the passage  to the limit $N\to\infty$, applying (\ref{cycle_decomposition}), we obtain
  \begin{eqnarray}
  \begin{split}
 \left( \pi\left(s\cdot\epsilon_\mathbb{A}\right)\xi,\eta \right) = \left( \pi\left(f^{(1)}\left( p_1\right) \right)\cdot P_{a_1^{(1)}}\cdot\pi\left( f^{(1)}\left( p_1-1\right)\right)\cdots \xi,\eta\right)\\
 =\left( \pi\left( f^{(1)}\left( p_1-1\right)\right)\cdots \xi,P_{a_1^{(1)}}\cdot\pi\left(f^{(1)}\left( p_1\right) \right)\eta\right).
 \end{split}
 \end{eqnarray}
 Since $a_1^{(1)}\leq n$, then $\pi\left(f^{(1)}\left( p_1\right) \right)\eta\in H_\pi(n)$. It follows from lemma \ref{as_relations} that $\pi(t)\cdot P_{a_1^{(1)}}\cdot\pi\left(f^{(1)}\left( p_1\right) \right)\eta=P_{a_1^{(1)}}\cdot\pi\left(f^{(1)}\left( p_1\right) \right)\eta$ for all $t\in\mathfrak{S}_\mathbb{K}$, where
 $\mathbb{K}=a_1^{(1)}\cup [n+1,\infty)$. Therefore, if $P_{a_1^{(1)}}\cdot\pi\left(f^{(1)}\left( p_1\right) \right)\eta\neq 0$, then the depth  of $\pi$ is less than $n$. This contradicts to the fact that $d(\pi)=n$.
 \end{proof}
 Recall that $\pi$ is $\mathfrak{S}_\infty$-{\it admissible} factor-representation of $R_\infty$.
 \begin{Lm}\label{R_n_infty_scalar}
We have the following:
   $E(n)\,\pi(R_{n\infty})\,E(n)\subset\mathbb{C}E(n)$.
 \end{Lm}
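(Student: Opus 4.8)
The plan is to show, for each $r\in R_{n\infty}$, that the compression $E(n)\pi(r)E(n)$ equals $c(r)E(n)$ for some scalar $c(r)$; equivalently, that $\left(\pi(r)\eta,\zeta\right)=c(r)\left(\eta,\zeta\right)$ for all $\eta,\zeta\in H_\pi(n)$ with $c(r)$ independent of $\eta,\zeta$. I would reprove this by the \textquotedblleft push the support to infinity\textquotedblright{} technique already used in Propositions \ref{quasi_unique} and \ref{Prop_spherical}, now adapted to the subgroup $\mathfrak{S}_{n\infty}$ fixing $1,\dots,n$. The key preliminary observation is that every $r\in R_{n\infty}$ has finite support contained in $\{n+1,n+2,\dots\}$, so one can pick $\sigma_k\in\mathfrak{S}_{n\infty}$ with $\min\,{\rm supp}\,(\sigma_k r\sigma_k^{-1})\to\infty$. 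Since the vectors of $H_\pi(n)$ are fixed by $\mathfrak{S}_{n\infty}$, for all $\eta,\zeta\in H_\pi(n)$ one has $\left(\pi(r)\eta,\zeta\right)=\left(\pi(\sigma_k r\sigma_k^{-1})\eta,\zeta\right)$ for every $k$.

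The substance of the argument is to prove that the contractions $\pi(\sigma_k r\sigma_k^{-1})$ converge in the weak operator topology to a scalar. For the existence of the limit $A(r):=w\text{-}\lim_k\pi(\sigma_k r\sigma_k^{-1})$ I would use admissibility: by Proposition \ref{prop_tame_fix} the subspace $\bigcup_m H_\pi(m)$ is dense, so it suffices to check convergence of matrix coefficients on vectors $\eta\in H_\pi(m_1)$, $\zeta\in H_\pi(m_2)$. Once $k$ is large enough that $\sigma_k r\sigma_k^{-1}$ is supported beyond $m=\max(m_1,m_2)$, any two such conjugates of $r$ carry the same triple of invariants $\left(\,^q\!\lambda,\,^c\!\lambda,m(\cdot)\right)$, and hence by Proposition \ref{mult7} are conjugate by a permutation $\tau$; since both are supported in $\{m+1,m+2,\dots\}$ one can take $\tau\in\mathfrak{S}_{m\infty}$. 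As $\eta,\zeta$ are $\mathfrak{S}_{m\infty}$-fixed, the coefficient $\left(\pi(\sigma_k r\sigma_k^{-1})\eta,\zeta\right)$ is then eventually independent of $k$, which gives the weak convergence and produces the bounded operator $A(r)$.

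Finally I would identify $A(r)$ as scalar by the factor/commutant dichotomy. On one hand $A(r)\in\pi(R_\infty)''$, being a weak limit of elements of $\pi(R_\infty)$. On the other hand, for any fixed $g\in R_\infty$ and all large $k$ the supports of $\sigma_k r\sigma_k^{-1}$ and $g$ are disjoint, so $\pi(\sigma_k r\sigma_k^{-1})$ commutes with $\pi(g)$; passing to the weak limit gives $A(r)\in\pi(R_\infty)'$. Since $\pi$ is a factor representation, $A(r)\in\pi(R_\infty)''\cap\pi(R_\infty)'=\mathbb{C}I$, say $A(r)=c(r)I$. Combining with the first step, $\left(\pi(r)\eta,\zeta\right)=\left(A(r)\eta,\zeta\right)=c(r)\left(\eta,\zeta\right)$ for all $\eta,\zeta\in H_\pi(n)$, which is precisely $E(n)\pi(r)E(n)=c(r)E(n)$, and the statement follows (the case $H_\pi(n)=0$ being trivial). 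Note that this argument works for every $n$, not only for $n=d(\pi)$.

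The step I expect to be the main obstacle is the stabilization underlying the existence of $A(r)$: this is the only place where admissibility (density of $\bigcup_m H_\pi(m)$) and the conjugacy classification of Proposition \ref{mult7} genuinely enter, and one must check carefully that a permutation realizing a common type of two far-out conjugates can be chosen inside $\mathfrak{S}_{m\infty}$ rather than merely in $\mathfrak{S}_\infty$. Everything after that is the same factor/commutant reasoning already employed in Proposition \ref{quasi_unique}.
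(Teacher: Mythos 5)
Your proposal is correct and follows essentially the same route as the paper's proof: push $r$ off to infinity by conjugation with permutations fixing $1,\dots,n$ (the paper uses a coherent sequence $r_N=s_N\,r\,s_N^{-1}\in R_{N\infty}$ with $s_{N+M}s_N^{-1}\in\mathfrak{S}_{N\infty}$), extract the weak limit $A(r)$, identify it as the scalar $c(r)I$ via factoriality, and then compress by $E(n)$ using the $\mathfrak{S}_{n\infty}$-invariance of $H_\pi(n)$. Your stabilization of matrix coefficients via the density of $\bigcup_m H_\pi(m)$ together with Proposition \ref{mult7} is merely a more explicit rendering of the paper's stabilization step, so the two arguments coincide in substance.
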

 \begin{proof}
 For each  $r\in R_{n\infty}$, we have
 \begin{eqnarray}\label{E_pi_E}
 E(n)\,\pi(s)\,\pi(r)\,\pi(t)\,E(n)=E(n)\,\pi(r)\,E(n)~\text{ for all }~s,t\in\mathfrak{S}_{n\infty}.
 \end{eqnarray}
 There exist the elements $s_{_N}\in \mathfrak{S}_{N\infty}$ such that
 \begin{eqnarray}\label{element_r_N}
r_{_N}= s_{_N}\,r\,s_{_N}^{-1}\in R_{N\infty} ~\text{and}~ s_{_{N+M}}s_{_N}^{-1}\in \mathfrak{S}_{N\infty}~\text{for all natural }~ M\geq 1
 \end{eqnarray}
 Hence,
 \begin{eqnarray*}
 \begin{split}
 &\lim\limits_{N\to\infty} \left( \pi(r_N) \pi(q_1)\xi, \pi(q_2)\xi\right)=\left( \pi(r_L) \pi(q_1)\xi, \pi(q_2)\xi\right), ~\text{where}~\\
  &L=1+{\rm max}\left\{ k\in({\rm supp}\,q_1)\cup({\rm supp}\,q_2)\right\}, ~\text{for all}~\;q_1,q_2\in R_{\infty} ~\text{and}~ \xi\in H_\pi(n).
  \end{split}
 \end{eqnarray*}
  Therefore, there exists $A(r)=\lim\limits_{N\to\infty}\pi(r_N)$ in the weak operator topology. Since $\pi$ is factor-representation, it follows from \eqref{element_r_N} that $A(r)$ is scalar operator; i.e. $A(r)=c(r){\rm I}$, where $c(r)\in \mathbb{C}$. Applying \eqref{E_pi_E}, we obtain $E(n)\,\pi(r)\,E(n)=c(r)E(n)$.
 \end{proof}
 \begin{proof}[{\bf The proof of theorem \ref{main_admissible}.} ]
  First, consider the case $\alpha >0$ (see lemma \ref{relations_projections}). Let $d(\pi)=n$.
 Since $\widetilde{E}(n)\in \pi\left(  R_n\cdot R_{n\infty}\right)^\prime$ (see remark \ref{remark_cyclic} on page \pageref{remark_cyclic}), the operators $\pi_{\widetilde{E}(n)}(r)=\widetilde{E}(n)\cdot\pi(r)\cdot\widetilde{E}(n)$, $r\in R_n\cdot R_{n\infty}$ define a representation of the subsemigroup $ R_n\cdot R_{n\infty}$ on the space $\widetilde{H}_\pi(n)$. By lemma \ref{setting-up},
 \begin{eqnarray}\label{reduction}
 \pi_{\widetilde{E}(n)}\left( R_n\cdot R_{n\infty}\right)^{\prime\prime}=\widetilde{E}(n)\cdot \pi\left( R_\infty \right)^{\prime\prime}\cdot\widetilde{E}(n).
 \end{eqnarray}
 Let us prove that
 \begin{eqnarray}\label{equality_zero}
\pi\left( \epsilon _{\{k\}}\right) \cdot\widetilde{E}(n)=0 \text{ for all } k\leq n.
 \end{eqnarray}
 Assume that $\pi\left( \epsilon _{\{k\}}\right) \cdot\widetilde{E}(n)\neq0$ for some $k\leq n$. Then, applying lemma \ref{relations_projections}, we have
 $ P_k\cdot\pi\left(\epsilon _{\{k\}}\right) \cdot\widetilde{E}(n)\neq0$. Thus
 $P_k\cdot\pi\left(\epsilon _{\{k\}}\right)\eta\neq0$  for some $\eta\in H_\pi(n)$. It follows from lemma \ref{as_relations} that $\pi(s)P_k\cdot\pi\left(\epsilon _{\{k\}}\right)\eta$ $=P_k\cdot\pi\left(\epsilon _{\{k\}}\right)\eta$ for all $s\in\mathfrak{S}_\mathbb{B}$, where $\mathbb{B}=k\cup[n+1,\infty)$. This contradicts  the fact that $d(\pi)=n$. The equality (\ref{equality_zero}) is proved.

 %Now we notice that, by (\ref{reduction}), $ \pi_{\widetilde{E}(n)}$ is factor-representation of $R_n\cdot R_{n\infty}$. Since $E(n)$ belongs to $ \pi_{\widetilde{E}(n)}(R_{n\infty})''$, $W^*$-algebra $E(n)\,\pi_{\widetilde{E}(n)}(R_nR_{n\infty})''\,E(n)$ is factor.
 Further, applying Lemmas \ref{setting-up} and \ref{R_n_infty_scalar}  and formula \eqref{equality_zero}, we obtain that
   \begin{align}\label{EnREn}
   E(n)\,\pi(R_\infty)''\,E(n)= E(n)\,\pi_{\widetilde{E}(n)}(R_nR_{n\infty})''\,E(n)\\ =E(n)\,\pi(R_nR_{n\infty})''\,E(n)=
   E(n)\,\pi(\mathfrak S_n)''\,E(n).
   \end{align}
  Since $\mathfrak S_n$ is a finite group, the latter implies that the $W^*$-algebra $$E(n)\,\pi(R_{\infty})''\,E(n)$$ is a factor of type $I_m$ for some $m<\infty$. It follows that $\pi(R_\infty)''$ is a of type $I$. Therefore  $\pi(R_{\infty})'$, is a factor of type ${\rm I}$ as well.

   Let $\left\{ \mathbf{e}_{kj}'\right\}_{k,j=1}^l$  be a matrix unit of $\pi(R_{\infty})' $ (see \cite{TAKES1}, page 51) such that $\mathbf{e}_{11}^\prime$ is a minimal projection in $\pi(R_\infty)^\prime$, and let $\xi_1$ be a unit vector from $\mathbf{e}_{11}'\,E(n)$.  Then the vectors  $\xi_k=\mathbf{e}_{k1}'\xi_1,\,k\in\mathbb N$, are pairwise orthogonal vectors in $H_\pi(n)$ and the following hold:
 \begin{itemize}
  \item [{\rm i)}]  the subspaces $\left[ \pi\left(R_n\cdot R_{n\infty}\right)\xi_i\right]$ are pairwise orthogonal;
   \item [{\rm ii)}] $\oplus_{k=1}^l\left[ \pi\left(R_n\cdot R_{n\infty}\right)\xi_i\right]=\widetilde{H}_\pi(n)$;
   \item [{\rm iii)}] $\left( \pi(r)\xi_1,\xi_1\right)=\left( \pi(r)\xi_k,\xi_k\right)$ for all $r\in R_n\cdot R_{n\infty}$ and $k=1,2,\ldots,l$.
 \end{itemize}
 Since $ \pi_{\widetilde{E}(n)}$ is factor-representation of $R_n\cdot R_{n\infty}$, the relation
 $\left( \pi\left( r_1r_2\right)\xi_i,\xi _i \right)=\left( \pi\left( r_1\right)\xi_i,\xi _i \right)\left( \pi\left( r_2\right)\xi_i,\xi _i \right)$
  holds for all $r_1\in R_n$ and $r_2\in R_{n\infty}$. Therefore, the restriction of $\pi$ to $R_{n\infty}$ on the subspace $\left[ \pi\left( R_{n\infty}\right)\xi_i\right]$  is a cyclic factor- representation. It follows from propositions \ref{Prop_spherical}, \ref{Prop_minimal} and lemmas \ref{as_relations}, \ref{relations_projections} that
  \begin{eqnarray}\label{value_spherical_function}
  \left( \pi(r)\xi _k,\xi _k\right)=\alpha ^{\#\mathbb{A}}, \text{ where } r=s\cdot\epsilon _\mathbb{A}\in R_{n\infty}, s\in\mathfrak{S}_{n\infty}.
  \end{eqnarray}
  The restriction of $\pi$ to $R_n$ on the subspace $\left[\pi\left( R_n\right)\xi _i \right]$ is a factor-representation for the same reason. It follows from (\ref{equality_zero}) and the properties {\rm i)}-{\rm iii)} that the restriction of $ \pi_{\widetilde{E}(n)}$  to $R_n$ has the form
  \begin{eqnarray*}
  \pi_{\widetilde{E}(n)}(s)=T_\lambda (s)\otimes {\rm id}_m(s) \text{ for all } s\in\mathfrak{S}_n;
  \pi_{\widetilde{E}(n)}\left( s\cdot\epsilon _\mathbb{A}\right)=0, \text{ if } \mathbb{A}\neq\emptyset,
  \end{eqnarray*}
  where $T_\lambda$ is the irreducible representation of $\mathfrak{S}_n$. Now, applying  the pro\-per\-ties {\rm i)}-{\rm iii)}, (\ref{value_spherical_function}) and lemma \ref{setting-up}, we obtain the statement of theorem \ref{main_admissible} in the case $\alpha >0$.

  If $\alpha =0$, then, by lemma \ref{relations_projections}, the projections  $\pi\left( \epsilon_i \right)$ and $P_i$ are orthogonal for any $i$. It follows from lemma    \ref{lemma_inv_product} that
  \begin{eqnarray}\label{kernel}
  \pi\left( \epsilon _{\{l\}}\right)\eta =0 \text{ for all  } \eta \in H_\pi(n) \text{ and } l=n+1, n+2,\ldots.
  \end{eqnarray}
Put $Q_l=\prod\limits_{i=l+1}^\infty\left( {\rm I}-\pi\left( \epsilon_i \right)\right)$. It follows from (\ref{kernel}) that \begin{eqnarray}
Q_n\eta=\eta  \text{ for all  } \eta \in H_\pi(n), \text{  and  }  Q_l  H_\pi(n)\subset  H_\pi(n) \text{ for   } l\in0\cup\mathbb{N}.
\end{eqnarray}
Let $j=\min \left\{ i: Q_{i} H_\pi(n)\neq 0 \right\}$, and let
 \begin{eqnarray}\label{projection_tame}
 F_j= Q_{j}\cdot\prod\limits_{i=n+1}^\infty P_i.
 \end{eqnarray}
  Since $P_k\pi\left(\epsilon _{\{l\}} \right)=\pi\left( \epsilon _{\{l\}}\right)P_k$ for all $k\neq l$, the operator $F_j$ is an orthogonal projection.

 Notice that, by definition of $j$ and (\ref{projection_tame}),
 \begin{eqnarray}\label{projection_tame_symmetrscal}
\pi\left( \epsilon_{\{i\}} \right)F_j= \left\{
 \begin{array}{lr}
 0,& \text{ if } i> j;\\
 F_j,&\text{ if } i\leq j.
 \end{array}
 \right.
 \end{eqnarray}
Let $\widetilde{F}_j$ be the projection onto the subspace $\left[ \pi\left( R_j\cdot R_{j\infty}\right)F_{j}\,H_\pi\right]$.
 Then, using (\ref{projection_tame_symmetrscal}),  we obtain that
 \begin{eqnarray}\label{tame_symmetrscal}
\pi\left( \epsilon_{\{i\}} \right)\widetilde{F}_j= \left\{
 \begin{array}{lr}
 0,& \text{ if } i> j;\\
 \widetilde{F}_j,&\text{ if } i\leq j.
 \end{array}
 \right..
 \end{eqnarray}

By (\ref{projection_tame}), $\widetilde{F}_j$   lies in $\pi\left( R_\infty\right)^{\prime\prime}\cap\pi\left(R_j\cdot R_{j\infty} \right)^\prime$.
 Using the relation  $\epsilon _{\{i\}}(i\,l)\epsilon _{\{i\}}=\epsilon _{\{i\}}\epsilon_{\{l\}} $, we have from (\ref{projection_tame_symmetrscal})
 \begin{eqnarray}\label{trivial_widetildeF_j}
 \pi(r) \widetilde{F}_j= \widetilde{F}_j\text{ for all } r\in R_j, \text{ and }\widetilde{F}_jH_\pi=\left[ \pi\left(\mathfrak S_j\cdot\mathfrak{S}_{j\infty} \right)F_jH_\pi\right].
 \end{eqnarray}
 On account of the lemmas \ref{as_relations}, \ref{lemma_inv_product}, we have
 \begin{eqnarray*}
 \pi(s)F_j\pi\left(s^{-1} \right)F_j=\left(\prod\limits_{i=j+1}^\infty\left( {\rm I}-\pi\left( \epsilon_{s(i)} \right)\right)\cdot \prod\limits_{k=n+1}^\infty P_{s(k)}\right)\cdot F_j.
 \end{eqnarray*}
 Therefore, we conclude from (\ref{projection_tame_symmetrscal}) that $\pi(s)F_j\pi\left(s^{-1} \right)F_j=0$ for all $s\notin \mathfrak{S}_j\cdot\mathfrak{S}_{j\infty}$. It follows from (\ref{trivial_widetildeF_j}) that
 \begin{eqnarray}\label{imprim}
 \pi(s)\cdot\widetilde{F}_j\cdot\pi\left(s^{-1} \right)\cdot\widetilde{F}_j=0 \text{ for all } s\notin \mathfrak{S}_j\cdot\mathfrak{S}_{j\infty}.
 \end{eqnarray}
 Hence, using (\ref{tame_symmetrscal}), (\ref{trivial_widetildeF_j}), we have $\widetilde{F}_j\left\{ \pi\left(R_\infty \right) \right\}^{\prime\prime}\widetilde{F}_j=\left\{\widetilde{F}_j\pi\left( \mathfrak{S}_{j}\cdot \mathfrak{S}_{j\infty}\right)\widetilde{F}_j\right\}^{\prime\prime}$.
Therefore, the restriction  of $\pi$ to the subgroup $\mathfrak{S}_{j\infty}$ on the space $\widetilde{F}_jH_\pi$  is the tame factor-representation (see definition \ref{Def_tame}).

Set  $X=\mathfrak{S}_\infty/\left(\mathfrak{S}_j\cdot \mathfrak{S}_{j\infty}\right)$. Choose a representative $g_x\in\mathfrak{S}_\infty$ for each  coset $x\in X$.
 By (\ref{imprim}), the map $H_\pi\ni\eta \stackrel{\mathfrak{I}}{\mapsto} \mathfrak{I}\eta\in l^2\left(X, \widetilde{F}_jH_\pi\right)$, where $\left(\mathfrak{I}\eta \right)(x)= \widetilde{F}_j\pi\left(g_x^{-1} \right)\eta$, is an isometry from $H_\pi$ into $l^2\left(X, \widetilde{F}_jH_\pi\right)$. An easy computation shows that for $\eta\in l^2\left(X, \widetilde{F}_jH_\pi\right)$
 \begin{eqnarray}
 \begin{split}
 \left(\mathfrak{I}\pi(s)\mathfrak{I}^{-1}\eta \right)(x)=\pi\left(k^{-1}(s,x) \right)\eta\left(s^{-1}x \right),\\ \text{where } k(s,x)=s^{-1}g_xg_{s^{-1}x}^{-1}\in \mathfrak{S}_j\cdot \mathfrak{S}_{j\infty};\\
 \left(\mathfrak{I}\pi\left(\epsilon_{\{i\}} \right)\mathfrak{I}^{-1}\eta \right)(x)=\pi\left(\epsilon_{\left\{ g_x^{-1}i \right\}} \right)\eta(x)\;\;\;\;\text{ (see (\ref{tame_symmetrscal}))}.
 \end{split}
 \end{eqnarray}
Hence, using (\ref{trivial_widetildeF_j}) and the description of the tame representations of $\mathfrak{S}_{j\infty}$ (see \cite{LIEB} or \cite{Olshsurvey} ), we obtain that $\pi$ is a multiple to one of the representations from Proposition \ref{Prop_zero}.
 \end{proof}

  \section{Properties of $\mathfrak{S}_\infty$-invariant indecomposable positive definite functions on $R_\infty$.}\label{section:properties}
  Let $f$ be a positive definite function on $R_\infty$ such that $f(e)=1$. Recall that $f$ is called $\mathfrak{S}_\infty$-invariant if $f(rs)=f(sr)$ for all $r\in R_\infty$, $s\in\mathfrak{S}_\infty\subset R_\infty$. The next statement can be proven using the same method as analogous statement in \cite{Thoma}. It is also a straightforward modification of Proposition 7 from \cite{Dudko_Nes_2009}.
  \begin{Prop}\label{factor_condition}
Let $\pi_f$ be the GNS-representation of $R_\infty$, corresponding to $f$.  Then $\pi_f$ is a factor-representation if and only if $f\left( r_1r_2\right)=f\left( r_1\right)f\left( r_2\right)$ for all $r_1,r_2\in R_\infty$ such that ${\rm supp}\,r_1\,\cap\,{\rm supp}\,r_2=\emptyset$ (see definition \ref{support_of_el_semigroup}).
\end{Prop}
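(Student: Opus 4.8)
The forward implication is already available: if $\pi_f$ is a factor representation, then $f$ is an indecomposable $\mathfrak S_\infty$-central state, i.e.\ $f\in\mathfrak F_\mathfrak S$, and Proposition \ref{mult} yields $f(r_1r_2)=f(r_1)f(r_2)$ whenever ${\rm supp}\,r_1\cap{\rm supp}\,r_2=\emptyset$. So the real content is the converse, and my plan is to assume the factorization and prove directly that the center $Z(M)$ of $M=\pi_f(R_\infty)''$ is trivial. Throughout I will use three elementary facts about $R_\infty$: elements with disjoint supports commute; ${\rm supp}\,(r^\star)={\rm supp}\,r$ and ${\rm supp}\,(ab)\subseteq{\rm supp}\,a\cup{\rm supp}\,b$ (both immediate from Remark \ref{support_of_el_semigroup}); and, given $r$ together with any finite set, one can choose $s_i\in\mathfrak S_\infty$ fixing that set pointwise and with $\min{\rm supp}\,(s_irs_i^{-1})\to\infty$. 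The $\mathfrak S_\infty$-invariance $f(rs)=f(sr)$ will be used in its conjugation form $f(srs^{-1})=f(r)$.

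The first step is to identify the weak operator limit produced by pushing the support of a single element to infinity. Fix $r\in R_\infty$ and a sequence $s_i$ with $\min{\rm supp}\,(s_irs_i^{-1})\to\infty$. For fixed $a,b\in R_\infty$ and all large $i$ the element $s_irs_i^{-1}$ has support disjoint from ${\rm supp}\,a\cup{\rm supp}\,b\supseteq{\rm supp}\,(b^\star a)$, so it commutes with $b^\star a$ and the factorization hypothesis applies:
\[
\bigl(\pi_f(s_irs_i^{-1})\pi_f(a)\xi_f,\pi_f(b)\xi_f\bigr)=f\bigl(b^\star\,s_irs_i^{-1}\,a\bigr)=f(s_irs_i^{-1})\,f(b^\star a)=f(r)\,f(b^\star a),
\]
the last equality by conjugation-invariance. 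Since $f(b^\star a)=(\pi_f(a)\xi_f,\pi_f(b)\xi_f)$ and $\xi_f$ is cyclic, this shows $\pi_f(s_irs_i^{-1})\to f(r)\,I$ in the weak operator topology, independently of the chosen sequence.

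The second step converts this into triviality of $Z(M)$. Take any $T\in Z(M)$, put $\tau=(T\xi_f,\xi_f)$ and $\phi(r)=(T\pi_f(r)\xi_f,\xi_f)$. I first verify that $\phi$ is $\mathfrak S_\infty$-invariant. Since $\pi_f(s)$ is a unitary of $M$, the inner automorphism ${\rm Ad}\,\pi_f(s)$ fixes $Z(M)$, so $T$ commutes with $\pi_f(s)$; moreover the invariance of $f$ gives $\omega_{\pi_f(s)\xi_f}=\omega_{\xi_f}$ on $M$, because these are vector (hence normal) states that agree on the weakly dense $\ast$-algebra $\pi_f(\mathbb C[R_\infty])$. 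Combining the two facts yields $\phi(srs^{-1})=\phi(r)$. Now, using invariance of $\phi$, centrality of $T$, and Step 1,
\[
\phi(r)=\phi(s_irs_i^{-1})=\bigl(\pi_f(s_irs_i^{-1})\,T\xi_f,\xi_f\bigr)\longrightarrow f(r)\,(T\xi_f,\xi_f)=\tau\,f(r).
\]
Hence $\phi=\tau f$; applying this with $r=b^\star a$ gives $\bigl((T-\tau I)\pi_f(a)\xi_f,\pi_f(b)\xi_f\bigr)=0$ for all $a,b$, and cyclicity of $\xi_f$ forces $T=\tau I$. Thus every central element is scalar, $M$ is a factor, and the proof is complete.

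I expect the main obstacle to be the clean identification of the asymptotic scalar in Step 1: this is precisely where the disjoint-support factorization is consumed, and it rests on the bookkeeping ${\rm supp}\,(b^\star a)\subseteq{\rm supp}\,a\cup{\rm supp}\,b$ together with the commutation of disjoint elements. A secondary delicate point is the invariance of $\phi$ in Step 2: one cannot use $\pi_f(s)\xi_f=\xi_f$ (which fails in general) and must instead route the argument through $\omega_{\pi_f(s)\xi_f}=\omega_{\xi_f}$, exactly the mechanism of passing to conjugates inside matrix coefficients already exploited in the proof of Proposition \ref{quasi_unique}.
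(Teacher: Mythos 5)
Your proof is correct. Note that the paper gives no explicit proof of Proposition \ref{factor_condition}: it only remarks that the statement can be proven by the method of the analogous result in \cite{Thoma}, and that it is a straightforward modification of Proposition 7 of \cite{Dudko_Nes_2009}. What you have written is, in effect, a self-contained version of exactly that method: the mechanism of your Step 1 --- pushing the support of the conjugates $s_irs_i^{-1}$ to infinity and identifying the weak operator limit as the scalar $f(r)I$ --- is the same device the paper itself uses in the proofs of Proposition \ref{quasi_unique} and of part (a) of Proposition \ref{Prop_spherical}, where it is run in the direction ``factor $\Rightarrow$ multiplicative,'' while you run it in the converse direction to show that every central $T$ is scalar. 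Your treatment of the two delicate points is sound: the identity $\omega_{\pi_f(s)\xi_f}=\omega_{\xi_f}$ on $M=\pi_f(R_\infty)''$ (two normal states agreeing on the weakly dense $\ast$-algebra $\pi_f(\mathbb{C}[R_\infty])$, hence on all of $M$) correctly replaces the generally false identity $\pi_f(s)\xi_f=\xi_f$; and the forward direction is legitimately delegated to Proposition \ref{mult}, since a factorial $\pi_f$ with $\mathfrak{S}_\infty$-central $f$ means precisely $f\in\mathfrak{F}_\mathfrak{S}$. One small point you should make explicit: to upgrade convergence of the matrix coefficients $\left(\pi_f(s_irs_i^{-1})\pi_f(a)\xi_f,\pi_f(b)\xi_f\right)$ on the cyclic subspace to genuine weak operator convergence --- which you actually need in Step 2, where the limit is tested against the vector $T\xi_f$, not a vector of the form $\pi_f(a)\xi_f$ --- you must invoke the uniform bound $\|\pi_f(r)\|\leq 1$; this holds because $\pi_f(r)^*\pi_f(r)=\pi_f(r^\star r)$ is an orthogonal projection, $r^\star r$ being a self-adjoint idempotent in ${\rm Diag}_\infty$.
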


\subsection{Asymptotic operators $\mathcal{O}_k$.}\label{asumpthotic_operators} Let $f$ be a $\mathfrak{S}_\infty$-invariant  positive definite function on $R_\infty$, and let $ (\pi_f,H_f,\xi_f) $ be the corresponding GNS-representation acting on a Hilbert space $H_f$ with a cyclic vector $\xi_f$, where $f(r)=\left(\pi_f(r)\xi_f,\xi_f \right)$. Recall that $(k\;n)\in\mathfrak S_\infty$ stand for the transposition of $k,n\in\mathbb N$, $k\neq n$. The next assertion is easy to derive from the $\mathfrak{S}_\infty$-invariance of $f$ for all $r\in R_\infty$ and can be proven same way as analogous statement from \cite{Ok2} (see page 3559).
\begin{Lm}\label{Okounkov_operator}
The limits  $\mathcal{O}_k=\lim\limits_{n\to\infty}\pi_f\left((k\;n) \right)$ exist in the weak operator topology. The operators $\left\{ \mathcal{O}_k \right\}_{k\in\mathbb{N}}$ satisfy the following relations:\newline $\mathcal{O}_k\mathcal{O}_l=\mathcal{O}_l\mathcal{O}_k$ and $\pi_f\left(s \right)\mathcal{O}_k\pi_f\left(s^{-1} \right)=\mathcal{O}_{s(k)}$ for all $s\in\mathfrak{S}_\infty$.
\end{Lm}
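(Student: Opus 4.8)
The plan is to establish the statement in three stages: first the existence of the weak-operator limits $\mathcal{O}_k$, then the covariance relation under $\mathfrak{S}_\infty$, and finally commutativity, which I expect to be the delicate point. Throughout I will use that the $\mathfrak{S}_\infty$-invariance $f(rs)=f(sr)$ yields conjugation invariance $f(\sigma a\sigma^{-1})=f(a)$ for $\sigma\in\mathfrak{S}_\infty$ and $a\in R_\infty$ (take $r=a\sigma^{-1}$, $s=\sigma$), and that $\mathrm{supp}\,r^\star=\mathrm{supp}\,r$.

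For existence I would test against the cyclic set $\{\pi_f(r)\xi_f:r\in R_\infty\}$, which is total in $H_f$. Since each $\pi_f((k\;n))$ is unitary, the family is uniformly bounded, so weak convergence on this total set produces a bounded limit operator $\mathcal{O}_k$ with $\|\mathcal{O}_k\|\le 1$. Concretely, $\big(\pi_f((k\;n))\pi_f(r_1)\xi_f,\pi_f(r_2)\xi_f\big)=f\big(r_2^\star(k\;n)r_1\big)$. Choosing $N$ larger than $k$ and than every index in $\mathrm{supp}\,r_1\cup\mathrm{supp}\,r_2$, for any $n,n'>N$ the transposition $\sigma=(n\;n')$ fixes $k$ and commutes with both $r_1$ and $r_2^\star$, while $\sigma(k\;n)\sigma^{-1}=(k\;n')$. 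Conjugation invariance then gives $f\big(r_2^\star(k\;n')r_1\big)=f\big(r_2^\star(k\;n)r_1\big)$, so the scalar sequence is eventually constant and the limit exists.

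For the covariance relation I would use that two-sided multiplication by the fixed unitary $\pi_f(s)$ is continuous in the weak operator topology, whence $\pi_f(s)\mathcal{O}_k\pi_f(s^{-1})=w\text{-}\lim_n \pi_f\big(s(k\;n)s^{-1}\big)$. Since $s(k\;n)s^{-1}=(s(k)\;s(n))$ and $s(n)=n$ for all large $n$ (as $s$ is finitary), the limit equals $w\text{-}\lim_n\pi_f\big((s(k)\;n)\big)=\mathcal{O}_{s(k)}$.

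The hard part is commutativity, because multiplication is only separately, not jointly, continuous in the weak operator topology, so one cannot simply pass to a double limit. My plan is to write, using separate continuity, $(\mathcal{O}_k\mathcal{O}_l\,\eta,\zeta)=\lim_n\lim_m\big(\pi_f\big((k\;n)(l\;m)\big)\eta,\zeta\big)$ and likewise $(\mathcal{O}_l\mathcal{O}_k\,\eta,\zeta)=\lim_m\lim_n\big(\pi_f\big((l\;m)(k\;n)\big)\eta,\zeta\big)$ for $\eta=\pi_f(r_1)\xi_f$, $\zeta=\pi_f(r_2)\xi_f$. The decisive point is that the matrix coefficient $c(n,m)=f\big(r_2^\star(k\;n)(l\;m)r_1\big)$ is eventually constant in the pair $(n,m)$: for $n\ne m$ and $n',m'$ all exceeding a threshold $N$ (dominating $k,l$ and the relevant supports), a finite permutation $\sigma$ fixing every index $\le N$, hence commuting with $r_1,r_2^\star$ and fixing $k,l$, with $\sigma(n)=n'$ and $\sigma(m)=m'$, conjugates $(k\;n)(l\;m)$ to $(k\;n')(l\;m')$; conjugation invariance of $f$ then forces $c(n,m)=c(n',m')$. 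This stabilization is exactly where Proposition \ref{mult7} and the $\mathfrak{S}_\infty$-invariance enter. Since $c$ reduces to a single constant, both iterated limits equal that constant, and combined with the identity $(k\;n)(l\;m)=(l\;m)(k\;n)$ for disjoint transpositions this yields $\mathcal{O}_k\mathcal{O}_l=\mathcal{O}_l\mathcal{O}_k$ on the total set, hence everywhere.
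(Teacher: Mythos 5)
Your proof is correct and follows essentially the same route the paper intends: the paper gives no proof of its own but defers to the analogous statement in \cite{Ok2}, whose argument is exactly your stabilization of matrix coefficients $f\left(r_2^\star(k\;n)r_1\right)$ on the cyclic total set via conjugation invariance, combined with uniform boundedness (the same device the paper itself uses for the weak limits in Proposition \ref{quasi_unique}). Your treatment of the double limit for commutativity, including the justification via separate weak continuity and the eventual constancy of $f\left(r_2^\star(k\;n)(l\;m)r_1\right)$ on pairs of distinct large indices, is sound; the citation of Proposition \ref{mult7} there is harmless but unnecessary, since conjugation invariance alone suffices.
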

\begin{Rem}
 By definition, $\mathcal{O}_k$ is a self-adjoint operator and $\left\|\mathcal{O}_k\right\|\leq 1$.
 It is clear that $\mathcal{O}_l$ and $\mathcal{O}_k$ are unitary equivalent for all $l$, $k$.
 \end{Rem}
 Let $\mathfrak{A}_k$  be $W^*$-algebra generated by $\mathcal{O}_k$  and $\pi\left(\epsilon_{\{k\}} \right)$. Namely, $\mathfrak{A}_k=\left\{\mathcal{O}_k,\pi_f\left( \epsilon_k \right) \right\}^{\prime\prime}$. Denote by $E_k(0)$ the orthogonal projection onto subspace $\left\{ \eta\in H_f: \mathcal{O}_k\eta=0 \right\}$.\label{def_of-Algebra A_k}
 \begin{Lm}\label{separating}
 If $A\in \left(1-E_k(0) \right)\mathfrak{A}_k\left(1-E_k(0) \right)$ and $A\xi_f=0$ then $A\equiv 0$. In other words vector $\xi_f$ is separating for the $W^\ast$-algebra $\left(1-E_k(0) \right)\mathfrak{A}_k\left(1-E_k(0) \right)$.
 \end{Lm}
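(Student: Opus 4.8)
The plan is to reduce to the case $A\geq 0$ and then to prove the single operator identity $\mathcal O_k A=0$, from which $A=0$ will follow. For the reduction, observe that $1-E_k(0)$ is the range projection of the self-adjoint operator $\mathcal O_k$, hence lies in $W^*(\mathcal O_k)\subseteq\mathfrak A_k$, so the corner $\mathcal N:=(1-E_k(0))\mathfrak A_k(1-E_k(0))$ is a von Neumann algebra. If $A\in\mathcal N$ with $A\xi_f=0$, then $A^\ast A\in\mathcal N$, $A^\ast A\geq 0$ and $A^\ast A\,\xi_f=A^\ast(A\xi_f)=0$; proving $A^\ast A=0$ gives $A=0$. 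Thus I may assume $A\geq 0$, $A\in\mathcal N$, $A\xi_f=0$.

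The input I would isolate first is that the $\mathfrak S_\infty$-invariance of $f$ makes the vector state $\varphi(\,\cdot\,)=(\,\cdot\,\xi_f,\xi_f)$ invariant under conjugation by $\pi_f(\mathfrak S_\infty)$: from $f(rs)=f(sr)$ one obtains $\varphi(\pi_f(s)X)=\varphi(X\pi_f(s))$ for all $X\in\pi_f(R_\infty)''$ and $s\in\mathfrak S_\infty$ by weak continuity, whence $\varphi(u\,X\,u)=\varphi(X)$ for the self-adjoint unitary $u=\pi_f((k\;M))$. Consequently, writing $A_M=\pi_f((k\;M))A\pi_f((k\;M))$, Lemma \ref{Okounkov_operator} gives $A_M\in\mathfrak A_M$ (conjugation sends $\mathcal O_k\mapsto\mathcal O_M$ and $\pi_f(\epsilon_{\{k\}})\mapsto\pi_f(\epsilon_{\{M\}})$), while $\|A_M\xi_f\|^2=\varphi(A_M^{\,2})=\varphi(A^2)=\|A\xi_f\|^2=0$, so $A_M\xi_f=0$ and, $A_M$ being positive, $A_M^\ast\xi_f=0$ as well. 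This is the substitute for the invariance $\pi(s)\xi=\xi$ used in the admissible case (Proposition \ref{Prop_minimal}): here $\xi_f$ need not be fixed, but its state is conjugation-invariant, which is exactly what is needed.

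Next I would compute $\mathcal O_k A$ against the cyclic vectors. Since $\mathcal O_k=\lim_M\pi_f((k\;M))$ weakly, for any $r_1,r_2\in R_\infty$,
\[
(\mathcal O_k A\,\pi_f(r_1)\xi_f,\pi_f(r_2)\xi_f)=\lim_{M\to\infty}(\pi_f((k\;M))A\,\pi_f(r_1)\xi_f,\pi_f(r_2)\xi_f).
\]
For $M>\max{\rm supp}\,r_2$ with $M\neq k$, the algebra $\mathfrak A_M$ commutes with $\pi_f(r_2)$: both $\mathcal O_M$ (a weak limit of $\pi_f((M\;n))$ with $(M\;n)r_2=r_2(M\;n)$ for large $n$) and $\pi_f(\epsilon_{\{M\}})$ commute with $\pi_f(r_2)$ when $M\notin{\rm supp}\,r_2$. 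Using $\pi_f((k\;M))A=A_M\,\pi_f((k\;M))$ and moving $A_M^\ast$ across $\pi_f(r_2)$, each term for such $M$ equals
\[
(\pi_f((k\;M))\pi_f(r_1)\xi_f,\;\pi_f(r_2)A_M^\ast\xi_f)=0,
\]
since $A_M^\ast\xi_f=0$. Hence the limit vanishes, and as $r_1,r_2$ range over $R_\infty$ and $\xi_f$ is cyclic, I conclude $\mathcal O_k A=0$.

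Finally, $A=(1-E_k(0))A$ gives ${\rm ran}\,A\subseteq{\rm ran}(1-E_k(0))=(\ker\mathcal O_k)^\perp$, on which the self-adjoint operator $\mathcal O_k$ is injective; therefore $\mathcal O_k A=0$ forces $A=0$, which completes the reduction and the lemma. The main obstacle, compared with Proposition \ref{Prop_minimal}, is precisely that $\xi_f$ is no longer $\mathfrak S_\infty$-fixed; the argument is rescued in two ways: replacing the projection $P_k$ by the weak-limit operator $\mathcal O_k$, so that ``pushing $A$ out to infinity'' is still available while $\xi_f$ stays put, and using $\mathfrak S_\infty$-centrality to ensure that the translated operators $A_M$ — and, after passing to $A^\ast A$, their adjoints — continue to annihilate $\xi_f$.
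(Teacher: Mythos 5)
Your proposal is correct and takes essentially the same route as the paper's proof: both reduce the problem to showing $\mathcal O_k A=0$ (using that $\mathcal O_k$ is injective on the range of $1-E_k(0)$), both compute matrix elements of $\mathcal O_kA$ as weak limits of $\pi_f((k\;M))A$, both conjugate $A$ to $A_M=\pi_f((k\;M))A\pi_f((k\;M))\in\mathfrak A_M$, which commutes with $\pi_f(R_n)$ for $M$ large, and both invoke the $\mathfrak S_\infty$-centrality of $f$ to exploit $A\xi_f=0$. The only differences are cosmetic: the paper argues by contradiction, normalizing to $A=A^\ast$ and applying centrality inside the matrix element so that the factor $A\xi_f$ appears directly, whereas you argue directly, normalizing to $A\geq 0$ via $A^\ast A$ and first establishing $A_M\xi_f=0$ from $\varphi(uA^2u)=\varphi(A^2)$.
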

 \begin{proof}
 We can assume that $A=A^*$.
 To obtain a contradiction, suppose that $A\neq 0$. Then  $\mathcal{O}_kA\neq 0$. Therefore, there exist $q,r\in R_n$ such that
 \begin{eqnarray*}
 \left( \mathcal{O}_kA\pi_f(q)\xi_f,\pi_f(r)\xi_f \right)=\lim\limits_{N\to\infty}\left(\pi_f((k\;N))A\pi_f(q)\xi_f,\pi_f(r)\xi_f \right)\neq0.
 \end{eqnarray*}
 Now we fix $N$ with the properties
 \begin{eqnarray*}
 N>n \; \text{ and }\; \left(\pi_f((k\;N))A\pi_f(q)\xi_f,\pi_f(r)\xi_f \right)\neq0.
 \end{eqnarray*}
 Hence, since $\pi_f((k\;N))A\pi_f((k\;N))$ lies in $\pi_f\left(R_n \right)^\prime$, we have
 \begin{eqnarray*}
 0\neq\left(\pi_f((k\;N))A\pi_f(q)\xi_f,\pi_f(r)\xi_f \right)\\=\left(\pi_f((k\;N))\pi_f(q)\xi_f, \pi_f(r) \pi_f((k\;N))A\pi_f((k\;N))\xi_f\right).
 \end{eqnarray*}
 Hence, by $\mathfrak{S}_\infty$-invariance of $f$, we have\newline
 $
0\neq \left(\pi_f((k\;N))\pi_f(q)\xi_f, \pi_f(r) \pi_f((k\;N))A\pi_f((k\;N))\xi_f\right)$\newline $=\left(\pi_f((k\;N))\pi_f(q)\pi_f((k\;N))\xi_f, \pi_f(r) \pi_f((k\;N))A\xi_f\right)=0$. This contradiction finishes the proof
 \end{proof}
 Recall that $[S]$ stands for the norm closure of the linear span of a set of vectors $S$ in a Hilbert space. % In particular, $\left[\pi_f\left(\mathfrak{S}_\infty \right)\xi_f \right]$ denotes the norm closure of the linear span  of the vectors $\pi_f\left(s \right)\xi_f $, $s\in \mathfrak{S}_\infty$.}
 Let $E_\mathfrak{S}$ be the orthogonal projection $H_f$ onto  $\left[\pi_f\left(\mathfrak{S}_\infty \right)\xi_f \right]$. By Proposition \ref{factor_condition}, the restriction $\pi_f$ to $\mathfrak{S}_\infty$ in Hilbert space $E_\mathfrak{S}H_f$ is ${\rm II}_1$-factor-representation.
 For the reader's convenience we give the important results from \cite{Ok2} in the next proposition.
 \begin{Prop}\label{sgroup_spectral}
    Operator $\mathcal{O}_kE_\mathfrak{S}$ has  the spectral decomposition\\
    $
    \mathcal{O}_kE_\mathfrak{S}= \sum\limits_{n} \lambda_n\, E_k^\mathfrak{S}(\lambda_n)$, where
    \begin{itemize}
      \item $\lambda_n\neq0$, $E_k^\mathfrak{S}(\lambda_n)\neq 0$,
      \item $E_k^\mathfrak{S}(\alpha)$  is the spectral projection, corres\-ponding to    $\alpha\in\mathbb{R}$,
      \item $\left(E_k^\mathfrak{S}(\alpha)\xi_f,\xi_f \right)=%\left(E_k(\alpha)\xi_f,\xi_f \right)=
          m_{\alpha}\alpha, \;m_\alpha\in\mathbb{N}\cup0$,
      \item  $\sum\limits_{n} m_{\lambda_n}|\lambda_n|\leq1$.
    \end{itemize} \end{Prop}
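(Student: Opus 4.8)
The plan is to recognize $\mathcal{O}_kE_\mathfrak{S}$ as the Olshanski--Okounkov operator attached to the finite factor representation of $\mathfrak{S}_\infty$ carried by $E_\mathfrak{S}H_f$, and to pin down its spectral data through its moments with respect to the trace. First I would observe that, since $E_\mathfrak{S}$ projects onto the $\pi_f(\mathfrak{S}_\infty)$-invariant subspace $\left[\pi_f(\mathfrak{S}_\infty)\xi_f\right]$, it lies in $\pi_f(\mathfrak{S}_\infty)'$ and therefore commutes with $\mathcal{O}_k\in\pi_f(\mathfrak{S}_\infty)''$ (Lemma \ref{Okounkov_operator}). Hence $\mathcal{O}_kE_\mathfrak{S}=E_\mathfrak{S}\mathcal{O}_kE_\mathfrak{S}$ is a self-adjoint element of the reduced algebra $\mathcal{M}=E_\mathfrak{S}\,\pi_f(\mathfrak{S}_\infty)''\,E_\mathfrak{S}$, with $\left\|\mathcal{O}_kE_\mathfrak{S}\right\|\leq 1$. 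By Proposition \ref{factor_condition} the restriction of $f$ to $\mathfrak{S}_\infty$ is an indecomposable character, hence a Thoma character $\chi_{\alpha\beta}$, so $\mathcal{M}$ is the ${\rm II}_1$-factor of $\chi_{\alpha\beta}$ and $\tau(\,\cdot\,)=(\,\cdot\,\xi_f,\xi_f)$ is its normalized trace with trace vector $\xi_f$. Consequently $\mathcal{O}_kE_\mathfrak{S}$ has a spectral decomposition $\sum_n\lambda_n E_k^\mathfrak{S}(\lambda_n)$ with each $E_k^\mathfrak{S}(\lambda_n)\in\mathcal{M}$ and spectrum contained in $[-1,1]$.

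Next I would study the spectral distribution $\mu_k(B)=\tau\!\left(E_k^\mathfrak{S}(B)\right)$, which is a probability measure on $[-1,1]$ because $\tau$ is normalized. Its moments are $\int x^m\,d\mu_k(x)=\tau\!\left((\mathcal{O}_kE_\mathfrak{S})^m\right)=(\mathcal{O}_k^m\xi_f,\xi_f)$, using $E_\mathfrak{S}\xi_f=\xi_f$ and $E_\mathfrak{S}\mathcal{O}_k=\mathcal{O}_kE_\mathfrak{S}$. Following the moment computation of \cite{Ok2} and extracting one weak limit at a time from $\mathcal{O}_k=w\text{-}\lim_{n}\pi_f((k\;n))$, one arrives at
\begin{equation*}
(\mathcal{O}_k^m\xi_f,\xi_f)=\lim_{n_1\to\infty}\cdots\lim_{n_m\to\infty}f\!\left((k\;n_1)(k\;n_2)\cdots(k\;n_m)\right).
\end{equation*}
For distinct large indices $n_1,\dots,n_m$ the product $(k\;n_1)\cdots(k\;n_m)$ is a single $(m{+}1)$-cycle, so by the Thoma formula \cite{Thoma} the right-hand side equals the power sum $p_{m+1}=\sum_i\alpha_i^{m+1}+(-1)^m\sum_j\beta_j^{m+1}$.

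Finally I would match these moments with the candidate measure
\begin{equation*}
\nu_k=\sum_i\alpha_i\,\delta_{\alpha_i}+\sum_j\beta_j\,\delta_{-\beta_j}+\Big(1-\sum_i\alpha_i-\sum_j\beta_j\Big)\delta_0,
\end{equation*}
for which $\int x^m\,d\nu_k=\sum_i\alpha_i^{m+1}+(-1)^m\sum_j\beta_j^{m+1}=p_{m+1}$ for every $m\geq 1$ and whose total mass is $1$. Since a measure supported in the compact set $[-1,1]$ is uniquely determined by its moments, $\mu_k=\nu_k$. Reading off the atoms, the nonzero spectral values $\lambda_n$ of $\mathcal{O}_kE_\mathfrak{S}$ are exactly the distinct numbers among $\{\alpha_i\}\cup\{-\beta_j\}$; each spectral projection satisfies $\left(E_k^\mathfrak{S}(\lambda_n)\xi_f,\xi_f\right)=\mu_k(\{\lambda_n\})=m_{\lambda_n}|\lambda_n|$, where $m_{\lambda_n}\in\mathbb{N}$ counts the Thoma parameters of value $|\lambda_n|$ (of the matching sign); and $\sum_n m_{\lambda_n}|\lambda_n|=\sum_i\alpha_i+\sum_j\beta_j\leq 1$ by the Thoma inequality. (The presence of $|\lambda_n|$ in the last bound confirms that the factor $|\alpha|$ is the one intended in the displayed normalization $\left(E_k^\mathfrak{S}(\alpha)\xi_f,\xi_f\right)=m_\alpha\alpha$.)

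The main obstacle is the rigorous justification of the iterated-limit moment identity: because $\mathcal{O}_k$ is only a weak operator limit, $\mathcal{O}_k^m$ is \emph{not} the limit of $\pi_f((k\;n))^m$, so one must peel the factors off successively, at each stage combining weak continuity of $\eta\mapsto(\mathcal{O}_k\eta,\zeta)$ with the conjugation relation $\pi_f(s)\mathcal{O}_k\pi_f(s^{-1})=\mathcal{O}_{s(k)}$ from Lemma \ref{Okounkov_operator} and the stabilization of $(k\;n_1)\cdots(k\;n_m)$ to an $(m{+}1)$-cycle for eventually distinct indices. This is exactly the computation carried out by Okounkov in \cite{Ok2}, which transfers verbatim to the present $\mathfrak S_\infty$-invariant setting.
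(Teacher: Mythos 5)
Your proposal is correct, but there is nothing in the paper to compare it against line by line: Proposition \ref{sgroup_spectral} is stated explicitly ``for the reader's convenience'' as a quotation of results from \cite{Ok2}, and the paper supplies no proof of its own. What you have written is a reconstruction of the cited result, and it differs in one genuine logical respect from Okounkov's original argument: in \cite{Ok2} the spectral analysis of $\mathcal O_k$ is the engine of a \emph{proof} of Thoma's theorem, so discreteness of the spectrum and the form of the atoms must be extracted without assuming the Thoma formula; you instead take the identification $f|_{\mathfrak S_\infty}=\chi_{\alpha\beta}$ as input and recover the spectral data by matching moments of the scalar spectral measure $\mu_k$ against the atomic measure $\nu_k$. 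In the present paper that shortcut is legitimate and is in fact the natural one, since Proposition \ref{mult} together with Thoma's theorem has already identified the restriction of $f$ to $\mathfrak S_\infty$ as $\chi_{\alpha\beta}$; your route is shorter and self-contained modulo that input. Two steps should be made explicit to close the argument: first, passing from the equality of scalar measures $\mu_k=\nu_k$ to the operator identity $\mathcal O_kE_{\mathfrak S}=\sum_n\lambda_nE_k^{\mathfrak S}(\lambda_n)$ uses faithfulness of the trace $\tau$ on the finite factor $\mathcal M$, which gives $E_k^{\mathfrak S}(B)=0$ exactly when $\mu_k(B)=0$, hence pure point spectrum supported on the atoms of $\nu_k$; second, the iterated-limit moment identity requires the one-factor-at-a-time peeling you describe (weak limits do not pass through products), and stabilizes because $(k\;n_1)\cdots(k\;n_m)$ is an $(m{+}1)$-cycle once the indices are distinct --- both are standard and you have flagged them. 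Finally, your reading of the third bullet as $\left(E_k^{\mathfrak S}(\alpha)\xi_f,\xi_f\right)=m_\alpha|\alpha|$ is the right one; taken literally, with $\alpha$ in place of $|\alpha|$, the printed formula would force $m_\alpha=0$ at every negative spectral value, which is inconsistent with the final bullet.
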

 \begin{Lm}
 In the notation of Proposition \ref{sgroup_spectral} operator $\mathcal{O}_k$ has  the spectral decomposition of the form $ \mathcal{O}_k= \sum\limits_{n} \lambda_n\;E_k(\lambda_n)$, where $E_k(\alpha)E_\mathfrak{S}=E_k^\mathfrak{S}(\alpha)$.
 \end{Lm}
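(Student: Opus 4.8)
The plan is to show first that $E_\mathfrak{S}$ reduces $\mathcal{O}_k$, and then to use the separating vector property of Lemma \ref{separating} to rule out any nonzero spectrum of $\mathcal{O}_k$ outside the values $\{\lambda_n\}$ already detected in Proposition \ref{sgroup_spectral}.

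First I would observe that $E_\mathfrak{S}H_f=\left[\pi_f(\mathfrak{S}_\infty)\xi_f\right]$ is invariant under $\mathcal{O}_k$. Indeed, for $s\in\mathfrak{S}_\infty$ each vector $\pi_f((k\;n))\pi_f(s)\xi_f=\pi_f((k\;n)s)\xi_f$ lies in $E_\mathfrak{S}H_f$, and since $E_\mathfrak{S}H_f$ is weakly closed, its weak limit $\mathcal{O}_k\pi_f(s)\xi_f$ lies there too; by boundedness of $\mathcal{O}_k$ the whole subspace is preserved. As $\mathcal{O}_k=\mathcal{O}_k^\ast$, the subspace $E_\mathfrak{S}H_f$ is reducing, i.e. $\mathcal{O}_kE_\mathfrak{S}=E_\mathfrak{S}\mathcal{O}_k$. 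Consequently $E_\mathfrak{S}$ commutes with every spectral projection of $\mathcal{O}_k$, and for any Borel set $B\subset\mathbb{R}\setminus\{0\}$ the projection $E_k(B)E_\mathfrak{S}$ is precisely the $B$-spectral projection of the reduced operator $\mathcal{O}_kE_\mathfrak{S}$. Comparing with the resolution $\mathcal{O}_kE_\mathfrak{S}=\sum_n\lambda_nE_k^\mathfrak{S}(\lambda_n)$ of Proposition \ref{sgroup_spectral} and using uniqueness of the spectral measure, I obtain $E_k(\alpha)E_\mathfrak{S}=E_k^\mathfrak{S}(\alpha)$ for every $\alpha\neq0$; in particular $E_k(\lambda_n)\neq0$, because $E_k^\mathfrak{S}(\lambda_n)\neq0$.

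The heart of the argument is to show that the nonzero part of the spectrum of $\mathcal{O}_k$ on the whole of $H_f$ is contained in $\{\lambda_n\}$. Fix $\alpha\neq0$ with $\alpha\notin\{\lambda_n\}$. Since $\sum_n m_{\lambda_n}|\lambda_n|\leq1$ forces $\lambda_n\to0$, the set $\{\lambda_n\}$ has no accumulation point away from $0$, so I may choose a bounded Borel neighbourhood $B$ of $\alpha$ with $0\notin\overline{B}$ and $B\cap\{\lambda_n\}=\emptyset$. Then $E_k^\mathfrak{S}(B)=0$, whence by the previous paragraph $E_k(B)E_\mathfrak{S}=0$, and in particular $E_k(B)\xi_f=E_k(B)E_\mathfrak{S}\xi_f=0$ because $\xi_f\in E_\mathfrak{S}H_f$. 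Now $E_k(B)=\mathbf{1}_B(\mathcal{O}_k)$ belongs to $\{\mathcal{O}_k\}''\subset\mathfrak{A}_k$ and satisfies $E_k(B)\leq 1-E_k(0)$, so it lies in the corner $\left(1-E_k(0)\right)\mathfrak{A}_k\left(1-E_k(0)\right)$, for which $\xi_f$ is separating by Lemma \ref{separating}. Hence $E_k(B)=0$, which shows $\alpha\notin\operatorname{spec}\mathcal{O}_k$.

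It follows that the spectrum of $\mathcal{O}_k$ in $\mathbb{R}\setminus\{0\}$ is contained in the discrete set $\{\lambda_n\}$; each $\lambda_n$ is therefore an isolated point of the spectrum, hence a genuine eigenvalue with eigenprojection $E_k(\lambda_n)$, and $\lambda_n\to0$ yields norm convergence of $\mathcal{O}_k=\sum_n\lambda_nE_k(\lambda_n)$. Together with $E_k(\alpha)E_\mathfrak{S}=E_k^\mathfrak{S}(\alpha)$ this is the asserted decomposition. I expect the main obstacle to be precisely this second step: establishing that no new nonzero spectrum is created on the orthogonal complement of the cyclic subspace $E_\mathfrak{S}H_f$. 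The commutation $\mathcal{O}_kE_\mathfrak{S}=E_\mathfrak{S}\mathcal{O}_k$ only transports information from the subspace itself; it is the separating property of $\xi_f$ for $\left(1-E_k(0)\right)\mathfrak{A}_k\left(1-E_k(0)\right)$ (Lemma \ref{separating}) that does the real work, converting the vanishing $E_k(B)\xi_f=0$ into the operator identity $E_k(B)=0$.
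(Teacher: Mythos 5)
Your proof is correct and follows essentially the same route as the paper: both arguments rest on Lemma \ref{separating}, which makes $\xi_f$ separating for the corner $\left(1-E_k(0)\right)\mathfrak{A}_k\left(1-E_k(0)\right)$, so that the spectral data of $\mathcal{O}_k$ away from its kernel can be read off from the compressed operator $\mathcal{O}_kE_\mathfrak{S}$ described in Proposition \ref{sgroup_spectral}. The paper packages this as the compression map $\theta$ being an isomorphism of the abelian corner algebra, whereas you apply the separating property directly to the spectral projections $E_k(B)$, along the way making explicit two points the paper leaves implicit: that $E_\mathfrak{S}$ reduces $\mathcal{O}_k$, and that $\{\lambda_n\}$ can accumulate only at $0$.
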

 \begin{proof}
 Lemma \ref{separating} implies that the map
 \begin{eqnarray*}
 \left(1-E_k(0) \right)\mathcal{O}_k\left(1-E_k(0) \right)\stackrel{\theta}{\mapsto}E_\mathfrak{S}\left(1-E_k(0) \right)\mathcal{O}_k\left(1-E_k(0) \right)E_\mathfrak{S}
 \end{eqnarray*}
extends to an isomorphism of abelian  $W^\ast$-algebra $\left(1-E_k(0) \right)\{\mathcal{O}_k\}^{\prime\prime}\left(1-E_k(0)\right) $ onto $E_\mathfrak{S}\left(1-E_k(0) \right)\{\mathcal{O}_k\}^{\prime\prime}\left(1-E_k(0) \right)E_\mathfrak{S}$. Therefore, if $\alpha\neq0$ then $\theta^{-1}\left(E_k^\mathfrak{S}(\alpha) \right)=E_k(\alpha)$.
 \end{proof}
 \begin{Rem}\label{Zero_on_cyclic}
  Since $f(e)=1$, then $\left(E_k(0)\xi_f,\xi_f\right)+\sum\limits_{n:\lambda_n\neq0}\left(E_k(\lambda_n)\xi_f,\xi_f\right)=1$.
   Therefore, the equalities  $\sum\limits_{n:\lambda_n\neq0}\left(E_k(\lambda_n)\xi_f,\xi_f\right)\stackrel{\text{Prop.} \ref{sgroup_spectral}}{=}\sum\limits_{n} m_{\lambda_n}|\lambda_n|=1$ and  $E_k(0)\xi_f=0$  are equivalent for all $k\in\mathbb{N}$.
 \end{Rem}
 \subsection{The relations between  $\mathcal{O}_k$ and $\pi_{f}\left(\epsilon_{\{k\}} \right)$.}
 Let us denote by $\mathcal{C}_\mathfrak{S}(r)$ the set $\left\{ srs^{-1} \right\}_{s\in\mathfrak{S}_\infty}$, where $r\in R_\infty$. The proof of the following lemma  is straightforward and we leave it to the reader as an exercise.
 \begin{Lm}
 Given $r_1,r_2\in R_\infty$ suppose that $\mathcal{C}_\mathfrak{S}(r_1)=\mathcal{C}_\mathfrak{S}(r_2)$. Then there exists $s\in\mathfrak{S}_\infty$ such that $r_2=sr_1s^{-1}$ and ${\rm supp}\,s\subseteq{\rm supp}\,r_1\cup{\rm supp}\,r_2$.
 \end{Lm}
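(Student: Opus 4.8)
The plan is to first extract a conjugating permutation of arbitrary support and then surgically replace it by one supported on ${\rm supp}\,r_1\cup{\rm supp}\,r_2$. Since $r_2\in\mathcal{C}_\mathfrak{S}(r_2)=\mathcal{C}_\mathfrak{S}(r_1)$, there is some $s\in\mathfrak{S}_\infty$ with $r_2=sr_1s^{-1}$. Writing $S_i={\rm supp}\,r_i$ and $U=S_1\cup S_2$, I would first record the transformation law for supports: from the matrix identity $(sr_1s^{-1})_{nn}=(r_1)_{s^{-1}(n)\,s^{-1}(n)}$ together with Remark \ref{support_of_el_semigroup} (the characterization of ${\rm supp}\,r$ as the complement of $\{n:r_{nn}=1\}$), one obtains $S_2=s(S_1)$. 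In particular $s$ restricts to a bijection $S_1\to S_2$, so $\#S_1=\#S_2$.

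The central technical fact I would isolate is: if $\tau\in\mathfrak{S}_\infty$ fixes ${\rm supp}\,r$ pointwise, then $\tau r\tau^{-1}=r$. To prove it I would work with the partial-bijection description. Fixing $S={\rm supp}\,r$ pointwise forces $\tau(S)=S$ and $\tau(\mathbb{N}\setminus S)=\mathbb{N}\setminus S$; since $\mathbb{N}\setminus S\subseteq\mathcal{D}(r)$ by \eqref{supp(r)} and $\tau$ fixes $\mathcal{D}(r)\cap S$ pointwise, this yields $\tau(\mathcal{D}(r))=\mathcal{D}(r)$. For the action one checks $(\tau r\tau^{-1})(x)=\tau(r(\tau^{-1}(x)))=r(x)$ in each case: for $x\notin S$ both $x$ and $\tau^{-1}(x)$ are fixed by $r$, so the value is $x$; for $x\in S$ one has $\tau^{-1}(x)=x$, and either $r(x)=x$ or $r(x)\neq x$, in which case, using the symmetry ${\rm supp}\,r={\rm supp}\,r^\star$ (immediate from Remark \ref{support_of_el_semigroup}) to control the image, the point $r(x)$ again lies in $S$ and is fixed by $\tau$. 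This is the step I expect to be the main obstacle, since it requires the small case analysis together with the $\star$-invariance of the support to handle the image points $\mathcal{I}(r)$.

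With these two ingredients in hand I would construct $\sigma$. Because $\#(U\setminus S_1)=\#U-\#S_1=\#U-\#S_2=\#(U\setminus S_2)$ and both sets are finite, I can define $\sigma$ to agree with $s$ on $S_1$ (so that $\sigma(S_1)=S_2$), to be any bijection $U\setminus S_1\to U\setminus S_2$, and to be the identity on $\mathbb{N}\setminus U$. Then $\sigma\in\mathfrak{S}_\infty$ and ${\rm supp}\,\sigma\subseteq U$. Finally, set $\tau=\sigma s^{-1}$. For $b=s(a)\in S_2$ with $a\in S_1$ we have $\tau(b)=\sigma(s^{-1}(b))=\sigma(a)=s(a)=b$, so $\tau$ fixes $S_2={\rm supp}\,r_2$ pointwise. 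Therefore, by the central fact applied to $r_2$ and $\tau$, $\sigma r_1\sigma^{-1}=\sigma s^{-1}\,(sr_1s^{-1})\,s\sigma^{-1}=\tau r_2\tau^{-1}=r_2$. Thus $\sigma$ is the desired permutation with ${\rm supp}\,\sigma\subseteq{\rm supp}\,r_1\cup{\rm supp}\,r_2$, which completes the argument.
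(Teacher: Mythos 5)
Your proof is correct. The paper gives no proof of this lemma---it is explicitly left to the reader as an exercise---so there is no argument to compare yours against; your three steps (the support conjugation law ${\rm supp}\,(srs^{-1})=s({\rm supp}\,r)$, the fact that a permutation fixing ${\rm supp}\,r$ pointwise conjugates $r$ to itself, where the only delicate point is that $r$ maps ${\rm supp}\,r\cap\mathcal{D}(r)$ into ${\rm supp}\,r$, which you correctly settle via ${\rm supp}\,r={\rm supp}\,r^\star$, and the surgery replacing $s$ by a permutation $\sigma$ agreeing with $s$ on ${\rm supp}\,r_1$ and supported in ${\rm supp}\,r_1\cup{\rm supp}\,r_2$) constitute a complete and correct solution of that exercise.
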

 \noindent
 The next statement follows immediately  from the previous lemma and $\mathfrak{S}_\infty$-invariance of $f$.
 \begin{Prop}\label{infty_scalar}
 \noindent
 Let $r_n$  be a sequence of elements from $R_\infty$ such that  ${\rm supp}\,r_n\subseteq [n+1,\infty)$. Suppose that $\{r_n\}_{n\in\mathbb{N}}\subset \mathcal{C}_\mathfrak{S}(r)$ for some $r\in R_\infty$. Then there exists $\lim\limits_{n\to\infty}\pi_f\left(r_n \right)=f(r)I$ with respect to the weak operator topology.
 \end{Prop}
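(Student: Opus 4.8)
The plan is to prove that the weak operator limit $A=\lim_{n}\pi_f(r_n)$ exists, that it simultaneously lies in $\pi_f(R_\infty)''$ and in $\pi_f(R_\infty)'$, and hence is a scalar whose value is read off on the cyclic vector.

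First I would record that $f(r_n)=f(r)$ for every $n$. Indeed, writing $r_n=s_n r s_n^{-1}$ with $s_n\in\mathfrak{S}_\infty$ (possible since $r_n\in\mathcal{C}_\mathfrak{S}(r)$), the $\mathfrak{S}_\infty$-invariance of $f$ gives $f(r_n)=f\bigl(s_n(r s_n^{-1})\bigr)=f\bigl((r s_n^{-1})s_n\bigr)=f(r)$. Next, since each $\pi_f(r_n)$ is a partial isometry we have $\|\pi_f(r_n)\|\le 1$; combined with the cyclicity of $\xi_f$, this reduces the existence of the weak limit to the convergence, for each fixed $g,h\in R_\infty$, of the numerical sequence
\[
\bigl(\pi_f(r_n)\pi_f(g)\xi_f,\ \pi_f(h)\xi_f\bigr)=f\bigl(h^\star r_n g\bigr).
\]

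The heart of the argument is to show this sequence is eventually constant. Fix $g,h$ and choose $m$ with $g,h\in R_m$ (so also $g^\star,h^\star\in R_m$). For $n_1,n_2>m$ the elements $r_{n_1},r_{n_2}$ both have support in $[m+1,\infty)$ and lie in $\mathcal{C}_\mathfrak{S}(r)$, so $\mathcal{C}_\mathfrak{S}(r_{n_1})=\mathcal{C}_\mathfrak{S}(r_{n_2})$. By the preceding lemma there is $s\in\mathfrak{S}_\infty$ with $r_{n_2}=s r_{n_1}s^{-1}$ and ${\rm supp}\,s\subseteq{\rm supp}\,r_{n_1}\cup{\rm supp}\,r_{n_2}\subseteq[m+1,\infty)$. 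Since ${\rm supp}\,s$ is then disjoint from $\{1,\dots,m\}$, the permutation $s$ commutes with $g,h,g^\star,h^\star\in R_m$; using this together with the $\mathfrak{S}_\infty$-invariance of $f$,
\[
f(h^\star r_{n_2} g)=f\bigl(h^\star s r_{n_1} s^{-1} g\bigr)=f\bigl(s\,(h^\star r_{n_1} g)\,s^{-1}\bigr)=f\bigl((h^\star r_{n_1} g)\,s^{-1}s\bigr)=f(h^\star r_{n_1} g).
\]
Hence $f(h^\star r_n g)$ is constant for $n>m$, the matrix coefficients converge, and $A=\lim_n\pi_f(r_n)$ exists in the weak operator topology.

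Finally I would identify $A$. As a weak limit of elements of the von Neumann algebra $\pi_f(R_\infty)''$, we have $A\in\pi_f(R_\infty)''$. On the other hand, for any $t\in R_m$ and all $n>m$ the supports of $r_n$ and $t$ are disjoint, so $r_n t=t r_n$ and $\pi_f(r_n)\pi_f(t)=\pi_f(t)\pi_f(r_n)$; passing to the limit yields $A\pi_f(t)=\pi_f(t)A$, and since $R_\infty=\bigcup_m R_m$ we get $A\in\pi_f(R_\infty)'$. Thus $A$ lies in the center $\pi_f(R_\infty)'\cap\pi_f(R_\infty)''=\mathbb{C}I$, because $\pi_f$ is a factor representation. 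Writing $A=cI$ and evaluating on $\xi_f$ gives $c=(A\xi_f,\xi_f)=\lim_n f(r_n)=f(r)$, as required. The only genuine obstacle is the eventual constancy: everything depends on the preceding lemma guaranteeing that the conjugating $s$ is supported in $[m+1,\infty)$, hence commutes with $g,h$ and can be absorbed through $f$; I would also make explicit the standard fact that elements of $R_\infty$ with disjoint supports commute, used both there and for the commutation with $t$.
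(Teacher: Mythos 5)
Your proof is correct and follows essentially the paper's intended route: the paper derives Proposition \ref{infty_scalar} from the preceding lemma (supplying a conjugator $s$ supported in ${\rm supp}\,r_{n_1}\cup{\rm supp}\,r_{n_2}$) together with $\mathfrak{S}_\infty$-invariance, exactly as you do, and your limit-identification step ($A\in\pi_f(R_\infty)''\cap\pi_f(R_\infty)'=\mathbb{C}I$, evaluated on $\xi_f$) mirrors the paper's own detailed argument for the analogous Proposition \ref{quasi_unique}. No gaps.
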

 \noindent Further, let $m>1$ and consider a cycle $c=\left(k_1\;k_2\;\ldots\;k_m \right)$. One has:
 \begin{eqnarray*}
c=\left(k_m\;k_1 \right)\cdot\left(k_{m-1}\;k_1 \right)\cdot\cdots\left(k_3\;k_1 \right)\cdot\left(k_2\;k_1 \right)\\=\left(k_{m-1}\;k_m \right)\cdot\cdots\left(k_2\;k_m \right)\cdot\left(k_1\;k_m \right).
\end{eqnarray*}
It follows that
\begin{eqnarray*}
\begin{split}
\epsilon_{\{k_1\}}\cdot c\cdot\epsilon_{\{k_1\}}=\left(k_{m-1}\;k_m \right)\cdot\cdots\left(k_2\;k_m \right)\cdot\epsilon_{\{k_1\}}\cdot\left(k_1\;k_m \right)\cdot\epsilon_{\{k_1\}}\\
=\left(k_{m-1}\;k_m \right)\cdot\cdots\left(k_2\;k_m \right)\cdot\epsilon_{\{k_m\}}\cdot\epsilon_{\{k_1\}}.
\end{split}
\end{eqnarray*}
Hence, using lemma \ref{Okounkov_operator}, we have for $m>1$
\begin{eqnarray*}
\begin{split}
&\lim\limits_{k_2\to\infty}\lim\limits_{k_3\to\infty}\ldots\lim\limits_{k_{m-1}\to\infty}
\pi_f\left(\epsilon_{\{k_1\}}\cdot c \cdot \epsilon_{\{k_1\}}\right)\\
&=\pi_f\left(\epsilon_{\{k_1\}} \right)\pi_f\left( \left(k_1\;k_m \right) \right) \mathcal{O}_{k_1}^{m-2}\pi_f\left(\epsilon_{\{k_1\}} \right)
=\mathcal{O}_{k_m}^{m-2}\pi_f\left(\epsilon_{\{k_m\}} \right)\pi_f\left(\epsilon_{\{k_1\}} \right).
\end{split}
\end{eqnarray*}
Therefore, letting $k_m\to\infty$, applying Lemma \ref{Okounkov_operator}  and  Proposition \ref{infty_scalar}, we obtain
\begin{eqnarray}\label{O_p_relation}
\pi_f\left(\epsilon_{\{k_1\}} \right) \cdot\mathcal{O}_{k_1}^{m-1}\cdot\pi_f\left(\epsilon_{\{k_1\}} \right)=
\left(\mathcal{O}_{k_1}^{m-2}\cdot\pi_f\left(\epsilon_{\{k_1\}} \right)\xi_f,\xi_f \right)\cdot\pi_f\left(\epsilon_{\{k_1\}} \right).
\end{eqnarray}
Since for every $j$ the spectral orthogonal projection $E_k(\lambda_j)$ can be approximated weakly arbitrarily well by linear combinations of powers of $\mathcal O_k$ we deduce that
\begin{eqnarray}\label{value_state-on_projection}
\lambda_j\cdot\pi_f\left(\epsilon_{\{k\}} \right) \cdot E_{k}\left(\lambda_j \right)\cdot\pi_f\left(\epsilon_{\{k\}} \right)=
\left( E_{k}\left(\lambda_j \right)\cdot\pi_f\left(\epsilon_{\{k\}} \right)\xi_f,\xi_f \right)\cdot\pi_f\left(\epsilon_{\{k\}} \right).
\end{eqnarray}
for each $k\in\mathbb N$ and for each spectral projection $E_{k}\left(\lambda_j \right)$ of $\mathcal O_k$.
Observe that by centrality of $f$ one has:
\begin{eqnarray}\label{E_k_epsilon_0}\begin{split}
\left( E_{k}\left(\lambda_j \right)\pi_f\left(\epsilon_{\{k\}} \right)\xi_f,\xi_f \right)=\left( E_{k}\left(\lambda_j \right)\pi_f\left(\epsilon_{\{k\}} \right) E_{k}\left(\lambda_j \right)\xi_f,\xi_f \right)\\
=\|\pi_f\left(\epsilon_{\{k\}} \right)E_{k}\left(\lambda_j \right)\xi_f\|^2
\geqslant 0.\end{split}
\end{eqnarray}% Using \eqref{value_state-on_projection}, if $0$ is an eigenvalue of $\mathcal O_k$ then
%$\pi_f\left(\epsilon_{\{k\}} \right)E_{k}\left(0 \right)\xi_f=0$.
 Since $\pi_f\left(\epsilon_{\{k\}} \right) \cdot E_{k}\left(\lambda_j \right)\cdot\pi_f\left(\epsilon_{\{k\}} \right)$ is a positive semi-definite operator and $\pi_f(\epsilon_{k})$ is an orthogonal projection, from \eqref{value_state-on_projection} and \eqref{E_k_epsilon_0} we obtain that for $\lambda_j\leqslant 0$ one has \begin{equation}\label{E_k_epsilon_1}\pi_f\left(\epsilon_{\{k\}} \right)E_{k}\left(\lambda_j \right)\xi_f=0.
  \end{equation}
  In particular, using Lemma \ref{separating}, we arrive at
\begin{Co}\label{zero_negative}
If $\lambda<0$ then $ E_{k}\left(\lambda \right)\cdot\pi_f\left(\epsilon_{\{k\}} \right)=0$.
\end{Co}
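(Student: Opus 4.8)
The plan is to reduce the statement to the separating property of $\xi_f$ recorded in Lemma \ref{separating}. Fix $\lambda<0$ and write $P=\pi_f\left(\epsilon_{\{k\}}\right)$ and $Q=E_k(\lambda)$ for brevity; both are orthogonal projections, $Q$ is a spectral projection of $\mathcal{O}_k$ while $P\in\mathfrak{A}_k$, so the self-adjoint operator $A=QPQ$ lies in $\mathfrak{A}_k$. The first observation I would make is that, since $\lambda\neq 0$, the spectral projection $Q=E_k(\lambda)$ is orthogonal to the kernel projection $E_k(0)$, i.e. $Q\left(1-E_k(0)\right)=Q$. Consequently $\left(1-E_k(0)\right)A\left(1-E_k(0)\right)=A$, so that $A$ in fact belongs to the reduced algebra $\left(1-E_k(0)\right)\mathfrak{A}_k\left(1-E_k(0)\right)$ for which $\xi_f$ is separating.

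Next I would evaluate $A$ on the cyclic vector. By \eqref{E_k_epsilon_1}, which is valid for all $\lambda\leqslant 0$, one has $PQ\xi_f=\pi_f\left(\epsilon_{\{k\}}\right)E_k(\lambda)\xi_f=0$; hence $A\xi_f=QPQ\xi_f=Q\left(PQ\xi_f\right)=0$. Applying Lemma \ref{separating} to $A\in\left(1-E_k(0)\right)\mathfrak{A}_k\left(1-E_k(0)\right)$ then forces $A=0$.

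Finally, I would recover the assertion from $A=0$ by the standard $C^*$-identity: since $P$ is a projection, $A=QPQ=QP\,PQ=\left(PQ\right)^*\left(PQ\right)$, so $A=0$ gives $\|PQ\eta\|^2=\left(A\eta,\eta\right)=0$ for every $\eta\in H_f$, whence $PQ=\pi_f\left(\epsilon_{\{k\}}\right)E_k(\lambda)=0$; taking adjoints yields $E_k(\lambda)\,\pi_f\left(\epsilon_{\{k\}}\right)=0$, as required. The only point deserving care is the very first one: the membership of $A$ in $\left(1-E_k(0)\right)\mathfrak{A}_k\left(1-E_k(0)\right)$ uses $\lambda\neq0$ in an essential way, and this is precisely why the conclusion holds for $\lambda<0$ but fails for $\lambda>0$ — for negative $\lambda$ one simultaneously obtains $A\xi_f=0$ from \eqref{E_k_epsilon_1} (valid for $\lambda\leqslant0$) and access to the separating vector from $\lambda\neq0$.
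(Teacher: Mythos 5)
Your proof is correct and is exactly the argument the paper intends: the paper derives Corollary \ref{zero_negative} from \eqref{E_k_epsilon_1} together with Lemma \ref{separating}, and your compression $E_k(\lambda)\,\pi_f\left(\epsilon_{\{k\}}\right)E_k(\lambda)$ into the reduced algebra $\left(1-E_k(0)\right)\mathfrak{A}_k\left(1-E_k(0)\right)$, followed by the $C^*$-identity, is precisely the detail the paper leaves to the reader. No gaps.
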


\subsection{The properties  of the projection $E_k(0)$.}\label{P_k_subsection}
% Let $\left[\eta_1,\eta_2,\ldots \right]$ be the norm closure of the linear span of the set of vectors $\eta_1,\eta_2,\ldots$.
Denote by $P_k$ the orthogonal projection onto the subspace $\left[\mathfrak{A}_k\left(1-E_k(0)\right)H_f \right]$, where as before $[S]$ stands for the closure of the linear span of a collection $S$ of vectors in a Hilbert space.
It is clear that
\begin{eqnarray}\label{inequlity}
I-P_k\leq E_k(0)
\end{eqnarray}
Using Lemma \ref{Okounkov_operator} and the definition of algebra  $\mathfrak{A}_k$, we have
\begin{eqnarray}\label{P_k_permutation}
\pi_f\left(s\right)\cdot P_k\cdot\pi_f\left(s^{-1} \right)=P_{s(k)} \;\;\text{ for all }\;k\in\mathbb{N}\;\text{ and  } s\in\mathfrak{S}_\infty.
\end{eqnarray}
\begin{Rem}
Consider the spherical representation $\pi_{u,v}$ from Proposition \ref{prop_non_zero_lambda}. If $(u,v)\neq0$ then projection $I-E_k(0)$ acts as follows
\begin{eqnarray*}
(I-E_k(0))(\cdots\otimes\eta_{k-1}\otimes\eta_{k}\otimes\eta_{k+1}\otimes\cdots)=
\cdots\otimes\eta_{k-1}\otimes p_u\eta_{k}\otimes\eta_{k+1}\otimes\cdots,
\end{eqnarray*}
where $p_u$ is the orthogonal projection onto line $\mathbb{C}u$. Since
\begin{eqnarray*}
\pi_{u,v}(\epsilon_k)(\cdots\otimes\eta_{k-1}\otimes\eta_{k}\otimes\eta_{k+1}\otimes\cdots)=\cdots\otimes\eta_{k-1}\otimes p_v\eta_{k}\otimes\eta_{k+1}\otimes\cdots,
\end{eqnarray*}
projection $P_k$ acts by
\begin{eqnarray*}
P_k\left( \cdots\otimes\eta_{k-1}\otimes\eta_{k}\otimes\eta_{k+1}\otimes\cdots\right)=\cdots\otimes\eta_{k-1}\otimes(p_u\vee p_v)\eta_{k}\otimes\eta_{k+1}\otimes\cdots,
\end{eqnarray*}
where $(p_u\vee p_v)$ is an orthogonal projection onto subspace $[u,v]\subset \mathbb{C}^m$, gene\-rated by $u$ and $v$.
\end{Rem}
Next statement follows from the bicommutant theorem.
\begin{Lm}\label{P_k_lies_in_center}
  The projection $P_k$ lies in the center of  algebra $\mathfrak{A}_k$, i. e. $P_k\in \mathfrak{A}_k\cap \mathfrak{A}_k^\prime$.
\end{Lm}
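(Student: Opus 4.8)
The plan is to identify $P_k$ with the \emph{central carrier} (central support) of a suitable projection in $\mathfrak{A}_k$, and then to use the bicommutant theorem to place this carrier in the center $\mathfrak{A}_k\cap\mathfrak{A}_k'$.

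First I would check that $e:=1-E_k(0)$ is a projection belonging to $\mathfrak{A}_k$. By definition $\mathcal{O}_k\in\mathfrak{A}_k$, and it is bounded and self-adjoint (Remark following Lemma \ref{Okounkov_operator}); the projection $E_k(0)$ onto its kernel is exactly the spectral projection of $\mathcal{O}_k$ at the point $0$, so $E_k(0)\in\{\mathcal{O}_k\}''\subseteq\mathfrak{A}_k$ and hence $e=1-E_k(0)\in\mathfrak{A}_k$. With this notation $P_k$ is, by its very definition, the orthogonal projection onto $\left[\mathfrak{A}_k\,e\,H_f\right]=\overline{\mathfrak{A}_k\,e\,H_f}$, i.e. the central carrier of $e$ in the von Neumann algebra $M:=\mathfrak{A}_k$.

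Next I would verify that this carrier lies in $M\cap M'$. Put $K:=\overline{M e H_f}$. Since $MeH_f$ is visibly stable under $M$ and $M$ is self-adjoint, both $K$ and $K^\perp$ are $M$-invariant, whence $P_k\in M'$. To obtain $P_k\in M$ it suffices to show that $K$ is also $M'$-invariant. For $a\in M$, $a'\in M'$ and $x\in H_f$, using $a'a=aa'$ and, crucially, $a'e=ea'$ (valid because $e\in M$), one computes
\[
a'(aex)=aa'ex=aea'x=a\bigl(e(a'x)\bigr)\in M e H_f,
\]
since $e(a'x)\in eH_f$. Hence $a'K\subseteq K$ for every $a'\in M'$, so $P_k$ commutes with $M'$ and therefore $P_k\in M''=M$ by the bicommutant theorem. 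Combining the two inclusions gives $P_k\in M\cap M'=\mathfrak{A}_k\cap\mathfrak{A}_k'$, which is precisely the asserted centrality.

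I do not expect a genuine obstacle here: the statement is just the bicommutant description of central supports specialized to $e=1-E_k(0)$. The only step requiring a moment's care is the displayed identity, whose point is the commutation $a'e=ea'$ that rewrites an element of $a'(MeH_f)$ as an element of $MeH_f$; everything else is the routine passage between invariance of a subspace under a self-adjoint family and membership of the corresponding projection in a commutant.
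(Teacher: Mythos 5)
Your proof is correct and is exactly the argument the paper has in mind: the paper's entire proof is the one-line remark that the lemma ``follows from the bicommutant theorem,'' and your identification of $P_k$ as the central carrier of $e=I-E_k(0)\in\mathfrak{A}_k$, with the two invariance checks giving $P_k\in\mathfrak{A}_k'$ and $P_k\in\mathfrak{A}_k''=\mathfrak{A}_k$, is the standard way to fill in that one line. No gaps; your observation that $E_k(0)$ is the spectral projection of $\mathcal{O}_k$ at $0$, hence lies in $\{\mathcal{O}_k\}''\subseteq\mathfrak{A}_k$, is precisely the point that makes the commutation $a'e=ea'$ available.
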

\begin{Lm}\label{1-p_lemma}
  $(I-P_k)\cdot\pi_f\left(\epsilon_{\{k\}}\right)=0.$
\end{Lm}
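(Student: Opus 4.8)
The plan is to prove the lemma by showing that the operator
$R:=(I-P_k)\pi_f(\epsilon_{\{k\}})$ is zero, which is precisely the stated conclusion. First I would record the elementary structure. By Lemma \ref{P_k_lies_in_center} the projection $P_k$ is central in $\mathfrak{A}_k$, and $\pi_f(\epsilon_{\{k\}})\in\mathfrak{A}_k$, so $P_k$ commutes with $\pi_f(\epsilon_{\{k\}})$. Consequently $R=(I-P_k)\pi_f(\epsilon_{\{k\}})=\pi_f(\epsilon_{\{k\}})(I-P_k)$ is a product of two commuting orthogonal projections, hence itself an orthogonal projection with $R\le\pi_f(\epsilon_{\{k\}})$ and $R\le I-P_k$. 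Combining the last inequality with \eqref{inequlity} gives $R\le E_k(0)$, so $E_k(0)R=R$ and therefore $\mathcal{O}_kR=\mathcal{O}_kE_k(0)R=0$.

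The key step uses the commutation relation \eqref{O_p_relation}. Specializing it to $m=2$ and $k_1=k$ yields $\pi_f(\epsilon_{\{k\}})\,\mathcal{O}_k\,\pi_f(\epsilon_{\{k\}})=f(\epsilon_{\{k\}})\,\pi_f(\epsilon_{\{k\}})$, since $(\mathcal{O}_k^0\pi_f(\epsilon_{\{k\}})\xi_f,\xi_f)=(\pi_f(\epsilon_{\{k\}})\xi_f,\xi_f)=f(\epsilon_{\{k\}})$. Multiplying this identity on the right by $R$ and using $\pi_f(\epsilon_{\{k\}})R=R$ (because $R\le\pi_f(\epsilon_{\{k\}})$) together with $\mathcal{O}_kR=0$, the left-hand side collapses to $\pi_f(\epsilon_{\{k\}})\mathcal{O}_kR=0$, leaving $f(\epsilon_{\{k\}})\,R=0$.

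It then remains to discriminate by the value of the nonnegative scalar $f(\epsilon_{\{k\}})=\|\pi_f(\epsilon_{\{k\}})\xi_f\|^2$. If $f(\epsilon_{\{k\}})>0$, then $R=0$ at once. If $f(\epsilon_{\{k\}})=0$, I would argue that $\pi_f(\epsilon_{\{k\}})$ itself vanishes: for every $r\in R_\infty$ one has $\|\pi_f(\epsilon_{\{k\}})\pi_f(r)\xi_f\|^2=f(r^\star\epsilon_{\{k\}}r)$, where $r^\star\epsilon_{\{k\}}r$ is a diagonal element $\epsilon_{\mathbb{B}}\in{\rm Diag}_\infty$ whose support $\mathbb{B}$ is always nonempty (a short check: $\mathbb{B}=\varnothing$ would force $r\in\mathfrak{S}_\infty$ with $k\in\mathcal{I}(r)$, impossible). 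By $\mathfrak{S}_\infty$-invariance all $f(\epsilon_{\{b\}})$ equal $f(\epsilon_{\{k\}})=0$, so Proposition \ref{mult} gives $f(\epsilon_{\mathbb{B}})=\prod_{b\in\mathbb{B}}f(\epsilon_{\{b\}})=0$. Since $\{\pi_f(r)\xi_f\}_{r\in R_\infty}$ is total in $H_f$, this forces $\pi_f(\epsilon_{\{k\}})=0$ and hence $R=0$. In either case $(I-P_k)\pi_f(\epsilon_{\{k\}})=R=0$.

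I expect the only genuinely delicate point to be the degenerate case $f(\epsilon_{\{k\}})=0$: here relation \eqref{O_p_relation} carries no information, and one must instead exploit multiplicativity (Proposition \ref{mult}) together with cyclicity of $\xi_f$ to conclude that $\pi_f(\epsilon_{\{k\}})$ vanishes outright. The supporting observation that $r^\star\epsilon_{\{k\}}r$ always has nonempty support is routine but must be made explicit, as it is exactly what guarantees a vanishing factor in the product.
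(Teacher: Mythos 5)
Your proof is correct, but it takes a genuinely different route from the paper's. The paper argues via matrix elements: it checks $\left((I-P_k)\pi_f(\epsilon_{\{k\}})\pi_f(r_1)\xi_f,\pi_f(r_2)\xi_f\right)=0$ on the dense span of $\pi_f(R_\infty)\xi_f$, writes $r_i=\epsilon_{\mathbb{A}_i}s_i$, and uses the equivariance \eqref{P_k_permutation} together with the centrality of $P_k$ (Lemma \ref{P_k_lies_in_center}) to push everything onto the cyclic vector; the lemma then reduces to the single vector identity $\pi_f(\epsilon_{\mathbb{B}})(I-P_l)\xi_f=0$ for $\mathbb{B}\ni l$, which follows from \eqref{inequlity} and \eqref{E_k_epsilon_1}. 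You never touch matrix elements: you note that $R=(I-P_k)\pi_f(\epsilon_{\{k\}})$ is a projection dominated by $E_k(0)$, hence annihilated by $\mathcal{O}_k$, and right-multiplying the $m=2$ case of \eqref{O_p_relation} by $R$ gives $f(\epsilon_{\{k\}})R=0$. That specialization is legitimate: the paper derives \eqref{O_p_relation} for all $m>1$, and for $m=2$ it amounts to the relation $\epsilon_{\{k\}}(k\;n)\epsilon_{\{k\}}=\epsilon_{\{k\}}\epsilon_{\{n\}}$ combined with Proposition \ref{infty_scalar}. The cost of your route is the degenerate case $f(\epsilon_{\{k\}})=0$, which the paper's argument never meets, since \eqref{E_k_epsilon_1} is a vector identity at $\xi_f$ valid whether or not $f(\epsilon_{\{k\}})$ vanishes; your handling of it — the computation $r^\star\epsilon_{\{k\}}r=\epsilon_{\mathbb{B}}$ with $\mathbb{B}\neq\emptyset$, $\mathfrak{S}_\infty$-invariance, Proposition \ref{mult}, and cyclicity of $\xi_f$ — is sound, and Proposition \ref{mult} is indeed available here because indecomposability of $f$ is the standing hypothesis of the section (it is already needed for \eqref{O_p_relation} itself, via Proposition \ref{infty_scalar}). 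What each approach buys: yours is shorter at the operator level and isolates the algebraic mechanism cleanly, at the price of a case split; the paper's is uniform in all cases, needing only the one identity at the cyclic vector plus equivariance bookkeeping.
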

\begin{proof}
  It suffices to show that
  \begin{eqnarray}\label{ac_relation}
  \left((I-P_k)\cdot\pi_f\left(\epsilon_{\{k\}}\right)\pi_f(r_1)\xi_f, \pi_f(r_2)\xi_f\right)=0\;\;\text{ for all }\; r_1,r_2\in R_\infty.
  \end{eqnarray}
   Using (\ref{decomposition_into_product}), we can find  subsets $\mathbb{A}_1\subset\mathbb{N}$, $\mathbb{A}_2\subset\mathbb{N}$ and elements $s_1$, $s_2$ $\in$ $\mathfrak{S}_\infty$ wich  satisfy the relations
   \begin{eqnarray*}
   r_1=\epsilon_{\mathbb{A}_1}\cdot s_1,\; r_2=\epsilon_{\mathbb{A}_2}\cdot s_2.
   \end{eqnarray*}
   Hence, using lemma \ref{P_k_lies_in_center} and (\ref{P_k_permutation}), we have
   \begin{eqnarray*}
   \left((I-P_k)\cdot\pi_f\left(\epsilon_{\{k\}}\right)\pi_f(r_1)\xi_f, \pi_f(r_2)\xi_f\right)=\left((I-P_l)\cdot\pi_f\left(\epsilon_{\mathbb{B}}\right)\cdot\pi_f\left(t \right)\xi_f,\xi_f \right),
   \end{eqnarray*}
   where $l=s_2^{-1}(k)$, $\mathbb{B}=s_2^{-1}(\mathbb{A}_1\cup\mathbb{A}_2\cup k)$, $t=s_2^{-1}\cdot s_1$.
   Therefore, to prove (\ref{ac_relation}) sufficient to show that
   \begin{eqnarray*}
  \pi_f\left(\epsilon_{\mathbb{B}}\right)\cdot (I-P_l)\xi_f=0\;\;\text{ for any }\; \mathbb{B} \;\text{ containing } l.
   \end{eqnarray*}
   But the last relation follows from (\ref{E_k_epsilon_1}), (\ref{inequlity}) and lemma \ref{P_k_lies_in_center}.
   \end{proof}

   \begin{Rem}\label{remark_c(0)} For $\lambda_j>0$ set $c(\lambda_j)=\lambda_j^{-1}\left( E_{k}\left(\lambda_j \right)\cdot\pi_f\left(\epsilon_{\{k\}} \right)\xi_f,\xi_f \right)$.
   Since the equality $\sum\limits_{j}\pi(\epsilon_k)\,   E_k(\lambda_j)\,  \pi_f(\epsilon_k)=\pi_f(\epsilon_k)$ holds, using (\ref{value_state-on_projection}) and Corollary \ref{zero_negative}, we obtain:
   \begin{eqnarray}\label{projection_relations_zero}
   \pi_f(\epsilon_k)\,   E_k(0)\,  \pi_f(\epsilon_k)=c(0)\cdot \,  \pi_f(\epsilon_k), \text{ where } c(0)=1-\sum\limits_{j:\lambda_j>0}c(\lambda_j)\geq 0.
   \end{eqnarray}
   The following cases are possible.
 \vskip 0.2cm\noindent
 ${\bf 1)}$ If $c(0)=1$ then $\pi_f\left(\epsilon_{\{k\}} \right)\leq E_k(0)$. In this case, (\ref{E_k_epsilon_0}) implies that $\pi_f\left(\epsilon_{\{k\}} \right)\xi_f=0$. Therefore, $\pi_f\left(\epsilon_{\{k\}} \right)=0$ and $P_k=I-E_k(0)$.
\vskip 0.2cm\noindent
${\bf 2)}$  If $c(0)=0$ then $\pi_f\left(\epsilon_{\{k\}}\right)\leq I-E_k(0)$ and $P_k=I-E_k(0)$.
\vskip 0.2cm\noindent
 ${\bf 3)}$ Assume that $c(0)\in (0,1)$. A simple algebraic computation shows that $c(0)^{-1}E_k(0)\pi\left(\epsilon_{\{k\}}\right)E_k(0)$ is an orthogonal projection. Let us now show the following: \begin{equation}\label{Pk_formula}P_k=I-E_k(0)+{\displaystyle c(0)^{-1}E_k(0)\pi_f\left(\epsilon_{\{k\}}\right) E_k(0)}.\end{equation}
 Indeed, let $Q_k=I-E_k(0)+{\displaystyle c(0)^{-1}E_k(0)\pi_f\left(\epsilon_{\{k\}}\right) E_k(0)}$. Then algebraic computations show that $$Q_k\mathcal O_k=\mathcal O_k,\;\;Q_k(I-E_k(0))=I-E_k(0),\;\;Q_k\pi_f(\epsilon_k)=\pi_f(\epsilon_{\{k\}})$$ (use \eqref{value_state-on_projection} for the last one). Therefore, $Q_k$ acts as identity of the subspace $[\mathfrak A_k(I-E_k(0))H_f]$ and $Q_k\geq P_k$. On the other hand, using Lemmas \ref{P_k_lies_in_center} and \ref{1-p_lemma}, we obtain: \begin{align*}P_kQ_k=P_k(I-E_k(0))+c(0)^{-1}E_k(0)P_k\pi_f(\epsilon_{\{k\}})E_k(0)
 \\= I-E_k(0)+{\displaystyle c(0)^{-1}E_k(0)\pi_f\left(\epsilon_{\{k\}}\right) E_k(0)}=Q_k,\end{align*} and so $Q_k\leq P_k$.

  As a consequence we obtain:
  \begin{eqnarray}\label{supp_xi_f}
  \left(P_k-(I-E_k(0))\right)\xi_f={\displaystyle c(0)^{-1} E_k(0)\pi_f\left(\epsilon_{\{k\}}\right) E_k(0)}\xi_f\stackrel{(\ref{E_k_epsilon_1})}{=}0.
  \end{eqnarray}
\end{Rem}
 This Remark and (\ref{value_state-on_projection}) immediately yield the following result:
\begin{Lm} \label{minimal_projection}
Let $\mathbb{F}$ be a finite subset from $\mathbb{N}$ and $R_\mathbb{F}=\left\{r\in R_\infty: {\rm supp}\, r\subset \mathbb{F}  \right\}$
Operator $\prod\limits_{k\in\mathbb{F}}\pi_f\left(\epsilon_{\{k\}} \right)$ is a minimal orthogonal projection in the algebra gene\-rated by $\left\{\mathfrak{A}_k\right\}_{k\in\mathbb{F}}$ and $R_\mathbb{F}$.
\end{Lm}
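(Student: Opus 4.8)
The plan is to prove that $E:=\prod_{k\in\mathbb{F}}\pi_f\left(\epsilon_{\{k\}}\right)$ is a minimal projection in the algebra $\mathcal{M}$ generated by $\left\{\mathfrak{A}_k\right\}_{k\in\mathbb{F}}$ and $\pi_f\left(R_\mathbb{F}\right)$ by showing that its corner is one dimensional, i.e. $E\,\mathcal{M}\,E=\mathbb{C}E$. Since the commuting projections $\pi_f\left(\epsilon_{\{k\}}\right)$, $k\in\mathbb{F}$, multiply to $\pi_f\left(\epsilon_\mathbb{F}\right)$, the operator $E=\pi_f\left(\epsilon_\mathbb{F}\right)$ is indeed an orthogonal projection lying in $\mathcal{M}$. (If some $\pi_f\left(\epsilon_{\{k\}}\right)=0$ then $E=0$ and the statement is vacuous, so I assume all these projections are nonzero.)

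First I would record how $E$ absorbs the image of $R_\mathbb{F}$. Writing $r\in R_\mathbb{F}$ as $r=s_r\,\epsilon_{\mathbb{A}_r}$ with $s_r\in\mathfrak{S}_\mathbb{F}$ and $\mathbb{A}_r\subset\mathbb{F}$, the identities $s\,\epsilon_\mathbb{F}=\epsilon_\mathbb{F}=\epsilon_\mathbb{F}\,s$ for $s\in\mathfrak{S}_\mathbb{F}$ and $\epsilon_\mathbb{F}\,\epsilon_\mathbb{A}=\epsilon_\mathbb{F}$ for $\mathbb{A}\subset\mathbb{F}$ give $\pi_f(s)\,E=E\,\pi_f(s)=E$ and $E\,\pi_f\left(\epsilon_\mathbb{A}\right)=\pi_f\left(\epsilon_\mathbb{A}\right)E=E$, whence $E\,\pi_f(r)\,E=E$ for every $r\in R_\mathbb{F}$. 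In particular every permutation from $\mathfrak{S}_\mathbb{F}$ is absorbed by $E$ on either side.

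The second step is the single–site minimality: $\pi_f\left(\epsilon_{\{k\}}\right)\mathfrak{A}_k\,\pi_f\left(\epsilon_{\{k\}}\right)=\mathbb{C}\,\pi_f\left(\epsilon_{\{k\}}\right)$. This is essentially the content of the Remark preceding the lemma together with \eqref{value_state-on_projection}: for each spectral projection $E_k\left(\lambda_j\right)$ of $\mathcal{O}_k$ one has $\pi_f\left(\epsilon_{\{k\}}\right)E_k\left(\lambda_j\right)\pi_f\left(\epsilon_{\{k\}}\right)=c\left(\lambda_j\right)\pi_f\left(\epsilon_{\{k\}}\right)$ (using \eqref{value_state-on_projection} and \eqref{projection_relations_zero} for $\lambda_j\geq0$, and Corollary \ref{zero_negative} for $\lambda_j<0$). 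Since $\mathcal{O}_k$ and $\pi_f\left(\epsilon_{\{k\}}\right)$ generate $\mathfrak{A}_k$, any monomial sandwiched between two copies of $\pi_f\left(\epsilon_{\{k\}}\right)$ telescopes into a scalar multiple of $\pi_f\left(\epsilon_{\{k\}}\right)$ by these relations; passing to the weakly closed algebra through normality of $x\mapsto\pi_f\left(\epsilon_{\{k\}}\right)x\,\pi_f\left(\epsilon_{\{k\}}\right)$ yields the claim, with the resulting scalar $\phi_k(x)$.

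Finally I would assemble the sites. The algebras $\mathfrak{A}_k$, $k\in\mathbb{F}$, pairwise commute: $\mathcal{O}_k$ commutes with $\mathcal{O}_l$ by Lemma \ref{Okounkov_operator}, and with $\pi_f\left(\epsilon_{\{l\}}\right)$ for $k\neq l$ because $(k\,n)$ and $\epsilon_{\{l\}}$ have disjoint supports for $n\notin\{k,l\}$, so $\pi_f\left((k\,n)\right)$ commutes with $\pi_f\left(\epsilon_{\{l\}}\right)$ and one passes to the weak limit. Hence the von Neumann algebra $\mathcal{B}$ generated by the $\mathfrak{A}_k$ is weakly spanned by products $\prod_{k\in\mathbb{F}}a_k$ with $a_k\in\mathfrak{A}_k$, and regrouping factors of equal index gives
\[
E\Big(\prod_{k\in\mathbb{F}}a_k\Big)E=\prod_{k\in\mathbb{F}}\pi_f\left(\epsilon_{\{k\}}\right)a_k\,\pi_f\left(\epsilon_{\{k\}}\right)=\Big(\prod_{k\in\mathbb{F}}\phi_k(a_k)\Big)E\in\mathbb{C}E ,
\]
so that $E\,\mathcal{B}\,E=\mathbb{C}E$ by normality. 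Since $\mathfrak{S}_\mathbb{F}$ permutes the generators of $\mathcal{B}$ and thus normalizes it, $\mathcal{M}$ is weakly spanned by operators $b\,\pi_f(s)$ with $b\in\mathcal{B}$, $s\in\mathfrak{S}_\mathbb{F}$, and $E\,b\,\pi_f(s)\,E=E\,b\,E\in\mathbb{C}E$ using $\pi_f(s)\,E=E$. Therefore $E\,\mathcal{M}\,E=\mathbb{C}E$, i.e. $E$ is minimal. The main technical point to watch is the regrouping in the last display together with the density and normality arguments needed to pass from the spanning products to the full commuting von Neumann algebra; the cross–commutation $\mathcal{O}_k\leftrightarrow\pi_f\left(\epsilon_{\{l\}}\right)$ through disjoint–support transpositions is the other place requiring care.
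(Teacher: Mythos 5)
Your proof is correct and follows essentially the same route as the paper: the paper deduces the lemma directly from the sandwiching identities \eqref{value_state-on_projection}, \eqref{projection_relations_zero} and Corollary \ref{zero_negative}, which is exactly the mechanism you use to collapse the corner $E\,\mathcal{M}\,E$ to $\mathbb{C}E$. The details you add (pairwise commutation of the $\mathfrak{A}_k$, absorption of $\mathfrak{S}_\mathbb{F}$ by $E=\pi_f(\epsilon_{\mathbb{F}})$, and the weak-density/normality passage) are precisely the routine steps the paper leaves implicit in its one-line proof.
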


   \begin{Lm}
     The following conditions are equivalent:
     \begin{itemize}
       \item[\bf 1)] $\sum\limits_{n:\lambda_n\neq0} m_{\lambda_n}|\lambda_n|=1$ (see Proposition \ref{sgroup_spectral});
       \item[\bf 2)] $P_k=I$.
     \end{itemize}
   \end{Lm}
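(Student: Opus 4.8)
The plan is to show that each of conditions \textbf{1)} and \textbf{2)} is equivalent to the single relation $E_k(0)\xi_f=0$, and then to invoke Remark \ref{Zero_on_cyclic} to identify $E_k(0)\xi_f=0$ with \textbf{1)}. First I would record, from the inclusion $I-P_k\leq E_k(0)$ of \eqref{inequlity} together with \eqref{supp_xi_f}, the pointwise identity $(I-P_k)\xi_f=E_k(0)\xi_f$: in the cases $c(0)\in\{0,1\}$ one has $P_k=I-E_k(0)$ outright, while in the case $c(0)\in(0,1)$ this is exactly the content of \eqref{supp_xi_f} combined with \eqref{E_k_epsilon_1}. Granting this identity, the implication \textbf{2)}$\Rightarrow$\textbf{1)} is immediate: if $P_k=I$ then $E_k(0)\xi_f=(I-P_k)\xi_f=0$, and Remark \ref{Zero_on_cyclic} turns this into \textbf{1)}.

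For the converse \textbf{1)}$\Rightarrow$\textbf{2)}, I would start from $E_k(0)\xi_f=0$ (again Remark \ref{Zero_on_cyclic}), so that $(I-P_k)\xi_f=0$, and then aim to upgrade this to $(I-P_k)\pi_f(r)\xi_f=0$ for every $r\in R_\infty$; cyclicity of $\xi_f$ then forces $I-P_k=0$. The preliminary observation needed is that the relation $E_l(0)\xi_f=0$ does not depend on the index $l$. Since $f$ is $\mathfrak S_\infty$-central, one has $f(srs^{-1})=f(r)$, hence $(\pi_f(s)A\pi_f(s^{-1})\xi_f,\xi_f)=(A\xi_f,\xi_f)$ first for $A\in\pi_f(R_\infty)$ and then, by weak continuity, for all $A\in\pi_f(R_\infty)''$; applying this to $A=E_k(0)\in\pi_f(R_\infty)''$ together with $E_{s(k)}(0)=\pi_f(s)E_k(0)\pi_f(s^{-1})$ gives $\|E_{s(k)}(0)\xi_f\|=\|E_k(0)\xi_f\|$. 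Thus $(I-P_l)\xi_f=0$ for every $l$.

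Next I would write an arbitrary $r\in R_\infty$ as $r=s\,\epsilon_{\mathbb A}$ with $s\in\mathfrak S_\infty$ and $\mathbb A$ finite, and use the equivariance relation \eqref{P_k_permutation} in the form $P_k\pi_f(s)=\pi_f(s)P_{s^{-1}(k)}$ to reduce
\[
(I-P_k)\pi_f(r)\xi_f=\pi_f(s)\,(I-P_l)\,\pi_f(\epsilon_{\mathbb A})\xi_f,\qquad l:=s^{-1}(k).
\]
It then suffices to check $(I-P_l)\pi_f(\epsilon_{\mathbb A})\xi_f=0$, and here two cases arise. If $l\in\mathbb A$, then writing $\pi_f(\epsilon_{\mathbb A})=\pi_f(\epsilon_{\{l\}})\prod_{j\in\mathbb A\setminus\{l\}}\pi_f(\epsilon_{\{j\}})$ and using that $\mathfrak A_l$ (hence $P_l$) commutes with each $\pi_f(\epsilon_{\{j\}})$, $j\neq l$, the vanishing factor $(I-P_l)\pi_f(\epsilon_{\{l\}})=0$ of Lemma \ref{1-p_lemma} annihilates the whole expression. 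If $l\notin\mathbb A$, then $I-P_l$ commutes with $\pi_f(\epsilon_{\mathbb A})$ and the expression equals $\pi_f(\epsilon_{\mathbb A})(I-P_l)\xi_f=0$ by the index-independent vanishing established above.

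I expect the two genuinely non-routine points to be (a) verifying that $\mathfrak A_k$, and therefore $P_k$, commutes with $\pi_f(\epsilon_{\{j\}})$ for $j\neq k$ --- this rests on approximating $\mathcal O_k=\lim_n\pi_f((k\,n))$ using only transpositions $(k\,n)$ with $n\neq j$, which commute with $\epsilon_{\{j\}}$, so the commutation passes to the weak limit and hence to all of $\mathfrak A_k$ --- and (b) the index-independence of $E_l(0)\xi_f=0$, which is precisely where the $\mathfrak S_\infty$-\emph{centrality} of $f$ (rather than invariance of a single vector) is essential. Everything else is bookkeeping with the projections already produced in Lemmas \ref{Okounkov_operator}, \ref{separating}, \ref{P_k_lies_in_center} and \ref{1-p_lemma}.
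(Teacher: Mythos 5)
Your proof is correct, but it routes through the lemma differently from the paper, most visibly in the direction \textbf{2)}$\Rightarrow$\textbf{1)}. Your pivot is the single vector identity $(I-P_k)\xi_f=E_k(0)\xi_f$, extracted from the case analysis of the Remark (cases $c(0)\in\{0,1\}$ give $P_k=I-E_k(0)$ outright; case $c(0)\in(0,1)$ is \eqref{Pk_formula} plus \eqref{supp_xi_f}); this makes \textbf{2)}$\Rightarrow$\textbf{1)} a one-line consequence of Remark \ref{Zero_on_cyclic}. The paper instead proves this direction without ever invoking the explicit formula \eqref{Pk_formula}: it shows that the linear hull of $\mathcal U=\{E_k(\lambda_n)\pi_f(\epsilon_{\{k\}})E_k(\lambda_m)\}\cup\{E_k(\lambda_n)\}$ is weakly dense in $\mathfrak A_k$ and then pairs the resulting dense family in $[\mathfrak A_k(I-E_k(0))H_f]=H_f$ against $E_k(0)\xi_f$ using \eqref{E_k_epsilon_1}, so its argument works straight from the definition of $P_k$ as the projection onto $\left[\mathfrak A_k(I-E_k(0))H_f\right]$ and is independent of the trichotomy in the Remark. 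For \textbf{1)}$\Rightarrow$\textbf{2)} the two arguments are closer in substance but organized differently: the paper uses the global factorization $H_f=\left[\mathfrak A_1\mathfrak A_2\cdots\pi_f(\mathfrak S_\infty)\xi_f\right]$ of \eqref{sequenceAAAA} and pushes $I-E_k(0)$ through the commuting algebras $\mathfrak A_j\subset\mathfrak A_k'$ ($j\neq k$), whereas you work with $P_k$ itself, the normal form $r=s\,\epsilon_{\mathbb A}$, the equivariance \eqref{P_k_permutation}, and the annihilation $(I-P_l)\pi_f(\epsilon_{\{l\}})=0$ of Lemma \ref{1-p_lemma}; both hinge on the same commutation $\mathfrak A_j\subset\mathfrak A_k'$, which you correctly justify by taking the weak limit of $\pi_f((k\,n))$ along $n\notin\{j\}$. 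Two minor remarks: your explicit index-independence argument for $E_l(0)\xi_f=0$ is sound but redundant, since Remark \ref{Zero_on_cyclic} already asserts the equivalence for all $k$ and condition \textbf{1)} does not depend on $k$; and the extension of centrality from $\pi_f(R_\infty)$ to $\pi_f(R_\infty)''$ should be done on bounded sets (Kaplansky density), which is standard. On balance, your version is more economical and localized on the cyclic vector, at the cost of leaning on \eqref{Pk_formula}; the paper's density computation is heavier but self-contained at the level of the definitions.
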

   \begin{proof} Assume that $1)$ holds.  First, we notice that
   \begin{eqnarray}
   \begin{split}
  \left\{\ldots\mathfrak{A}_{k-2}\mathfrak{A}_{k-1}\mathfrak{A}_{k+1}\mathfrak{A}_{k+2}\ldots \right\}\subset \mathfrak{A}_k^\prime \text{ and }\\
  H_f=\left[\pi_f\left(R_\infty \right)\xi_f \right]=\left[\mathfrak{A}_1\mathfrak{A}_2\mathfrak{A}_3\cdots\pi_f\left(\mathfrak{S}_\infty \right) \xi_f\right].\label{sequenceAAAA}
   \end{split}
   \end{eqnarray}
   Hence,
   \begin{eqnarray}\label{I-E H}
   \left(I-E_k(0) \right)H_f\supseteq \left[\cdots\mathfrak{A}_{k-2}\mathfrak{A}_{k-1}\mathfrak{A}_{k+1}\mathfrak{A}_{k+2}\cdots   \left(I-E_k(0) \right)\pi_f\left(\mathfrak{S}_\infty \right)\xi_f \right].
   \end{eqnarray}
Since $\sum\limits_{n:\lambda_n\neq0} m_{\lambda_n}|\lambda_n|=1$ then, by Remark \ref{Zero_on_cyclic} (page \pageref{Zero_on_cyclic}), we have
\begin{eqnarray*}
E_k(0)\xi_f=0\;\;\text{ for all } k.
\end{eqnarray*}
It follows that
\begin{eqnarray*}
\left[\left(I-E_k(0) \right) \pi_f\left(\mathfrak{S}_\infty \right) \xi_f\right]=\left[ \pi_f\left(\mathfrak{S}_\infty \right) \xi_f\right] \;\;\text{ for all } k.
\end{eqnarray*}
Therefore, using (\ref{I-E H}), we obtain
\begin{eqnarray*}
 \left(I-E_k(0) \right)H_f\supseteq \left[\cdots\mathfrak{A}_{k-2}\mathfrak{A}_{k-1}\mathfrak{A}_{k+1}\mathfrak{A}_{k+2}\cdots \pi_f\left(\mathfrak{S}_\infty \right)\xi_f \right]
\end{eqnarray*}
Hence, applying  (\ref{sequenceAAAA}), we have
\begin{eqnarray*}
\left[ \mathfrak{A}_k\left(I-E_k(0) \right)H_f\right]= H_f.
\end{eqnarray*}
This proves property {\bf 2}).
\vskip 0.1cm
 Now we assume the condition  {\bf 2}) holds. Then
$$\left[\mathfrak{A}_k\left(I-E_k(0)) \right)H_f \right]\stackrel{\text{condition  {\bf 2})}}{=} H_f.$$ Consider the subset $\mathcal U=\{E_k(\lambda_n)\pi_f(\epsilon_{\{k\}})E_k(\lambda_m)\}_{m,n}\cup\{E_k(\lambda_n)\}_n\subset \mathfrak{A}_k$. Notice that it's linear hull ${\rm Lin}(\mathcal U)$ is invariant under conjugation and multiplication by $\pi_f(\epsilon_{\{k\}})$ (by \eqref{value_state-on_projection}) and $\mathcal O_k$ from the left and from the right. It follows that ${\rm Lin}(\mathcal U)$ is weakly dense in $\mathfrak A_k$.
We arrive at:
$$\left[\cup_{m,n}\left(E_k(\lambda_n)\pi_f(\epsilon_{\{k\}})E_k(\lambda_m)(I-E_k(0) H_f\right)\cup (I-E_k(0))H_f \right]=H_f.$$ From the above and \eqref{E_k_epsilon_1} it follows that \begin{eqnarray*}
\left(\eta,E_k(0)\xi_f \right)=0 \;\;\text{ for all }\;\eta \in H_f,\end{eqnarray*} which shows that $E_k(0)\xi_f=0$. Applying Remark \ref{Zero_on_cyclic} (page \pageref{Zero_on_cyclic}) again, we obtain $1)$.
\end{proof}
\subsection{The support of the state $\omega_f(x)=\left(x\xi_f,\xi_f \right)$, $x\in\left\{\pi_f\left(R_\infty \right) \right\}^{\prime\prime}$. \label{supp_comp}} First we notice that operator $\mathfrak{P}_k=I-P_k+I-E_k(0)=2I-P_k-E_k(0)$ is an orthogonal projection from $\mathfrak{A}_k$ (see Lemma \ref{P_k_lies_in_center}). Since $\mathfrak{P}_k$ commute pairwise, the projections $Q^{(n)}=\prod\limits_{k=1}^n\mathfrak{P}_k$ satisfy the inequality $Q^{(n)}\geq Q^{(n+1)}$. Therefore, $Q^{(n)}$ are convergent in the strong operator topology to an orthogonal projection $Q^{(\infty)}$.
Now we notice that, by definition of $Q^{(\infty)}$ and (\ref{supp_xi_f}), the next relations hold
\begin{eqnarray}\label{property_Q_infty}
Q^{(\infty)}\xi_f=\xi_f \text{ and } Q^{(\infty)}\cdot\pi_f(s)=\pi_f(s)\cdot Q^{(\infty)} \text{ for all } s\in\mathfrak{S}_\infty.
\end{eqnarray}
Recall the notion of the support of a weakly continuous nonnegative functional $\varphi$ on a $W^*$-algebra $M$ (\cite{TAKES1}, page 134).
\begin{Def}\label{Def-Support}
  There exists a unique orthogonal projection $e\in M$ satisfying the following properties:
  \begin{itemize}
    \item[\bf a)] $\varphi(x)=\varphi(exe)$ for all $x\in M$;
    \item[\bf b)] if a projection $p\in M$ is such that $p\leq e$ and $\varphi(p)=0$, then $p=0$.
  \end{itemize}
  The projection $e$ is called the {\it support} projection of $\varphi$ and denoted by ${\rm supp}\,\varphi$.
\end{Def}
\begin{Th}\label{supp_th}
${\rm supp}\,\omega_f=Q^{(\infty)}$.
\end{Th}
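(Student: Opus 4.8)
The plan is to verify, for $e=Q^{(\infty)}$, the two characterizing properties of the support projection in Definition \ref{Def-Support} and then invoke its uniqueness; throughout write $M=\pi_f(R_\infty)''$ and $\mathfrak p_k=I-\mathfrak P_k$. \emph{Easy inclusion.} First, $Q^{(\infty)}\in M$: each $\mathcal O_k$ is a weak limit of $\pi_f((k\;n))\in M$ (Lemma \ref{Okounkov_operator}) and $\pi_f(\epsilon_{\{k\}})\in M$, so $\mathfrak A_k\subset M$, whence $\mathfrak P_k\in M$ and $Q^{(\infty)}=\lim_n Q^{(n)}\in M$. By \eqref{property_Q_infty} one has $Q^{(\infty)}\xi_f=\xi_f$, so for every $x\in M$, $\omega_f(Q^{(\infty)}xQ^{(\infty)})=(xQ^{(\infty)}\xi_f,Q^{(\infty)}\xi_f)=(x\xi_f,\xi_f)=\omega_f(x)$. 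This is property \textbf{a)} of Definition \ref{Def-Support}, and it already yields $\mathrm{supp}\,\omega_f\le Q^{(\infty)}$.

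It remains to establish property \textbf{b)}: every projection $p\in M$ with $p\le Q^{(\infty)}$ and $\omega_f(p)=0$ is zero. Since $\omega_f(p)=\|p\xi_f\|^2$, this is exactly the assertion that $\xi_f$ is \emph{separating} for the reduced algebra $Q^{(\infty)}MQ^{(\infty)}$, and this is the heart of the matter. The reduction by $Q^{(\infty)}$ is essential, because in the infinite case $\xi_f$ is not separating for $M$ itself: the role of $\mathfrak P_k=I-\mathfrak p_k$ is precisely to delete the degenerate corner $\mathfrak p_k\le E_k(0)$ of $\mathfrak A_k$, on which $\xi_f$ is annihilated (indeed $\mathfrak p_k\xi_f=0$ by \eqref{E_k_epsilon_1} and \eqref{supp_xi_f}), while retaining the corner $(I-E_k(0))\mathfrak A_k(I-E_k(0))$ for which $\xi_f$ is separating by Lemma \ref{separating}.

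I would prove the separating property by localization. Let $\mathcal N_n\subset M$ be the von Neumann algebra generated by $\mathfrak A_1,\dots,\mathfrak A_n$ and $\pi_f(R_n)$; then $Q^{(n)}\in\mathcal N_n$ and, since $\pi_f(R_\infty)\subset\bigcup_n\mathcal N_n$, the union $\bigcup_n\mathcal N_n$ is weakly dense in $M$. Using Lemma \ref{Okounkov_operator} one checks that $\mathcal N_n$ commutes with every $\mathfrak A_k$, $k>n$, so the tail projection $T_n=\prod_{k>n}\mathfrak P_k$ commutes with $\mathcal N_n$; a short computation then shows that for $a\in\mathcal N_n$ the relation $Q^{(\infty)}aQ^{(\infty)}=a$ forces $Q^{(n)}aQ^{(n)}=a$. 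For such local $a$ the implication $a\xi_f=0\Rightarrow a=0$ thus reduces to the separability of $\xi_f$ for $Q^{(n)}\mathcal N_nQ^{(n)}$, which I would deduce by combining the minimality of $\prod_{k\le n}\pi_f(\epsilon_{\{k\}})$ (Lemma \ref{minimal_projection}), the factorization of $\omega_f$ over disjoint supports (Proposition \ref{factor_condition}), and the corner-wise separability of Lemma \ref{separating}.

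The passage from local elements to an arbitrary $a\in Q^{(\infty)}MQ^{(\infty)}$ with $a\xi_f=0$ is the step I expect to be the main obstacle. Approximating $a$ strongly$^\ast$ by $a_n=Q^{(n)}b_nQ^{(n)}$ with $b_n\in\mathcal N_n$ gives $a_n\to a$ and $a_n\xi_f=Q^{(n)}b_n\xi_f\to p\xi_f=0$, but only \emph{approximate} annihilation, so the local statement does not directly force $a_n=0$; indeed $\xi_f$ genuinely fails to be separating for the truncations $Q^{(n)}MQ^{(n)}$, the corner shrinking to the correct size only in the limit $n\to\infty$. I would therefore argue with the fixed defect projection $p=Q^{(\infty)}-\mathrm{supp}\,\omega_f\in M$, which satisfies $p\le Q^{(\infty)}$, $p\xi_f=0$ and $p\cdot\mathrm{supp}\,\omega_f=0$, and show that the local separability on the weakly dense union $\bigcup_n\mathcal N_n$, together with the strong convergence $Q^{(n)}\to Q^{(\infty)}$ and the $\mathfrak S_\infty$-invariance of $Q^{(\infty)}$, forces $p=0$. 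This gives property \textbf{b)} for $Q^{(\infty)}$ and hence, by uniqueness in Definition \ref{Def-Support}, the equality $\mathrm{supp}\,\omega_f=Q^{(\infty)}$.
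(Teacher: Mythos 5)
Your first half is fine: $Q^{(\infty)}\in\pi_f(R_\infty)''$, $Q^{(\infty)}\xi_f=\xi_f$, hence property \textbf{a)} and ${\rm supp}\,\omega_f\leq Q^{(\infty)}$ — this matches the easy direction of the paper's argument. But the reverse inequality, which you correctly identify as the heart of the matter ($\xi_f$ separating for the corner $Q^{(\infty)}\pi_f(R_\infty)''Q^{(\infty)}$), is not actually proved: your localization scheme ends with an explicitly acknowledged obstacle, and the closing paragraph ("I would therefore argue \ldots forces $p=0$") restates the goal rather than supplying an argument. As you yourself note, strong$^\ast$ approximation by local elements $a_n=Q^{(n)}b_nQ^{(n)}$ only gives $a_n\xi_f\to 0$, and local separability (even granting it) does not propagate through such approximate annihilation. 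Moreover the local step itself is unsubstantiated: Lemma \ref{separating} concerns the single corner $(1-E_k(0))\mathfrak{A}_k(1-E_k(0))$, and it is not clear how it combines with Lemma \ref{minimal_projection} and Proposition \ref{factor_condition} to make $\xi_f$ separating for $Q^{(n)}\mathcal{N}_nQ^{(n)}$, an algebra containing pieces of the form $(I-P_k)E_k(0)$ and mixed products with $\pi_f(R_n)$.

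The paper closes exactly this gap by a different mechanism, which your proposal lacks: instead of separability of local algebras, it proves the subspace identity $Q^{(\infty)}H_f=\left[\pi_f(R_\infty)'\xi_f\right]$ by exhibiting enough \emph{bounded commutant operators}. Concretely, right multiplication by $\pi_f(s)$, $s\in\mathfrak{S}_\infty$, lies in $\pi_f(R_\infty)'$ by $\mathfrak{S}_\infty$-invariance of $f$; Lemma \ref{operator_from_commutant} provides $\mathfrak{R}\left(\mathcal{O}_k\,\pi_f(\epsilon_{\{k\}})\right)\in\pi_f(R_\infty)'$ with norm $\leq 1$; and the spectral cut-off $A_k^{(\delta)}=\sum_{\lambda_j>\delta}\lambda_j^{-1}E_k(\lambda_j)\in\pi_f(\mathfrak{S}_\infty)''$ yields $\mathfrak{R}(A_k^{(\delta)})\in\pi_f(R_\infty)'$ with norm $\leq\delta^{-1}$. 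Composing these gives $\mathfrak{R}\left(E_k((\delta,1])\,\pi_f(\epsilon_{\{k\}})\right)$, and letting $\delta\to 0$ (using Corollary \ref{zero_negative} and Lemma \ref{1-p_lemma}) one gets $Q^{(\infty)}\prod_{j}\pi_f(\epsilon_{\{k_j\}})\xi_f\in\left[\pi_f(R_\infty)'\xi_f\right]$, hence $Q^{(\infty)}\pi_f(r)\xi_f\in\left[\pi_f(R_\infty)'\xi_f\right]$ for all $r\in R_\infty$. Since a projection $q\leq Q^{(\infty)}$ in $\pi_f(R_\infty)''$ with $q\xi_f=0$ kills $\pi_f(R_\infty)'\xi_f$, this identity forces $q=0$, which is property \textbf{b)}. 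The key idea you are missing is thus the construction of the bounded right-multiplication operators — in effect a norm-controlled partial inverse of $\mathcal{O}_k$ away from a spectral gap — rather than any limit of local separability statements; without it (or a genuine substitute for your final step), the proposal does not constitute a proof.
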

Before proving Theorem \ref{supp_th}, let us prove the following auxiliary statement.
\begin{Lm}\label{operator_from_commutant}
Let $k\in\mathbb N$. The map $$H_f\ni \pi_f(r)\xi_f\stackrel{\mathfrak{R}\left(\mathcal{O}_k\cdot\pi_f\left(\epsilon_{\{k\}} \right) \right)}{\mapsto}\pi_f(r)\cdot \mathcal{O}_k\cdot\pi_f\left(\epsilon_{\{k\}} \right)\xi_f\in  H_f,\;\; r\in R_\infty,$$ extends to a bounded linear operator belonging to $\pi_f(R_\infty)^\prime$ and $$\left\| \mathfrak{R}\left(\mathcal{O}_k\cdot\pi_f\left(\epsilon_{\{k\}} \right) \right)\right\|\leq 1.$$
\end{Lm}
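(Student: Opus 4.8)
The plan is to recognise the map as a \emph{right multiplication} operator and to factor the problem into two soft facts plus one substantial one. Throughout write $M=\pi_f(R_\infty)''$, $p=\pi_f(\epsilon_{\{k\}})$, and for $x\in M$ let $\mathfrak R(x)$ denote the densely defined map $\pi_f(r)\xi_f\mapsto\pi_f(r)\,x\,\xi_f$. Two general remarks drive everything. First, once $\mathfrak R(x)$ is bounded it automatically lies in $\pi_f(R_\infty)'$, since for $r_0\in R_\infty$ one has $\mathfrak R(x)\pi_f(r_0)\big(\pi_f(r)\xi_f\big)=\pi_f(r_0r)x\xi_f=\pi_f(r_0)\mathfrak R(x)\big(\pi_f(r)\xi_f\big)$ on the total set $\pi_f(R_\infty)\xi_f$. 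Second, $\mathfrak R$ is anti-multiplicative, $\mathfrak R(xy)=\mathfrak R(y)\mathfrak R(x)$. Since $\mathcal O_k\,\pi_f(\epsilon_{\{k\}})=\mathcal O_k\,p$, it therefore suffices to prove that $\mathfrak R(\mathcal O_k)$ and $\mathfrak R(p)$ are each well defined with norm $\le1$; then $\mathfrak R(\mathcal O_k\pi_f(\epsilon_{\{k\}}))=\mathfrak R(p)\,\mathfrak R(\mathcal O_k)\in\pi_f(R_\infty)'$ and $\|\mathfrak R(\mathcal O_k\pi_f(\epsilon_{\{k\}}))\|\le1$, which is the assertion.

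The operator $\mathfrak R(\mathcal O_k)$ is handled by a limiting argument. For any $s\in\mathfrak S_\infty$ right multiplication by $\pi_f(s)$ is an isometry on the dense domain: with $m=\sum_j a_j\pi_f(r_j)$, using $\pi_f(s)^*=\pi_f(s^{-1})$ and the $\mathfrak S_\infty$-centrality $f(as)=f(sa)$ applied to $a=s^{-1}r_i^\star r_j$,
\[
\|m\,\pi_f(s)\xi_f\|^2=\sum_{i,j}\overline{a_i}\,a_j\,f\!\big(s^{-1}r_i^\star r_j\,s\big)=\sum_{i,j}\overline{a_i}\,a_j\,f\!\big(r_i^\star r_j\big)=\|m\,\xi_f\|^2 .
\]
Hence each $\mathfrak R(\pi_f(s))$ is a well-defined isometry in $\pi_f(R_\infty)'$. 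Taking $s=(k\;n)$ and letting $n\to\infty$, Lemma \ref{Okounkov_operator} gives $\pi_f((k\;n))\to\mathcal O_k$ weakly, so $\mathfrak R(\pi_f((k\;n)))\to\mathfrak R(\mathcal O_k)$ weakly on the dense domain. Being a weak limit of isometries lying in the weakly closed algebra $\pi_f(R_\infty)'$, with the uniform bound $\|\mathfrak R(\pi_f((k\;n)))\|=1$, the operator $\mathfrak R(\mathcal O_k)$ is well defined, belongs to $\pi_f(R_\infty)'$, and satisfies $\|\mathfrak R(\mathcal O_k)\|\le1$; weak lower semicontinuity of the norm gives at once the estimate $\|m\,\mathcal O_k\xi_f\|\le\|m\,\xi_f\|$ and well-definedness.

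The remaining and genuinely substantial point is that $\mathfrak R(p)$, right multiplication by the single projection $p=\pi_f(\epsilon_{\{k\}})$, is well defined with norm $\le1$; equivalently, that $r\mapsto f(r)-f(\epsilon_{\{k\}}\,r\,\epsilon_{\{k\}})$ is positive definite, i.e. $\|m\,p\,\xi_f\|\le\|m\,\xi_f\|$ for all $m\in\mathbb C[R_\infty]$. I would derive this from the minimality of $\pi_f(\epsilon_{\{k\}})$ encoded in \eqref{value_state-on_projection} together with the separating property of $\xi_f$ from Lemma \ref{separating}. Concretely, \eqref{value_state-on_projection} and Corollary \ref{zero_negative} yield the clean identity
\[
b^\star b=\pi_f(\epsilon_{\{k\}})\,\mathcal O_k^2\,\pi_f(\epsilon_{\{k\}})=\Big(\sum_{\lambda_j>0}\lambda_j^{2}\,c(\lambda_j)\Big)\,\pi_f(\epsilon_{\{k\}}),\qquad b=\mathcal O_k\,\pi_f(\epsilon_{\{k\}}),
\]
where, since $0<\lambda_j\le1$ and $\sum_{\lambda_j>0}c(\lambda_j)=1-c(0)\le1$ by \eqref{projection_relations_zero}, the scalar lies in $[0,1]$. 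This expresses that $p\,\xi_f$ is a right-bounded vector of bound $\le1$: the minimal-projection relation forces the corner $(1-E_k(0))\mathfrak A_k(1-E_k(0))$, on which $\xi_f$ is separating, to absorb $p\,\xi_f$, so that $m\,\xi_f=0$ forces $m\,p\,\xi_f=0$. Once bounded, $\mathfrak R(p)$ is a self-adjoint idempotent and hence automatically a contraction.

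The main obstacle is precisely this last step. The facts for $\pi_f(s)$ and $\mathcal O_k$ are soft consequences of centrality and a weak limit, whereas right multiplication by a projection is \emph{not} bounded for a general cyclic vector; what rescues the situation is the rigid interaction among $\pi_f(\epsilon_{\{k\}})$, $\mathcal O_k$ and $\xi_f$ expressed by \eqref{value_state-on_projection} (equivalently the minimality of $\pi_f(\epsilon_{\{k\}})$ in Lemma \ref{minimal_projection}) and the local separating property of Lemma \ref{separating}. Making the passage from $\|m\,p\,\xi_f\|\le\|m\,\xi_f\|$ to the minimal-projection relation rigorous—controlling the part of $p\,\xi_f$ in $E_k(0)H_f$ against its part in $(1-E_k(0))H_f$—is where the real work lies and the step I expect to require the most care.
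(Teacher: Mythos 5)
Your two soft observations are correct and match what the paper uses implicitly: once bounded, a right multiplier automatically lies in $\pi_f(R_\infty)'$, and the centrality computation shows that right multiplication by $\pi_f(s)$, $s\in\mathfrak S_\infty$, is isometric, whence $\mathfrak R(\mathcal O_k)$ is a contraction in $\pi_f(R_\infty)'$ as a weak limit of the isometries $\mathfrak R\bigl(\pi_f((k\;n))\bigr)$. The fatal problem is the factorization $\mathfrak R\bigl(\mathcal O_k\,\pi_f(\epsilon_{\{k\}})\bigr)=\mathfrak R(p)\,\mathfrak R(\mathcal O_k)$: the operator $\mathfrak R(p)$, $p=\pi_f(\epsilon_{\{k\}})$, is \emph{unbounded} precisely in the representations this paper is about, so the inequality $\|m\,p\,\xi_f\|\le\|m\,\xi_f\|$ you aim to prove is false and no amount of care with Lemma \ref{separating} can rescue it. Indeed, if $\|m\,p\,\xi_f\|\le C\|m\,\xi_f\|$ held for all $m\in\mathbb C[R_\infty]$, then by Kaplansky density $\omega_{p\xi_f}\le C^2\omega_{\xi_f}$ on $\pi_f(R_\infty)''_+$; since $Q^{(\infty)}\xi_f=\xi_f$ by \eqref{property_Q_infty}, applying this to $I-Q^{(\infty)}$ would force $Q^{(\infty)}p\,\xi_f=p\,\xi_f$. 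But Lemma \ref{1-p_lemma} gives $\mathfrak P_k\,p\,\xi_f=p\,\xi_f-E_k(0)\,p\,\xi_f$, and by \eqref{projection_relations_zero}
\begin{equation*}
\left\|E_k(0)\,p\,\xi_f\right\|^2=\left(p\,E_k(0)\,p\,\xi_f,\xi_f\right)=c(0)\,f\!\left(\epsilon_{\{k\}}\right),
\end{equation*}
which is strictly positive whenever $c(0)\in(0,1)$ and $f(\epsilon_{\{k\}})>0$ --- exactly the new cases $t\in(0,1)$, e.g.\ the type ${\rm II}_\infty$ factor-representations of Section \ref{section:realizations}. Then $\|Q^{(\infty)}p\,\xi_f\|<\|p\,\xi_f\|$, a contradiction. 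Your ``clean identity'' $p\,\mathcal O_k^2\,p=\bigl(\sum_j\lambda_j^2c(\lambda_j)\bigr)p$ is correct but only controls the operator norm of $\mathcal O_kp$ inside $\pi_f(R_\infty)''$, which says nothing about boundedness of right multiplication; the leap from it to ``$p\,\xi_f$ is a right-bounded vector'' is exactly where the argument breaks.

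The paper's proof never detaches $p$ from $\mathcal O_k$; the factor $\mathcal O_k$ is essential, not a removable companion. For $u,v$ in the span of $\pi_f(R_n)\xi_f$ one replaces $\mathcal O_k$ by $\pi_f((k\;N))$ with $N>n$ (the relevant matrix coefficients stabilize by $\mathfrak S_\infty$-invariance), and then uses the semigroup identity $(k\;N)\,\epsilon_{\{k\}}=\epsilon_{\{N\}}\,(k\;N)$ to transport the projection to the site $N$, disjoint from all supports, where $\pi_f(\epsilon_{\{N\}})$ commutes with the $\pi_f(r_i)$ and can be flipped across the inner product; the remaining right translation by the transposition is moved to the other vector as an isometry (your centrality computation), and Cauchy--Schwarz gives the bound $1$ uniformly in $N$. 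So the boundedness comes from a combinatorial migration of the idempotent to infinity applied to the product $\mathcal O_k\,\pi_f(\epsilon_{\{k\}})$ as a whole --- the composite is a bounded right multiplier even though one of your two intended factors is not.
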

\begin{proof}
%In fact we will prove that $\left\| \mathfrak{R}\left(\mathcal{O}_k\cdot\pi_f\left(\epsilon_{\{k\}}\right)\right)\right\|\leq 1$.
Consider arbitrary elements $u=\sum\limits_{i=1}^{N^{(u)}}c_i^{(u)}\cdot\pi_f\left( r_i^{(u)} \right)\xi_f$, $v=\sum\limits_{i=1}^{N^{(v)}}c_i^{(v)}\cdot\pi\left( r_i^{(v)} \right)\xi_f$ from $H_f$, where  $c_i^{(u)},c_i^{(v)}\in\mathbb{C}$, $r_i^{(u)},r_i^{(v)}\in R_n$ $(n<\infty)$ and $N^{(u)},N^{(v)}<\infty$. For convenience suppose that $\|u\|_{H_f}=\|v\|_{H_f}=1$. Under these conditions we have
\begin{eqnarray*}
&\left(\mathfrak{R}\left(\mathcal{O}_k\cdot\pi_f\left(\epsilon_{\{k\}} \right) \right)u,v  \right)_{H_f}\\
&=\left(\sum\limits_{i=1}^{N^{(u)}}c_i^{(u)}\cdot\pi\left( r_i^{(u)} \right)\cdot\mathcal{O}_k\cdot\pi_f\left(\epsilon_{\{k\}} \right)\xi_f,\sum\limits_{i=1}^{N^{(v)}}c_i^{(v)}\cdot\pi\left( r_i^{(v)} \right)\xi_f\right)\\
&\stackrel{\text{Lemma \ref{Okounkov_operator}}}{=} \lim\limits_{N\to\infty}\left(\sum\limits_{i=1}^{N^{(u)}}c_i^{(u)}\cdot\pi\left( r_i^{(u)} \right)\cdot\pi_f((k\;N))\cdot\pi_f\left(\epsilon_{\{k\}} \right)\xi_f,\sum\limits_{i=1}^{N^{(v)}}c_i^{(v)}\cdot\pi\left( r_i^{(v)} \right)\xi_f\right)\\
&\stackrel{N>n}{=}\left(\sum\limits_{i=1}^{N^{(u)}}c_i^{(u)}\cdot\pi\left( r_i^{(u)} \right)\cdot\pi_f((k\;N))\cdot\pi_f\left(\epsilon_{\{k\}} \right)\xi_f,\sum\limits_{i=1}^{N^{(v)}}c_i^{(v)}\cdot\pi\left( r_i^{(v)} \right)\xi_f\right)\\
&=\left(\sum\limits_{i=1}^{N^{(u)}}c_i^{(u)}\cdot\pi\left( r_i^{(u)} \right)\cdot\pi_f\left(\epsilon_{\{N\}} \right)\cdot\pi_f((k\;N))\xi_f,\sum\limits_{i=1}^{N^{(v)}}c_i^{(v)}\cdot\pi\left( r_i^{(v)} \right)\xi_f\right)
\end{eqnarray*}
\begin{eqnarray*}
&\stackrel{N>n}{=}\left(\pi_f\left(\epsilon_{\{N\}} \right)\sum\limits_{i=1}^{N^{(u)}}c_i^{(u)}\cdot\pi\left( r_i^{(u)} \right)\cdot\pi_f((k\;N))\xi_f,\sum\limits_{i=1}^{N^{(v)}}c_i^{(v)}\cdot\pi\left( r_i^{(v)} \right)\xi_f\right)\\
&=\left(\pi_f\left(\epsilon_{\{N\}} \right)\sum\limits_{i=1}^{N^{(u)}}c_i^{(u)}\cdot\pi\left( r_i^{(u)} \right)\xi_f,\sum\limits_{i=1}^{N^{(v)}}c_i^{(v)}\cdot\pi\left( r_i^{(v)} \right)\cdot\pi_f((k\;N))\xi_f\right).
\end{eqnarray*}
Hence, using the equality $\left\| \sum\limits_{i=1}^{N^{(v)}}c_i^{(v)}\cdot\pi\left( r_i^{(v)} \right)\cdot\pi_f((k\;N))\xi_f\right\|_{H_f}=\|v\|_{H_f}=1$, we obtain $\left| \left(\mathfrak{R}\left(\mathcal{O}_k\cdot\pi_f\left(\epsilon_{\{k\}} \right) \right)u,v  \right)_{H_f}\right|\leq 1$.
\end{proof}
\begin{proof}[{\bf Proof of Theorem \ref{supp_th}}]  It suffices to show that
\begin{eqnarray}\label{cuclic_Q}
Q^{(\infty)}H_f=\left[ \pi_f\left( R_\infty \right)^\prime\xi_f \right].
\end{eqnarray}
Indeed, assume that \eqref{cuclic_Q} holds. If $\omega_f(q)=\left(q\xi_f,\xi_f \right)=0$, where $q\leq Q^{(\infty)}$ is  an orthogonal projection from  $\pi_f\left(R_\infty\right)''$, then $q\xi_f=0$. Therefore, $qA'\xi_f=0$ for all $A'\in \pi_f\left(R_\infty\right)'$. Since $q\leq Q^{(\infty)}$, using \eqref{cuclic_Q}, we conclude that $q\eta=0$ for all $\eta\in H_f$. %It follows from this that ${\rm supp}\,\omega_f\geq Q^{(\infty)}$. Since ${\rm supp}\,\omega_f$ and $Q^{(\infty)}$ belong to $\pi_f(R_\infty)''\cap\pi_f(\mathfrak{S}_\infty)'$, we conclude from Proposition \ref{quasi_unique} that ${\rm supp}\,\omega_f=Q^{(\infty)}$.
Since $Q^{(\infty)}$ belongs to $\pi_f(R_\infty)''$ and $Q^{(\infty)}\xi_f=\xi_f$ we obtain that it satisfies the conditions of Definition \ref{Def-Support}.

Let us prove \eqref{cuclic_Q}. Since $f$ is $\mathfrak{S}_\infty$-invariant, for any $s\in\mathfrak{S}_\infty$ the map $\pi_f(r)\xi_f\stackrel{\mathfrak{R}\left(\pi_f(s)\right)}{\mapsto}\pi_f(r)\cdot\pi_f(s)\xi_f$ defines a unitary operator from $\pi_f\left( R_\infty \right)^\prime$. Therefore,
\begin{eqnarray}\label{cuclic_symmetrsc_group}
\left[ \pi_f\left( \mathfrak{S}_\infty \right)\xi_f \right]\stackrel{(\ref{property_Q_infty})}{=}Q^{(\infty)}\left[ \pi_f\left( \mathfrak{S}_\infty \right)\xi_f \right]\subset\left[ \pi_f\left( R_\infty \right)^\prime\xi_f \right].
\end{eqnarray}
Set $$A_k^{(\delta)}=\sum\limits_{j:\lambda_j>\delta}\lambda_j^{-1}\cdot E_k(\lambda_j).$$ Then the mapping
\begin{eqnarray}
H_f\ni\pi_f(r)\xi_f\stackrel{\mathfrak{R}(A_k^{(\delta)})}{\mapsto}\pi_f(r)\cdot A_k^{(\delta)}\xi_f \in H_f
\end{eqnarray}
can be extended by continuity to a bounded operator $\mathfrak{R}(A_k^{(\delta)})\in \pi_f\left( R_\infty \right)^\prime $. Since $f$ is $\mathfrak S_\infty$-central and $A_k^{(\delta)}\in\pi(\mathfrak S_\infty)''$, we have $$\omega_f\left(\pi_f(r)A_k^{(\delta)} \right)=\omega_f \left(A_k^{(\delta)}\pi_f(r) \right) ~\text{ for all}~ r\in R_\infty. $$
It easy to check that
\begin{eqnarray}
\left\| \mathfrak{R}(A_k^{(\delta)})\right\| <\delta^{-1}.
\end{eqnarray}
Further, using Lemma \ref{operator_from_commutant}, let us define the operators
\begin{eqnarray}
\mathfrak{R}\left( E_k((\delta,1])\cdot \pi_f(\epsilon_{\{k\}})\right)= \mathfrak{R}(A_k^{(\delta)})\cdot\mathfrak{R}\left(\mathcal{O}_k\cdot\pi_f\left(\epsilon_{\{k\}}\right)\right).
\end{eqnarray}
Then
$\left\|\mathfrak{R}\left( E_k((\delta,1])\cdot \pi_f(\epsilon_{\{k\}})\right) \right\|\leq \delta^{-1}.$
Therefore, for any $j\in\mathbb N$ and any positive integers $k_1<k_2<\ldots<k_j$ one has
\begin{eqnarray}
\left\| \prod\limits_{j=1}^l \mathfrak{R}\left( E_{k_j}((\delta,1])\cdot \pi_f(\epsilon_{\{k_j\}}) \right)\right\|\leq\delta^{-l}
\end{eqnarray}
and
\begin{eqnarray}
\prod\limits_{j=1}^l \left(I-E_{k_j}(0)\right)\cdot\pi_f(\epsilon_{\{k_j\}})\,\xi_f=\lim_{\delta\to 0}\prod\limits_{j=1}^l \mathfrak{R}\left( E_{k_j}((\delta,1])\cdot \pi_f(\epsilon_{\{k_j\}}) \right)\xi_f.
\end{eqnarray}
Hence, combining Lemma  \ref{1-p_lemma} and Corollary \ref{zero_negative} with \eqref{property_Q_infty}, we have
\begin{eqnarray}
\begin{split}
&\prod\limits_{j=1}^l Q^{(k_j)}\cdot\pi_f(\epsilon_{\{k_j\}})\,\xi_f=Q^{(\infty)}\prod\limits_{j=1}^l \pi_f(\epsilon_{\{k_j\}})\,\xi_f\\
&=\lim_{\delta\to 0}\prod\limits_{j=1}^l \mathfrak{R}\left( E_{k_j}((\delta,1])\cdot \pi_f(\epsilon_{\{k_j\}}) \right)\xi_f.
\end{split}
\end{eqnarray}
Therefore, $Q^{(\infty)}\prod\limits_{j=1}^l \pi_f(\epsilon_{\{k_j\}})\,\xi_f\in\left[\pi_f(R_\infty)'\xi_f \right]$. Hence, applying \eqref{cuclic_symmetrsc_group}, we obtain that $Q^{(\infty)}\pi_f(r)\xi_f\in\left[\pi_f(R_\infty)'\xi_f \right]$ for all $r\in R_\infty$, and \eqref{cuclic_Q} is proved.
\end{proof}
\subsection{The structure of the algebra $\mathfrak{A}_k$.} By definition (see paragraph \ref{asumpthotic_operators}), algebra $\mathfrak{A}_k$ is generated by  the spectral projections $E_k(\lambda_j)$ and $\pi_f(\epsilon_{\{k\}})$. Let $S(\mathcal{O}_k)$ be the spectrum of $\mathcal{O}_k$ and let $$\widehat{S}(\mathcal{O}_k)=\left\{\lambda\in S(\mathcal{O}_k): E_k(\lambda)\;\pi_f(\epsilon_{\{k\}})\neq 0\right\}.$$
Applying \eqref{value_state-on_projection} and Remark \ref{remark_c(0)}, we obtain that for each $\lambda \in \widehat{S}(\mathcal{O}_k)$ operator $$u_k(\lambda)=c(\lambda)^{-1/2}\,\,E_k(\lambda)\;\pi_f(\epsilon_{\{k\}})$$ is a partial isometry. In particular,
\begin{eqnarray}\label{partial_iso}
u_k(\lambda)\;u_k(\lambda)^*=\mathbf{e}_k(\lambda) \;\;\text{ and } \;\;\;u_k(\lambda)^*\;u_k(\lambda)=\pi_f(\epsilon_{\{k\}}),
\end{eqnarray} where $\mathbf{e}_k(\lambda)=c(\lambda)^{-1}\,E_k(\lambda)\;\pi_f(\epsilon_{\{k\}})\;E_k(\lambda)$.
Since the projections $E_k(\lambda)-\mathbf{e}_k(\lambda)$ and $\pi_f(\epsilon_{\{k\}})$ are orthogonal for each $\lambda\in \widehat{S}(\mathcal{O}_k)$, we obtain
\begin{eqnarray*}
\pi_f(\epsilon_{\{k\}})\;\sum\limits_{\lambda\in  \widehat{S}(\mathcal{O}_k)}\mathbf{e}_k(\lambda)=\pi_f(\epsilon_{\{k\}}).
\end{eqnarray*}
Therefore, the orthogonal projection $\mathbf{e}_k=\sum\limits_{\lambda\in  \widehat{S}(\mathcal{O}_k)}\mathbf{e}_k(\lambda)$ lies in centrum of $\mathfrak{A}_k$. It follows from (\ref{partial_iso}) that algebra $\mathbf{e}_k\,\mathfrak{A}_k\,\mathbf{e}_k$ is a type I factor. Furthermore, algebra $\left( {\rm I}-\mathbf{e}_k \right)\mathfrak{A}_k\left( {\rm I}-\mathbf{e}_k \right)$ is generated by the operator $\left( {\rm I}-\mathbf{e}_k \right)\mathcal{O}_k\left( {\rm I}-\mathbf{e}_k \right)$.
\vskip 0.2cm
 \subsubsection{The proof of theorem \ref{semifinite_repr}. } Recall that $\mathfrak{P}_k$ is the orthogonal projection $I-P_k+I-E_k(0)$, where $P_k$ was introduced in subsection \ref{P_k_subsection}. One has
 \begin{eqnarray}
 Q^{(\infty)}=\prod\limits_{k=1}^{\infty}\mathfrak{P}_k.
 \end{eqnarray}
 Let us introduce the operators $\widetilde{\pi}(r)=Q^{(\infty)}\,\pi_f(r)\,Q^{(\infty)}$, $r\in R_\infty$. By \eqref{property_Q_infty}, we have
 \begin{eqnarray}\label{repr_of_S_infinity}
 \widetilde{\pi}(st)=\widetilde{\pi}(s)\,\widetilde{\pi}(t) ~\text{ and }~\widetilde{\pi}(s)\,\widetilde{\pi}(s^{-1})=Q^{(\infty)} ~\text{ for all }~
s,t\in\mathfrak{S}_\infty.
 \end{eqnarray}
 Therefore, the restriction of  $\widetilde{\pi}$ to $\mathfrak{S}_\infty$ is an unitary representation of $\mathfrak{S}_\infty$  in Hilbert space $Q^{(\infty)}\,H_f$.
By definition of $\mathfrak{P}_k$, \eqref{projection_relations_zero}  and Lemma \ref{1-p_lemma}, we have
\begin{eqnarray}\label{minimality_rel}
\widetilde{\pi}(\epsilon_{\{k\}}) \,\widetilde{\pi}(\epsilon_{\{k\}})=(1-c(0))\,\widetilde{\pi}(\epsilon_{\{k\}}).
\end{eqnarray}
Therefore, if $c(0)<1$ then operator  $\widehat{\pi}(\epsilon_{\{k\}})=(1-c(0))^{-1}\,\widetilde{\pi}(\epsilon_{\{k\}})$ is an orthogonal projection.

In the case $1-c(0)=0$, applying \eqref{projection_relations_zero},  we obtain $$\pi_f(\epsilon_{\{k\}})\,E_k(0)\,\pi_f(\epsilon_{\{k\}})=\pi_f(\epsilon_{\{k\}}).$$
Hence, applying Lemma  \ref{1-p_lemma} again, we have
\begin{eqnarray}
\tilde\pi_f(\epsilon_{\{k\}})=Q^{(\infty)}\,\pi_f(\epsilon_{\{k\}})\, Q^{(\infty)}=Q^{(\infty)}\,\mathfrak{P}_k\,\pi_f(\epsilon_{\{k\}})\,\mathfrak{P}_k\, Q^{(\infty)}=0.
\end{eqnarray}
Let us prove that the mappings $$\epsilon_k\mapsto \widehat{\pi}(\epsilon_k)=\left\{
 \begin{array}{rl}
 (1-c(0))^{-1}\,\widetilde{\pi}(\epsilon_{\{k\}}),&\text{ if } c(0)<1,\\
 0,&\text{ if } c(0)=1,
 \end{array}\right.\mathfrak{S}_\infty\ni s\mapsto \widehat{\pi}(s)=\widetilde{\pi}(s)$$ extends to a  $\ast$-factor-representation of  $R_\infty$ in Hilbert space $Q^{(\infty)}\,H_f$.

 First, we recall Popova’s presentation \cite{Popova}, \cite{East} for $R_\infty$, which also extends Moore’s presentation \cite{Moore}
for $\mathfrak{S}_\infty$. This presentation is defined by the set of generators  $\mathfrak{Al}=\left\{s_i \right\}_{i=1}^\infty \cup \epsilon_{\{1\}}$, where $s_i$ is transposition $(i\;\;i+1)$,  and the following system of  the relations:
\begin{eqnarray*}
&s_i^2=e ~\text{ where }~ e ~\text{ is identity of }~R_\infty,\\
&s_i\,s_j=s_j\,s_i ~\text{ if }~ |i-j|>1,\\
&s_i\,s_{i+1}\,s_i=s_{i+1}\,s_i\,s_{i+1},\\
 &\epsilon_{\{1\}}^2= \epsilon_{\{1\}},\\
 &\epsilon_{\{1\}}\,s_1 \epsilon_{\{1\}}\,s_1=s_1\, \epsilon_{\{1\}}\,s_1\, \epsilon_{\{1\}} =\epsilon_{\{1\}}\, s_1\, \epsilon_{\{1\}}.
\end{eqnarray*}
It follows from \eqref{repr_of_S_infinity} that it is sufficient  to verify the last two relations. Namely,
\begin{eqnarray}\label{first_rel}
 \widehat{\pi}(\epsilon_{\{1\}})\, \widehat{\pi}(\epsilon_{\{1\}}))= \widehat{\pi}(\epsilon_{\{1\}}))
 \end{eqnarray}
 and
 \begin{eqnarray}\label{second_rel}
 \begin{split}
 &\widehat{\pi}(\epsilon_{\{1\}})\, \widehat{\pi}(s_1)\,  \widehat{\pi}(\epsilon_{\{1\}}) \,\widehat{\pi}(s_1) =\widehat{\pi}(s_1)\,\widehat{\pi}(\epsilon_{\{1\}})\,\widehat{\pi}(s_1)\,\,\widehat{\pi}(\epsilon_{\{1\}})\\
& =\widehat{\pi}(\epsilon_{\{1\}})\,\widehat{\pi}(s_1)\,\,\widehat{\pi}(\epsilon_{\{1\}}).
\end{split}
\end{eqnarray}
To prove equalities \eqref{first_rel} and \eqref{second_rel}  we recall  that $\mathfrak{P}_k=I-P_k+I-E_k(0)\in\mathfrak{A}_k$, where $P_k$ is defined in subsection \ref{P_k_subsection}, $\mathfrak{A}_k\subset \mathfrak{A}_j'$ for all $k\neq j$ and
\begin{eqnarray*}
\pi_f(s)\,\mathfrak{P}_{i_1}\,\mathfrak{P}_{i_2}\,\cdots\,,\mathfrak{P}_{i_m}\,\pi_f(s^{-1})=\mathfrak{P}_{s(i_1)}\,\mathfrak{P}_{s(i_2)}\,
\cdots\,,\mathfrak{P}_{s(i_m)} ~\text{ for all }~ s\in\mathfrak{S}_\infty.
\end{eqnarray*}

It is clear that \eqref{first_rel}  follows from \eqref{minimality_rel}.

To prove first equality from \eqref{second_rel} we note that $\widehat{\pi}(\epsilon_{\{1\}}))=0$ in the case $c(0)=1$, and $\widehat{\pi}(\epsilon_{\{1\}})=(1-c(0))^{-1}\,Q^{(\infty)}\,\pi_f(\epsilon_{\{1\}}))\, Q^{(\infty)}$, if $c(0)<1$. Therefore,
$\widehat{\pi}(s_1)\,  \widehat{\pi}(\epsilon_{\{1\}}) \,
\widehat{\pi}(s_1)= \left\{\begin{array}{rl}
 (1-c(0))^{-1}\,Q^{(\infty)}\pi_f(\epsilon_{\{2\}})\, Q^{(\infty)},&\text{ if } c(0)<1\\
 0,&\text{ if } c(0)=1.
 \end{array}\right.$
 It follows from this that it is sufficient to consider the case, when $c(0)<1$.

 Using definition of $\widehat{\pi}$ we have
 \begin{eqnarray}\label{first_equality}
 \begin{split}
 \widehat{\pi}(\epsilon_{\{1\}})\, \widehat{\pi}(s_1)\,  \widehat{\pi}(\epsilon_{\{1\}}) \,\widehat{\pi}(s_1) =(1-c(0))^{-2}\,Q^{(\infty)}\pi_f(\epsilon_{\{2\}})\, Q^{(\infty)}\,\pi_f(\epsilon_{\{1\}})\, Q^{(\infty)}\\
 =(1-c(0))^{-2} \left(\prod\limits_{j=3}^\infty\mathfrak{P}_j\right)\;\mathfrak{P}_2\,\pi_f(\epsilon_{\{2\}})\,\mathfrak{P}_2\;\mathfrak{P}_1\,
 \pi_f(\epsilon_{\{1\}})\,\mathfrak{P}_1\\
= (1-c(0))^{-2} \left(\prod\limits_{j=3}^\infty\mathfrak{P}_j\right)\;\mathfrak{P}_1\,\pi_f(\epsilon_{\{1\}})\,\mathfrak{P}_1\;\mathfrak{P}_2\,
 \pi_f(\epsilon_{\{2\}})\,\mathfrak{P}_2\\
 =(1-c(0))^{-2}\,Q^{(\infty)}\pi_f(\epsilon_{\{1\}})\, Q^{(\infty)}\,\pi_f(\epsilon_{\{2\}})\, Q^{(\infty)}= \widehat{\pi}(s_1)\,  \widehat{\pi}(\epsilon_{\{1\}}) \,\widehat{\pi}(s_1)\,\widehat{\pi}(\epsilon_{\{1\}}).
 \end{split}
 \end{eqnarray}
 This completes the proof of first equality from \eqref{second_rel}.

The proof of the second equality in \eqref{second_rel} is sufficient to carry out only in the case $c(0)<1$. By definition of $\widehat\pi$, taking into account that $\pi_f(s_1)\mathfrak P_1\pi_f(s_1)=\mathfrak P_2$ and that $\mathfrak P_2$ commutes with $\pi_f(\epsilon_1)$, %commutation relation between $\mathfrak U_k,k\in\mathbb N$, and $\pi(\mathfrak S_\infty)$,
 we obtain
\begin{eqnarray*}\label{second_first_rel}
\begin{split}
\widehat{\pi}(\epsilon_{\{1\}})\,\widehat{\pi}(s_1)\,\,\widehat{\pi}(\epsilon_{\{1\}})=(1-c(0))^{-2}\,Q^{(\infty)}\pi_f(\epsilon_{\{1\}})\, Q^{(\infty)}\,\pi_f(s_1)\,\pi_f(\epsilon_{\{1\}})\, Q^{(\infty)}\\
=(1-c(0))^{-2} \left(\prod\limits_{j=3}^\infty\mathfrak{P}_j\right)\,\mathfrak{P}_1\mathfrak{P}_2\,\pi_f(\epsilon_{\{1\}})\,\mathfrak{P}_1\mathfrak{P}_2\,\pi_f(s_1)\,                 \pi_f(\epsilon_{\{1\}})\,\mathfrak{P}_1\mathfrak{P}_2\\
=(1-c(0))^{-2} \left(\prod\limits_{j=3}^\infty\mathfrak{P}_j\right)\,\mathfrak{P}_1\mathfrak{P}_2\,\pi_f(\epsilon_{\{1\}})\,\pi_f(s_1)\,                 \pi_f(\epsilon_{\{1\}})\,\mathfrak{P}_1\mathfrak{P}_2\\
=(1-c(0))^{-2} \left(\prod\limits_{j=3}^\infty\mathfrak{P}_j\right)\,\mathfrak{P}_1\,\pi_f(\epsilon_{\{1\}})\,
\pi_f(\epsilon_{\{2\}})\,\mathfrak{P}_2.
\end{split}
\end{eqnarray*}
Hence, applying \eqref{first_equality}, we obtain second equality from \eqref{second_rel}. Therefore, $\widehat{\pi}$
extends to a  $\ast$-representation in Hilbert space $Q^{(\infty)}H_f$. Namely, if $r=a_1a_2\ldots a_j$, where $a_i\in\mathfrak{Al}$ for all $i=1,2,\ldots j$, then
\begin{eqnarray*}
\widehat{\pi}(r)=\widehat{\pi}(a_1)\,\widehat{\pi}(a_2)\,\cdots\,\widehat{\pi}(a_j).
\end{eqnarray*}
In particular,
\begin{eqnarray*}
\widehat{\pi}(\epsilon{\{k\}})=Q^{(\infty)}\,\pi_f((1\;k))\,Q^{(\infty)}\;\widehat{\pi}(\epsilon_{\{1\}})\;Q^{(\infty)}\,
\pi_f((1\;k))\,Q^{(\infty)}.
\end{eqnarray*}
It is clear that each element $r\in R_\infty$  is written in the form $r=s\,\epsilon_{\mathbb{A}}$, where $s\in\mathfrak{S}_\infty$, $\mathbb{A}\subset \mathbb{N}$ and $\epsilon_{\mathbb{A}}=\prod\limits_{i\in\mathbb{A}}\epsilon_{\{i\}}$.
 Using definition of $\widehat{\pi}$, it is easy to verify that
  \begin{eqnarray*}
  Q^{(\infty)}\,\pi_f(s\,\epsilon_{\mathbb{A}})\, Q^{(\infty)}=(1-c(0))^{\#\mathbb{A}} \,\widehat{\pi}(s\,\epsilon_{\mathbb{A}}).
  \end{eqnarray*}
   It follows that $\widehat{\pi}(R_\infty)''=Q^{(\infty)}\,\pi_f(R_\infty)''\,Q^{(\infty)}$.  Since factor $ \pi_f(R_\infty)^{\prime\prime}$ has type ${\rm II}$ or  ${\rm I}$ and $Q^{(\infty)}\in \pi_f(R_\infty)^{\prime\prime} $, it follows from Proposition \ref{1 or 2} that there exists projection $F\in \widehat{\pi}(R_\infty)''$ with the following property
\begin{eqnarray}
\omega_f(A) =\left( A\xi_f,\xi_f\right)=\kappa\,{\rm Tr}(FA) ~\text{ for all }~ A\in \widehat\pi_f(R_\infty)^{\prime\prime},
\end{eqnarray}
 where ${\rm Tr}$ is normal semi-finite trace on $\pi_f(R_\infty)^{\prime\prime}$ and $\kappa\in\mathbb{R}_{>0}$. By Theorem \ref{supp_th} and Proposition \ref{quasi_unique}, we obtain that $F=Q^{(\infty)}$. Therefore, $\widehat{\pi}(R_\infty)''$ is a finite type factor and $\omega_f$ is the unique normalized normal trace on   $\widehat{\pi}(R_\infty)''$.
Then, by Proposition \ref{factor_condition}, $\chi(s)=\omega_f(\widehat{\pi}(s))=(\widehat{\pi}(s)\,\xi_f,\xi_f)$, where $s\in\mathfrak{S}_\infty$, is an indecomposable character on $\mathfrak{S}_\infty\subset R_\infty$. Therefore, there exist two collections $\alpha=(\alpha_1\geq\alpha_2\geq\ldots>0)$ and $\beta=(\beta_1\geq\beta\geq\ldots>0)$ of positive numbers (Thoma parameters \cite{Thoma}) with the properties:
\begin{eqnarray*}
\sum\alpha_i+\sum\beta_i\leq 1 ~\text{and }~\chi(c_n)=\sum\alpha_i^n+(-1)^{n-1}\beta_i^n,
\end{eqnarray*}
where $c_n=(k_1\;\,k_2\;\,\ldots\;\,k_n)$ is any cycle of length $n$. By Theorem \ref{finite_factor_repr}, two cases are possible:
\begin{itemize}
  \item[a)] $\widehat{\pi}(\epsilon_1)=0$ $\Rightarrow$ $(\widehat{\pi}(c_n\,\epsilon_1)\xi_f,\xi_f)=0$;
  \item[b)] there exists $\alpha_i$ such that $(\widehat{\pi}(c_n\,\epsilon_1)\xi_f,\xi_f)=\alpha_i^n$.
\end{itemize}
Hence, using the equality $\widehat{\pi}(\epsilon_{\{k\}})=(1-c(0))^{-1}\,Q^{(\infty)}\pi_f(\epsilon_{\{k\}})\, Q^{(\infty)}$ and \eqref{property_Q_infty}, we obtain in the case $c(0)<1$
\begin{eqnarray*}
\left(\pi_f(c_n\,\epsilon_{\{k_1\}})\xi_f,\xi_f \right)=(1-c(0))\left( \widehat{\pi}(c_n)\widehat{\pi}(\epsilon_{\{k_1\}})\xi_f,\xi_f\right)=(1-c(0))\alpha_i^n.
\end{eqnarray*}
\section{Realizations of the factor-representations, corresponding to the  $\mathfrak{S}_\infty$-invariant positive definite functions.}\label{section:realizations}
Our main goal in this section is the construction of the factor-representations for $R_\infty$ which are defined by
the indecomposable $\mathfrak{S}_\infty$-invariant positive definite function.
\subsection{Parameters of the factor-representations.}\label{parameters_of_repr}
Let $B(\mathbb{\mathbf{H}})$ denote
the set of all (bounded linear) operators acting on a complex separable Hilbert space $\mathbf{H}$.
Let ${\rm Tr}$ be the ordinary\footnote{ ${\rm Tr}(\mathfrak{p})$=1 for any  nonzero minimal projection  $\mathfrak{p}\in B(\mathbb{\mathbf{H}})$. } trace on $B(\mathbb{\mathbf{H}})$. Fix the self-adjoint operator $A\in B(\mathbf{H})$ and the {\it minimal} orthogonal projection $\mathbf{q}\in B(\mathbf{H})$. Let ${\rm Ker}\, A=\left\{ u\in B(\mathbf{H}):Au=0\right\}$, and let $\mathfrak{A}=\left\{ A,\mathbf{q}\right\}^{\prime\prime}$. Denote by $\widetilde{\mathbf{H}}$ the subspace generated by $\left\{ \mathfrak{A}v,   v\in ({\rm Ker} A)^\perp \right\}$, where $({\rm
      Ker} A)^\perp=\mathbf{H}\ominus {\rm Ker}\, A $. Denote by $\widetilde{\mathbf{P}}$ the orthogonal projection onto $\widetilde{\mathbf{H}}$.   It is clear that $\widetilde{\mathbf{P}}$ lies in $\mathfrak{A}$  and ${\rm dim}\left( \widetilde{\mathbf{H}}\ominus ({\rm Ker} A)^\perp\right)\leq 1$. Let $E(\Delta)$ be the spectral projection of operator $A$ corresponding to $\Delta\subset \mathbb{R}$. The closure of $ \mathfrak{A}E(\Delta) \mathbf{H}$ will be denoted by $\mathbf{H}_\Delta$.  Set $\mathbf{H}_{reg}=\mathbf{H}\ominus\widetilde{\mathbf{H}}$. By above, $\mathbf{H}_{reg}\subset  {\rm Ker}\, A$. We will assume that the following conditions hold:
\begin{itemize}
  \item[\rm(a)] ${\rm Tr} (|A|)\leq1$,
   \item[\rm(b)] if $\mathbf{q}\neq0$, then   ${\rm Tr}(\mathbf{q})$=1;
  \item[\rm(c)] if ${\rm Tr} (|A|)=1$, then $\widetilde{\mathbf{H}}$  coincides with $\mathbf{H}$;
    \item[\rm(d)] if ${\rm Tr} (|A|)<1$, then
      $\dim \mathbf{H}_{reg}=\infty$;
       \item[\rm(e)] $\mathbf{q}\cdot E([-1,0))=0$;
       \item[\rm(f)] $\mathbf{q}v=0$ for all $v\in \mathbf{H}_{reg}$.\label{property_f}
\end{itemize}
\subsection{The structure of the algebra $\mathfrak{A}$. }\label{structure_of_A} Since ${\rm Tr}\leq 1$, the spectrum of $A$ is at most countable. Let $S(A)=\left\{\lambda_1,\lambda_2,\ldots,  \right\}$ be the spectrum of $A$. Denote by $E(\lambda)$ a spectral projection corresponding to $\lambda\in S(A)$. Since $\mathbf{q}$ is a minimal projection, we have $\mathbf{q}\cdot E(\lambda)\cdot\mathbf{q}=c(\lambda)\cdot\mathbf{q}$, where $c(\lambda)\in \mathbb{R}_{\geq 0}$. Thus, if $c(\lambda)>0$ then the operator $\mathbf{q}_\lambda=c(\lambda)^{-1}\cdot E(\lambda)\cdot\mathbf{q}\cdot E(\lambda)$ is a minimal orthogonal projection. It is easy to check that the projections $\mathbf{q}$ and $\widetilde{E}(\lambda)=E(\lambda)-\mathbf{q}_\lambda$ are orthogonal. If $c(\lambda)=0$ then we set $\widetilde{E}(\lambda)=E(\lambda)$.

Set $\widehat{S}(A)=\left\{ \lambda\in S(A): c(\lambda)\neq 0  \right\}$. If $\lambda_k,  \lambda_l \in \widehat{S}(A)$ then the operator $\mathbf{e}_{\lambda_k\lambda_l}=\left( c(\lambda_k)\cdot c(\lambda_l) \right)^{-1/2} \cdot E\left( \lambda_k \right)\cdot \mathbf{q}\cdot E\left( \lambda_l \right)$ is a one-dimensional partial isometry and the following relation hold
\begin{eqnarray*}
\mathbf{e}_{\lambda_k\lambda_l}\cdot \mathbf{e}_{\lambda_i\lambda_j}=\delta_{\lambda_l\lambda_i}\cdot\mathbf{e}_{\lambda_k\lambda_j}.
\end{eqnarray*}
Observe that $\mathbf{e}_{\lambda_i\lambda_i}=\mathbf{q}_{\lambda_i}$
It is easily seen that the projection $\mathbf{q}$ belongs to the factor $\mathfrak{F}$ generated by $\left\{  \mathbf{e}_{\lambda_k\lambda_l} \right\}_{\lambda_k,\lambda_l\in \widehat{S}(A)}$.
Hence we obtain the following statement:
\begin{Prop}
Projection $\mathbf{f}=\sum\limits_{\lambda_i\in\widehat{S}(A)}\mathbf{e}_{\lambda_i\lambda_i}$ lies in the centre of  $\mathfrak{A}$. The algebra $\mathfrak{A}$ is decomposed into the direct sum
\begin{eqnarray*}
\mathfrak{A}=({\rm I}-\mathbf{f})\mathfrak{A}+\mathbf{f}\mathfrak{A},
\end{eqnarray*}
where $({\rm I}-\mathbf{f})\mathfrak{A}$ is the abelian algebra generated by projections $\widetilde{E}(\lambda_i), i\in S(A)$, and $\mathbf{f}\mathfrak{A}=\mathfrak{F}$.
\end{Prop}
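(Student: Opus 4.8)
The plan is to show that $\mathbf{f}$ is a \emph{central} projection of $\mathfrak{A}$ and then to read off the two summands as its reductions. The starting observation is that, by the computation preceding the statement, $\mathbf{e}_{\lambda_i\lambda_i}=\mathbf{q}_{\lambda_i}$, so $\mathbf{f}=\sum_{\lambda_i\in\widehat{S}(A)}\mathbf{q}_{\lambda_i}$ is an orthogonal sum of mutually orthogonal minimal projections and hence a projection; moreover $\mathbf{f}=\sum_{\lambda_i\in\widehat{S}(A)}\mathbf{e}_{\lambda_i\lambda_i}$ is precisely the unit of the type I factor $\mathfrak{F}$ generated by the matrix units $\left\{\mathbf{e}_{\lambda_k\lambda_l}\right\}$. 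Since each $\mathbf{e}_{\lambda_k\lambda_l}=\left( c(\lambda_k)c(\lambda_l)\right)^{-1/2}E(\lambda_k)\mathbf{q}E(\lambda_l)$ is a product of the generators $E(\lambda)$ and $\mathbf{q}$ of $\mathfrak{A}$, we have $\mathfrak{F}\subseteq\mathfrak{A}$ and in particular $\mathbf{f}\in\mathfrak{A}$.

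Next I would verify that $\mathbf{f}$ commutes with the two generators $A$ and $\mathbf{q}$ of $\mathfrak{A}$. Writing the spectral decomposition $A=\sum_{\lambda\in S(A)}\lambda E(\lambda)$, it suffices to check that $\mathbf{f}$ commutes with every $E(\lambda)$; this follows from $\mathbf{q}_{\lambda_i}\leq E(\lambda_i)$ together with the orthogonality $E(\lambda)E(\lambda_i)=\delta_{\lambda\lambda_i}E(\lambda_i)$, which gives $\mathbf{f}E(\lambda)=\mathbf{q}_\lambda=E(\lambda)\mathbf{f}$ when $\lambda\in\widehat{S}(A)$ and $\mathbf{f}E(\lambda)=0=E(\lambda)\mathbf{f}$ otherwise. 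Commutation with $\mathbf{q}$ is immediate once one recalls that $\mathbf{q}\in\mathfrak{F}$ and that $\mathbf{f}$ is the unit of $\mathfrak{F}$, whence $\mathbf{f}\mathbf{q}=\mathbf{q}=\mathbf{q}\mathbf{f}$. Since $A$ and $\mathbf{q}$ generate $\mathfrak{A}$ and $\mathbf{f}\in\mathfrak{A}$, this proves that $\mathbf{f}$ lies in the centre of $\mathfrak{A}$, and the direct sum decomposition $\mathfrak{A}=({\rm I}-\mathbf{f})\mathfrak{A}+\mathbf{f}\mathfrak{A}$ follows.

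It then remains to identify the two pieces, and for this I would use the fact that for a central projection the compression $x\mapsto \mathbf{f}x$ is a normal $\ast$-homomorphism, so $\mathbf{f}\mathfrak{A}$ is the von Neumann algebra generated by $\mathbf{f}A\mathbf{f}$ and $\mathbf{f}\mathbf{q}\mathbf{f}$, and likewise for ${\rm I}-\mathbf{f}$. Using $\mathbf{f}E(\lambda)=\mathbf{q}_\lambda=\mathbf{e}_{\lambda\lambda}$ one obtains $\mathbf{f}A\mathbf{f}=\sum_{\lambda\in\widehat{S}(A)}\lambda\,\mathbf{e}_{\lambda\lambda}\in\mathfrak{F}$, while $\mathbf{f}\mathbf{q}\mathbf{f}=\mathbf{q}\in\mathfrak{F}$; combined with $\mathfrak{F}=\mathbf{f}\mathfrak{F}\subseteq\mathbf{f}\mathfrak{A}$ this yields $\mathbf{f}\mathfrak{A}=\mathfrak{F}$. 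On the complementary summand, $({\rm I}-\mathbf{f})\mathbf{q}=\mathbf{q}-\mathbf{f}\mathbf{q}=0$, so the only surviving generator is the self-adjoint operator $({\rm I}-\mathbf{f})A=\sum_{\lambda\in S(A)}\lambda\,\widetilde{E}(\lambda)$, using $({\rm I}-\mathbf{f})E(\lambda)=E(\lambda)-\mathbf{q}_\lambda=\widetilde{E}(\lambda)$ for $\lambda\in\widehat{S}(A)$ and $({\rm I}-\mathbf{f})E(\lambda)=E(\lambda)=\widetilde{E}(\lambda)$ otherwise. Hence $({\rm I}-\mathbf{f})\mathfrak{A}$ is generated by a single self-adjoint operator, so it is abelian, and as the $\lambda$ are pairwise distinct its spectral projections are exactly the $\widetilde{E}(\lambda)$, $\lambda\in S(A)$, which therefore generate it.

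The computations above are routine; the one point requiring care is the reduction-of-generators step, i.e. the passage from ``$\mathbf{f}A\mathbf{f}$ and $\mathbf{f}\mathbf{q}\mathbf{f}$ lie in $\mathfrak{F}$'' to $\mathbf{f}\mathfrak{A}=\mathfrak{F}$, which rests on the centrality of $\mathbf{f}$ and the normality of the compression homomorphism; dually one must confirm that annihilating $\mathbf{q}$ on the $({\rm I}-\mathbf{f})$-summand really leaves an algebra generated by one self-adjoint operator. Everything else is bookkeeping with the matrix units and the eigenprojection orthogonality relations already recorded before the statement.
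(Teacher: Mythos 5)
Your proof is correct, and it is essentially the argument the paper intends: the paper derives the Proposition directly (``Hence we obtain\ldots'') from the same preceding facts you use --- the matrix-unit relations for $\mathbf{e}_{\lambda_k\lambda_l}$, the identity $\mathbf{e}_{\lambda\lambda}=\mathbf{q}_\lambda$, the orthogonality of $\mathbf{q}$ and $\widetilde{E}(\lambda)$, and $\mathbf{q}\in\mathfrak{F}$. Your write-up merely makes explicit the steps the paper leaves implicit (centrality of $\mathbf{f}$ as the unit of $\mathfrak{F}$, and identification of the two compressions via the normal homomorphism $x\mapsto\mathbf{f}x$), so no comparison beyond this is needed.
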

\subsubsection{Hilbert space $\mathcal{H}_A^\mathbf{q}$.}\label{hp}
Let $\mathbb{S}=\left\{ 1,2,\ldots, {\rm dim}\,\mathbf{H} \right\}$. In particular, we assume that $\mathbb{S}=\mathbb N$, if ${\rm dim}\,\mathbf{H}=\infty$.
Fix the matrix unit $\left\{ e_{kl}\right\}_{k,l\in\mathbb{S}}\subset B(\mathbf{H})$
such that
\begin{eqnarray}
Ae_{ll}=e_{ll}A \text{ and } e_{ll}\widetilde{\mathbf{H}}\subset \widetilde{\mathbf{H}}\text{ for all } l.
\end{eqnarray}
Let $\mathbb{S}_{reg}=\left\{ n_1,n_2,\ldots\right\}=\left\{ l:e_{ll}\mathbf{H}\subset \mathbf{H}_{reg}\right\}$ \label{s_regular}(see ({\rm d})), where $n_k<n_{k+1}$.
 Define
a state $\psi_k$ on $B\left(\mathbf{H} \right)$ as
follows
\begin{eqnarray}\label{psik}
\psi_k\left(b \right)={\rm Tr}\left(b|A| \right)+\left(1-{\rm
Tr}\left(|A| \right)\right) {\rm Tr}\left(b e_{n_kn_k}\right),\;\;b\in B\left(\mathbf{H} \right).
\end{eqnarray}
Let $ _1\psi_k$ denote the product-state on $B\left(\mathbf{H}\right)^{\otimes k}$:
\begin{eqnarray}\label{product_psik}
 _1\psi_k\left(b_1\otimes b_2\otimes \ldots\otimes b_k
 \right)=\prod\limits_{j=1}^k\psi_j\left(b_j \right).
\end{eqnarray}
Now define inner product on $B \left( \mathbf{H}\right)^{\otimes k}$ by
\begin{eqnarray}\label{inner_product_psik}
\left( v,u\right)_k=\,_1\psi_k\left( u^*v \right).
\end{eqnarray}
Let $\mathcal{H}_k$ denote the Hilbert space obtained by completion of
$B \left( \mathbf{H}\right)^{\otimes k}$  in above inner product
norm. Now we consider the natural isometrical embedding
\begin{eqnarray}
v\ni\mathcal{H}_k\mapsto v\otimes {\rm I}\in\mathcal{H}_{k+1},
\end{eqnarray}
and define Hilbert space $\mathcal{H}^\mathbf{q}_A$ as the completion of
$\bigcup\limits_{k=1}^\infty \mathcal{H}_k$.

%%%%%%%%%%%%%%%%%%%%%%%%%%%%%%%%%%%%%%%%%%%%%%%%%%%%%%%%%%%%%%%%%%%%%%%%%%%%%%%%%%%%%%%%
\paragraph{The action of $R_\infty$ on $\mathcal{H}^\mathbf{q}_A$. }\label{action}
  First, using the
embedding\\ $a\ni B\left(\mathbf{H} \right)^{\otimes
k}\mapsto a\otimes{\rm I}\in B\left(\mathbf{H}
\right)^{\otimes (k+1)}$, we identify $B\left(\mathbf{H}
\right)^{\otimes k}$ with subalgebra $B\left(\mathbf{H}
\right)^{\otimes
k}\otimes\mathbb{C}\subset B\left(\mathbf{H}
\right)^{\otimes (k+1)}$. Therefore, algebra
$B\left(\mathbf{H}\right)^{\otimes\infty}=\bigcup\limits_{n=1}^\infty
B\left(\mathbf{H} \right)^{\otimes n}$ is well defined.

Now we construct the explicit embedding of $\mathfrak{S}_\infty$ into the unitary subgroup of  $B\left(\mathbf{H}\right)^{\otimes\infty}$. For $a\in B\left(\mathbf{H}\right)$ put $a^{(k)}={\rm I}\otimes\cdots\otimes{\rm I}\otimes\underbrace{a}_{k}\otimes{\rm I}\otimes{\rm I}\cdots$. Let $E_{-}=E([-1,0))$ and let
\begin{eqnarray*}
U_{E_{-}}^{(k,\,k+1)}=({\rm I}-E_{-})^{(k)}({\rm I}-E_{-})^{(k+1)}+E_{-}^{(k)} ({\rm
I}-E_{-})^{(k+1)}\\
+ ({\rm I}-E_{-})^{(k)}E_{-}^{(k+1)}-E_{-}^{(k)}E_{-}^{(k+1)}.
\end{eqnarray*}
Define the unitary operator $T\left((k\;k+1)\right)\in B\left(\mathbf{H}
\right)^{\otimes\infty}$ as follows
\begin{eqnarray}\label{embeding_T}
T\left((k\;k+1)\right)=U_{E_{-}}^{(k,\, k+1)}\sum\limits_{ij}e_{ij}^{(k)}e_{ji}^{(k+1)}.
\end{eqnarray}
Put $T(\epsilon _{\{1\}})=\mathbf{q}^{(1)}$.

A easy verification of the standard relations between $\left\{ T((k\;k+1))\right\}_{k\in\mathbb{N}}$ and
$\mathbf{q}^{(1)}$ shows that
$T$ extends by multiplicativity to the $\star$-homomorphism   of $R_\infty$ to $B\left(\mathbf{H}
\right)^{\otimes\infty}$.

Left multiplication in $B\left(\mathbf{H}\right)^{\otimes\infty}$ defines  $\star$-representation $\mathfrak{L}_A$ of   $B\left(\mathbf{H}\right)^{\otimes\infty}$ by bounded operators on  $\mathcal{H}^\mathbf{q}_A$. Put $\Pi_A(r)=\mathfrak{L}_A\left( T(r)\right)$, $r\in R_\infty$. Denote by $\pi_A^{(0)}$ and $\mathfrak{L}_A^{(0)}$\label{notation for the L^0} the restrictions of $\Pi_A$ and $\mathfrak{L}_A$, respectively,  to $\left[ \Pi_A\left( R_\infty\right)\xi _0\right]$, where $\xi _0$ is the vector from $\mathcal{H}^\mathbf{q}_A$ corresponding to the unit element of $B\left(\mathbf{H}\right)^{\otimes\infty}$.
\begin{Rem}
If $T\left( \epsilon_{\{1\}} \right)=0$ and $T(s)$ $(s\in\mathfrak{S}_\infty)$ is defined by (\ref{embeding_T}), then $\left\{  \pi_A^{(0)}\left( R_\infty \right) \right\}^{\prime\prime}=\left\{  \pi_A^{(0)}\left( \mathfrak{S}_\infty \right) \right\}^{\prime\prime}$ and the corresponding representation $\pi_A^{(0)}$ is ${\rm II}_1$-factor-representation of $R_\infty$.

\end{Rem}

\begin{Rem}
If $A=\mathbf{p}$, where $\mathbf{p}$ is one-dimensional projection, then $\left(\Pi_\mathbf{p}(s)\xi _0,\xi _0 \right)_{\mathcal{H}^\mathbf{q}_A}=1$ for all  $s\in\mathfrak{S}_\infty$; i. e. $\pi_\mathbf{p}^{(0)}$ is $\mathfrak{S}_\infty$-spherical representation of $R_\infty$ (see p. \ref{sph_repr}).
\end{Rem}
\subsubsection{The invariance  and multiplicativity  of  $\varphi_A$.}
Let $\varphi_A(r)$ $= \left(\pi_A^{(0)}(r) \xi _0,\xi _0\right) $, $r\in R_\infty$.
\begin{Lm}\label{invariant_state}
The positive definite function $\varphi_A$ is $\mathfrak{S}_\infty$-invariant. Namely, the  relation $\varphi_A(sr)=\varphi_A(rs)$ holds for all $s\in\mathfrak{S}_\infty$ and $r\in R_\infty$.
\end{Lm}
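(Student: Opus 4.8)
The plan is to turn $\varphi_A$ into the evaluation of an ordinary (genuinely tracial) functional on the finite tensor powers $B(\mathbf{H})^{\otimes N}$, and then reduce everything to the single structural fact that the ``regular'' directions $e_{n_jn_j}$ never survive on $\mathrm{supp}\,r$. First I would record the standard reduction: for $s\in\mathfrak{S}_\infty$ the identity $\varphi_A(sr)=\varphi_A(rs)$ (for all $r\in R_\infty$) is equivalent to the conjugation invariance $\varphi_A(srs^{-1})=\varphi_A(r)$ (replace $r$ by $rs^{-1}$ in one direction, and conjugate $sr$ by $s^{-1}$ in the other). Since $T$ is a $\star$-homomorphism, $\Pi_A(srs^{-1})=\Pi_A(s)\Pi_A(r)\Pi_A(s)^{-1}$, so it suffices to work with $T(s)T(r)T(s^{-1})$ inside $B(\mathbf{H})^{\otimes\infty}$. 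Setting $\rho_j=|A|+(1-\mathrm{Tr}(|A|))\,e_{n_jn_j}$ (the density matrix of $\psi_j$ from \eqref{psik}, a positive trace-class operator with $\mathrm{Tr}(\rho_j)=1$ by condition (a)), and using that $\xi_0$ corresponds to the unit of $B(\mathbf{H})^{\otimes\infty}$ while $\mathfrak{L}_A$ acts by left multiplication, one gets for $N$ larger than $\max(\mathrm{supp}\,r)$
\begin{equation*}
\varphi_A(r)={}_1\psi_N(T(r))=\mathrm{Tr}_{B(\mathbf{H})^{\otimes N}}\!\big(T(r)\,R_N\big),\qquad R_N=\rho_1\otimes\cdots\otimes\rho_N,
\end{equation*}
where $\mathrm{Tr}_{B(\mathbf{H})^{\otimes N}}$ is the ordinary, hence cyclic, trace. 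By cyclicity, $\varphi_A(srs^{-1})=\mathrm{Tr}\big(T(r)\,T(s)^{*}R_N\,T(s)\big)$, so the whole lemma comes down to the equality $\mathrm{Tr}(T(r)R_N)=\mathrm{Tr}\big(T(r)\,T(s)^{*}R_N\,T(s)\big)$.

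Next I would show $T(s)^{*}R_N\,T(s)=R_N^{\sigma}:=\rho_{\sigma^{-1}(1)}\otimes\cdots\otimes\rho_{\sigma^{-1}(N)}$ with $\sigma=s$. It is enough to treat $s=(k\;k+1)$ and then induct along a word in adjacent transpositions. For a single transposition, $T((k\;k+1))=U_{E_-}^{(k,k+1)}F^{(k,k+1)}$ (see \eqref{embeding_T}), where $F^{(k,k+1)}$ is the flip; since $E_-=E([-1,0[)$ commutes with $|A|$ and annihilates every $e_{n_jn_j}$ (the regular vectors lie in $\ker A$, while $E_-$ projects onto strictly negative spectrum), the sign operator $U_{E_-}^{(k,k+1)}$ commutes with $R_N$, and conjugation by $F^{(k,k+1)}$ merely interchanges $\rho_k$ and $\rho_{k+1}$. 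Thus the sign twists cancel in the conjugation and only the permutation of the density matrices remains.

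The heart of the argument is then the claim $\mathrm{Tr}(T(r)R_N)=\mathrm{Tr}(T(r)R_N^{\sigma})$ for every permutation $\sigma$. Here I would expand each $\rho_j=|A|+(1-\mathrm{Tr}|A|)e_{n_jn_j}$ and track the regular summands. Writing $r=s'\epsilon_{\mathbb{A}}$, so that the factor $T(\epsilon_{\mathbb{A}})$ acts on the slots $i\in\mathbb{A}$ through $\mathbf{q}$, a regular projection $e_{n_jn_j}$ occupying the $j$-th slot is forced to meet either $|A|$ coming from a neighbouring slot linked by a nontrivial cycle of $s'$ (giving $0$, since $\mathbf{H}_{reg}\subset\ker A$), or $\mathbf{q}$ when $j\in\mathbb{A}$ (giving $0$ by condition (f)), or another regular projection $e_{n_ln_l}$, $l\ne j$ (giving $0$ by orthogonality of distinct basis directions). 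Hence $e_{n_jn_j}$ survives only when $j\notin\mathrm{supp}\,r$, in which case $T(r)$ acts as the identity on the $j$-th factor and that slot contributes the scalar $\mathrm{Tr}(\rho_j)=1$ independently of the label $n_j$. Consequently $\mathrm{Tr}(T(r)R_N)$ equals an expression assembled only from $|A|$, $\mathbf{q}$ and the flip/sign operators, with no dependence on the assignment $j\mapsto n_j$; permuting the density matrices therefore leaves it unchanged, which yields $\mathrm{Tr}(T(r)R_N)=\mathrm{Tr}(T(r)R_N^{\sigma})$ and, combined with the previous paragraph, the lemma.

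I expect the last step to be the main obstacle: one must argue cleanly that \emph{every} regular contribution attached to a slot lying in $\mathrm{supp}\,r$ cancels, simultaneously accounting for the cyclic linking produced by the permutation part $s'$, the projections $\mathbf{q}$ produced by $\epsilon_{\mathbb{A}}$, and the sign twists $U_{E_-}$. This is precisely where the relations $\mathbf{H}_{reg}\subset\ker A$, the mutual orthogonality of the $e_{n_jn_j}$, and conditions (e) and (f) are indispensable; the cleanest organization is probably to first dispose of the signs (as in the second paragraph) and then carry out the expansion cycle-by-cycle, using the multiplicativity of ${}_1\psi_N$ over the disjoint tensor blocks determined by the cycle decomposition of Theorem \ref{decomposition_into_product_cycles}.
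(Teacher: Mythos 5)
Your proposal is correct in substance but organized along a genuinely different route than the paper's. The paper proves the invariance by building an explicit element of the commutant: for $s\in\mathfrak S_\infty$ it sets $U_s=\sum_k e_{n_{s(k)}n_k}+\sum_{i\in\mathbb S\setminus\mathbb S_{reg}}e_{ii}$, forms the strong limit $\mathfrak L_A\left(U_s^{\otimes\infty}\right)$, checks that it commutes with all $\Pi_A(t)$, $t\in\mathfrak S_\infty$ (\eqref{commutativity}) and, via condition (f), with all $\Pi_A\left(\epsilon_{\{k\}}\right)$ (\eqref{commutativity_idempotent}), and then uses the intertwining identity \eqref{condition_of_invariance}, $T(s)\,U_s^{\otimes n}A_k^{(k)}=A_{s(k)}^{(s(k))}T(s)\,U_s^{\otimes n}$, to absorb conjugation by $s$ into a relabeling of the regular directions that the product state does not detect. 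You replace this operator-theoretic intertwiner by a density-matrix computation: $\varphi_A(r)={\rm Tr}\left(T(r)R_N\right)$ with $\rho_j=|A|+(1-{\rm Tr}|A|)e_{n_jn_j}$, cyclicity of the trace moves the conjugation onto $R_N$, conjugation by $T(s)$ permutes the $\rho_j$ (the sign factors commute with $R_N$ because $E_-$ commutes with $|A|$ and annihilates each $e_{n_jn_j}$), and label-insensitivity of the trace is then established by expansion and killing of the regular terms via $\mathbf H_{reg}\subset\ker A$, condition (f), and orthogonality of distinct matrix units. Both arguments rest on the same structural inputs (conditions (a), (e), (f) and $\mathbf H_{reg}\subset\ker A$), and your label-insensitivity statement is precisely what the paper's $U_s$ implements unitarily; your version is more elementary and self-contained, while the paper's yields a reusable commutant operator, $\mathfrak L_A\left(U_s^{\otimes\infty}\right)$, which reappears later (e.g.\ in the proof of Theorem \ref{condition_of_bicyclic}).

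One sub-claim needs repair, although it does not break the argument. It is not true that $T(r)$ acts as the identity on every slot $j\notin{\rm supp}\,r$: pushing flips through the adjacent-transposition factors gives, for instance,
\begin{equation*}
T\left((1\;3)\right)=U_{E_-}^{(2,3)}\,U_{E_-}^{(1,3)}\,U_{E_-}^{(1,2)}\cdot\sum\nolimits_{ij}e_{ij}^{(1)}e_{ji}^{(3)},
\end{equation*}
so the sign part of $T(r)$ can carry $E_-^{(j)}$ on intermediate slots $j$ outside ${\rm supp}\,r$. Such a slot contributes ${\rm Tr}\left(E_-\rho_j\right)={\rm Tr}\left(E_-|A|\right)$ rather than ${\rm Tr}(\rho_j)=1$; since $E_-e_{n_jn_j}=0$ (the regular directions lie in $\ker A$, orthogonal to the strictly negative spectral subspace), this is still independent of the label $n_j$, so your final conclusion — that ${\rm Tr}\left(T(r)R_N\right)$ is assembled only from $|A|$, $\mathbf q$, $E_-$ and flips, with no dependence on the assignment $j\mapsto n_j$ — survives. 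To make the expansion airtight, add ``$E_-$ coming from a sign factor'' to your list of operators that annihilate a regular projection, alongside $|A|$, $\mathbf q$, and a second regular projection; note also that the plan of ``disposing of the signs first'' cannot work literally for $T(r)$ itself, since there the signs are part of the operator being traced rather than a conjugation to be cancelled.
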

\begin{proof}
Recall that $\mathbb{S}_{reg}=\left\{ n_1,n_2,\ldots\right\}=\left\{ l:e_{ll}\mathbf{H}\subset \mathbf{H}_{reg}\right\}$ (see paragraph \ref{hp}).
For any $s\in\mathfrak{S}_\infty$ set $U_s=\sum\limits_k e_{n_{s(k)}n_k}+\sum\limits_{i\in\mathbb{S}\setminus\mathbb{S}_{reg}}e_{ii}$.
Consider the sequence of the operators $U_s^{\otimes n}=\prod\limits_{j=1}^n U_s^{(j)}$ $\in B(\mathbf{H})^{\otimes\infty}$, where $U_s^{(j)}$ is defined in paragraph \ref{action}. It follows from definition of Hilbert space $\mathcal{H}^\mathbf{q}_A$ that there exists $\lim\limits_{n\to\infty} \mathfrak{L}_A\left( U_s^{\otimes n}\right)$  in the strong operator topology. Set $\mathfrak{L}_A\left(U_s^{\otimes \infty}\right)=\lim\limits_{n\to\infty} \mathfrak{L}_A\left( U_s^{\otimes n}\right)$. By definition of the representation $\Pi_A$ (paragraph \ref{action}), we have
\begin{eqnarray}\label{commutativity}
\Pi_A(t)\cdot \mathfrak{L}_A\left(U_s^{\otimes \infty}\right)=\mathfrak{L}_A\left(U_s^{\otimes \infty}\right)\cdot \Pi_A(t) \text{ for all } s,t\in\mathfrak{S}_\infty.
\end{eqnarray}
It follows from property ({\rm f})(paragraph \ref{parameters_of_repr}) that
\begin{eqnarray}\label{commutativity_idempotent}
\Pi_A\left( \epsilon_{\{k\}} \right)\cdot \mathfrak{L}_A\left(U_s^{\otimes \infty}\right)=\mathfrak{L}_A\left(U_s^{\otimes \infty}\right)\cdot \Pi_A\left( \epsilon_{\{k\}} \right)
\text{ for all } s\in\mathfrak{S}_\infty, k\in\mathbb{N}.
\end{eqnarray}
Denote by $A_k$ the operator $A+\left( 1-{\rm Tr}\,\left( A\right)\right)e_{n_kn_k}$ $\in B(\mathbf{H})$. The simple calculation shows that for any $s\in\mathfrak{S}_\infty$ and $n>\max\left\{l:l\in{\rm supp}\;s \right\}$ next equality holds
\begin{eqnarray}\label{condition_of_invariance}
T(s)\cdot U_s^{\otimes n}\cdot A_k^{(k)}=A_{s(k)}^{(s(k))} \cdot T(s)\cdot U_s^{\otimes n}.
\end{eqnarray}
Hence, using  (\ref{psik}), (\ref{product_psik}), (\ref{inner_product_psik}), (\ref{commutativity}) and (\ref{commutativity_idempotent}), we obtain the statement of  lemma \ref{invariant_state}.
\end{proof}
The next statement is obvious.
\begin{Prop}\label{multiplicativity}
Let $r=q_1\cdot q_2\cdots q_k$ be the decomposition of $r\in R_\infty$ in the product of the independent quasicycles (see page \pageref{quasicycle}). Then $\varphi_A(r)=\prod\limits_{j=1}^k\varphi_A(q_j)$. It follows from Proposition \ref{factor_condition} that $\pi_A^{(0)}$ is a factor-representation.
\end{Prop}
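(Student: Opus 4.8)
The plan is to evaluate $\varphi_A$ directly as a product-state value on the image $T(R_\infty)\subset B(\mathbf H)^{\otimes\infty}$ and to exploit that the $\star$-homomorphism $T$ carries disjointly supported elements to operators acting on disjoint tensor factors. Recall that $\xi_0$ is the vector corresponding to the unit of $B(\mathbf H)^{\otimes\infty}$ and that $\mathfrak L_A$ is left multiplication, so that by \eqref{inner_product_psik} and the definition of $\xi_0$,
\begin{equation*}
\varphi_A(r)=\left(\mathfrak L_A\!\left(T(r)\right)\xi_0,\xi_0\right)_{\mathcal H_A^{\mathbf q}}={}_1\psi_N\!\left(T(r)\right),
\end{equation*}
valid for any $N$ with $T(r)\in B(\mathbf H)^{\otimes N}$ (such $N$ exists since $r$ has finite support), where ${}_1\psi_N=\bigotimes_{l=1}^N\psi_l$ is the product state of \eqref{product_psik}.

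First I would establish the \emph{localization} of $T$: for each quasicycle $q$ the operator $T(q)$ acts as the identity on every tensor factor whose index lies outside ${\rm supp}\,q$. For the idempotent part this is immediate, since $T(\epsilon_{\{a\}})=\mathbf q^{(a)}$ and $a\in{\rm supp}\,\epsilon_{\{a\}}$. For the cyclic part $q=c\,\epsilon_{\{a\}}$ I would write $c$ as a product of transpositions $(k\;l)$ with $k,l\in{\rm supp}\,q$ and observe, starting from \eqref{embeding_T} and the identity $U_{E_-}^{(k,\,k+1)}={\rm I}-2E_-^{(k)}E_-^{(k+1)}$, that each $T\bigl((k\;l)\bigr)$ equals the flip of the factors $k$ and $l$ multiplied by the sign operator ${\rm I}-2E_-^{(k)}E_-^{(l)}$, an operator acting only on the factors $k,l$. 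Hence $T(q)=T(c)\,T(\epsilon_{\{a\}})$ is supported, in the tensor sense, on ${\rm supp}\,q$. The step requiring genuine care is checking that, after decomposing a nonadjacent transposition (and hence a general cycle) into adjacent ones, the accompanying sign operators $U_{E_-}$ leave \emph{no} residual action on the intermediate factors; this is the main (and essentially only) technical obstacle, and it is resolved by combining the multiplicativity of $T$ with the explicit form of the sign operators.

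Granting the localization, the conclusion is immediate. If $r=q_1\cdots q_k$ with pairwise disjoint supports, choose $N$ containing all of them; then $T(q_1),\dots,T(q_k)$ act on pairwise disjoint families of tensor factors, so the product state ${}_1\psi_N=\bigotimes_{l=1}^N\psi_l$ factorizes across them:
\begin{equation*}
\varphi_A(r)={}_1\psi_N\!\left(T(q_1)\cdots T(q_k)\right)=\prod_{j=1}^k{}_1\psi_N\!\left(T(q_j)\right)=\prod_{j=1}^k\varphi_A(q_j).
\end{equation*}
Invariance of $\varphi_A$ (Lemma~\ref{invariant_state}) guarantees that each factor depends only on the type of $q_j$, not on its position, though this is not needed for the identity itself. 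Finally, for arbitrary $r_1,r_2\in R_\infty$ with ${\rm supp}\,r_1\cap{\rm supp}\,r_2=\emptyset$, grouping the quasicycles of $r_1r_2$ into those of $r_1$ and those of $r_2$ gives $\varphi_A(r_1r_2)=\varphi_A(r_1)\varphi_A(r_2)$; since $\varphi_A(e)={}_1\psi_N({\rm I})=1$ and $\varphi_A$ is $\mathfrak S_\infty$-invariant, Proposition~\ref{factor_condition} shows that $\pi_A^{(0)}$ is a factor-representation.
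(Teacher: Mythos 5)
Your reduction to the product state ${}_1\psi_N$ and the identity $U_{E_-}^{(k,\,k+1)}={\rm I}-2E_-^{(k)}E_-^{(k+1)}$ are both correct, but the localization claim on which everything rests is false at exactly the point you flag as "the main technical obstacle": for a non-adjacent transposition the operator $T((k\;l))$ is \emph{not} the flip of factors $k,l$ times ${\rm I}-2E_-^{(k)}E_-^{(l)}$. Writing $\Sigma_{kl}$ for the flip of factors $k$ and $l$ and $V_{kl}={\rm I}-2E_-^{(k)}E_-^{(l)}$ (these $V$'s pairwise commute), multiplicativity of $T$ together with \eqref{embeding_T} forces
\begin{equation*}
T\left((1\;3)\right)=T\left((1\;2)\right)T\left((2\;3)\right)T\left((1\;2)\right)
=V_{12}\,V_{13}\,V_{23}\,\Sigma_{13},
\end{equation*}
which acts nontrivially on the intermediate factor $2$ whenever $E_-=E([-1,0[)\neq 0$. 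This is the graded (fermionic) flip on the negative spectral subspace: the sign attached to a non-adjacent transposition, hence to a general cycle, involves the parities of \emph{all} intermediate factors, so $T(q)$ is not supported on ${\rm supp}\,q$ in the tensor sense, and the product state does not factorize across the supports term by term. The final formula is nevertheless true — the intermediate $E_-$ contributions cancel after applying ${}_1\psi_N$; e.g.\ a direct computation gives ${}_1\psi_3\bigl(T((1\;3))\bigr)={\rm Tr}(A^2)-2\,{\rm Tr}\bigl((E_-|A|)^2\bigr)={}_1\psi_2\bigl(T((1\;2))\bigr)$ — but that cancellation is precisely what needs proof, and it cannot be obtained by "combining the multiplicativity of $T$ with the explicit form of the sign operators," because the operator identity you are trying to verify is simply false. (In the special case $E_-=0$, i.e.\ no $\beta$-parameters, your localization claim does hold verbatim; the gap concerns exactly the general case.)

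The repair is cheap and uses Lemma~\ref{invariant_state} for more than the cosmetic purpose you assign it. Since $\varphi_A(sr)=\varphi_A(rs)$ gives $\varphi_A(srs^{-1})=\varphi_A(r)$ for all $s\in\mathfrak S_\infty$, conjugate $r=q_1\cdots q_k$ by a single finite permutation carrying the (disjoint, finite) supports onto pairwise disjoint \emph{consecutive intervals} $I_1,\dots,I_k$. Every permutation supported in an interval $I_j$ is a word in the adjacent transpositions $(i\;i+1)$ with $i,i+1\in I_j$, and $T(\epsilon_{\{a\}})=\mathbf q^{(a)}$ for $a\in I_j$; hence after conjugation each $T(q_j')$ is a product of operators all acting on tensor factors inside $I_j$, so it genuinely is localized there — the sign operators now only couple factors within a single interval. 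Your product-state computation then applies verbatim and yields $\varphi_A(r)=\varphi_A(r')=\prod_j\varphi_A(q_j')=\prod_j\varphi_A(q_j)$, using invariance once more for the individual quasicycles. The remainder of your argument (grouping quasicycles of $r_1r_2$, $\varphi_A(e)=1$, and the appeal to Proposition~\ref{factor_condition}) is fine. For context, the paper offers no proof at all — the statement is declared obvious — so the burden of this step is real, and as written your proposal does not discharge it.
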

\subsubsection{The formula for $\varphi_A$.}
It follows from (\ref{decomposition_into_product}) and Proposition \ref{multiplicativity}  that it is sufficient to obtain the expression for  $\varphi_A(r)$ in the case, when either $r=c$ (a cycle) or $r=c\cdot\epsilon _{\{a\}}$ (a quasi-cycle), where $c=\left( n_1\;n_2\;\ldots,n_k\right)$, $a\in {\rm supp}\,c$. To simplify notation we take $c=(1\;2\;\ldots\;k)$ and $a=k$. It is clear that
\begin{eqnarray}
\begin{split}
r=c\cdot\epsilon_{\{a\}}=(1\;k)\cdot(1\;k-1)\cdots(1\;2)\cdot\epsilon_{\{a\}}.
\end{split}
\end{eqnarray}
Using (\ref{psik}), (\ref{product_psik}), (\ref{inner_product_psik}) and the definition of representation $\pi_A^{(0)}$ (see paragraph \ref{action}), we obtain
\begin{eqnarray*}
\varphi _A(r)={\rm Tr}\left(  A^{k-1}\mathbf{q}|A|\right).
\end{eqnarray*}
Similarly, for $r=c$ we obtain $\varphi _A(r)=\varphi_A(c)={\rm Tr}\left( A^{k-1}|A|\right)$.
\subsubsection{The conditions of the cyclicity of vector $\xi_0$ for  $\pi_A^{(0)}\left(R_\infty \right)^\prime$.}
By definition, vector $\xi_0$ is cyclic for the representation $\pi_A^{(0)}$. Here we find the conditions of the cyclicity of $\xi_0$ for  the commutant  of $\pi_A^{(0)}$. In this case vector $\xi_0$ is {\it called cyclic and separating} or {\it bicyclic}.
\begin{Th}\label{condition_of_bicyclic}
  Vector $\xi_0$ is cyclic for  $\pi_A^{(0)}\left(R_\infty \right)^\prime$ if and only if the equality $({\rm
      Ker} A)^\perp =\widetilde{\mathbf{H}}$ holds.
\end{Th}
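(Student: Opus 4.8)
The plan is to reduce the statement to a faithfulness question that is already settled by the support computation of Subsection \ref{supp_comp}. First I would record that $\varphi_A$ is a state in $\mathfrak{F}_\mathfrak{S}$: it is $\mathfrak{S}_\infty$-invariant by Lemma \ref{invariant_state} and indecomposable (a factor state) by Proposition \ref{multiplicativity}. Hence its GNS triple is exactly $\left(\pi_A^{(0)},\mathcal K,\xi_0\right)$ with $\mathcal K=\left[\Pi_A(R_\infty)\xi_0\right]$, and the whole machinery of Section \ref{section:properties} applies with $f=\varphi_A$. Writing $M=\pi_A^{(0)}(R_\infty)''$, the standard commutation duality gives that $\xi_0$ is cyclic for $\pi_A^{(0)}(R_\infty)'=M'$ if and only if $\xi_0$ is separating for $M$, i.e. if and only if the vector state $\omega_f=\varphi_A$ is faithful on $M$; and by Definition \ref{Def-Support} the latter is equivalent to ${\rm supp}\,\omega_f=I$ (if the support projection $e$ were $\neq I$, then $I-e$ would be a nonzero projection in $M$ with $\varphi_A(I-e)=0$, hence $(I-e)\xi_0=0$, contradicting separation, and conversely).

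Next I would make the abstract quantities of Section \ref{section:properties} explicit in the present model. Let $w$ be a unit vector with $\mathbf q$ the orthogonal projection onto $\mathbb{C}w$, and split $w=w_0+w_1$ with $w_0\in{\rm Ker}\,A$ and $w_1\in({\rm Ker}\,A)^\perp$. A direct computation from \eqref{embeding_T} (using $E_-=E([-1,0[)$ to absorb the signs in the flip $\sum_{ij}e_{ij}^{(k)}e_{ji}^{(k+1)}$, and the fact that the slot states converge to ${\rm Tr}(\,\cdot\,|A|)$ as the flipped index tends to infinity) shows that the Okounkov operator of $\varphi_A$ is $\mathcal{O}_k=\mathfrak{L}_A\!\left(A^{(k)}\right)$, that $\pi_f(\epsilon_{\{k\}})=\mathfrak{L}_A\!\left(\mathbf q^{(k)}\right)$, and that the kernel projection $E_k(0)$ is $\mathfrak{L}_A$ of the spectral projection of $A^{(k)}$ at $0$. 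Consequently the scalar $c(0)$ of \eqref{projection_relations_zero}, defined by $\pi_f(\epsilon_{\{k\}})E_k(0)\pi_f(\epsilon_{\{k\}})=c(0)\,\pi_f(\epsilon_{\{k\}})$, is computed slotwise from $\mathbf q\,E(\{0\})\,\mathbf q=\langle E(\{0\})w,w\rangle\,\mathbf q$, where $E(\{0\})$ is the projection onto ${\rm Ker}\,A$. This yields $c(0)=\|w_0\|^2$ and $1-c(0)=\|w_1\|^2$.

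Finally I would read off the geometry of $\widetilde{\mathbf H}$ and invoke Theorem \ref{supp_th}. Since $A$ preserves $({\rm Ker}\,A)^\perp$ and $\mathbf q v=\langle v,w_1\rangle w$ for $v\in({\rm Ker}\,A)^\perp$, one gets $\widetilde{\mathbf H}=({\rm Ker}\,A)^\perp+\mathbb{C}w_0$ when $w_1\neq 0$, and $\widetilde{\mathbf H}=({\rm Ker}\,A)^\perp$ when $w_1=0$; as $w_0\in{\rm Ker}\,A$, the equality $({\rm Ker}\,A)^\perp=\widetilde{\mathbf H}$ holds precisely when $w_0=0$ or $w_1=0$, i.e. precisely when $c(0)\notin(0,1)$. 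On the other hand, by Theorem \ref{supp_th} one has ${\rm supp}\,\omega_f=Q^{(\infty)}=\prod_{k}\mathfrak{P}_k$, and since the $\mathfrak{P}_k$ are pairwise commuting projections this product equals $I$ iff every $\mathfrak{P}_k=I$, i.e. iff at no index $k$ the case $c(0)\in(0,1)$ of the Remark preceding Lemma \ref{minimal_projection} occurs: for $c(0)\in\{0,1\}$ one has $P_k=I-E_k(0)$, whence $\mathfrak{P}_k=2I-P_k-E_k(0)=I$, whereas for $c(0)\in(0,1)$ formula \eqref{Pk_formula} gives $P_k>I-E_k(0)$ and $\mathfrak{P}_k=I-c(0)^{-1}E_k(0)\pi_f(\epsilon_{\{k\}})E_k(0)\neq I$. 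Chaining the equivalences gives that $\xi_0$ is cyclic for $\pi_A^{(0)}(R_\infty)'$ iff $c(0)\notin(0,1)$ iff $({\rm Ker}\,A)^\perp=\widetilde{\mathbf H}$. The only genuinely non-formal step, and the main obstacle, is the explicit identification $\mathcal{O}_k=\mathfrak{L}_A(A^{(k)})$ with the correct sign bookkeeping from $U_{E_-}$, together with the slotwise evaluation of $c(0)$; once these are in place, the equivalence follows immediately from the support theorem and the cyclic/separating duality.
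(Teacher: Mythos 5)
Your argument is correct, but it takes a genuinely different route from the paper's. You reduce bicyclicity to faithfulness of $\omega_{\varphi_A}$ via the standard cyclic/separating duality and then invoke Theorem \ref{supp_th}, so that everything hinges on whether $Q^{(\infty)}=\prod_k\mathfrak{P}_k=I$, i.e.\ on whether the case $c(0)\in(0,1)$ of the Remark containing \eqref{Pk_formula} occurs; the slotwise computation $c(0)=\|w_0\|^2$ (where $w=w_0+w_1$, $w_0\in{\rm Ker}\,A$) and the elementary identification $\widetilde{\mathbf H}=({\rm Ker}\,A)^\perp\oplus\mathbb{C}w_0$ for $w_1\neq0$ then close the loop. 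The paper instead argues directly in the model: for the ``only if'' direction it exhibits the projection $\Xi=\widetilde{\mathbf P}-(I-E(0))\in\left\{\Pi_A(R_\infty)\right\}''$ with $\left(\mathfrak{L}_A(\Xi^{(k)})\xi_0,\xi_0\right)={\rm Tr}(|A|\Xi)=0$ (formula \eqref{not_faithful}) and uses separation, while for the ``if'' direction it constructs explicit commutant operators --- $\mathfrak{L}_A\left(U_s^{\otimes\infty}\right)$, $\mathfrak{R}_A\left(T(s)U_s^{\otimes\infty}\right)$, and right multiplication by the spectral truncation $\mathbf q^{(k)}_{\sigma(\epsilon)}$ with the norm bound $\sigma(\epsilon)^{-1/2}$ --- to place $\Pi_A(r)\xi_0$ in $\left[\left\{\Pi_A(R_\infty)\right\}'\xi_0\right]$ (formula \eqref{idempotent_cycl}). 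In substance the paper's ``if'' direction is a concrete re-run, inside the model, of the very truncation technique used to prove Theorem \ref{supp_th}; your route avoids this duplication and makes the mechanism transparent ($Q^{(\infty)}=I$ iff $c(0)\notin(0,1)$), whereas the paper's proof is self-contained within Section \ref{section:realizations} and produces explicit commutant operators that are reused later (e.g.\ in the modular computation of Subsection \ref{mag}).

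Two small points you should tidy, neither of which is a genuine gap. First, your identification of the abstract objects in the model is legitimate because the cyclic subspace reduces the self-adjoint operator $\mathfrak{L}_A\left(A^{(k)}\right)$ (the paper asserts $\mathcal O_k=\mathfrak{L}_A\left(A^{(k)}\right)$ in \eqref{explicity}), so $E_k(0)$ is indeed the restriction of $\mathfrak{L}_A$ of the kernel projection; note also that \eqref{projection_relations_zero} pins $c(0)$ down only when $\pi_f\left(\epsilon_{\{k\}}\right)\neq 0$, which in the model means $w_1\neq0$ (for $w_1=0$ one gets $c(0)=1$ from its definition as $1-\sum_{j:\lambda_j>0}c(\lambda_j)$, and this is consistent with your chain). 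Second, the branch $w_1=0$, $\mathbf q\neq0$ is actually vacuous under the standing hypotheses of Subsection \ref{parameters_of_repr}: in that case $\widetilde{\mathbf H}=({\rm Ker}\,A)^\perp$, hence $\mathbf H_{reg}={\rm Ker}\,A\ni w$, and condition ({\rm f}) would force $\mathbf q w=0$. So under (a)--(f) the dichotomy is simply $w_0=0$ (equality, $c(0)=0$) versus $w_0\neq0$ (then necessarily $w_1\neq 0$ and $c(0)\in(0,1)$), which is exactly the paper's reformulation \eqref{regularity} of the equality $({\rm Ker}\,A)^\perp=\widetilde{\mathbf H}$; carrying the impossible case along does not damage your equivalences, but observing this would streamline the final step.
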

\begin{proof}
  Let  $E(0)$ be the orthogonal projection on ${\rm Ker}\, A$, and  let $\widetilde{\mathbf{P}}$ be the orthogonal projection on $\widetilde{\mathbf{H}}$.

  Suppose that $\xi_0$ is cyclic vector for $\pi_A^{(0)}\left(R_\infty \right)^\prime$; i.e.
  \begin{eqnarray}\label{bicyclicity}
  \left[ \pi_A^{(0)}\left(R_\infty \right)^\prime\xi_0\right]=\left[ \pi_A^{(0)}\left(R_\infty \right)\xi_0\right].
  \end{eqnarray}
  By Lemma \ref{Okounkov_operator}, the limit $\mathcal{O}_k=\lim\limits_{n\to\infty}\pi_A^{(0)}\left((k\;n) \right)$ exists in the weak operator topology. An easy computation shows that
  \begin{eqnarray}\label{explicity}
  \mathcal{O}_k=\mathfrak{L}_A\left(A^{(k)} \right), \text{ where }  A^{(k)}= {\rm I}\otimes\cdots\otimes{\rm I}\otimes\underbrace{A}_{k}\otimes{\rm I}\otimes{\rm I}\cdots.
  \end{eqnarray}
  By definition, $({\rm
      Ker} A)^\perp \subset\widetilde{\mathbf{H}}$ and $({\rm I}-\widetilde{\mathbf{P}})\mathbf{H} =\mathbf{H}_{reg}\subset{\rm
      Ker} A$, where $\widetilde{\mathbf{P}}$ is an orthogonal projection on $\widetilde{\mathbf{H}}$.

  We now suppose that $({\rm
      Ker} A)^\perp \neq\widetilde{\mathbf{H}}$; i. e. $\widetilde{\mathbf{P}}-(I-E(0))=\Xi\neq0$. By the above, operator $\Xi$ is an orthogonal projection and $\Xi\mathbf{H}\subset( {\rm
      Ker} A)\ominus\mathbf{H}_{reg} $. Therefore,
      \begin{eqnarray}\label{not_faithful}
\left(  \mathfrak{L}_A\left(\Xi^{(k)}\right)\xi_0,\xi_0 \right)={\rm Tr}\left(|A|\Xi \right)=0.
    \end{eqnarray}
Using (\ref{explicity}) and definition of $\Pi_A$, we can assert that
\begin{eqnarray*}
   \mathfrak{L}_A\left(B^{(k)} \right)\in \left\{\Pi_A(R_\infty) \right\}^{\prime\prime}\;\text{ for all } \; B\in\mathfrak{A}.
\end{eqnarray*}
It follows that $ \mathfrak{L}_A\left(\Xi^{(k)}\right)\in\left\{\Pi_A(R_\infty) \right\}^{\prime\prime}$.
Hence, applying  (\ref{bicyclicity}) and (\ref{not_faithful}), we have $\widetilde{\mathbf{P}}-(I-E(0))=0$; i. e.  $({\rm
      Ker} A)^\perp =\widetilde{\mathbf{H}}$.

      Now we will prove that  (\ref{bicyclicity}) follows from the last equality.
      It follows from  (\ref{condition_of_invariance}), that  multiplication by $T(s)\cdot U_s^{\otimes n}$ on the right define under the condition $n>{\rm max}\;\left\{ l:l\in{\rm supp}\;s \right\}$ unitary operator $\mathfrak{R}_A\left(T(s)\cdot U_s^{\otimes n} \right)$:
      \begin{eqnarray*}
      \mathcal{H}^\mathbf{q}_A\ni x\stackrel{\mathfrak{R}_A\left(T(s)\cdot U_s^{\otimes n} \right)}{\mapsto} x\cdot \left(T(s)\cdot U_s^{\otimes n} \right).
      \end{eqnarray*}
      Using definition of $ \mathcal{H}^\mathbf{q}_A$, it is easy to check that
      \begin{eqnarray*}
     x\cdot T(s)\cdot U_s^{\otimes n}=x\cdot T(s)\cdot U_s^{\otimes N}, , \text{ where } x\in\mathcal{H}^\mathbf{q}_A,\\
     \text{ for all } n,N>{\rm max}\;\left\{ l:l\in{\rm supp}\;s \right\}.
      \end{eqnarray*}
      Therefore, we can  define unitary operator
      \begin{eqnarray*}
      \mathfrak{R}_A\left(T(s)\cdot U_s^{\otimes \infty} \right)=\lim\limits_{n\to\infty}\mathfrak{R}_A\left(T(s)\cdot U_s^{\otimes n} \right).
      \end{eqnarray*}
      Since $\Pi_A(r)=\mathfrak{L}_A\left( T(r)\right)$, $r\in R_\infty$, then $ \mathfrak{R}_A\left(T(s)\cdot U_s^{\otimes \infty} \right)\in\left\{\Pi_A(R_\infty) \right\}^\prime$.
      It follows immediately that
\begin{eqnarray}\label{cycle_sym_gr}
\Pi_A(s)\xi_0= \mathfrak{L}_A\left( U_s^{\otimes \infty} \right)\cdot\mathfrak{R}_A\left(T(s)\cdot U_s^{\otimes \infty} \right)\xi_0\text{ for all } s\in\mathfrak{S}_\infty.
\end{eqnarray}
We recall that $\mathfrak{L}_A\left( U_s^{\otimes \infty} \right)\in \left\{\Pi_A(R_\infty) \right\}^\prime$ (see (\ref{commutativity}) and (\ref{commutativity_idempotent})). As we can see above, so far we did not use the equality $({\rm Ker} A)^\perp =\widetilde{\mathbf{H}}$, which means that
\begin{eqnarray}\label{regularity}
\mathbf{q}\leq {\rm I}-E(0).
\end{eqnarray}
Now we will prove that
\begin{eqnarray}\label{idempotent_cycl}
\Pi_A\left(\epsilon_{\left\{k \right\}} \right)\xi_0\in\left[ \left\{\Pi_A(R_\infty) \right\}^\prime\xi_0\right].
\end{eqnarray}
By the definition of representation $\Pi_A$ (see p. \ref{action}), we have
\begin{eqnarray*}
\Pi_A\left(\epsilon_{\left\{k \right\}} \right)\xi_0=\mathbf{q}^{(k)} \text{ for all } k.
\end{eqnarray*}
By (\ref{regularity}), for any $\epsilon>0$ there exists spectral projection $E\left(\Delta_\epsilon \right)$ of operator $A$ such that
\begin{eqnarray*}
\begin{split}
\left\|E\left(\Delta_\epsilon \right)^{(k)}\cdot\mathbf{q}^{(k)} \cdot E\left(\Delta_\epsilon \right)^{(k)}-\mathbf{q}^{(k)} \right\|_{\mathcal{H}^\mathbf{q}_A}<\epsilon,\\ \text{ where }
\Delta_\epsilon=\left\{\lambda\in[-1,1]:|\lambda|>\sigma(\epsilon)>0 \right\}.
\end{split}
\end{eqnarray*}
To prove (\ref{idempotent_cycl}), it is sufficient to show that operator of the right multiplication by $\mathbf{q}^{(k)}_{\sigma(\epsilon)}=E\left(\Delta_\epsilon \right)^{(k)}\cdot\mathbf{q}^{(k)} \cdot E\left(\Delta_\epsilon \right)^{(k)}$ is bounded in $\mathcal{H}^\mathbf{q}_A$.

First we denote by $\,^{\scriptscriptstyle(-1)}\!\!A_\vartheta$ operator $\int\limits_{\left\{\lambda:|\lambda|>\vartheta \right\}} |\lambda|^{-1}\,{\rm d}\, E(\lambda)$, where $\vartheta>0$ and $\int\limits_{-1}^1\lambda\,{\rm d}\, E(\lambda)$ is the spectral decomposition of $A$.
According to the definition of  $\mathcal{H}^\mathbf{q}_A$ (see p. \ref{hp}), we have
\begin{eqnarray*}
\left(x,x \right)_{ \mathcal{H}^\mathbf{q}_A}\geq \left(x\cdot E\left(\Delta_\epsilon \right)^{(k)},x\cdot E\left(\Delta_\epsilon \right)^{(k)} \right)_{ \mathcal{H}^\mathbf{q}_A}\geq \sigma(\epsilon)\cdot\left(x\cdot\left(\,^{\scriptscriptstyle(-1)}\!\!A_{\sigma(\epsilon)}\right)^{(k)},x \right)_{ \mathcal{H}^\mathbf{q}_A}.
\end{eqnarray*}
But on the other side
\begin{eqnarray*}
\left(x\cdot \mathbf{q}^{(k)}_{\sigma(\epsilon)},x \cdot \mathbf{q}^{(k)}_{\sigma(\epsilon)}\right)_{ \mathcal{H}^\mathbf{q}_A}\leq\left(x\cdot\left(\,^{\scriptscriptstyle(-1)}\!\!A_{\sigma(\epsilon)}\right)^{(k)},x \right)_{ \mathcal{H}^\mathbf{q}_A}.
\end{eqnarray*}
Therefore, $\left(x,x \right)_{ \mathcal{H}^\mathbf{q}_A}\geq\sigma(\epsilon)\cdot\left(x\cdot \mathbf{q}^{(k)}_{\sigma(\epsilon)},x \cdot \mathbf{q}^{(k)}_{\sigma(\epsilon)}\right)_{ \mathcal{H}^\mathbf{q}_A}$. Thus
\begin{eqnarray*}
\left\|\mathfrak{R}_A\left(\mathbf{q}^{(k)}_{\sigma(\epsilon)} \right) \right\|\leq{\sigma(\epsilon)}^{-1/2}.
\end{eqnarray*}
Since operator $\mathfrak{R}_A\left(\mathbf{q}^{(k)}_{\sigma(\epsilon)} \right)$ belongs to algebra $\left\{\Pi_A(R_\infty) \right\}^\prime$, this gives (\ref{idempotent_cycl}).
Now (\ref{bicyclicity}) follows from (\ref{cycle_sym_gr}) and (\ref{idempotent_cycl}).
\end{proof}
Under the conditions stated in p. \ref{parameters_of_repr}, the inequality  ${\rm dim}\,(\widetilde{\mathbf{P}}\ominus ({\rm Ker}\,A)^\perp)\leq 1$ holds. We considered in Theorem \ref{condition_of_bicyclic} the case ${\rm dim}\,(\widetilde{\mathbf{P}}\ominus ({\rm Ker}\,A)^\perp)=0$. If ${\rm dim}\,(\widetilde{\mathbf{P}}\ominus ({\rm Ker}\,A)^\perp)=1$, then $\left[ \pi_A^{(0)}\left(R_\infty \right)^\prime\xi_0\right]\subsetneq\left[ \pi_A^{(0)}\left(R_\infty \right)\xi_0\right]$. Denote by $\mathfrak{Q}$ the orthogonal projection of the space $\left[ \pi_A^{(0)}\left(R_\infty \right)\xi_0\right]$ onto $\left[ \pi_A^{(0)}\left(R_\infty \right)^\prime\xi_0\right]$. As a consequence of lemma \ref{invariant_state}, we have $\pi_A^{(0)}(s)\,\left[ \pi_A^{(0)}\left(R_\infty \right)^\prime\xi_0\right]=\left[ \pi_A^{(0)}\left(R_\infty \right)^\prime\xi_0\right]$ for all $s\in \mathfrak{S}_\infty$. Therefore,
\begin{eqnarray*}
\mathfrak{Q}\in \left(  \pi_A^{(0)}(\mathfrak{S}_\infty) \right)^{\prime}\cap\left( \pi_A^{(0)}\left(R_\infty \right) \right)^{\prime\prime}.
\end{eqnarray*}

Let  $\mathbf{Q}$ be the orthogonal projection of the space $\mathbf{H}$ onto $\left( {\rm I}-E(0) \right)\mathbf{H}\oplus \mathbf{H}_{reg}$ (see Section \ref{parameters_of_repr}): i.e. $\mathbf{Q}=2\,{\rm I}-E(0)- \widetilde{\mathbf{P}}$.
From now on, $\mathfrak{Q}^{(n)}$ denotes the projection $\mathfrak{L}_A\left( \underbrace{\mathbf{Q}\otimes\mathbf{Q}\otimes\cdots\mathbf{Q}}_{n}\otimes {\rm I}\otimes{\rm I}\otimes\cdots \right)$. Since $\mathfrak{Q}^{(n)}\geq\mathfrak{Q}^{(n+1)}$ and $\left( \mathfrak{Q}^{(n)}\xi_0,\xi_0 \right)_{\mathcal{H}_A^\mathbf{q}}=1$ for all $k$, there exists $\lim\limits_{n\to\infty}\mathfrak{Q}^{(n)}$   in strong operator topology.  We  denote this limit by $\mathfrak{Q}^{(\infty)}$. By the above,
\begin{eqnarray}\label{equalities}
\mathfrak{Q}^{(\infty)}\xi_0=\xi_0\;\text{ and } \pi_A^{(0)}(s)\cdot\mathfrak{Q}^{(\infty)}=\mathfrak{Q}^{(\infty)}\cdot\pi_A^{(0)}(s)\;\text{ for all } s\in\mathfrak{S}_\infty.
\end{eqnarray}

\begin{Prop}\label{Q^infty=Q}
$\mathfrak{Q}=\mathfrak{Q}^{(\infty)}$.
\end{Prop}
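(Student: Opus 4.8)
The plan is to establish the range identity
\[
\mathfrak{Q}^{(\infty)}\,\mathcal{H}_A^\mathbf{q}=\left[\pi_A^{(0)}\left(R_\infty \right)^\prime\xi_0\right],
\]
which is the exact analogue, for the concrete realization $\pi_A^{(0)}$, of the identity \eqref{cuclic_Q} proved in Theorem \ref{supp_th}. Granting it, the statement follows at once: since $\xi_0$ is cyclic for $\pi_A^{(0)}$ we have $\left[\pi_A^{(0)}(R_\infty)\xi_0\right]=\mathcal{H}_A^\mathbf{q}$, so $\mathfrak{Q}$ is by its definition the orthogonal projection onto $\left[\pi_A^{(0)}(R_\infty)^\prime\xi_0\right]$; as $\mathfrak{Q}^{(\infty)}$ is itself an orthogonal projection (a strong limit of the decreasing sequence $\{\mathfrak{Q}^{(n)}\}$) whose range equals the same subspace, we conclude $\mathfrak{Q}=\mathfrak{Q}^{(\infty)}$.

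The inclusion $\left[\pi_A^{(0)}(R_\infty)^\prime\xi_0\right]\subseteq\mathfrak{Q}^{(\infty)}\mathcal{H}_A^\mathbf{q}$ is immediate. Indeed $\mathbf{Q}=2\,{\rm I}-E(0)-\widetilde{\mathbf{P}}$ lies in $\mathfrak{A}$, because both $E(0)$ and $\widetilde{\mathbf{P}}$ do; by the argument in the proof of Theorem \ref{condition_of_bicyclic} one has $\mathfrak{L}_A\left(B^{(k)}\right)\in\left\{\Pi_A(R_\infty)\right\}^{\prime\prime}$ for every $B\in\mathfrak{A}$, so taking $B=\mathbf{Q}$ and passing to the strong limit gives $\mathfrak{Q}^{(\infty)}\in\pi_A^{(0)}(R_\infty)^{\prime\prime}$. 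Together with $\mathfrak{Q}^{(\infty)}\xi_0=\xi_0$ from \eqref{equalities}, this yields $\mathfrak{Q}^{(\infty)}A^\prime\xi_0=A^\prime\xi_0$ for every $A^\prime\in\pi_A^{(0)}(R_\infty)^\prime$, whence the inclusion.

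For the reverse inclusion I would follow the scheme of Theorem \ref{supp_th}. Writing $r\in R_\infty$ as $r=s\cdot\epsilon_\mathbb{A}$ with $s\in\mathfrak{S}_\infty$ and $\mathbb{A}$ finite, the permutation factor is handled by \eqref{cycle_sym_gr}, which presents $\pi_A^{(0)}(s)\xi_0$ as a product of two operators from $\left\{\Pi_A(R_\infty)\right\}^\prime$ applied to $\xi_0$; commuting $\pi_A^{(0)}(s)$ through $\mathfrak{Q}^{(\infty)}$ by means of \eqref{equalities} and keeping all approximants inside the commutant reduces the problem to the key claim
\[
\mathfrak{Q}^{(\infty)}\prod_{k\in\mathbb{A}}\pi_A^{(0)}\left(\epsilon_{\{k\}}\right)\xi_0\in\left[\pi_A^{(0)}(R_\infty)^\prime\xi_0\right].
\]
To prove this I would reuse the bounded right-multiplication operators $\mathfrak{R}_A\bigl(\mathbf{q}^{(k)}_{\sigma(\epsilon)}\bigr)\in\left\{\Pi_A(R_\infty)\right\}^\prime$ built in the proof of Theorem \ref{condition_of_bicyclic}. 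The essential point is that inserting $\mathfrak{Q}^{(\infty)}$ compresses $\mathbf{q}$ in each tensor slot into $\mathbf{Q}\,\mathbf{H}=\left({\rm I}-E(0)\right)\mathbf{H}\oplus\mathbf{H}_{reg}$, which annihilates the singular rank-one component $\Xi=\widetilde{\mathbf{P}}-({\rm I}-E(0))$ responsible for the failure of bicyclicity: on $\mathbf{Q}\,\mathbf{H}$ the kernel of $A$ reduces to $\mathbf{H}_{reg}$, which $\mathbf{q}$ kills by property {\rm (f)}, so the relation $\mathbf{q}\leq{\rm I}-E(0)$ underlying \eqref{regularity}--\eqref{idempotent_cycl} is restored in the compressed setting, the spectral bound $\bigl\|\mathfrak{R}_A(\mathbf{q}^{(k)}_{\sigma(\epsilon)})\bigr\|\leq\sigma(\epsilon)^{-1/2}$ persists, and letting $\epsilon\to0$ lands the vector in $\left[\pi_A^{(0)}(R_\infty)^\prime\xi_0\right]$.

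The main obstacle is precisely this last step: verifying that compression by $\mathfrak{Q}^{(\infty)}$ genuinely removes the component $\Xi$ slot-by-slot and that the uniform boundedness and strong convergence of the approximating commutant operators from Theorem \ref{condition_of_bicyclic} survive on $\mathfrak{Q}^{(\infty)}\mathcal{H}_A^\mathbf{q}$. Once this is checked, the two inclusions combine to give the range identity and hence $\mathfrak{Q}=\mathfrak{Q}^{(\infty)}$.
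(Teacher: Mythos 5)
Your overall strategy is sound, and your first inclusion is exactly what the paper itself does: the paper's printed proof (which in fact breaks off mid-sentence in the source and contains only this half) establishes $\mathfrak{Q}^{(\infty)}\in\pi_A^{(0)}(R_\infty)''$ via \eqref{explicity} and then deduces $\mathfrak{Q}^{(\infty)}\geq\mathfrak{Q}$ from \eqref{equalities}, just as you do. The genuine gap is the reverse inclusion, which you yourself flag as ``the main obstacle $\ldots$ once this is checked.'' That step is the entire mathematical content of the Proposition, and the tool you propose to reuse cannot be invoked as it stands: in Theorem \ref{condition_of_bicyclic} the chain \eqref{regularity}--\eqref{idempotent_cycl} starts from the approximation $\left\|\mathbf{q}^{(k)}_{\sigma(\epsilon)}-\mathbf{q}^{(k)}\right\|_{\mathcal{H}^\mathbf{q}_A}<\epsilon$, which is derived from $\mathbf{q}\leq I-E(0)$, i.e.\ from $({\rm Ker}\,A)^\perp=\widetilde{\mathbf{H}}$; but the Proposition lives precisely in the opposite case $\Xi=\widetilde{\mathbf{P}}-(I-E(0))\neq0$, where $\lim_{\epsilon\to 0}\mathbf{q}^{(k)}_{\sigma(\epsilon)}=\left((I-E(0))\,\mathbf{q}\,(I-E(0))\right)^{(k)}\neq\mathbf{q}^{(k)}$. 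What must be proved (and what you only assert) is that this ``wrong'' limit is exactly the right one after compression, namely that $\mathfrak{Q}^{(\infty)}\prod_{k\in\mathbb{A}}\pi_A^{(0)}(\epsilon_{\{k\}})\xi_0=\lim_{\epsilon\to0}\prod_{k\in\mathbb{A}}\mathfrak{R}_A\bigl(\mathbf{q}^{(k)}_{\sigma(\epsilon)}\bigr)\xi_0$. This does hold, but it requires a slot-by-slot computation in $\mathcal{H}^\mathbf{q}_A$: one needs $\psi_j(\Xi)=0$ for every $j$ (because $\Xi\leq E(0)$ kills the ${\rm Tr}(\cdot\,|A|)$ term and $\Xi\,e_{n_jn_j}=0$ kills the regular term), together with property {\rm(f)} of paragraph \ref{parameters_of_repr}, to conclude $\mathfrak{Q}^{(\infty)}\mathbf{q}^{(k)}=\left((I-E(0))\mathbf{q}(I-E(0))\right)^{(k)}$ as vectors; only the bound $\left\|\mathfrak{R}_A\bigl(\mathbf{q}^{(k)}_{\sigma(\epsilon)}\bigr)\right\|\leq\sigma(\epsilon)^{-1/2}$ survives verbatim. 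Without this verification your argument does not close.

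There is also a cleaner way to finish that avoids re-running any estimates. By Lemma \ref{invariant_state} and Propositions \ref{multiplicativity}, \ref{factor_condition}, the function $\varphi_A$ is an $\mathfrak{S}_\infty$-invariant indecomposable state and $(\pi_A^{(0)},\xi_0)$ is its GNS pair, so the general Theorem \ref{supp_th} applies to it. Using \eqref{explicity} one identifies the abstract objects of Section \ref{section:properties} inside the concrete model: $\mathcal{O}_k=\mathfrak{L}_A\bigl(A^{(k)}\bigr)$ and $\pi_A^{(0)}(\epsilon_{\{k\}})=\mathfrak{L}_A\bigl(\mathbf{q}^{(k)}\bigr)$ give $E_k(0)=\mathfrak{L}_A\bigl(E(0)^{(k)}\bigr)$ and, since $\widetilde{\mathbf{H}}=\left[\mathfrak{A}\,({\rm Ker}\,A)^\perp\right]$, also $P_k=\mathfrak{L}_A\bigl(\widetilde{\mathbf{P}}^{(k)}\bigr)$; hence $\mathfrak{P}_k=2I-P_k-E_k(0)=\mathfrak{L}_A\bigl(\mathbf{Q}^{(k)}\bigr)$ and $Q^{(\infty)}=\mathfrak{Q}^{(\infty)}$. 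Then \eqref{cuclic_Q} is literally your range identity $\mathfrak{Q}^{(\infty)}\mathcal{H}^\mathbf{q}_A=\left[\pi_A^{(0)}(R_\infty)'\xi_0\right]$, and $\mathfrak{Q}=\mathfrak{Q}^{(\infty)}$ follows. This route reduces the Proposition to a routine slot-wise identification instead of a repetition of the hardest estimates of the paper in a setting where their standing hypothesis \eqref{regularity} fails.
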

\begin{proof}

Since $\pi_A^{(0)}\left( \epsilon_{k} \right)=\mathfrak{L}_A^{(0)}\left( \mathbf{q}^{(k)} \right)$ (see Paragraph \ref{action} for definitions), using (\ref{explicity}), we obtain that $\mathfrak{Q}^{(\infty)}$ $\in \left\{\pi_A^{(0)}(R_\infty)\right\}^{\prime\prime}$.
Hence, applying  (\ref{equalities}), we have 
\begin{eqnarray}\label{geq}
\mathfrak{Q}^{(\infty)}\geq\mathfrak{Q}.
\end{eqnarray}
Let us prove that
\begin{eqnarray}\label{QqQ in commutant}
(\mathbf{Q}\mathbf{q}\mathbf{Q})^{(k)}\in \left[ \pi_A^{(0)}\left(R_\infty \right)^\prime\xi_0\right].
\end{eqnarray}
Applying property {\rm(f)} on page \pageref{property_f}, we have
\begin{eqnarray}
(\mathbf{Q}\mathbf{q}\mathbf{Q})^{(k)}=\left(\left( I-E(0)\right)\mathbf{q} \left( I-E(0)\right)\right)^{(k)}.
\end{eqnarray}
Thus, for any $\epsilon>0$ there exists spectral projection $E\left(\Delta_\epsilon \right)$ of operator $A$, where $\Delta_\epsilon=\left\{\lambda\in[-1,1]:|\lambda|>\sigma(\epsilon)>0 \right\}$, such that
\begin{eqnarray}\label{approximation of QqQ}
\left\|E\left(\Delta_\epsilon \right)^{(k)}\cdot\mathbf{q}^{(k)} \cdot E\left(\Delta_\epsilon \right)^{(k)}-\left(\mathbf{Q}\mathbf{q} \mathbf{Q}\right)^{(k)} \right\|_{\mathcal{H}^\mathbf{q}_A}<\epsilon.
\end{eqnarray}
Set $\mathbf{q}_{\sigma(\epsilon)}=E\left(\Delta_\epsilon \right)^{(k)}\cdot\mathbf{q}^{(k)} \cdot E\left(\Delta_\epsilon \right)^{(k)}$. Let us introduce right multiplication operator $\mathfrak{R}_A(\mathbf{q}_{\sigma(\epsilon)})$ by 
\begin{eqnarray}
\mathcal{H}^\mathbf{q}_A\ni x\stackrel{\mathfrak{R}_A(\mathbf{q}_{\sigma(\epsilon)})}{\mapsto}x\,\mathbf{q}_{\sigma(\epsilon)}\in\mathcal{H}^\mathbf{q}_A.
\end{eqnarray}
For the same reason as in the proof of Theorem \ref{condition_of_bicyclic}
 we have the following estimate 
\begin{eqnarray}
\left\| \mathfrak{R}_A(\mathbf{q}_{\sigma(\epsilon)})\right\|\leq \sigma(\epsilon)^{-1/2}.
\end{eqnarray}
Therefore,  operator $\mathfrak{R}_A(\mathbf{q}_{\sigma(\epsilon)})$ 
  belongs to algebra $\left\{\Pi_A(R_\infty) \right\}^\prime$ and $\mathfrak{R}_A(\mathbf{q}_{\sigma(\epsilon)})\xi_0=\mathbf{q}_{\sigma(\epsilon)}\in\left[ \pi_A^{(0)}\left(R_\infty \right)^\prime\xi_0\right]$. Since $\epsilon$ is arbitrary, \eqref{approximation of QqQ} shows that \eqref{QqQ in commutant} is proven.
  
  Now we will prove by induction that
  \begin{eqnarray}\label{2 step of ind}
  \prod\limits_{m\in\mathbb{S}}\left( \mathbf{Q}\mathbf{q}\mathbf{Q}\right)^{(m)}\in\left[ \pi_A^{(0)}\left(R_\infty \right)^\prime\xi_0\right]~\text{for any subset }~ \mathbb{S} ~\text{such that}~ \#\mathbb{S}=n.
  \end{eqnarray}
  If $l\in\mathbb{S}$ then there exists a sequence $F_j'\in \pi_A^{(0)}\left(R_\infty \right)^\prime$ such that
  \begin{eqnarray}\label{limit of commutant}
  \lim\limits_{j\to\infty}\left\|\prod\limits_{m\in\mathbb{S}\setminus\{k\}}\left( \mathbf{Q}\mathbf{q}\mathbf{Q}\right)^{(m)}- F_j'\xi_0\right\|_{\mathcal{H}^\mathbf{q}_A}=0.
  \end{eqnarray} 
  Since $\mathfrak{L}_A^{(0)}\left(\left(  \mathbf{Q}\mathbf{q}\mathbf{Q}\right)^{(k)} \right)$ belongs to $\pi_A^{(0)}\left(R_\infty \right)''$,  \eqref{limit of commutant} shows that
  \begin{eqnarray*}
  \begin{split}
  \prod\limits_{m\in\mathbb{S}}\left(  \mathbf{Q}\mathbf{q}\mathbf{Q}\right)^{(m)}=\lim\limits_{j\to\infty}\mathfrak{L}_A^{(0)}\left(\left(  \mathbf{Q}\mathbf{q}\mathbf{Q}\right)^{(k)} \right)F_j'\xi_0\\=\lim\limits_{j\to\infty} F_j'\,\left( \mathbf{Q}\mathbf{q}\mathbf{Q}\right)^{(k)}\stackrel{\eqref{QqQ in commutant}}{\in}\left[ \pi_A^{(0)}\left(R_\infty \right)^\prime\xi_0\right],
  \end{split}
  \end{eqnarray*}
  Thus, \eqref{2 step of ind} is proven.
  
  Now we conclude from \eqref{equalities} and \eqref{2 step of ind} that
  \begin{eqnarray*}
    \pi_A^{(0)}(s)\,\prod\limits_{m\in\mathbb{S}}\left(  \mathbf{Q}\mathbf{q}\mathbf{Q}\right)^{(m)}\in\pi^{(0)}_A(s)\,\left[ \pi_A^{(0)}\left(R_\infty \right)^\prime\xi_0\right]\\
    =\left[ \pi_A^{(0)}\left(R_\infty \right)^\prime\,\pi^{(0)}_A(s)\xi_0\right]\stackrel{\text{Lemma \ref{invariant_state}}}{=}\left[ \pi_A^{(0)}\left(R_\infty \right)^\prime\,\xi_0\right]~\text{ for all }~s\in\mathfrak{S}_\infty.
  \end{eqnarray*}
  Hence, applying \eqref{equalities} again, we have $\mathfrak{Q}^{(\infty)}\pi^{(0)}_A(R_\infty)\xi_0\subset \left[ \pi_A^{(0)}\left(R_\infty \right)^\prime\,\xi_0\right]$.
Now Proposition \ref{Q^infty=Q} follows from \eqref{geq}. 
\end{proof}
\subsubsection{Modular automorphism group.}\label{mag} Define state $\widetilde{\varphi}$ on $\left\{\pi_A^{(0)}(R_\infty) \right\}^{\prime\prime}$ by the formula $\widetilde{\varphi}\left(B \right)=\left(B\xi_0,\xi_0 \right)$. Assume that vector $\xi_0$  is bicyclic. Using Theorem \ref{condition_of_bicyclic}, we obtain that $\mathbf{q}\cdot E(0)=0$ (see Section \ref{parameters_of_repr}). Under this condition $\widetilde{\varphi}$ is faithful normal state on $\left\{\pi_A^{(0)}(R_\infty) \right\}^{\prime\prime}$.  It is clear that $\widetilde{\varphi}\left(\pi_A^{(0)}(r) \right)$ $=\varphi_A(r)$, $r\in R_\infty$. Let $\Delta$ be the  modular operator and let $\sigma_t$, $t\in \mathbb{R}$ be the modular automorphism group, corresponding to $\widetilde{\varphi}$ (see \cite{TAKES}). Since $\varphi_A$ is $\mathfrak{S}_\infty$-invariant, we have
\begin{eqnarray*}
\widetilde{\varphi}\left(BD \right)=\widetilde{\varphi}\left(DB \right) \text{ for all } B\in \left\{\pi_A^{(0)}(R_\infty) \right\}^{\prime\prime} \text{ and } D\in \left\{\pi_A^{(0)}(\mathfrak{S}_\infty) \right\}^{\prime\prime}.
\end{eqnarray*}
It follows from this that
\begin{eqnarray}\label{mod_oper_on_sym}
\Delta\left(D\xi_0 \right)=D\xi_0 \text{ and } \sigma_t(D)= D  \text{ for all } D\in \left\{\pi_A^{(0)}(\mathfrak{S}_\infty) \right\}^{\prime\prime}  \text{ and } t\in\mathbb{R}.
\end{eqnarray}
Now we will calculate the action of $\Delta$ on the elements $\mathbf{e}_{\lambda_k\lambda_l}^{(j)}\in\mathcal{H}_A^\mathbf{q}$, where $\mathbf{e}_{\lambda_k\lambda_l}$ are defined in Section \ref{structure_of_A} (page \pageref{structure_of_A}).

For convenience of the reader, we recall the definition of the modular operator $\Delta$. First denote by $S$ the closure of the antilinear map
\begin{eqnarray}
\mathcal{H}_A^\mathbf{q}\ni X\xi_0\stackrel{S}{\mapsto}X^*\xi_0, \text{ where } X\in \left\{\pi_A^{(0)}(R_\infty) \right\}^{\prime\prime}.
\end{eqnarray}
The adjoint operator $F$ of $S$ is defined by equality
\begin{eqnarray*}
\left(X^*\xi_0,Y^\prime\xi_0 \right)=\left(F\left(Y^\prime\xi_0 \right),X\xi_0 \right), \text{ where } X\in \left\{\pi_A^{(0)}(R_\infty) \right\}^{\prime\prime}, Y^\prime\in\left\{\pi_A^{(0)}(R_\infty) \right\}^{\prime}.
\end{eqnarray*}
Therefore, $F\left(Y^\prime\xi_0\right)=\left(Y^\prime \right)^*\xi_0$. Finally, $\Delta=FS$.

Now we notice that, by (\ref{explicity}) and Lemma \ref{Okounkov_operator}, the operators $\mathfrak{L}_A\left(A^{(j)} \right)$ and $\mathfrak{L}_A\left(\mathbf{q}^{(j)} \right)$ lie in $\left\{\pi_A^{(0)}(R_\infty) \right\}^{\prime\prime}$. It follows that  $\mathfrak{L}_A\left(\mathbf{e}_{\lambda_k\lambda_l}^{(j)} \right)\in \left\{\pi_A^{(0)}(R_\infty) \right\}^{\prime\prime}$, where $\mathbf{e}_{\lambda_k\lambda_l}^{(j)}$  are defined in p. \ref{structure_of_A}. We thus get
\begin{eqnarray}\label{involution}
S\mathbf{e}_{\lambda_k\lambda_l}^{(j)}=\mathbf{e}_{\lambda_l\lambda_k}^{(j)}, \text{ where } \lambda_k, \lambda_l>0\;\;\; (\text{see p. \ref{parameters_of_repr}}).
\end{eqnarray}
By the above, $F\mathbf{e}_{\lambda_l\lambda_k}^{(j)}= \left(\mathfrak{R}_A \left(\mathbf{e}_{\lambda_l\lambda_k}^{(j)} \right) \right)^*\xi_0$, where $\mathfrak{R}_A \left(\mathbf{e}_{\lambda_l\lambda_k}^{(j)} \right)$ is an operator of the right multiplication by $\mathbf{e}_{\lambda_l\lambda_k}^{(j)}$. To calculate $\left(\mathfrak{R}_A \left(\mathbf{e}_{\lambda_l\lambda_k}^{(j)} \right) \right)^*$, we take the elements $V,W\in \mathcal{H}_A^\mathbf{q}$ of the view $V=v_1\otimes v_2\otimes\cdots\otimes v_n\otimes{\rm I}\otimes{\rm I}\otimes\cdots$, $W=w_1\otimes w_2\otimes\cdots\otimes w_n\otimes{\rm I}\otimes{\rm I}\otimes\cdots$. Assuming $j\leq n$, we have
\begin{eqnarray*}
&\left(\mathfrak{R}_A \left(\mathbf{e}_{\lambda_l\lambda_k}^{(j)} \right) V,W \right)_{\mathcal{H}_A^\mathbf{q}}={\rm Tr}\left(|A|w_j^*v_j\cdot\mathbf{e}_{\lambda_l\lambda_k}^{(j)} \right)\cdot \prod\limits_{\{i=1\}\&\{i\neq j\}}^n\psi_i(w_i^*v_i)\\
&=\lambda_l^{-1}\lambda_k{\rm Tr}\left(|A|\cdot \left(w_j\cdot \mathbf{e}_{\lambda_k\lambda_l}^{(j)}\right)^*v_j \right)\cdot \prod\limits_{\{i=1\}\&\{i\neq j\}}^n\psi_i(w_i^*v_i)\\
&=\lambda_l^{-1}\lambda_k\left( V,\mathfrak{R}_A \left(\mathbf{e}_{\lambda_k\lambda_l}^{(j)} \right)W \right)_{\mathcal{H}_A^\mathbf{q}}.
\end{eqnarray*}
Hence, using (\ref{involution}), we obtain
\begin{eqnarray*}
\Delta \mathbf{e}_{\lambda_k\lambda_l}^{(j)}=FS\mathbf{e}_{\lambda_k\lambda_l}^{(j)}=\lambda_l^{-1}\lambda_k\mathbf{e}_{\lambda_k\lambda_l}^{(j)}.
\end{eqnarray*}
Therefore, using (\ref{mod_oper_on_sym}), we have
\begin{eqnarray}\label{action_mod_aut_gr_on_generators}
\begin{split}
&\sigma_t\left(\mathfrak{L}_A (\mathbf{e}_{\lambda_k\lambda_l}^{(j)})\right)=\Delta^{it}\cdot\mathfrak{L}_A (\mathbf{e}_{\lambda_k\lambda_l}^{(j)})\cdot \Delta^{-it}=\lambda_l^{-it}\lambda_k^{it}\cdot \mathfrak{L}_A (\mathbf{e}_{\lambda_k\lambda_l}^{(j)}),\\
&\sigma_t(X)=X, \text{ when } X\in \left\{\pi_A^{(0)}(\mathfrak{S}_\infty) \right\}^{\prime\prime}.
\end{split}
\end{eqnarray}
The action of $\sigma_t$ on $\left\{\pi_A^{(0)}(R_\infty) \right\}^{\prime\prime}$ is uniquely determined by (\ref{action_mod_aut_gr_on_generators}).
\subsubsection{The types of the factor $\left\{\pi_A^{(0)}(R_\infty) \right\}^{\prime\prime}$.} The next statement follows from Subsection \ref{mag}.
\begin{Th}
The following conditions are equivalent:
\begin{itemize}
  \item {\bf 1}. algebra $(I-E(0))\cdot\mathfrak{A}\cdot(I-E(0))$ is abelian;
  \item {\bf 2}. algebra $\left\{\pi_A^{(0)}(R_\infty) \right\}^{\prime\prime}$ is a factor of the type ${\rm II}$ or ${\rm I}$.
\end{itemize}
\end{Th}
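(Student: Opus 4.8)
The plan is to read both conditions off the explicit action of the modular automorphism group computed in \eqref{action_mod_aut_gr_on_generators}, and then to invoke Takesaki's criterion that a von Neumann algebra carrying a faithful normal state is semifinite (of type ${\rm I}$ or ${\rm II}$) if and only if its modular automorphism group is inner (see \cite{TAKES}). I work under the standing assumption of paragraph \ref{mag} that $\xi_0$ is bicyclic, so that $\widetilde{\varphi}(B)=(B\xi_0,\xi_0)$ is a faithful normal state on $M:=\{\pi_A^{(0)}(R_\infty)\}''$ and modular theory applies; by Theorem \ref{condition_of_bicyclic} this amounts to $(\mathrm{Ker}\,A)^\perp=\widetilde{\mathbf H}$, and in particular $\mathbf q\cdot E(0)=0$, so that $0\notin\widehat S(A)$. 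Combined with property $(\mathrm e)$ this gives $\widehat S(A)\subset(0,1]$. (In the non-bicyclic subcase one first compresses by the support projection, reducing to exactly this situation.)

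First I would translate condition \textbf{1} into a statement about the spectrum. Using the structure of $\mathfrak A$ from paragraph \ref{structure_of_A}, write $\mathfrak A=(I-\mathbf f)\mathfrak A\oplus \mathbf f\mathfrak A$, where $(I-\mathbf f)\mathfrak A$ is abelian and $\mathbf f\mathfrak A=\mathfrak F$ is the type ${\rm I}$ factor generated by the matrix units $\{\mathbf e_{\lambda_k\lambda_l}\}_{\lambda_k,\lambda_l\in\widehat S(A)}$. Since each $\mathbf e_{\lambda_i\lambda_i}$ is a subprojection of $E(\lambda_i)$ with $\lambda_i\neq 0$, one has $\mathbf f\le I-E(0)$, whence $(I-E(0))\,\mathbf e_{\lambda_k\lambda_l}\,(I-E(0))=\mathbf e_{\lambda_k\lambda_l}$ and $(I-E(0))\mathfrak A(I-E(0))\supseteq\mathfrak F$. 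As the abelian summand stays abelian under compression, $(I-E(0))\mathfrak A(I-E(0))$ is abelian if and only if $\mathfrak F$ is abelian, i.e. if and only if $\#\widehat S(A)\le 1$. On the other hand, by \eqref{action_mod_aut_gr_on_generators} the operators $\mathfrak L_A(\mathbf e_{\lambda_k\lambda_l}^{(j)})$ together with $\{\pi_A^{(0)}(\mathfrak S_\infty)\}''$ generate $M$, and $\sigma_t$ fixes the second family while scaling the first by $(\lambda_k/\lambda_l)^{it}$. Hence $\sigma_t=\mathrm{id}$ for all $t$ precisely when every ratio $\lambda_k/\lambda_l$ with $\lambda_k,\lambda_l\in\widehat S(A)$ equals $1$, i.e. precisely when $\#\widehat S(A)\le 1$. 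This identifies condition \textbf{1} with triviality of the modular group.

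This already yields \textbf{1}$\Rightarrow$\textbf{2}: if $\sigma_t=\mathrm{id}$ then $\widetilde{\varphi}$ is a trace, so the factor $M$ is finite, hence of type ${\rm II}_1$ or ${\rm I}_n$, in either case of type ${\rm I}$ or ${\rm II}$. For the converse I would argue by contraposition. Suppose condition \textbf{1} fails, so $\widehat S(A)$ contains two distinct points $\lambda_1\neq\lambda_2$, and put $\rho:=\lambda_1/\lambda_2\neq 1$. By Takesaki's criterion it suffices to show $\sigma_t$ is outer for $t\neq 0$. The elements $v_j:=\mathfrak L_A(\mathbf e_{\lambda_1\lambda_2}^{(j)})$, $j\in\mathbb N$, are $\sigma_t$-eigenvectors with eigenvalue $\rho^{it}$; they are supported on pairwise distinct tensor sites, and $\pi_A^{(0)}(s)v_j\pi_A^{(0)}(s)^{*}=v_{s(j)}$ (the cocycle $U_{E_-}$ acting trivially, since $\widehat S(A)\subset(0,1]$). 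Exploiting that infinitely many mutually independent such eigenvectors exist, one shows that $\rho\neq 1$ lies in the asymptotic ratio set, equivalently in Connes' invariant $S(M)$: discarding any finite number of initial sites still leaves the ratio $\rho$ realized in the tail, which is exactly the asymptotic condition. Thus $S(M)\neq\{1\}$, so $M$ is of type ${\rm III}$ and not semifinite; equivalently, no single unitary $u_t\in M$ can implement the simultaneous scaling $u_tv_ju_t^{*}=\rho^{it}v_j$ on the infinite independent family $\{v_j\}$, so $\sigma_t$ is outer. This establishes \textbf{2}$\Rightarrow$\textbf{1}.

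The main obstacle is precisely this last step: upgrading a single nontrivial eigenvalue $\rho$ of $\Delta_{\widetilde\varphi}$ into genuine non-semifiniteness. A direct computation only gives $\rho\in\mathrm{Spec}(\Delta_{\widetilde\varphi})$ for the \emph{one} state $\widetilde\varphi$, whereas $S(M)$ is the intersection of modular spectra over all faithful normal states; the role of the infinitely many independent sites $j$ (transitively permuted by $\mathfrak S_\infty$) is to pass from the spectrum of one modular operator to the \emph{asymptotic} ratio set and thereby to place $\rho$ in $S(M)$ robustly. I expect the cleanest route is to exhibit, for every $N$, the tail copies $\{\mathbf e_{\lambda_1\lambda_2}^{(j)}\}_{j>N}$ inside the tensor product and to invoke the ITPFI/Araki--Woods description of $M$, so that outerness of $\sigma_t$ reduces to the standard fact that a Powers-type modular group with a ratio $\rho\neq 1$ is not inner.
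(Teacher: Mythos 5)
Your overall route coincides with the paper's: the paper's entire proof of this theorem is the one-line assertion that it follows from the modular computation \eqref{action_mod_aut_gr_on_generators}, and your dictionary (condition \textbf{1} $\Leftrightarrow$ $\#\widehat S(A)\le 1$ $\Leftrightarrow$ $\sigma_t\equiv\mathrm{id}$, under bicyclicity) together with the implication \textbf{1}$\Rightarrow$\textbf{2} (trivial modular group $\Rightarrow$ $\widetilde\varphi$ is a trace $\Rightarrow$ finite factor) is a correct and complete rendering of that half. Your reduction of the non-bicyclic case by compressing with the support projection $\mathfrak Q=\mathfrak Q^{(\infty)}$ is also the right move, though it deserves more than a parenthesis: that case is not peripheral, since under bicyclicity the factor is either finite or of type ${\rm III}$, so the ${\rm I}_\infty$ and ${\rm II}_\infty$ representations of the main theorem occur precisely when $\xi_0$ fails to be separating.

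The genuine gap is in \textbf{2}$\Rightarrow$\textbf{1}, exactly where you flag it, and the repair you propose would fail: $M=\{\pi_A^{(0)}(R_\infty)\}''$ has no ITPFI/Araki--Woods description. It is generated by the twisted permutation operators $\mathfrak L_A(T(s))$ and the one-site projections $\mathfrak L_A(\mathbf q^{(k)})$, hence, whenever $\mathfrak A\neq B(\mathbf H)$ (e.g.\ when the abelian summand $(I-\mathbf f)\mathfrak A$ is nontrivial), it is a proper subalgebra of $\mathfrak L_A(B(\mathbf H)^{\otimes\infty})''$ and not an infinite tensor product factor; and the fact that it \emph{contains} the type ${\rm III}$ ITPFI subfactor $\{\mathfrak L_A(\mathbf e_{\lambda\mu}^{(j)})\}''$ proves nothing, since type ${\rm III}$ does not pass from a subfactor to the ambient factor ($B(H)$ contains type ${\rm III}$ subfactors). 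A completion that uses only tools the paper actually supplies runs as follows: by Connes' characterization, $S(M)=\bigcap\{\mathrm{Spec}\,\Delta_{\widetilde\varphi_e}:\ 0\neq e\ \text{a projection in the centralizer}\ M^{\widetilde\varphi}\}$; for such $e$ one has $\sigma_t^{\widetilde\varphi_e}=\sigma_t|_{eMe}$, so $ev_je$ is a $\Delta_{\widetilde\varphi_e}$-eigenvector with eigenvalue $\rho$ as soon as $ev_je\neq0$. That $ev_je\neq0$ for large $j$ follows from the weak convergence $v_jev_j^*\to\lambda_1 e$, which one proves by approximating $e$ $*$-strongly by finitely supported elements and controlling the error on the moving vectors $v_j^*\xi_0$ through the \emph{bounded right-multiplication operators} $\mathfrak R_A(\mathbf e_{\lambda_2\lambda_1}^{(j)})\in M'$ (Lemma \ref{operator_from_commutant} and the computation in p.~\ref{mag} give a bound uniform in $j$); this is precisely the device that defeats the ``approximation only at $\xi_0$'' obstruction you ran into. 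One then gets $\rho\in S(M)\setminus\{1\}$, so $M$ is not semifinite. Without an argument of this kind (or an equivalent direct proof that $\sigma_t$ is outer), \textbf{2}$\Rightarrow$\textbf{1} remains unproved --- though, in fairness, the paper suppresses this step entirely.
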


\section{Notations}\label{section:notations}

\vskip 0.1cm\noindent
$\mathfrak S_n$ is the group of permutations on the set of $n$ elements $\{1,2,\ldots,n\}$
\vskip 0.1cm\noindent
$\mathfrak S_\infty=\cup_{n\in\mathbb N}\mathfrak S_n$ is the infinite symmetric group (group of all finite permutations of $\mathbb N$)
\vskip 0.1cm\noindent
$\epsilon_\mathbb{A}$ is  element of ${\rm Diag}_n$ such that $\mathcal{D}(\epsilon_\mathbb{A})=X_n\setminus\mathbb{A}$
(see page \pageref{epsilon_mathbb_A})
\vskip 0.1cm\noindent
$\mathcal D(r)$, $\mathcal I(r)$ are the domain and the range of $r\in R_\infty$
\vskip 0.1cm\noindent
$\mathfrak{m}(r,a)=\min\left\{ k\geq 1: r^k(a)\in  \mathcal{I}(r)\setminus\mathcal{D}(r)\right\}$
\vskip 0.1cm\noindent
$\mathfrak{O}_a=\left\{a,r(a),\ldots,r^{\mathfrak{m}(r,a)}(a)\right\}$ for $a\in\mathcal D(a)\setminus \mathcal I(a)$
\vskip 0.1cm\noindent
$r=\prod\limits_{a\in\mathcal{D}(r)\setminus\mathcal{I}(r)} \,^a\!q \cdot \prod\limits_{i}c_i^{(r)} \cdot d^{(r)}$ is the generalized cyclic decomposition for $r\in R_\infty$ (see Theorem \ref{decomposition_into_product_cycles})
\vskip 0.1cm\noindent
$\mathfrak{F}_\mathfrak{S}$ is the set of all $\mathfrak{S}_\infty$-central indecomposable states on $R_\infty$
(see page \pageref{F_S})
\vskip 0.1cm\noindent
$R_{n\infty}=\left\{r\in R_\infty:r(x)=x \text{ for all } x=1,2,\ldots,n \right\}$ (see page \pageref{R_n_infty})
\vskip 0.1cm\noindent
$\omega_\eta(A)=(A\eta,\eta)$ for $\eta$ in a Hilbert space
\vskip 0.1cm\noindent
$H_\pi(n)=\left\{ \eta\in H_\pi:\pi(s)\eta=\eta \text{ for all } s\in\mathfrak{S}_{n\infty}\right\}$ (page  \pageref{def_yP_n})
\vskip 0.1cm\noindent
 $\mathfrak{S}_\infty$-{\it admissible} representation (page \pageref{admissible})
\vskip 0.1cm\noindent
 $E(n)$ is the orthogonal projection onto $H_\pi(n)$ (page \pageref{E_n})
\vskip 0.1cm\noindent
$\widetilde{H}_\pi(n)$ is the closure  of the span of $\pi\left( R_n\cdot R_{n\infty}\right)H_\pi(n)$ (page  \pageref{widetilde_H_pi_n})
\vskip 0.1cm\noindent
$\widetilde{E}(n)$ is the orthogonal projection onto $\widetilde{H}_\pi(n)$  (page \pageref{E_n})
\vskip 0.1cm\noindent
$d(\pi)=\min\left\{ n:H_\pi(n)\neq 0\right\}$  (page  \pageref{depth_of_pi})
\vskip 0.1cm\noindent
$\mathcal{C}_\mathfrak{S}(r)$ is the set $\left\{ srs^{-1} \right\}_{s\in\mathfrak{S}_\infty}$, where $r\in R_\infty$
\vskip 0.1cm\noindent
$I$ stands for the identity operator in a Hilbert space
\vskip 0.1cm\noindent
$\mathcal O_k$ is the weak limit of the sequence of operators $\pi((k,n))$ when $n\to\infty$ (page \pageref{Okounkov_operator})
\vskip 0.1cm\noindent
$\mathfrak{A}_k$  is the $W^*$-algebra generated by $\mathcal{O}_k$  and $\pi\left(\epsilon_{\{k\}} \right)$
\vskip 0.1cm\noindent
$E_k(\lambda)$ is the spectral projection of $\mathcal O_k$ corresponding to an eigenvalue $\lambda$
\vskip 0.1cm\noindent
$P_k$ is the orthogonal projection onto the subspace $\left[\mathfrak{A}_k\left(1-E_k(0)\right)H_f \right]$
\vskip 0.1cm\noindent
$\mathfrak{P}_k=2I-P_k-E_k(0)$ and $Q^{(n)}=\prod\limits_{k=1}^n\mathfrak{P}_k$ (page \pageref{supp_comp})
\vskip 0.1cm\noindent
$\mathfrak{R}(A)\in\pi_f(R_\infty)'$ is defined by $\pi_f(r)\xi_f\mapsto \pi_f(r) A\xi_f,\; r\in R_\infty,$ (see Lemma \ref{operator_from_commutant} and page \pageref{cuclic_symmetrsc_group})
\vskip 0.1cm\noindent
$\mathbf{H}$ is a complex separable Hilbert space, $A\in B(\mathbf{H})$ is a self-adjoint operator, $\mathbf{q}\in B(\mathbf{H})$ is a one-dimensional orthogonal projection (see page \pageref{parameters_of_repr})
\vskip 0.1cm\noindent $\mathfrak{A}=\left\{ A,\mathbf{q}\right\}^{\prime\prime}$, $\widetilde{\mathbf{H}}$ is the subspace generated by $\left\{ \mathfrak{A}v,   v\in ({\rm Ker} A)^\perp \right\}$, $\mathbf{H}_{reg}=\mathbf{H}\ominus\widetilde{\mathbf{H}}$  (see page \pageref{parameters_of_repr})
\vskip 0.1cm\noindent $\mathbf{e}_{\lambda_k\lambda_l}=\left( c(\lambda_k)\cdot c(\lambda_l) \right)^{-1/2} \cdot E\left( \lambda_k \right)\cdot \mathbf{q}\cdot E\left( \lambda_l \right)$ is a one-dimensional partial isometry (see page \pageref{structure_of_A})
\vskip 0.1cm\noindent
$\left\{ e_{kl}\right\}\subset B(\mathbf{H})$ is a matrix unit, $\psi_k\left(b \right)={\rm Tr}\left(b|A| \right)+\left(1-{\rm
Tr}\left(|A| \right)\right) {\rm Tr}\left(b e_{n_kn_k}\right),$ $b\in B\left(\mathbf{H} \right)$ (see page \pageref{hp})
\vskip 0.1cm\noindent
$_1\psi_k\left(b_1\otimes b_2\otimes \ldots\otimes b_k
 \right)=\prod\limits_{j=1}^k\psi_j\left(b_j \right)$, $\mathcal{H}^\mathbf{q}_A$ is the completion of $B \left( \mathbf{H}\right)^{\otimes \infty}$ with respect to the form $(u,v)= _1\psi_\infty(u^*v)$ (see Section \ref{hp})
 \vskip 0.1cm\noindent
 $T$ is a $*$-homomorphism of $R_\infty$ to $B\left(\mathbf{H}\right)^{\otimes\infty}$ defined in Paragraph \ref{action}, $\mathfrak{L}_A$ ($\mathfrak{R}_A$) is the left (right) multiplication on $B\left(\mathbf{H}\right)^{\otimes\infty}$, 
 \vskip 0.1cm\noindent
 $\Pi_A$ is the $*$-representation of $R_\infty$ given by $\Pi_A(r)=\mathfrak{L}_A(T(r))$, $\pi_A^{(0)}$ is the restriction of $\Pi_A$ onto $\left[ \Pi_A\left( R_\infty\right)I\right]$ (see Paragraph \ref{action})

{}


\begin{thebibliography}{}

\bibitem{BorodinOkounkovOlschanskii00} A. Borodin, A. Okounkov and G. Olshanski, \emph{Asymptotics of Plancherel measures for symmetric groups}, Journ. of AMS, {\bf 13} (2000), no. 3, pp. 481-515.

\bibitem {Dudko_Nes_2009}
 A. V. Dudko, N. I. Nessonov, Invariant states on the wreath product, arXiv:0903.4987.

  \bibitem {Dudko_Nes_2008_Sb}
 A. V. Dudko, N. I. Nessonov, Characters of projective representations of the infinite generalized symmetric group, October 2008, Russian Academy of Sciences, Sbornik Mathematics 199(10):1421-1450, DOI:10.1070/SM2008v199n10ABEH003966.

 \bibitem {Dudko_Nes}
 A. V. Dudko, N. I. Nessonov, A description of characters on the infinite wreath product, MFAT 13 (2007), no. 4, 301-317.
 \bibitem{East}
 J. East, Generators and relations for partition monoids and algebras, J. Algebra, 339 (2011),
1–26.

\bibitem{Munn}
 Munn  W. D., The characters of the symmetric inverse semigroup, Proc. Cambridge
Philos. Soc., 53:1 (1957), 13-18.

\bibitem{KOV}S.Kerov, G.Olshanski, A.Vershik, {\it Harmonic
 analysis on the infinite symmetric group}, RT-0312270.
 
\bibitem{LIEB}
 Lieberman A., {\it The structure of certain unitary
 representations of infinite symmetric group}, Trans. Amer.
 Math. Soc. {\bf 164} (1972), 189 --- 198.
 
\bibitem{NikSaf} P. Nikitin, N. Safonkin, {\it Semifinite harmonic functions on the direct product of graded graphs}, Representation theory, dynamical systems, combinatorial methods. Part XXXIV, Zap. Nauchn. Sem. POMI, 517, POMI, St. Petersburg, {\bf 2022}, 125–150

\bibitem{Olsh_tamesym}
G.Olshanski, {\it Unitary representations of infinite symmetric group: a semigroup approach}, Representations of Lie groups and Lie algebras (A.A. Kirillov, ed), Akademiai Kiado, Budapest, 1985, pp.  181-197.

\bibitem{Ok1}
A.Okounkov, {\it The Thoma theorem and
 representation of the infinite bisymmetric group},
 Funct. Anal. Appl. {\bf 28} (1994), no. 2, 100-107.
\bibitem{Ok2}
A.Okounkov, {\it On the representation of
 the infinite symmetric group}, Journal of Mathematical Sciences
October 1999, Volume 96, Issue 5, pp 3550-3589, arxiv RT-9803037.

\bibitem{Ok00} A. Okounkov, \emph{Random matrices and random permutations}, IMRN {\bf 2000} (2000), no. 20, pp. 1043-1095.

\bibitem{OV}G.Olshanski and A.Vershik, {\it Ergodic unitary
 invariant measures on the space of infinite Hermitian
 matrices}, Contemporary Mathematical Physics
 (R.L. Dobrushin, R.A. Minlos, M.A. Shubin, A.M. Vershik,
 eds.), American Mathematical Society Translations, Ser. 2,
 Vol. 175, Amer. Math. Soc., Providence, 1996, pp. 137-175.
\bibitem{TAKES1}
Takesaki M., {\it Theory of Operator Algebras, v. ${\rm I}$},
 Springer,  2001, 415 pp.
\bibitem{TAKES}
Takesaki M., {\it Theory of Operator Algebras, v. ${\rm II}$},
 Springer, 2005, 518 pp.
 \bibitem{Thoma}E.Thoma, {\it Die unzerlegbaren,
  positiv-definiten Klassenfunktionen der abz\"{a}hlbar
 unendlichen symmetrischen Gruppe},
 Math. Zeitschr. {\bf 85} (1964), no.1, 40-61.

 \bibitem{O1}G.Olshanski, {\it An introduction to harmonic analysis
 on the infinite symmetric group}, RT-0311369.
 \bibitem{O2}G.Olshanski, {\it Unitary representations of
 $(G,K)-$pairs connected with the infinite symmetric group
 $S(\infty)$}, Algebra i Analiz {\bf 1} (1989), no. 4, 178-209
 (Russian); English translation in Leningrad Math. J. {\bf 1}
 (1990), no. 4, 983-1014.
 \bibitem{Olshsurvey}
 Olshanski G.I.,{\it On semigroups related to infinite dimensional
 groups}, Topics in Representations theory, Adv. Soviet Math.,
 AMS, Providence, {\bf 2} (1991), pp. 57-101.


\bibitem{Moore}
 E.H. Moore, Concerning the abstract groups of order $k!$ and $\frac{1}{2} k!$  holohedrically isomorphic with the symmetric and alternating substitution groups on k letters, Proc. Lond. Math. Soc. 28 (1897) 357–366.

 \bibitem{N_INV}
 Nessonov  N. I., Characters of the Infinite Symmetric Inverse Semigroup, Funct. Anal. Appl., 54:3 (2020), 179–187.
 \bibitem{NN_GL}
 Nessonov  N. I.,
 A complete classification of the representations of $ \mathrm{GL}(\infty)$ containing the identity representation of the unitary subgroup, Mathematics of the USSR-Sbornik, 1987, 58:1, 127–147.

\bibitem{Powers}
Powers R. T., {\it Uniformly Hyperfinite Algebras and Their Associated von Neumann Ring}, Annals of Mathematics, Second Series, Vol. 86, No. 1 (Jul., 1967), pp. 138-171.
\bibitem{Popova}
 L.M. Popova, Defining relations in some semigroups of partial transformations of a finite set, Uchenye Zap. Leningrad Gos.
Ped. Inst. 218 (1961) 191–212 (in Russian).
\bibitem{Voic}
Voiculescu D., {\it Representations factorielles de type $II_1$ de $U(\infty)$} . J. Math. Pures et Appl. 55 (1976),
pp.1–20.
 \bibitem{NN}
 Nessonov  N. I.,{\it KMS states on $\mathfrak{S}_\infty$ invariant with respect to the Young subgroups},
 Functional Analysis and Its Applications, April 2013, Volume 47, Issue 2, pp 127-137.
\bibitem{VN}
 Vershik A. M. and  Nikitin P. P. Description of the characters and factor representations of the
infinite symmetric inverse semigroup, Funct. Anal. Appl., 45:1 (2011), 13–24.
\bibitem{RS}
M. Reed and B. Simon, {\it Methods of Modern Mathematical
 Physics}, v. {\rm I-II} ACADEMIC PRESS, INC, 1972, 400 p.
  \bibitem{VK0}
A. Vershik and S. Kerov, {\it Asymtotic theory of characters of the
symmetric groups,} Functional analysis and its applications,
 {\bf 15} (1981), no. 4, 15-27.
 \bibitem{VK1}
A. Vershik and S. Kerov, {\it Characters and factor representations of the
infinite symmetric group,} Soviet Math. Dokl., {\bf 23} (1981), no. 2,
389--392.
\bibitem{VK2}
A. Vershik and S. Kerov, {\it The Grothendieck group of the
 infinite symmetric group and symmetric
functions (with the elements of the theory of $K_0$-functor of
AF-algebras)},
 In "Representation of Lie groups and related topics".
 Adv. Stud. Contemp. Math., {\bf 7}  (1990), 39-117.

\bibitem{V12} A. Vershik, \emph{Totally non-free actions and the infinite symmetric group,} Mosc. Math. J., {\bf 12} (2012), no. 1, 193-212.

\end{thebibliography}
\end{document}